\documentclass{article}
\usepackage[utf8]{inputenc} 
\usepackage{amsmath}
\usepackage{amsfonts}
\usepackage{mathrsfs}
\usepackage{amssymb}
\usepackage{amsthm}
\usepackage{bbm}
\usepackage{bm}
\usepackage{dsfont}
\usepackage{appendix}
\usepackage[normalem]{ulem}
\usepackage{enumerate}
\usepackage{xmpmulti} 
\usepackage[english]{babel} 
\usepackage[T1]{fontenc} 
\usepackage{overpic} 
\usepackage{verbatim} 
\usepackage{tikz} 
\numberwithin{equation}{section}
\usepackage{enumitem}
\usepackage{hyperref} 
\usepackage{mathtools}

\newcommand{\vertiii}[1]{{\left\vert\kern-0.25ex\left\vert\kern-0.25ex\left\vert #1 
    \right\vert\kern-0.25ex\right\vert\kern-0.25ex\right\vert}}

\newcommand*{\qtext}[1]{\quad\text{#1}\quad}

\newcommand*{\coleq}{\mathrel{\mathop:}=}

\newcommand{\epf}{{{\hfill $\Box$ \smallskip}}}

\newcommand{\R}{\mathbb{R}}
\newcommand{\N}{\mathbb{N}}

\newcommand{\Z}{\mathbb{Z}}
\newcommand{\Q}{\mathbb{Q}}

\newcommand{\tbf}{\mathbf{t}}

\newcommand{\sbf}{\mathbf{s}}

\newcommand{\ibf}{\mathbf{i}}

\newcommand{\jbf}{\mathbf{j}}

\newcommand{\Pp}{\mathbf{P}}

\newcommand{\E}{\mathbf{E}}
\newcommand{\Ic}{\mathcal{I}}

\newcommand{\AC}{\mathcal{A}}
\newcommand{\Asc}{\mathscr{A}}

\newcommand{\F}{\mathcal{F}}

\newcommand{\Sc}{\mathcal{S}}

\newcommand{\Tc}{\mathcal{T}}

\newcommand{\Dc}{\mathcal{D}}
\newcommand{\eps}{\epsilon}

\newcommand{\id}{\mathbbm{1}}

\newcommand{\zbf}{\mathbf{z}}
\newcommand{\nbf}{\mathbf{n}}

\newcommand{\Psf}{\mathbf{P}}

\newcommand\Item[1][]{%
  \ifx\relax#1\relax  \item \else \item[#1] \fi
  \abovedisplayskip=0pt\abovedisplayshortskip=0pt~\vspace*{-\baselineskip}}
  
\renewcommand{\emptyset}{\varnothing}
\newcommand{\Jtconst}{\nu}
\newcommand{\sublin}{\sigma}

\newcommand{\nuzero}{\nu_0}
\newcommand{\nuone}{\nu_1}

\newtheorem{theorem}{Theorem}[section]
\newtheorem{lemma}{Lemma}[section]
\newtheorem{claim}{Claim}[section]
\newtheorem{remark}{Remark}[section]

\newtheorem{proposition}{Proposition}[section]

\newtheorem{assumption}{Assumption}[section]

\DeclareFontFamily{U}{mathc}{}
\DeclareFontShape{U}{mathc}{m}{it}%
{<->s*[1.03] mathc10}{}

\DeclareMathAlphabet{\mathscr}{U}{mathc}{m}{it}

\title{A factorization formula for the partition function in the semi-discrete parabolic Anderson model}

\author{
Tobias Hurth \footnote{Department of Mathematics, University of Toronto, Toronto, Canada} 
\and Konstantin Khanin \footnote{Beijing Institute of Mathematical Sciences and Applications (BIMSA), Beijing, China} 
\footnote{Department of Mathematics, University of Toronto, Toronto, Canada} 
\and Beatriz Navarro Lameda \footnote{Department of Mathematics, University College London, London, United Kingdom}
}

\makeatletter
\@namedef{r@tocindent4}{0pt}
\@namedef{r@tocindent5}{0pt}
\makeatother

\makeatletter

\renewcommand\subsection{\@startsection{subsection}{2}{\z@}%
                                     {-3.25ex\@plus -1ex \@minus -.2ex}%
                                     {1.5ex \@plus .2ex}%
                                     {\normalfont\bfseries}}
\renewcommand\subsubsection{\@startsection{subsubsection}{3}{\z@}%
                                     {-3.25ex\@plus -1ex \@minus -.2ex}%
                                     {-1ex \@plus -3ex}%
                                     {\normalfont\bfseries}}

\makeatother

\makeatletter
\renewenvironment{proof}[1][\proofname]{\par
  \pushQED{\qed}%
  \normalfont \topsep6\p@\@plus6\p@\relax
  \trivlist
  \item[\hskip\labelsep
        \bfseries
    #1\@addpunct{.}]\ignorespaces
}{%
  \popQED\endtrivlist\@endpefalse
}
\makeatother

\makeatletter
\def\@tocline#1#2#3#4#5#6#7{\relax
  \ifnum #1>\c@tocdepth 
  \else
    \par \addpenalty\@secpenalty\addvspace{#2}%
    \begingroup \hyphenpenalty\@M
    \@ifempty{#4}{%
      \@tempdima\csname r@tocindent\number#1\endcsname\relax
    }{%
      \@tempdima#4\relax
    }%
    \parindent\z@ \leftskip#3\relax \advance\leftskip\@tempdima\relax
    \rightskip\@pnumwidth plus4em \parfillskip-\@pnumwidth
    #5\leavevmode\hskip-\@tempdima
      \ifcase #1
       \or\or \hskip 1.8em \or \hskip 4.5em \else \hskip 3em \fi%
      #6\nobreak\relax
    \hfill\hbox to\@pnumwidth{\@tocpagenum{#7}}\par
    \nobreak
    \endgroup
  \fi}
\makeatother

\date{} 

\begin{document}

\maketitle 

\begin{abstract}
We consider a continuous-time simple symmetric random walk on the integer lattice $\Z^d$ in dimension $d \geq 3$, subject to a random potential given by a field of two-sided Wiener processes. In the high-temperature regime, we prove the existence of the $L^2$- and almost sure limit of the partition function as time $t \to \pm \infty$, and show that these limiting partition functions are positive almost surely. Our main result is a factorization formula for the point-to-point partition function, which is shown to be valid up to any sub-ballistic scale. 

\end{abstract}

\noindent {\bf Keywords:} Directed polymers, Weak disorder, Partition function \\
{\bf MSC numbers:}  60H15, 35R60, 37L40, 60K35, 60F05 



\section{{Introduction}}      \label{sec:introduction} 

Directed polymers in a random potential are a class of models in statistical mechanics where a stochastic process interacts with its space-time environment through a random potential~\cite{Vargas}. They are also known under the name \emph{Anderson polymer model}, in honor of P. W. Anderson's work on entrapment of electrons in crystals with impurities~\cite{anderson1958absence}. Statistical properties of the paths of the stochastic process are examined under a random Gibbs measure that depends on a parameter representing the temperature in the system. Intuitively, as the temperature increases, the influence of the random potential decreases, and typical paths under the Gibbs measure start to resemble typical paths under the random-walk measure. 

Directed polymers were first introduced in the 1980s in the context of ferromagnetism by Huse and Henley~\cite{PhysRevLett.54.2708}, who studied the phase boundary of the Ising model with random impurities. They then found additional applications in physics to a variety of phenomena from tearing sheets of paper~\cite{Kertez:1993aa}, to turbulence in electrically driven liquid crystals~\cite{takeuchi2010universal, zbMATH06062539}, to kinetic roughening of growing surfaces~\cite{Krug:1991aa}. The first mathematical treatments were undertaken in the late 1980s by Imbrie and Spencer~\cite{Imbrie1988}, and by Bolthausen~\cite{Bolthausen}. A modern treatment of directed polymers in a random potential can be found in the monograph~\cite{zbMATH06684616} by Comets. 

In polymer models, the spacetime is parameterized by $(x,t)$, where space and time are typically either continuous (i.e., $x \in \R^d$ for some dimension $d \geq 1$ and $t \in \R$) or discrete (i.e., $x \in \Z^d$ and $t \in \Z$). In this paper, however, we consider a semidiscrete spacetime $\Z^d \times \R$. By means of the Feynman--Kac formula, continuity in the time variable lets us apply our results on the polymer model to the stochastic heat equation on the integer lattice $\Z^d$, see~\cite{HKN_SHE}. And while continuity in the space variable is desirable from a physical point of view, it comes with technical challenges that we presently avoid by working on a discrete space. 

The two most typical choices for the underlying stochastic process are a Brownian motion in continuous space and a random walk in discrete space. In this paper, we consider a continuous-time simple symmetric random walk $\eta$ on $\Z^d$, i.e., a random walk where jumps from a site $x \in \Z^d$ may occur only to one of its $2d$ neighboring sites, with equal probabilities, and where the times between consecutive jumps are independent exponentially distributed random variables of rate $1$. In this setting, a directed polymer is the graph $(t, \eta(t))_t$ of the random walk $\eta$ parameterized by time $t$.  

In discrete space, the random potential is given by a family $\{F^x(t): \ x \in \Z^d\}$ of independent identically distributed stochastic processes. Several choices for the law of the random potential have been considered in the literature. The case of a Bernoulli potential ($F^x(t) = \pm 1$, each with probability $1/2$) was studied in discrete spacetime by Imbrie and Spencer~\cite{Imbrie1988}, Bolthausen~\cite{Bolthausen}, and Song and Zhou~\cite{zbMATH01279712}; and in a semidiscrete setting by Coyle~\cite{MR2693058, Coyle96}. In~\cite{CarmonaHu}, P. Carmona and Hu investigated a Gaussian potential in discrete spacetime, where each $F^x(t)$ is a standard normal random variable. As a generalization of a Bernoulli potential, one may consider distributions of bounded support~\cite{zbMATH01279712}. A Gaussian potential generalizes naturally to a spacetime field of random variables with finite exponential moments, as studied by Sinai~\cite{Sinai_95} and Vargas~\cite{Vargas}. In the present paper, for every $x \in \Z^d$, $(F^x(t))_{t \in \R} = (W^x_t)_{t \in \R}$ is a two-sided Brownian motion, and the Brownian motions $(W^x)_{x \in \Z^d}$ are assumed independent.  

The energy a path $\eta$ acquires in the random potential $\{F^x(t): \ x \in \Z^d\}$ over a time interval $[s, t]$ is given as the integral 
$$
\AC_s^t(\eta) \coleq \int_s^t F^{\eta(\tau)}(\tau) \ d \tau. 
$$
The statistical properties of the polymer model are then encoded in the point-to-point partition function $Z_{x,s}^{y,t}$ corresponding to the random-walk bridge between $(x,s)$ and $(y,t)$ for $t > s$. In our semidiscrete setting, the point-to-point partition function is given by  
$$
Z_{x,s}^{y,t} \coleq e^{-\frac{\beta^2}{2} (t-s)} \ p_{t-s}^{y-x} \  \E_{x,s}^{y,t} e^{\beta \AC_s^t}.  
$$
Here $\E_{x,s}^{y,t}$ is the expectation obtained by conditioning on random walks starting at $x$ at time $s$ and ending at $y$ at time $t$, and $p_{t-s}^{y-x}$ is the transition probability of the continuous-time simple symmetric random walk going from $x$ to $y$ in time $t-s$. The parameter $\beta > 0$ is called the inverse temperature. It describes the strength of the interaction between the directed polymer and the random potential. In the high-temperature regime, when $\beta$ is small, the interaction is weak. In the low-temperature regime, when $\beta$ is large, directed polymers are heavily influenced by the potential.  

Of particular interest is the longterm behavior of the polymer system. A directed polymer can be either \emph{localized} (the endpoint distribution has bounded variance with high probability) or \emph{diffusive} (the variance of the endpoint distribution grows linearly in time). It was shown in\cite{CarmonaHu, Comets} that localized behavior always takes place in spatial dimension $d=1$ and $d=2$, while in higher dimension $d \geq 3$ there is a transition from diffusive to localized behavior as the inverse temperature increases. The weak-disorder regime, where polymers are diffusive, has been studied since the late 1980s. For $d \geq 3$ and small $\beta$, diffusivity was established for polymer models in discrete spacetime by Imbrie and Spencer~\cite{Imbrie1988}, Bolthausen~\cite{Bolthausen}, and Sinai~\cite{Sinai_95}; in continuous spacetime by Kifer~\cite{Kifer} and Comets, Shiga, and Yoshida~\cite{zbMATH02072459}; and in a semidiscrete setting by Coyle~\cite{MR2693058, Coyle96}. 

In Sinai's approach to proving the diffusive behavior of polymers in the weak-disorder regime, the main tool is a factorization formula describing the asymptotic behavior of $Z_{x,s}^{y,t}$ as $t-s$ tends to $\infty$. In the semidiscrete setting, this formula reads 
\begin{equation}    \label{eq:factorization_form} 
Z_{x,s}^{y,t} = p_{t-s}^{y-x} (Z_{x,s}^{\infty} Z_{-\infty}^{y,t} + \delta_{x,s}^{y,t}). 
\end{equation} 
Here, $Z_{x,s}^{\infty}$ and $Z_{-\infty}^{y,t}$ are the limiting partition functions corresponding to the random walk starting from $(x,s)$ and evolving into a distant future, and to the random walk coming from a distant past and ending at $(y,t)$, respectively. A rigorous description is given in Section~\ref{sec:setting}. The error term $\delta_{x,s}^{y,t}$ tends to zero as $t-s \to \infty$, uniformly in $x$ and $y$, provided $\|y-x\|$ is not too large relative to $t-s$. The convergence of $\delta_{x,s}^{y,t}$ to zero for $y-x$ in the diffusive region (i.e., $\|y-x\| = O(\sqrt{t-s})$) was proved by Sinai in a discrete setting~\cite{Sinai_95} and later by Kifer in a continuous setting~\cite{Kifer}. In~\cite{HKNN}, in the discrete setting considered by Sinai, convergence to zero is shown to hold even in the sub-ballistic region, i.e., for $\|y-x\| = O((t-s)^{1-\eps})$, where $\eps > 0$ can be taken arbitrarily small. 

The main purpose of this paper is to prove that the convergence of $\delta_{x,s}^{y,t}$ to zero in the sub-ballistic region still holds if time is continuous instead of discrete (Theorem~\ref{thm:factorization}). In this sense, our paper can be seen as a companion paper to~\cite{HKNN}. While this result is not surprising, its proof is technically challenging, mainly because the continuous-time simple symmetric random walk may reach any point in $\Z^d$ on time intervals of finite length and thus interacts with much larger portions of the potential than a random walk in discrete time.  

Inspired by~\cite{Kifer}, we use the factorization formula in~\cite{HKN_SHE} to prove the following result for the semidiscrete stochastic heat equation with continuous time variable $t \in \R$ and discrete space variable $x \in \Z^d$: In spatial dimension $d \geq 3$ and for small $\beta > 0$, two solutions $u_1$ and $u_2$ to 
\begin{equation*}
\partial_t u(y,t)  = \Delta_y u(y,t) + \beta u(y,t) \dot{W}^y_t, \qquad y \in \Z^d, \ t > 0
\end{equation*}
with respective initial conditions $u_1(y,0) = f_1(y)$ and $u_2(y,0) = f_2(y)$ of subexponential growth satisfy 
$$ 
\biggl \lvert \frac{u_1(y,t)}{u_1(0,t)} - \frac{u_2(y,t)}{u_2(0,t)}  \biggr \rvert \xrightarrow[t \to \infty] {} 0  \text{ in probability}
$$ 
for every $y \in \Z^d$. The link between partition functions for the polymer model and the semidiscrete stochastic heat equation is provided by a Feynman--Kac formula established in~\cite{Carmona_Molchanov}, which implies in particular that the limiting partition function $u(y,t) \equiv Z_{-\infty}^{y,t}$ is a global solution.   

The rest of this paper is organized as follows:
Section~\ref{sec:setting} contains the setting and our main results. We show the existence of the limiting partition functions $Z_{x,s}^{\infty} = \lim_{t \to \infty} Z_{x,s}^t$ and $Z_{-\infty}^{y,t} = \lim_{s \to -\infty} Z_s^{y,t}$ from the factorization formula~\eqref{eq:factorization_form}, both in the sense of $L^2$-convergence and as almost sure limits (Theorem~\ref{thm:limiting_part_fun_exists}). 
In Section~\ref{sec:transition_prob}, we collect several estimates on transition probabilities for the simple symmetric random walk on $\Z^d$, both in discrete and in continuous time.
In Section~\ref{sec:D_sequences}, we formulate a key lemma (Lemma~\ref{lm:standard_arg}) that will allow us in many cases to deduce convergence statements in the continuous-time setting from convergence results in discrete time. Most of these discrete-time convergence results were already established in~\cite{HKNN} and are similar to results obtained in~\cite{Sinai_95}. 
Sections~\ref{sec:proof_lm_factorization} and~\ref{ssec:main_contribution_lemmas} are devoted to the proof of the factorization formula. 
In Section~\ref{sec:proof_thm_limiting_part_fun}  we obtain a rate of convergence to the limiting partition function. 
Finally, in Section~\ref{sec:proof_uniformity} we prove that the limiting partition function is positive almost surely.

\textbf{Notation:} Throughout this article the Euclidean norm and inner product in $\R^d$ are denoted by $\| \cdot \|$ and $\langle \cdot, \cdot \rangle$, respectively. The $1$-norm in $\R^d$ is denoted by $\| \cdot \|_1$. We denote the Euclidean ball of radius $r$ centred at $x$ by $B_r(x)$. For functions $A$ and $B$ of a continuous time variable $t > 0$ and potentially of several other variables, we write $A \lesssim B$ (or $A$ is \emph{dominated} by $B$) if there is a constant $c > 0$ such that $A(t, \cdot) \leq c B(t, \cdot)$ for all $t \geq c$. The constant $c$ may depend on the dimension $d$, the inverse temperature $\beta$, or on scaling parameters. However, it may not depend on any time or space variables such as $t, n, r,$ or $z$. We will simply write $a \equiv b$ to indicate that $a \equiv b \bmod 2$. Finally, in order to simplify notation, we will write $\sum_{\zbf}$ to indicate that we are summing over all $\zbf \in (\Z^d)^r$, where the value of $r$ is clear from the context.

\section*{Acknowledgments}

Part of this paper was written during two-week stays at Mathematisches 
For\-schungs\-zentrum Oberwolfach, in 2018, and Centre International de Rencontres Math\'ematiques (CIRM), in 2019, as part of their respective research in pairs programs. We thank both institutions for their kind hospitality. TH gratefully acknowledges support from the Einstein Foundation Berlin through grant IPF-2021-651 (2022--2024) and, until 2020, from the Swiss National Science Foundation through grant $200021-175728/1$. KK is grateful to acknowledge support from the Natural Sciences and Engineering Research Council of Canada through NSERC Discovery Grant RGPIN-2024-05804.

\section{Setting and Main Results}
\label{sec:setting}

The setting we consider is a continuous-time version of the one in~\cite{HKNN} and~\cite{Sinai_95}. Let $\eta = (\eta_t)_{t \in \R}$ be a continuous-time simple symmetric random walk on $\Z^d$, $d \geq 3$, starting at point $x \in \Z^d$ at time $s \in \R$, with corresponding probability measure $\Pp_{x,s}$ and corresponding expectation $\E_{x,s}$.  
We assume that the jump times of $\eta$ constitute a Poisson point process on the real line with Lebesgue intensity measure. Recall that the gaps between adjacent points of such a process form an i.i.d. sequence of exponential random variables with rate $1$. 
If observed over a time interval $[s,t)$, a sample path of $\eta$ is characterized by 
(i) the number $n_{s,t}$ of jumps that occur within $(s,t)$, 
(ii) a discrete-time path $\gamma = (\gamma_0, \gamma_1, \ldots, \gamma_{n_{s,t}})$ on $\Z^d$ such that $\|\gamma_j - \gamma_{j-1}\|_1= 1$ for $1 \leq j \leq n_{s,t}$, and 
(iii) the jump times $s < s_1 < \ldots < s_{n_{s,t}} <  t$. It is convenient to denote $s_0 := s$ and $s_{n_{s,t}+1} := t$, although we do not assume that $s$ and $t$ are jump times.
Instead of $n_{0,t}$, we will typically write $n_t$. The letter $\eta$ is also used to denote sample paths of the continuous-time simple symmetric random walk. 

For real numbers $t > s$ and $y \in \Z^d$, we denote the probability measure obtained from $\Psf_{x,s}$ by conditioning on $\{\eta_t = y\}$ by $\Psf_{x,s}^{y,t}$, and the corresponding expectation by $\E_{x,s}^{y,t}$.
We also define  
$$ 
	p_t^z \coleq \Pp_{0,0}(\eta_t = z)
\qtext{and}
	q_j^z \coleq \Pp (\gamma_j = z \mid \gamma_0 = 0)
$$
to be respectively the continuous-time and discrete-time transition probabilities for simple symmetric random walk on $\Z^d$. 

In this continuous-time setting, the random potential is given by a collection of independent two-sided Wiener processes $(W^x)_{x \in \Z^d}$. Let $\Omega$ denote the set of functions $\omega : \Z^d \times \R \to \R$ such that for every $x \in \Z^d$, $t \mapsto \omega(x,t)$ is continuous and satisfies $\omega(x,0) = 0$. Let $\F$ be the canonical $\sigma$-field on $\Omega$, and let $Q$ be the probability measure on $(\Omega, \F)$ under which $W^x_t(\omega) \coleq \omega(x,t)$, $x \in \Z^d$, are independent two-sided Wiener processes. To ensure measurability for sets contained in sets of measure zero, we work with the completion of $(\Omega, \F, Q)$, which we still denote by $(\Omega, \F, Q)$. The expectation corresponding to $Q$ is denoted by $\langle \:\cdot\: \rangle$. For every $s \in \R$, $Q$ is invariant under the Wiener shift 
$$
\theta_s \omega(x,t) := \omega(x,t+s) - \omega(x,s), \quad x \in \Z^d, \ t \in \R,  
$$
i.e., $Q(\theta_s A) = Q(A)$ for every $A \in \F$. 

To a random walk sample path $\eta$ over a time interval $[s,t)$, we assign the random action 
\begin{equation}    \label{eq:def_action} 
\AC_s^t (\eta, \omega) \coleq \sum_{j=0}^{n_{s,t}} \left(\omega(\gamma_j, s_{j+1}) - \omega(\gamma_j, s_j) \right).  
\end{equation}  
For real numbers $s < t$, $x, y \in \Z^d$, and inverse temperature $\beta > 0$, we define the random normalized point-to-point partition function 
$$
Z_{x,s}^{y,t}(\omega) \coleq e^{-\frac{\beta^2}{2} (t-s)} p_{t-s}^{y-x} \E_{x,s}^{y,t} e^{\beta \AC_s^t(\cdot, \omega)}.  
$$
We also define the random normalized point-to-line partition functions 
$$ 
Z_{x,s}^t(\omega) \coleq \sum_{y \in \Z^d} Z_{x,s}^{y,t}(\omega), \quad \text{and} \quad Z_s^{y,t}(\omega) \coleq \sum_{x \in \Z^d} Z_{x,s}^{y,t}(\omega). 
$$
Since $e^{-\frac{\beta^2}{2} (t-s)} \langle e^{\beta \AC_s^t(\eta, \cdot)} \rangle = 1$ for every $\eta$, we have $\langle Z_{x,s}^t \rangle = \langle Z_s^{y,t} \rangle = 1$. 
Since the law for the increments of the Wiener processes $(W^x)_{x \in \Z^d}$ is stationary in space and time, the law of $(Z_{x,s}^{s+t})_{t \geq 0}$ does not depend on $x$ or $s$. 
Besides, $(Z_{x,s}^{s+\tau})_{\tau \geq 0}$ and $(Z_{t-\tau}^{y,t})_{\tau \geq 0}$ have the same law because of time-reversibility of $\eta$. \\ 

Given $z \in \Z^d$ and $s, t \in \R$ such that $s < t$, define
\begin{equation}  \label{eq:h_definition}
	h(z; s,t) 
		\coleq 
			e^{-\frac{\beta^2}{2} (t-s)} e^{\beta (W^z_t - W^z_s)} - 1
		=
\dfrac{e^{\beta (W^z_t - W^z_s)} - e^{\frac{\beta^2}{2} (t-s)}}{e^{\frac{\beta^2}{2} (t-s)}}.
\end{equation}
Notice that $\langle h(z;s,t) \rangle = 0$ and $\langle h(z;s,t)^2 \rangle = e^{\beta^2 (t-s)}-1$.
Moreover, $h(z;s,t)$ and $h(z';s',t')$ are independent if $z \neq z'$ or if $(s,t) \cap (s',t') = \emptyset$.

For $n \in \N_0$ and $1 \leq r \leq n+1$, let 
$$
I(n,r) \coleq \{\ibf = (i_1, \ldots, i_r) \in \N_0^r : \ 0 \leq i_1 < \ldots < i_r \leq n\}. 
$$
The point-to-point partition function $Z_{x,s}^{y,t}$ admits the expansion 
$$
	Z_{x,s}^{y,t}
		= p_{t-s}^{y-x} + \sum_{n=0}^{\infty} \sum_{r=1}^{n+1} \sum_{\ibf \in I(n,r), \zbf} q_{i_1}^{z_1 - x} \ldots q_{n - i_r}^{y - z_r} \E_{x,s} \biggl[ \prod_{j=1}^r h(z_j; s_{i_j}, s_{i_j +1}) \id_{n_{s,t} = n} \biggr], 
$$
where one should recall from Section~\ref{sec:introduction} that $\sum_{\zbf}$ means summation over all $\zbf \in (\Z^d)^r$ and where 
$s_{n+1} = t$ if $n = n_{s,t}$. Similarly, 
$$
Z_s^{y,t} = 1 + \sum_{n=0}^{\infty} \sum_{r=1}^{n+1} \sum_{\ibf \in I(n,r), \zbf}  q_{i_2 -i_1}^{z_2 - z_1} \ldots q_{n - i_r}^{y - z_r} \E_{x,s} \biggl[ \prod_{j=1}^r h(z_j; s_{i_j}, s_{i_j +1}) \id_{n_{s,t} = n} \biggr]
$$
and 
$$
Z_{x,s}^{t} =  1 + \sum_{n=0}^{\infty} \sum_{r=1}^{n+1} \sum_{\ibf \in I(n,r),  \zbf} q_{i_1}^{z_1 - x} \ldots q_{i_r - i_{r-1}}^{z_r - z_{r-1}} \E_{x,s} \biggl[ \prod_{j=1}^r h(z_j; s_{i_j}, s_{i_j +1}) \id_{n_{s,t} = n} \biggr].
$$
Since $\E_{x,s}[\prod_{j=1}^r h(z_j; s_{i_j}, s_{i_j + 1}) \id_{n_{s,t} = n}]$ does not depend on $x$, the choice of $x$ in the expansion for $Z_s^{y,t}$ does not matter. 

\subsection{$L^2$ convergence to limiting partition functions.}
\label{ssec:L2_convergence}

\bigskip


\noindent Exactly as in~\cite{HKNN} and going back to~\cite{Sinai_95}, define 
\begin{equation}    \label{eq:def_alpha} 
\alpha_d \coleq \sum_{n=1}^{\infty} \sum_{z \in \Z^d} \left(q_n^z \right)^2
\end{equation} 
which is finite because $d \geq 3$.
We also define 
\begin{equation}\label{eq:def_lambda} 
	\lambda \coleq \E \left[e^{\beta^2 \tau} - 1 \right] = \frac{\beta^2}{1-\beta^2}
\end{equation} 
for $\tau$ an exponential random variable with rate $1$. 
Observe that 
$$
\alpha_d \lambda < 1 \quad \Longleftrightarrow \quad \beta < 1/\sqrt{1+\alpha_d}.
$$ 
The following theorem is the continuous-time version of Theorem 2.1 in~\cite{HKNN}. It corresponds to Theorem 1 in~\cite{Sinai_95}.

\begin{theorem}     \label{thm:limiting_part_fun_exists}
Let $\beta$ be so small that $\alpha_d \lambda  < 1$ and let $(x,s) \in \Z^d \times \R$. As $t \to \infty$, $Z_{x,s}^t$ converges in $L^2(Q)$ to a limiting partition function $Z_{x,s}^{\infty}$. 
\end{theorem}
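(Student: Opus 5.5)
We will show that $(Z_{x,s}^t)_{t \geq s}$ is Cauchy in $L^2(Q)$. By stationarity we may take $(x,s) = (0,0)$. The approach is to compute the second moment explicitly through the expansion above, which turns it into a renewal-type series. First, writing $Z_{0,0}^t = \E_{0,0}\bigl[e^{\beta \AC_0^t - \frac{\beta^2}{2}t}\bigr]$ and using two independent copies $\eta, \eta'$ of the walk, we get for $t \le t'$
$$
\langle Z_{0,0}^t Z_{0,0}^{t'} \rangle = \E_{0,0}^{\otimes 2}\bigl[ e^{\beta^2 L_t(\eta,\eta')}\bigr], \qquad L_t(\eta,\eta') \coleq \int_0^t \id_{\eta_\tau = \eta'_\tau}\, d\tau .
$$
Here we use that the Gaussian increments seen by $\eta$ and $\eta'$ beyond time $t$ have mean-one exponentials and are independent of everything before $t$. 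Consequently
$$
\langle (Z^{t'}_{0,0} - Z^t_{0,0})^2\rangle = \E[e^{\beta^2 L_{t'}}] - \E[e^{\beta^2 L_t}],
$$
so it suffices to show that $\E[e^{\beta^2 L_t}]$ is nondecreasing in $t$ and bounded uniformly in $t$. Monotonicity is clear, and the bounded limit then gives the Cauchy property.

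For the uniform bound we use the relative walk $\xi = \eta - \eta'$. This is a continuous-time simple symmetric random walk with jump rate $2$, and $L_\infty$ is the total time $\xi$ spends at $0$. We decompose $L_\infty$ into excursions. Each visit to $0$ lasts an exponential holding time $\tau$ of rate $2$ (both walkers must not jump). After leaving, $\xi$ performs an embedded discrete walk and returns with some probability $\pi_d < 1$, since $d \ge 3$. By the strong Markov property,
$$
\E[e^{\beta^2 L_\infty}] = \sum_{k \ge 1} \bigl(\E e^{\beta^2 \tau}\bigr)^k \pi_d^{\,k-1}(1-\pi_d),
$$
which is finite iff $\pi_d\,\E e^{\beta^2\tau} < 1$.

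The step I expect to be the main obstacle is matching this condition with the hypothesis $\alpha_d\lambda<1$ as the paper normalizes it; it is a bookkeeping issue rather than a deep one. I would instead run the same argument directly on the expansion of $Z_{x,s}^t$ given above. Squaring it, the cross terms vanish by independence and mean zero of the $h$'s unless both paths share the same sites $\zbf$ on overlapping intervals. Each coincidence contributes a factor bounded by $\langle h^2\rangle = e^{\beta^2(\cdot)}-1$, averaged over the exponential holding time, which is $\lambda$. The spatial matching of the two paths contributes $\sum_z (q_n^z)^2$, summed to $\alpha_d$. This yields a bound of the form
$$
\langle (Z^t_{0,0})^2\rangle \le \sum_{r\ge 0} (\alpha_d\lambda)^r < \infty .
$$
The delicate part is controlling the overlaps of jump times of the two independent Poisson clocks. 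One must check that the pairing structure indeed reduces to products of $\lambda$'s and $\alpha_d$'s, possibly as an upper bound by a discrete-time quantity. Once the uniform $L^2$ bound and the monotone identity above are in hand, $Z_{0,0}^t$ is Cauchy and converges in $L^2(Q)$ to some $Z_{0,0}^\infty$. Stationarity under $\theta_s$ and spatial shifts then transfers this to every $(x,s)$.
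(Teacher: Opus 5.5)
Your first route already closes; the ``bookkeeping'' you flagged is one line. Since $\sum_z (q_n^z)^2 = q_{2n}^0$, one has $1+\alpha_d=\sum_{m\ge 0} q_m^0=(1-\pi_d)^{-1}$, i.e.\ $\alpha_d=\pi_d/(1-\pi_d)$. So the hypothesis $\alpha_d\lambda<1$ (the paper's $\beta<1/\sqrt{1+\alpha_d}$) is exactly $\beta^2<1-\pi_d$. Your renewal series converges iff $2\pi_d/(2-\beta^2)<1$, i.e.\ $\beta^2<2(1-\pi_d)$, so it converges under the hypothesis. Your identity $\langle (Z^{t'}_{0,0}-Z^{t}_{0,0})^2\rangle=\E[e^{\beta^2 L_{t'}}]-\E[e^{\beta^2 L_t}]$, together with monotone convergence, then gives a complete proof. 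It is a genuinely different proof from the paper's.

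The paper does not compute the second moment exactly. It applies Jensen's inequality in the jump times, bounding $\langle (Z^t_{x,s})^2\rangle$ by $\E_{x,s}$ of the disorder second moment of the expansion for a \emph{fixed} realization of jump times. Both replicas then share one Poisson clock. As a result, the $h$'s attached to distinct pairs $(z_j,i_j)$ are exactly orthogonal, and the spatial sums produce powers of $\alpha_d$. Independence of the exponential gaps gives $\E\prod_j(e^{\beta^2 t_{i_j+1}}-1)=\lambda^r$. $L^2$-boundedness of the martingale $(Z^t_{x,s})_t$ then yields convergence.

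The shared clock is exactly where that bound loses. The relative walk then sits at $0$ for $\mathrm{Exp}(1)$ rather than $\mathrm{Exp}(2)$ holding times. That is why $\alpha_d\lambda<1$ corresponds to $\beta^2<1-\pi_d$, while your exact computation shows $L^2$-boundedness holds precisely for $\beta^2<2(1-\pi_d)$. Your route is therefore sharper and self-contained. The paper's route has the advantage of staying inside the $(\ibf,\zbf)$-expansion with weights $\lambda^r\alpha_d^r$, which is the structure reused later for the rate of convergence and the $D$-sequence estimates.

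The fallback you said you would run instead does not work as described. With two independent clocks, $\langle h(z;s,t)\,h(z;s',t')\rangle=e^{\beta^2|(s,t)\cap(s',t')|}-1$, which for overlapping but unequal intervals is neither $0$ nor $\langle h(z;s,t)^2\rangle$. Moreover, one interval of one replica can overlap several intervals of the other at the same site. So the cross terms do not reduce to products of $\lambda$'s and $\alpha_d$'s. The Jensen step above is exactly the device that removes this problem. Without it, or without finishing your first route, the uniform $L^2$ bound would be left unproved.
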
 


Due to symmetry, 
$$
Z_{-\infty}^{y,t}  := \lim_{s \to -\infty}  Z_s^{y,t}  
$$
also exists in the sense of $L^2$ convergence.  
In addition, as $(Z_{x,s}^t)_{t \geq s}$ is a martingale~\cite{Bolthausen}, convergence to the limiting partition functions holds $Q$-almost surely by the martingale convergence theorem. 

\begin{proof}[Proof of Theorem~\ref{thm:limiting_part_fun_exists}] We follow the approach in~\cite{Sinai_95}, i.e., we rely on the expansion of $Z_{x,s}^t$ and its orthogonality structure. 
Since $h(z;s,t)$ and $h(z';s',t')$ are independent if $z \neq z'$ or if $(s,t) \cap (s',t') = \emptyset$, and since $\langle h(z;s,t) \rangle = 0$, we have with Jensen's inequality and Fubini's theorem that $\langle (Z^t_{x,s})^2 \rangle$ is bounded from above by 
\begin{align}      \label{eq:Sinai_3}
2 + 2 \sum_{r=1}^{\infty} & \sum_{\substack{0 \leq i_1 < \ldots < i_r, \\ z_1, \ldots, z_r \in \Z^d}} \notag \\
&\left(q_{i_1}^{z_1-x} \right)^2 \ldots \left(q_{i_r - i_{r-1}}^{z_r - z_{r-1}} \right)^2 \E_{x,s} \biggl[ \biggl \langle \prod_{j=1}^r h(z_j; s_{i_j}, s_{i_j +1})^2 \biggr \rangle \biggr]. 
\end{align}
Since $\langle h(z;s,t)^2 \rangle = e^{\beta^2 (t-s)} - 1$, and since $s_{i_j +1} - s_{i_j}$ is an exponentially distributed random variable with rate $1$, we find
\begin{equation}\label{eq:eq_lambda}
\E_{x,s} \biggl[ \biggl \langle \prod_{j=1}^r h(z_j; s_{i_j}, s_{i_j +1})^2 \biggr \rangle \biggr] 
	= \E_{x,s} \biggl[ \prod_{j=1}^r \left(e^{\beta^2 (s_{i_j +1} - s_{i_j})} - 1 \right) \biggr]
	= \lambda^r. 
\end{equation}
Hence, the expression in~\eqref{eq:Sinai_3} is finite provided that 
$$ 
\sum_{r=1}^{\infty} (\alpha_d \lambda)^r < \infty,
$$ 
which holds for $\alpha_d \lambda < 1$. The asserted $L^2$ convergence then follows from the martingale convergence theorem. 
\end{proof}

The following theorem is the continuous-time version of Theorem 2.2 in \\~\cite{HKNN}. It provides a rate of convergence to the limiting partition function, which is needed in order to prove the factorization formula (Theorem~\ref{thm:factorization}).

\begin{theorem}     \label{thm:limiting_part_fun}
For $\beta$ sufficiently small, there is $\theta \in (0, \min\{\tfrac{d}{2}-1,-\ln(\alpha_d \lambda)\})$ such that 
$$
\lim_{t \to \infty} (t-s)^{\theta} \left \langle \left(Z_{x,s}^t - Z_{x,s}^{\infty} \right)^2 \right \rangle = 0.
$$
\end{theorem}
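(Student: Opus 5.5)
By stationarity it suffices to take $x=0$, $s=0$. Since $(Z_{0,0}^t)_{t\ge 0}$ is an $L^2$-bounded martingale (Theorem~\ref{thm:limiting_part_fun_exists}), orthogonality of increments gives
$$
\bigl\langle (Z_{0,0}^t - Z_{0,0}^\infty)^2\bigr\rangle = \langle (Z_{0,0}^\infty)^2\rangle - \langle (Z_{0,0}^t)^2\rangle .
$$
The plan is therefore to write both second moments through the chaos expansion and to show that the difference is $o(t^{-\theta})$.

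\textbf{Step 1: the two second moments as series.} Using independence of the $h$'s and $\langle h\rangle=0$, the orthogonality structure of the expansion makes $\langle (Z_{0,0}^t)^2\rangle$ a sum over $r$ and over pairs of collections of jump indices. Conditionally on the Poisson jump times, this is essentially a polymer replica overlap. I would condition on the two walks' jump times and write
$$
\langle (Z_{0,0}^t)^2\rangle=\E^{\otimes 2}\exp\Bigl(\beta^2\int_0^t \id_{\eta_u=\eta'_u}\,du\Bigr),
$$
where $\eta,\eta'$ are independent continuous-time walks. This follows from the Gaussian computation with two independent walks. The limit is the same expectation with $t=\infty$.

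\textbf{Step 2: renewal structure.} The difference $\eta-\eta'$ is a continuous-time random walk of rate $2$. Hence the total local time $L_t$ at $0$ decomposes into excursions: holding periods at $0$, which are exponential with rate $2$, alternating with excursions away from $0$. Each excursion returns with probability $\pi<1$ in $d\ge3$. The condition $\alpha_d\lambda<1$ (or smallness of $\beta$) should give $\E e^{\beta^2 L_\infty}<\infty$, in fact with some margin, i.e.\ $\rho:=\pi\,\E[e^{\beta^2\tau_2}]<1$ with $\tau_2\sim\mathrm{Exp}(2)$. Then
$$
\langle (Z^\infty)^2\rangle-\langle (Z^t)^2\rangle=\E\bigl[e^{\beta^2L_\infty}-e^{\beta^2 L_t}\bigr]
$$
is bounded by the contribution of paths that have some visit to $0$ after time $t$. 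Splitting at the last visit before $t$ and the first after $t$, this is bounded by
$$
\sum_{k}\rho^{k}\,\Pp(\text{some renewal gap exceeds } t/k \text{ or the }k\text{-th return occurs after } t)\cdot C,
$$
using the strong Markov property and the fact that the returns after $t$ contribute a further bounded factor $\E e^{\beta^2 L_\infty}$.

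\textbf{Step 3: tail of a return.} The probability that the difference walk started at $0$ returns to $0$ only after time $u$, but does return, is $\lesssim u^{1-d/2}$. This follows from the local limit bound $p_u^0\lesssim u^{-d/2}$ for continuous-time walks, integrated over the time horizon. With $k$ renewals, one gap must exceed $t/(k+1)$. This gives a bound
$$
\sum_k \rho^k (k+1)\,\bigl(t/(k+1)\bigr)^{1-d/2},
$$
together with the exponential-tail contribution of the holding times, which is negligible. Since $\rho<1$, the series converges and yields $O(t^{1-d/2})$. This would give the result for any $\theta<d/2-1$, and in particular for some $\theta$ also below $-\ln(\alpha_d\lambda)$.

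\textbf{Main obstacle.} The hard part is the bookkeeping when the sum over $k$ is combined with the possibly heavy gaps without losing the geometric factor. If $\rho$ is close to $1$, I would instead use the simpler estimate: one gap among the first $K$ exceeds $t/K$, or at least $K$ renewals occur. Choosing $K=c\log t$ gives the bound $K^{d/2}t^{1-d/2}+\rho^{K}$. This is $\lesssim t^{-\theta}$ for $\theta<\min\{d/2-1,\,c\,|\ln\rho|\}$, which matches the form of the stated restriction on $\theta$.
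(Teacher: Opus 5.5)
Your proof is correct, and it takes a genuinely different route from the paper.

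\textbf{The paper's route.} The paper never evaluates $\langle (Z_{0,0}^t)^2\rangle$. It applies Jensen's inequality in the jump times, so both replicas effectively share one Poisson clock. It then expands $e^{-\frac{\beta^2}{2}T}e^{\beta\AC_0^T}-e^{-\frac{\beta^2}{2}t}e^{\beta\AC_0^t}$ in the $h$-chaos for fixed jump times and controls three pieces separately:
\begin{enumerate}
\item the tail $M(\tbf)-M_{n_t-1}(\tbf)$ of the chaos series (Lemma~\ref{lm:finite_Q}); its split at $r\approx\ln n$ is where $-\ln(\alpha_d\lambda)$ enters the rate;
\item a boundary term $R^t$ from the interval straddling $t$, handled by the Key Lemma~\ref{lm:standard_arg} and Lemma~\ref{lm:lclt_lemma};
\item the event of fewer than $t^b$ jumps before $t$, handled by Cauchy--Schwarz, which needs $\E[(e^{\beta^2\tau}-1)^2]^{1/2}<\alpha_d^{-1}$.
\end{enumerate}

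\textbf{Your route.} You use martingale orthogonality and the exact Gaussian replica identity. This reduces everything to the occupation time at $0$ of the rate-$2$ difference walk, which is a geometric sum of $\mathrm{Exp}(2)$ holding times.

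Your hedged claim in Step~2 holds. Since $\sum_z (q_n^z)^2=q_{2n}^0$, one has $\alpha_d=\pi_d/(1-\pi_d)$, where $\pi_d$ is the return probability of the discrete walk. Hence $\alpha_d\lambda<1$ is equivalent to $\beta^2<1-\pi_d$, while $\rho=2\pi_d/(2-\beta^2)<1$ is equivalent to $\beta^2<2(1-\pi_d)$. Consequences:
\begin{itemize}
\item Your argument covers the whole region $\alpha_d\lambda<1$ and more. The paper's threshold is exactly the $L^2$ threshold for two replicas sharing one clock, which is what its Jensen step produces.
\item You need none of the Key Lemma smallness conditions.
\item You get the rate $O(t^{1-d/2})$, with no dependence on $-\ln(\alpha_d\lambda)$.
\end{itemize}

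\textbf{Two points for the write-up.}
\begin{itemize}
\item The tilt $e^{\beta^2 L}$ does not simply multiply an untilted probability, as your displayed bound suggests. It does factorize, because given the skeleton the holding times at $0$ are i.i.d.\ and independent of the excursion durations. With $k$ visits before $t$, the excursion gaps contribute $O(k\rho^k (t/k)^{1-d/2})$, which sums to $O(t^{1-d/2})$. The holding gaps contribute $O(k\rho^k e^{-(2-\beta^2)t/(2k)})$, which sums over $k$ to $O(e^{-c\sqrt t})$.
\item Your ``main obstacle'' is not one. Since $\sum_k k^{d/2}\rho^k<\infty$ for every $\rho<1$, the truncation at $K=c\log t$ is unnecessary; it is essentially the paper's $r\le\ln n$ split.
\end{itemize}

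What the paper's route buys is uniformity of method. It stays within the fixed-jump-time chaos and $D$-sequence framework that is reused for the factorization formula, and it mirrors the discrete-time argument of~\cite{HKNN}.
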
 

The proof of Theorem~\ref{thm:limiting_part_fun} is given in Section~\ref{sec:proof_thm_limiting_part_fun}. \\

\subsection{Almost sure convergence and positivity} 

As pointed out in the preceding subsection, for any $(x,s) \in \Z^d \times \R$, $Z_{x,s}^t$ converges almost surely to $Z_{x,s}^{\infty}$ as $t \to \infty$. A stronger statement holds: Almost surely, $Z_{x,s}^t$ converges to $Z_{x,s}^{\infty}$ for every $(x,s) \in \Z^d \times \R$, i.e., there is a set of measure $1$ that works for all $(x,s)$ at once. Moreover, the limiting partition functions $(Z_{x,s}^{\infty})_{x \in \Z^d, s \in \R}$ are strictly positive with probability $1$. 

\begin{theorem}     \label{thm:positivity}
Let $\beta$ be so small that $\alpha_d \lambda < 1$. Then there is a set $\Omega^{\lim}_+ \in \F$ with $Q(\Omega^{\lim}_+) = 1$ such that for all $(x,s), (y,t) \in \Z^d \times \R$ and all $\omega \in \Omega^{\lim}_+$, the limiting partition functions $Z_{x,s}^{\infty}(\omega)$ and $Z_{-\infty}^{y,t}(\omega)$ exist as pointwise limits $\lim_{T \to \infty} Z_{x,s}^T(\omega)$ and $\lim_{S \to -\infty} Z_S^{y,t}(\omega)$, respectively, and are positive. 
\end{theorem}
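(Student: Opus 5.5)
The proof has two parts: a simultaneous almost-sure convergence for all $(x,s)$, and positivity of the limits. By symmetry (time reversal of $\eta$), it suffices to treat $Z_{x,s}^{\infty}$; the statement for $Z_{-\infty}^{y,t}$ follows by the same argument.

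\emph{Convergence for all $(x,s)$.} For fixed $(x,s)$, $(Z_{x,s}^T)_{T\ge s}$ is a nonnegative martingale with continuous paths in $T$. It is bounded in $L^2$ by the proof of Theorem~\ref{thm:limiting_part_fun_exists}. Hence it converges almost surely, and countable intersection yields a full-measure set $\Omega_0$ on which $Z_{x,s}^T$ converges for all $x\in\Z^d$ and all rational $s$. To pass to real $s$, I will use the Markov property of the walk at the first jump. For $s<s'$,
\[
Z_{x,s}^T=\sum_{y} Z_{x,s}^{y,s'}\,Z_{y,s'}^T ,
\]
where the weights $Z_{x,s}^{y,s'}$ do not depend on $T$ and are summable in $y$ (their sum is $Z_{x,s}^{s'}<\infty$). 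I would rather work at the level of a single site. Conditioning on no jump in $[s,s']$ versus a jump gives
\[
Z_{x,s}^T = e^{-(s'-s)}e^{\beta(W^x_{s'}-W^x_s)-\frac{\beta^2}{2}(s'-s)}Z_{x,s'}^T+\int_s^{s'}(\dots)\,du .
\]
This suggests a cleaner route: choose a rational $s'<s$ and write
\[
Z_{x,s'}^T=\sum_y Z_{x,s'}^{y,s}Z_{y,s}^T .
\]
Here all terms are nonnegative and $Z_{x,s'}^{y,s}>0$ for every $y$. Convergence of the left side therefore controls $\limsup_T Z_{y,s}^T<\infty$, but not convergence of each summand. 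So instead I will establish convergence for real $s$ via a uniform maximal estimate. Doob's $L^2$ inequality, applied to the two-parameter family $(s,T)\mapsto Z_{x,s}^T$ on $s\in[k,k+1]$, combined with continuity in $s$ (the expansion in terms of $h$ is continuous in $s$ for fixed $\omega$), shows that
\[
\sup_{s\in[k,k+1]}\,\sup_{T,T'\ge N}\bigl|Z_{x,s}^T-Z_{x,s}^{T'}\bigr|
\]
tends to $0$ in $L^2$ and is monotone in $N$, hence tends to $0$ almost surely. The needed $L^2$ tail bound comes from the orthogonal expansion: the Cauchy tail beyond time $N$ is bounded uniformly in $s\in[k,k+1]$ by the terms of $\sum(\alpha_d\lambda)^r$ involving at least one increment after $N$. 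This uniformity over real $s$ is the step I expect to be the main obstacle. If it resists, the fallback is the decomposition $Z_{x,s}^T=\sum_y Z_{x,s}^{y,q}Z_{y,q}^T$ with rational $q>s$, plus dominated convergence in $y$ using $\sup_T Z_{y,q}^T$ bounds from Doob's maximal inequality with summable weights.

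\emph{Positivity.} The event $\{Z_{x,s}^{\infty}=0\}$ should be a tail/shift-invariant event. Using $Z_{x,s}^\infty=\sum_y Z_{x,s}^{y,t}Z_{y,t}^\infty$ with strictly positive weights,
\[
\{Z_{x,s}^\infty=0\}=\bigcap_y\{Z_{y,t}^\infty=0\}
\]
for every $t>s$. Hence the event belongs to the $\sigma$-field generated by increments after time $t$, for every $t$. This is a tail $\sigma$-field of independent Brownian increments, so Kolmogorov's $0$--$1$ law gives probability $0$ or $1$. Since $\langle Z_{x,s}^\infty\rangle=1$ by $L^2$ (hence $L^1$) convergence, the probability is $0$. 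Intersecting over rational $(x,s)$, and extending to real $s$ by the same positivity decomposition with rational $t>s$, gives $\Omega^{\lim}_+$.
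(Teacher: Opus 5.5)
The gap is in passing from rational to arbitrary real $s$, which is exactly what the theorem requires: a single full-measure set for all $(x,s)\in\Z^d\times\R$.

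Your main route does not work as stated. Doob's inequality is a one-parameter statement, and $s\mapsto Z^T_{x,s}$ is not a martingale in $s$: conditioning $Z^T_{x,s}$ on the increments after $s'>s$ gives $\sum_y p^{y-x}_{s'-s}Z^T_{y,s'}$, not $Z^T_{x,s'}$. The orthogonal expansion bounds $\sup_{s}\langle (Z^T_{x,s}-Z^{T'}_{x,s})^2\rangle$, not $\langle\sup_{s}(Z^T_{x,s}-Z^{T'}_{x,s})^2\rangle$. Continuity in $s$ only reduces the supremum to countably many $s$, over which you cannot sum.

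Your fallback has the same defect. By independence and Doob, $\sum_y Z^{y,q}_{x,s}\sup_T Z^T_{y,q}$ has finite expectation $\sum_y p^{y-x}_{q-s}\langle\sup_T Z^T_{y,q}\rangle$. But this gives a.s.\ finiteness only for each fixed $s$, with an exceptional null set that depends on $s$. Summability of the weights alone does not help, because $\sup_T Z^T_{y,q}$ is not bounded in $y$. Your extension of positivity to real $s$ uses $Z^\infty_{x,s}=\sum_y Z^{y,t}_{x,s}Z^\infty_{y,t}$ for all real $s$ at once, so it inherits the same gap.

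The missing idea is a pathwise domination of the weights that is uniform in $s$. You get it by combining the two decompositions you already wrote down. For rational $q'<s<q$, apply Chapman--Kolmogorov at time $s$ and keep only the term $z=x$:
\[
Z^{y,q}_{x,q'} \;\ge\; Z^{x,s}_{x,q'}\,Z^{y,q}_{x,s} \;\ge\; \alpha(s)\,Z^{y,q}_{x,s},\qquad \alpha(s):=e^{-(s-q')}\,e^{\beta(W^x_s-W^x_{q'})-\frac{\beta^2}{2}(s-q')}>0 .
\]
The factor $\alpha(s)$ is the no-jump contribution you isolated. This is the key step of the paper (Proposition~\ref{prop:uniform_as}), with $q'=n-1$ and $q=n$. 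It combines this bound with the convergence $Z^t_{x,n-1}=\sum_y a_yb_y(t)\to\sum_y a_yb_y$, the latter identity coming from Lemma~\ref{lm:Z_Feynman_Kac_infinite}. This makes the tails $\sum_{\|y\|>R}a_y(s)b_y(t)$ uniformly small in $t$.

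Inserted into your fallback, the same bound gives $\sum_y Z^{y,q}_{x,s}\sup_TZ^T_{y,q}\le\alpha(s)^{-1}\sum_yZ^{y,q}_{x,q'}\sup_TZ^T_{y,q}$. The right side is finite on a countable intersection of full-measure sets. Dominated convergence then works for every real $s$ simultaneously, and it also yields the decomposition identity for all real $s$.

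With that repair, your positivity argument is correct and genuinely different from the paper's. You observe that $\{Z^\infty_{x,s}=0\}$ agrees a.s.\ with $\bigcap_y\{Z^\infty_{y,t}=0\}$ for every $t>s$. Hence it lies in the completed tail $\sigma$-field of the independent blocks of increments over $[k,k+1]$, and Kolmogorov's $0$--$1$ law together with $\langle Z^\infty_{x,s}\rangle=1$ finishes. The paper instead uses time-stationarity to rule out a finite first zero time $I_x$, and mixing of the spatial shift to rule out $I_x=-\infty$. Your route is shorter and needs no ergodic input.
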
 

The proof of Theorem~\ref{thm:positivity} can be found in Section~\ref{sec:proof_uniformity}. 
In discrete time, positivity of the limiting partition functions in the regime of weak disorder was shown in~\cite[Theorem 3]{Sinai_95}. 
While positivity of the limiting partition functions is not used in the proof of the factorization formula, it does play an important role in showing uniqueness of global solutions to the semi-discrete stochastic heat equation~\cite{HKN_SHE}.


\subsection{Factorization formula} 

Below we state the main result of this article, a factorization formula for the partition function $Z_{x,s}^{y,t}$. 

\begin{theorem}     \label{thm:factorization}
For $\beta$ sufficiently small, the following holds: For any $\sublin \in (0,1)$ there exists $\theta = \theta(\sublin) > 0$, independent of $\beta$, such that for all $x,y\in\Z^d$ and $s < t$ with $\| x - y \| < (t - s)^\sublin$, the partition function $Z_{x,s}^{y,t}$ has the representation 
\begin{equation}     \label{eq:factor_formula}
Z_{x,s}^{y,t} = p_{t-s}^{y-x} \left(Z_{x,s}^{\infty} Z_{-\infty}^{y,t} + \delta_{x,s}^{y,t} \right),
\end{equation} 
where the error term $\delta_{x,s}^{y,t}$ defined by the formula above satisfies 
\begin{equation}   \label{eq:convergence_error}
\lim_{(t-s) \to \infty} (t-s)^{\theta} \sup_{x, y \in \Z^d: \|x-y\| < (t-s)^{\sublin}} \langle \lvert \delta_{x,s}^{y,t} \rvert \rangle = 0.
\end{equation}
\end{theorem}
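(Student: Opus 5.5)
We follow the strategy of Sinai and of the discrete-time companion paper~\cite{HKNN}, adapted to continuous time. Split the time interval $[s,t]$ into three pieces $[s, s+\tau]$, $[s+\tau, t-\tau]$, $[t-\tau, t]$ with $\tau = (t-s)^{\kappa}$ for a small $\kappa>0$ (chosen depending on $\sublin$). In the chaos expansion of $Z_{x,s}^{y,t}$, group the terms according to whether the set of ``interaction intervals'' $(s_{i_j}, s_{i_j+1})$ meets the middle window $[s+\tau, t-\tau]$. Write
\[
Z_{x,s}^{y,t}/p_{t-s}^{y-x} = M + R,
\]
where $M$ collects the terms with no $h$-factor whose jump interval meets the middle window, and $R$ is the remainder.

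\textbf{Main term.} In $M$, the walk interacts only near the endpoints. Conditioning on the positions $u=\eta_{s+\tau}$ and $v=\eta_{t-\tau}$ gives
\[
M=\sum_{u,v} \frac{p_{t-s-2\tau}^{v-u}}{p_{t-s}^{y-x}}\, Z_{x,s}^{u,s+\tau}\, Z_{t-\tau}^{v,y,t},
\]
with a small correction from the jump interval straddling $s+\tau$ or $t-\tau$. The two factors are independent.

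The key step is the local-limit ratio estimate
\[
\frac{p_{t-s-2\tau}^{v-u}}{p_{t-s}^{y-x}} = 1+o(1),
\]
uniformly for $\|u-x\|,\|v-y\|\le \tau^{1/2+\epsilon'}$ and $\|x-y\|<(t-s)^{\sublin}$. This comes from the transition-probability estimates of Section~\ref{sec:transition_prob}. In the sub-ballistic regime this requires $\tau\|y-x\|/(t-s)$-type corrections to be small, which is what fixes the choice of $\kappa$ in terms of $\sublin$.

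With this ratio estimate, $M$ is close in $L^1$ to $Z_{x,s}^{s+\tau} Z_{t-\tau}^{y,t}$. Indeed, the difference is a sum weighted by $Z_{x,s}^{u,s+\tau}$ times a small factor, plus contributions from large displacements, which are controlled by the tail of $p_\tau$ (exponential in the continuous-time walk, by large deviations). Then Theorem~\ref{thm:limiting_part_fun}, together with independence and Cauchy--Schwarz, gives
\[
\bigl\langle | Z_{x,s}^{s+\tau} Z_{t-\tau}^{y,t} - Z_{x,s}^\infty Z_{-\infty}^{y,t}|\bigr\rangle \lesssim \tau^{-\theta/2}.
\]
This is a polynomial rate in $t-s$.

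\textbf{Remainder.} For $R$ we bound $\langle R^2\rangle$ via the orthogonality structure, exactly as in the proof of Theorem~\ref{thm:limiting_part_fun_exists}. Each interaction interval contributes a factor $\lambda$ after averaging over exponential gaps, as in \eqref{eq:eq_lambda}. The spatial sums become products of $(q^{z}_{i})^2$, except that now the terminal point $y$ is pinned. One must therefore control sums of the form
\[
\sum \prod_j (q_{i_j-i_{j-1}}^{z_j-z_{j-1}})^2\, (q_{n-i_r}^{y-z_r})^2 \big/ (q^{y-x}_n)^2
\]
with at least one interaction in the middle window. These are precisely the discrete-time quantities estimated in~\cite{HKNN}, and they decay like $(t-s)^{-c}$ in the sub-ballistic region when $\beta$ is small.

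The passage from discrete to continuous time is handled by Lemma~\ref{lm:standard_arg}. We condition on $n_{s,t}=n$, which is Poisson with mean $t-s$. We restrict to $|n-(t-s)|\le (t-s)^{1/2+\epsilon'}$, since deviations have super-polynomially small probability. For such $n$ the ratio $p_{t-s}^{y-x}$ versus $q_n^{y-x}$ is controlled, so the discrete bounds apply uniformly in $n$.

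\textbf{Main obstacle.} We expect the hardest step to be the remainder estimate in the sub-ballistic regime. There, dividing by the very small $p_{t-s}^{y-x}$ amplifies contributions in which the walk makes atypical excursions to sites interacting with the potential. Unlike discrete time, the continuous walk can make many jumps in a short interval, so a single $h$-factor can be attached to a long exponential gap. The $\lambda^r$ bound handles this only in second moment. We will try to treat this by using the conditional-on-$n$ reduction together with the tilted (exponentially reweighted) local CLT for $q_n$, uniform over $\|y-x\|<n^{\sublin}$, mirroring~\cite{HKNN}.
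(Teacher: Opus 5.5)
Your route differs from the paper's. You split by continuous-time windows and handle the main term through Chapman--Kolmogorov and positivity. The paper instead splits the chaos expansion by gaps in the \emph{jump-index} sequence: many gaps $B_1$, no huge gap $B_2$, one huge gap $B_3$ with error terms $L_i$. It does this precisely so that most error terms have coefficients depending only on $(n_t,\ibf,\zbf)$, and Lemma~\ref{lm:standard_arg} applies verbatim. Time windows enter only when comparing the $F_i$ with the truncations $T^t_{0,0}$, $T^{y,t}_0$ and these with the limits.

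Your main-term argument is sound in outline. All summands of $M$ are nonnegative with independent end factors, so the $L^1$ cost of large displacements is exactly a bridge probability. Two supporting steps need more than you cite. Bounding that bridge probability needs a tilted ratio bound such as Lemma~\ref{lm:ratio_estimate}, not just the tail of $p_\tau$. Your $1+o(1)$ ratio likewise needs Lemma~\ref{lm:ratio_estimate} plus a coupling of Poisson jump counts and a parity argument, because Lemma~\ref{lm:lclt_con_time} is too coarse for $\sublin>2/3$. The genuine gap is in the remainder $R$, which is exactly where the paper's technical work lies.

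First, the reduction you sketch fails in the sub-ballistic regime. For $\|y-x\|$ near $(t-s)^{\sublin}$, the factor $1/p^{y-x}_{t-s}$ is of order $\exp(c(t-s)^{2\sublin-1})$. Discarding $|n_{s,t}-(t-s)|>(t-s)^{1/2+\epsilon'}$ because it has probability $\exp(-c(t-s)^{2\epsilon'})$ therefore requires $\epsilon'>\sublin-\tfrac12$. But across such a window $q^{y-x}_n$ varies by factors $\exp(c(t-s)^{2\sublin-2}|n-(t-s)|)\ge\exp(c(t-s)^{3\sublin-2})$. So $q^{y-x}_n$ and $p^{y-x}_{t-s}$ are not comparable once $\sublin>2/3$.

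The Key Lemma never compares them. It cuts at the linear window $J(t)$, where Poisson tails $e^{-ct}$ beat $e^{t^{\sublin}}$. Inside $J(t)$ it uses $\sum_{l}\Pp(n_t=l)\,q^y_l\,D_l(y)\le p^y_t\sup_l D_l(y)$.

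Weighting by $q^{y-x}_{n_{s,t}}$ also forces you to condition on $n_{s,t}$. After that the sojourn lengths are not i.i.d.\ exponentials, so the factor is not $\lambda^r$. The paper uses $A(t,l,r)\lesssim\psi(\beta)^r$ for $r<\nuone l$ (Lemma~\ref{lm:A_estimate}) and Lemmas~\ref{lm:expected_value} and~\ref{lm:tail_exponential} for $r\ge\nuone l$. This, not a tilted CLT, is what resolves the long-gap obstacle you flag.

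Second, and more seriously, your $R$ is defined by which sojourns meet $[s+\tau,t-\tau]$. Its index set therefore depends on $n_{s,s+\tau}$ and $n_{t-\tau,t}$, and Lemma~\ref{lm:standard_arg} does not apply to it at all. Conditioning on the three window counts makes the index set deterministic. However, atypical end-window counts then have probability only about $e^{-c\tau}=e^{-c(t-s)^{\kappa}}$, with $\kappa<1-\sublin$ forced by your ratio estimate. That cannot absorb $1/p^{y-x}_{t-s}$ when $\sublin>2/3$.

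The missing idea is to let the long middle block carry the pinning:
\begin{itemize}
\item $q^y_{l_-+l_\bullet+l_+}\lesssim e^{Ct^{\sublin-1}(l_-+l_+)}q^y_{\iota(y,l_\bullet)}$ (Lemma~\ref{lm:ql+._});
\item $\sum_{l_\bullet\in J(t-2t^{\xi_1})}\Pp^{\bullet}(l_\bullet)\,q^y_{\iota(y,l_\bullet)}\lesssim p^y_{t-2t^{\xi_1}}\lesssim e^{\beta^2t^{\xi_1}}p^y_t$ (Lemma~\ref{lm:p_ratio});
\item the building blocks of Lemma~\ref{lm:building_blocks}, notably (A4), whose end-window tails beat both $e^{\beta^2 t^{\xi_1}}$ and $e^{ct^{\sublin-1}l}$.
\end{itemize}
Without this mechanism your remainder estimate is not established.
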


Theorem~\ref{thm:factorization} is proved in Section~\ref{sec:proof_lm_factorization}. 
We conjecture the conclusion of Theorem~\ref{thm:factorization} to hold for all $\beta$ so small that $\alpha_d \lambda < 1$, i.e., for all $\beta < 1/(\sqrt{1+\alpha_d})$. This region of $\beta$ corresponds to the so-called $L^2$-regime, where the $L^2$-series is convergent (see Theorem~\ref{thm:limiting_part_fun_exists}). 
We should mention that recently St. Junk extended the factorization formula in the discrete setting to the whole of the weak disorder regime (\cite{St_Junk}). This result seems to suggest that in the semidiscrete setting the factorization formula should also hold for all $\beta$ up to the point of transition from weak disorder to strong disorder. 
In~\cite{HKN_SHE}, Theorem~\ref{thm:factorization} is a key component in the proof of uniqueness for global solutions to the semidiscrete stochastic heat equation.

\subsection{Lower tail estimate} 

The following statement is a lower tail estimate for the law of the partition function $Z_0^{y,t}$. In the case of discrete space-time, such estimates have been obtained by P. Carmona and Hu in~\cite[Theorem~1.5]{CarmonaHu} using concentration of measure arguments for discrete directed polymers in Gaussian environments that originated in Talagrand's work on spin glasses (see~\cite{zbMATH01129463,Talagrand_11}). See also~\cite[Theorem 1(a)]{zbMATH06333766}. 

\begin{theorem}       \label{thm:continuous_Talagrand}
For $\beta$ sufficiently small, there exists a constant $c > 0$ such that 
\begin{equation*}
Q \left(Z_0^{y,t} < e^{-u} \right) < c e^{-u^2 / c}, \qquad \forall t, u > 0. 
\end{equation*}
\end{theorem}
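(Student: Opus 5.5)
We prove Theorem~\ref{thm:continuous_Talagrand} by combining Gaussian concentration for the free energy $\log Z_0^{y,t}$, viewed as a function of the Gaussian field $(W^x)$ on $[0,t]$, with a uniform lower bound on $Z_0^{y,t}$ holding with probability bounded away from zero. This is the Carmona--Hu/Talagrand strategy.

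\emph{Step 1: a positive-probability lower bound.} By the expansion of $Z_s^{y,t}$ and the $L^2$ computation in the proof of Theorem~\ref{thm:limiting_part_fun_exists}, we have $\langle Z_0^{y,t}\rangle=1$ and $\langle (Z_0^{y,t})^2\rangle\le C$ uniformly in $t$ and $y$ once $\alpha_d\lambda<1$. Paley--Zygmund then gives $Q(Z_0^{y,t}\ge 1/2)\ge 1/(4C)=:p_0>0$ uniformly in $t,y$. We need a slight refinement. Let $\mu$ be the polymer measure on paths ending at $(y,t)$, weighted by $e^{\beta\AC_0^t}$ and with starting point summed. We want the event
$$
A=\Bigl\{Z_0^{y,t}\ge 1/2,\ \ \int\!\!\int \sum_{z}\mu^{\otimes2}(\eta_r=\eta'_r=z)\,dr\ \le K\Bigr\},
$$
which bounds the replica overlap, to have probability at least $p_0/2$. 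The overlap bound uses $\langle Z^2\cdot\text{overlap}\rangle\lesssim\sum_r\sum_z(\cdot)$, a second-moment computation of the same form as~\eqref{eq:Sinai_3}. It converges by transience ($d\ge3$) for small $\beta$. Markov's inequality then makes the overlap bounded with high probability.

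\emph{Step 2: Lipschitz control.} Represent the action as a stochastic integral $\AC_0^t(\eta)=\sum_x\int_0^t \id_{\eta_r=x}\,dW^x_r$. The Cameron--Martin directional derivative of $\log Z$ in direction $h=(h^x)$, with $\|h\|^2=\sum_x\int \dot h^x(r)^2dr$, is $\beta\,\mu[\sum_x\int\id_{\eta_r=x}\dot h^x_r dr]$. Define $F=\log\max(Z,\cdot)$ in the usual Talagrand manner. We use the convex-hull argument: for $\omega\in A$ and $\omega'=\omega+h$, Jensen applied under $\mu_\omega$ gives
$$
\log Z(\omega')\ge \log Z(\omega)-\beta\|h\|\Bigl(\int\sum_x\mu^{\otimes2}(\eta_r=\eta'_r=x)dr\Bigr)^{1/2}-\text{(Itô correction, which cancels by normalization)}\ \ge -\log 2-\beta\sqrt K\|h\|.
$$

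\emph{Step 3: conclusion.} By the Gaussian isoperimetric inequality (Borell--TIS in Cameron--Martin form), since $Q(A)\ge p_0/2$, the set $A+ rB_{H}$ has measure at least $1-e^{-(r-r_0)^2/2}$. On that set $\log Z_0^{y,t}\ge-\log2-\beta\sqrt K r$. Taking $r\asymp u$ gives $Q(Z_0^{y,t}<e^{-u})\le c e^{-u^2/c}$ for large $u$; small $u$ is absorbed into $c$.

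The main obstacle is Step 2 in continuous time. The derivative is expressed through the Stratonovich/Itô form of the action along jump paths, and Jensen must be justified after the Cameron--Martin shift, since the shifted partition function equals $\mu_\omega[e^{\beta\int \dot h}]\,Z(\omega)$ exactly. That identity should hold since $\AC$ is linear in $\omega$, so the step reduces to Cauchy--Schwarz under $\mu^{\otimes 2}$. The delicate part is instead obtaining the overlap bound in Step 1 uniformly in $t$. I would derive it from an $L^2$ estimate on $\sum_z\int (Z_0^{z,r}Z_r^{y,t})^2dr/\langle\cdot\rangle$, reusing the expansion and~\eqref{eq:eq_lambda}.
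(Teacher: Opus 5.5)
Your proposal is correct. It uses the same Talagrand/Carmona--Hu mechanism as the paper:
\begin{itemize}
\item Paley--Zygmund plus a Markov bound on the replica overlap produce a good set of probability bounded below;
\item Jensen and Cauchy--Schwarz under the Gibbs measure show that $\log Z$ drops by at most $\beta\sqrt{K}$ times the distance to that set;
\item Gaussian concentration finishes the argument.
\end{itemize}
The implementation, however, differs substantially.

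\emph{The paper's route.} It never works on Wiener space. It replaces the walk by a lazy walk with $N$ steps per unit time and the noise by i.i.d.\ $N(0,1/N)$ variables. It proves the bound uniformly in $N$ using only finite-dimensional concentration of $\omega\mapsto d(\omega,X_t^N)$ (Theorem~\ref{thm:gci_discrete}, with Lemma~\ref{lm:q_bounds} giving overlap control uniform in $N$). It then passes to continuous time through Lemmas~\ref{lm:continuity_of_Zt} and~\ref{lm:D_small}, a truncation of the number of jumps, and a Portmanteau argument at rational times. It also proves the bound for $Z_{0,0}^t$ and appeals to time reversal.

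\emph{Your route.} You use that $\AC$ is pathwise linear in $\omega$. Hence for $h$ in the Cameron--Martin space $H$ of $\prod_x C_0([0,t])$ one has the exact identity $Z(\omega+h)=Z(\omega)\,\mu_\omega\bigl[e^{\beta\int_0^t\dot h^{\eta_r}(r)\,dr}\bigr]$. You then apply the infinite-dimensional Gaussian isoperimetric inequality directly.

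\emph{Comparison.} Your argument is shorter and bypasses the entire approximation scheme. The price is reliance on that infinite-dimensional inequality, which you should cite as Borell/Sudakov--Tsirelson: $Q_*(A+rB_H)\ge\Phi(\Phi^{-1}(Q(A))+r)$, stated with inner measure. Borell--TIS is not the right name; it concerns suprema of Gaussian processes. The paper, by contrast, needs only Euclidean concentration and obtains a discrete-time result as a by-product.

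Two small corrections.
\begin{itemize}
\item There is no It\^o correction in Step~2. The identity above is exact, and $\lvert\int_0^t\dot h^{\eta_r}(r)\,dr\rvert\le\sqrt{t}\,\|h\|_H$ makes Jensen legitimate.
\item The overlap bound in Step~1 is not delicate. Set $L_t:=\int_0^t\id_{\eta_r=\eta'_r}\,dr$ for two independent walks. Then $\bigl\langle Z^2\int_0^t\mu^{\otimes2}(\eta_r=\eta'_r)\,dr\bigr\rangle=\E[L_te^{\beta^2L_t}]\le\E[L_\infty e^{\beta^2L_\infty}]$. Here $L_\infty$ is exponential with rate $2(1-q_{\mathrm{ret}})$, where $q_{\mathrm{ret}}$ is the return probability of discrete simple random walk. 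So this is finite uniformly in $t$ once $\beta^2<2(1-q_{\mathrm{ret}})$. It is the exact continuous analogue of the computation in Lemma~\ref{lm:c_2}.
\end{itemize}
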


Theorem~\ref{thm:continuous_Talagrand} is proved in Section~\ref{sec:proof_continuous_Talagrand}. The Gaussianity of the noise is important here. While its main purpose is to let us prove an attraction result for the global stationary solution to the stochastic heat equation in~\cite{HKN_SHE}, Theorem~\ref{thm:continuous_Talagrand} is interesting in its own right because it implies that the limiting partition function $Z_{0,0}^\infty$ admits negative moments of any order.

\section{Transition Probabilities for the Simple Symmetric Random Walk}
\label{sec:transition_prob}
In this section we collect several estimates on transition probabilities for the simple symmetric random walk on $\Z^d$, both in discrete and in continuous time. Section~\ref{ssec:transition_prob_proofs} of the appendix is devoted to the proofs of these estimates.

\begin{lemma}    \label{lm:lclt_con_time} 
For any $y \in \Z^d$ such that $\| y \| \leq \frac{t}{2 \sqrt{d}}$, we have for the transition probability of the continuous-time simple symmetric random walk on $\Z^d$ 
\begin{equation}    \label{eq:lclt_con_time} 
p_t^y = \left(\tfrac{d}{2 \pi t} \right)^{\frac{d}{2}} 
		\exp \big( -\tfrac{d}{2t} \| y \|^2 \big) 
		\exp \left( O \left(  \frac{1}{\sqrt{t}} + \frac{\| y \|^3}{t^2} \right) \right).
\end{equation}
Therefore, for any $\sublin \in (0,1)$ and $t$ sufficiently large,
\begin{equation}\label{ineq:lclt_con_time}
	\frac{1}{p^y_t} \leq e^{t^{\sublin}}, \qquad \forall y \in \Z^d: \|y\| \leq t^{\sublin}. 
\end{equation}
\end{lemma}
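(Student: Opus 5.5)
The plan is to prove \eqref{eq:lclt_con_time} through the product structure of the continuous-time walk and a saddle-point (exponential tilting) argument. The $d$ coordinates of $\eta$ are independent continuous-time walks on $\Z$, each with jump rate $1/d$. Hence $p_t^y = \prod_{k=1}^d \tilde p_{t/d}(y_k)$, where $\tilde p_u(m) = e^{-u} I_m(u)$ is the rate-one walk on $\Z$, i.e.\ the difference of two independent Poisson$(u/2)$ variables, and $I_m$ is the modified Bessel function. It therefore suffices to prove the one-dimensional estimate
$$\tilde p_u(m) = (2\pi u)^{-1/2} e^{-m^2/(2u)} \exp\bigl(O(u^{-1/2} + |m|^3/u^2)\bigr)$$
uniformly for $|m| \le u/2$. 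Indeed, $\|y\| \le t/(2\sqrt d)$ gives $|y_k| \le (t/d)\cdot (\sqrt d/2)$. If that constant is too large, I will instead carry out the tilting directly in $d$ dimensions, which works the same way. Multiplying the coordinates gives $(d/(2\pi t))^{d/2}\exp(-d\|y\|^2/(2t))$, and the error terms combine because $\sum|y_k|^3 \le \|y\|^3$ up to constants.

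For the one-dimensional bound I will tilt. Write $\tilde p_u(m) = (2\pi)^{-1}\int_{-\pi}^{\pi} e^{u(\cos\xi-1)} e^{-im\xi}\,d\xi$ and shift the contour to $\xi = i a + \zeta$, choosing $a$ with $u\sinh a = m$. The exponent $u(\cosh a \cos\zeta - 1) + am + i u\sinh a \sin\zeta - i m\zeta$ has vanishing linear term in $\zeta$ at this choice. This gives
$$\tilde p_u(m) = e^{u(\cosh a -1) - am}\,\frac{1}{2\pi}\int_{-\pi}^{\pi} e^{u\cosh a(\cos\zeta-1) + i m(\sin\zeta - \zeta)}\,d\zeta.$$
The prefactor equals $e^{-m^2/(2u) + O(m^4/u^3)}$, and $m^4/u^3 \le |m|^3/u^2$ since $|m|\le u$. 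A Laplace expansion of the remaining integral, with Gaussian main term of variance $1/(u\cosh a)$ and cubic and quartic corrections from $m\zeta^3$ and $u\zeta^4$, yields $(2\pi u\cosh a)^{-1/2}(1+O(u^{-1/2}))$ plus a $\tfrac{|m|}{u}$-type correction. The contribution of $|\zeta| > u^{-1/3}$ is exponentially small. Finally $\cosh a = 1 + O(m^2/u^2)$, and $m^2/u^2 \lesssim u^{-1/2} + |m|^3/u^2$. Since $|m|/u \le 1/2$, all of these are $O(1)$ relative errors that fit into the stated exponent.

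The main obstacle is controlling the cross term $e^{im(\sin\zeta-\zeta)}$ uniformly when $m$ is of order $u$. There $m\zeta^3$ is not small at the scale $\zeta\sim u^{-1/2}$ unless $m \lesssim u^{1/2}$. I expect to handle this by noting that the exact error only needs to be $O(|m|^3/u^2)$ in the exponent, which for $|m|\gg u^{1/2}$ can be as large as a constant times $u$. In that regime only a crude two-sided bound within a factor $e^{O(|m|^3/u^2)}$ is needed. Such a bound follows from bounding the integral above by its absolute value and below by restricting to $|\zeta|\le c\,u^{-1/2}$, where the phase is at most $c^3|m|u^{-3/2} \le c^3/2$ and the real part of the integrand is at least $1/2$.

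The second claim \eqref{ineq:lclt_con_time} then follows directly. For $\|y\|\le t^\sublin$ with $\sublin<1$, and $t$ large, the hypothesis $\|y\|\le t/(2\sqrt d)$ holds, and
$$1/p_t^y \le C t^{d/2}\exp\bigl(d t^{2\sublin-1}/2 + C t^{-1/2} + C t^{3\sublin-2}\bigr).$$
Both $2\sublin-1<\sublin$ and $3\sublin-2<\sublin$, and the polynomial factor is absorbed, so the right-hand side is at most $e^{t^\sublin}$ for $t$ sufficiently large.
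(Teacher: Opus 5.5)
Your structure is the paper's. The paper also writes $p_t^y=\prod_{i=1}^d \Pp(\hat\eta^{(i)}_{t/d}=y_i)$ with independent rate-one one-dimensional walks, multiplies the coordinate estimates (using $\sum_i|y_i|^3\le\|y\|^3$), and deduces $1/p_t^y\le e^{t^{\sublin}}$ exactly as you do. The real difference is the one-dimensional step. The paper quotes Theorem~2.5.6 of Lawler--Limic, valid for $|z|\le u/2$. You derive that estimate yourself by shifting the Fourier contour to the saddle point $u\sinh a=m$. Your route is self-contained, and it also repairs a range issue the paper passes over. The hypothesis $\|y\|\le t/(2\sqrt d)$ only gives $|y_k|\le \frac{\sqrt d}{2}\cdot\frac{t}{d}$, which for $d\ge 2$ is outside the cited range $|y_k|\le \frac{t}{2d}$. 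You do not need a $d$-dimensional tilt to fix this. Your one-dimensional argument works verbatim for $|m|\le Cu$ with any fixed $C$: then $a=\sinh^{-1}(m/u)$ and $\cosh a$ stay bounded, and the prefactor exponent is $-m^2/(2u)+O(m^4/u^3)$ with $m^4/u^3\le C|m|^3/u^2$.

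Your ``main obstacle'' paragraph, however, rests on a miscalculation, and the fallback it proposes does not work.

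\emph{The obstacle is not there.} At the Gaussian scale $\zeta\sim u^{-1/2}$ you have $|m\zeta^3|\sim |m|u^{-3/2}\le u^{-1/2}$ whenever $|m|\le u$, so the cubic phase is small throughout the admissible range. Concretely, the real part of the integrand is $e^{u\cosh a(\cos\zeta-1)}\cos(m(\sin\zeta-\zeta))$. Use three facts:
\begin{itemize}
\item $e^{u\cosh a(\cos\zeta-1)}\le e^{-2u\zeta^2/\pi^2}$ on $[-\pi,\pi]$;
\item $|e^x-e^y|\le e^{\max(x,y)}|x-y|$;
\item $1-\cos x\le x^2/2$.
\end{itemize}
Together they show the integral equals $(2\pi u\cosh a)^{-1/2}+O\bigl(u^{-3/2}+m^2u^{-7/2}+e^{-cu}\bigr)$. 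That is, it equals $(2\pi u\cosh a)^{-1/2}(1+O(1/u))$ uniformly in $|m|\le Cu$. This gives the two-sided bound in every regime. The $|m|^3/u^2$ term then comes only from the prefactor and from $\cosh a=1+O(m^2/u^2)$. What lets these terms fit into the exponent is that each is at most $C|m|^3/u^2$ (e.g.\ $m^2/u^2\le|m|^3/u^2$ for $m\neq0$). Merely being ``$O(1)$ relative errors'' would not suffice.

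\emph{The fallback lower bound should be dropped.} Restricting to $|\zeta|\le cu^{-1/2}$ does not bound the integral from below, because the integrand is not nonnegative on the complement. Moreover, the Gaussian mass outside the window is of the same order $u^{-1/2}$ as inside it (larger, for small $c$), so the complement cannot be discarded as negligible either.

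A minor slip: with $u\sinh a=m$ the shift must be $\xi=\zeta-ia$, and the exponent then contains $-am$ rather than $+am$. Your final displayed formula is nevertheless correct.
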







The following lemma is Lemma~3.4 in~\cite{HKNN}. We include it for convenience. 

\begin{lemma}    \label{lm:ratio_estimate}
There are constants $\rho, c > 0$ such that for any $n, n' \in \N$ and for any $z, z' \in \Z^d$ with $\|z\| \leq \rho n$ and $\|z\|_1 \equiv n$, we have 
$$ 
\frac{q_{n'}^{z'}}{q_n^z} \leq  \left(1+O(n^{-\frac{2}{5}}) \right) \exp \left(c \left(\frac{\|z\|}{n} \left(\|z-z'\| + \lvert n' - n \rvert \right) + \ln(n) \frac{\lvert n-n' \rvert}{n} \right) \right). 
$$
\end{lemma}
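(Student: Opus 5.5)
The plan is to prove the bound from a uniform local limit theorem with exponential tilting (a saddle-point estimate) for the discrete-time simple symmetric random walk, and then to compare the two exponents by convexity.

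\textbf{Tilted local limit theorem.} Let $\phi(\lambda)=\frac1d\sum_{k=1}^d\cosh\lambda_k$ be the moment generating function of one step, and let $I(v)=\sup_\lambda(\langle\lambda,v\rangle-\ln\phi(\lambda))$ be its Legendre transform. I $I$ is smooth and strictly convex near $0$, with $I(v)\asymp\|v\|^2$ and $\nabla I(v)=O(\|v\|)$. The two estimates I would use are:
\begin{enumerate}
\item[(a)] the Chernoff bound $q_{n'}^{z'}\le e^{-n'I(z'/n')}$, which holds for all $n',z'$;
\item[(b)] for $\|z\|\le 2\rho n$ with $\|z\|_1\equiv n$ and $\rho$ small, the asymptotic formula
$$q_n^z=2\,(2\pi n)^{-d/2}\det\Sigma(z/n)^{-1/2}\,e^{-nI(z/n)}\bigl(1+O(n^{-2/5})\bigr).$$
Here $\Sigma(v)$ is the covariance of a single step under the measure tilted by $\lambda=\nabla I(v)$.
\end{enumerate}
Estimate (b) is the standard local CLT applied to the tilted walk. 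The tilted walk has mean $v$ and a covariance that is uniformly nondegenerate for $\|v\|\le 2\rho$, so its Edgeworth or characteristic-function error bounds are uniform in $v$. This uniformity is where I expect the real work to lie, and the error $n^{-2/5}$ leaves some room in that argument.

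\textbf{Exponent comparison.} The map $(z,n)\mapsto nI(z/n)$ is the perspective of a convex function, so it is jointly convex on $\R^d\times(0,\infty)$. Its gradient at $(z,n)$ is $\bigl(\nabla I(v),\,I(v)-\langle v,\nabla I(v)\rangle\bigr)$ with $v=z/n$, and this is $O(\|v\|)\times O(\|v\|^2)$. Convexity then gives
$$n'I(z'/n')\ge nI(z/n)-c\frac{\|z\|}{n}\|z-z'\|-c\frac{\|z\|^2}{n^2}|n-n'|.$$
Since $\|z\|/n\le\rho$, the last term is at most $c\frac{\|z\|}{n}|n-n'|$, which is the exponential factor in the statement.

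\textbf{Case analysis.} First suppose $n'\ge n/2$ and $\|z'\|\le 2\rho n'$. Apply (b) to both $q_n^z$ and $q_{n'}^{z'}$, so that the factors $2(2\pi)^{-d/2}$ cancel.
\begin{itemize}
\item The factor $(n/n')^{d/2}$ is at most $e^{c|n-n'|/n}$.
\item The ratio $\det\Sigma(v)/\det\Sigma(v')$ is at most $\exp(c\|v-v'\|)$. If $\|v-v'\|\le\|v\|$, then $\|v-v'\|\le\|v\|\lesssim\frac{\|z\|}{n}(\|z-z'\|+|n-n'|)$ whenever $z'\neq z$ or $n'\neq n$ (the ratio is trivially $1$ otherwise). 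If $\|v-v'\|>\|v\|$, then $\|v'\|<2\|v'-v\|$, and a direct computation using $n'\ge n/2$ gives $\|v-v'\|\lesssim(\|z-z'\|+\rho|n-n'|)/n$.
\end{itemize}
Both contributions are dominated by the term $\ln(n)|n-n'|/n$ together with the other terms in the exponent. The parity condition $\|z'\|_1\equiv n'$ can be assumed, since otherwise $q_{n'}^{z'}=0$.

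Now suppose $n'\ge n/2$ but $\|z'\|>2\rho n'$. Then $\|z-z'\|\ge\rho n'$, and the crude bounds $q_{n'}^{z'}\le1$ and $q_n^z\ge e^{-c n\|v\|^2-c\ln n}$ (from (b)) already suffice. Indeed, $c\ln n\le c'\frac{\|z\|}{n}\|z-z'\|$ when $\|z\|\ge 1$. When $z=0$ one can instead use (a) for $q_{n'}^{z'}$ together with $q_n^0\gtrsim n^{-d/2}$.

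Finally suppose $n'<n/2$. Then $|n-n'|\ge n/2$, so $\ln(n)|n-n'|/n\ge\frac12\ln n$. Combining (a) with the convexity bound handles the exponential factor, and the remaining polynomial loss $n^{d/2}$ is absorbed by $\exp(c\ln(n)|n-n'|/n)$. This is exactly the origin of the logarithmic term in the statement.

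Throughout, the main obstacle is the uniformity of the tilted local limit theorem (b) over $\|v\|\le 2\rho$ with the stated relative error. Everything else is convexity bookkeeping.
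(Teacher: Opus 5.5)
The paper gives no proof of this lemma; it is quoted from Lemma~3.4 of~\cite{HKNN}, so your argument has to stand on its own. Your route is sensible: a tilted local limit theorem combined with convexity of the perspective $(z,n)\mapsto nI(z/n)$. Your third case ($n'<n/2$) works. The other two cases, however, have a genuine gap. Both failures occur when $\|z\|$ is small, where the right-hand side of the lemma leaves essentially no slack.

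\textbf{Case $n'\ge n/2$, $\|z'\|>2\rho n'$.} The claim $\|z-z'\|\ge\rho n'$ holds only for $n'\ge n$. For $n/2\le n'<n$ you get just $\|z-z'\|>\rho(2n'-n)$. Worse, the inequality $c\ln n\le c'\frac{\|z\|}{n}\|z-z'\|$ for $\|z\|\ge 1$ is false.
\begin{itemize}
\item Take $z=e_1$ with $n$ odd, $n'=n$, and $\|z'\|\approx 3\rho n$.
\item Then $\frac{\|z\|}{n}\|z-z'\|\approx 3\rho$, so the right-hand side of the lemma is bounded.
\item The crude bound $q_{n'}^{z'}\le 1$ only gives a ratio at most $1/q_n^{e_1}\asymp n^{d/2}$.
\end{itemize}

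\textbf{Case $n'\ge n/2$, $\|z'\|\le 2\rho n'$.} Take $z=0$, $n'=n$, $z'\neq 0$. The exponent on the right-hand side is then $0$, so you must show the ratio is $1+O(n^{-2/5})$. The determinant factor is genuinely larger than $1$, since $\det\Sigma(v')<\det\Sigma(0)$ for small $v'\neq 0$. Your bound $e^{c\|v-v'\|}=e^{c\|z'\|/n}$ for it is of constant size, and nothing in the exponent absorbs it. Your subcase $\|v-v'\|>\|v\|$ absorbs $\|z-z'\|/n$ into $\frac{\|z\|}{n}\|z-z'\|$, which silently uses $\|z\|\ge 1$.

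\textbf{The fix.} What is missing is the second-order remainder that your first-order convexity inequality discards. For the perspective one has the exact identity
\[
n'I(v')-nI(v)-\langle\nabla I(v),z'-z\rangle-\bigl(I(v)-\langle v,\nabla I(v)\rangle\bigr)(n'-n)=n'\bigl[I(v')-I(v)-\langle\nabla I(v),v'-v\rangle\bigr].
\]
The right-hand side is at least $\tfrac{n'}{2}\|v'-v\|^2$, because $\nabla^2 I=\Sigma^{-1}\succeq\mathrm{Id}$ (a step has unit length, so $\Sigma\preceq\mathrm{Id}$). Keeping this term repairs both cases.
\begin{itemize}
\item In the case $\|z'\|\le 2\rho n'$: $c\|v-v'\|-\tfrac{n'}{2}\|v-v'\|^2\le c^2/(2n')=O(1/n)$. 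So the determinant ratio costs only a factor $1+O(1/n)$.
\item In the case $\|z'\|>2\rho n'$: $\|v'-v\|\ge\|v'\|-\|v\|>\rho$. Using the Chernoff bound (a) for every $z$, not only $z=0$, together with the lower bound on $q_n^z$ from (b), gives an extra factor $e^{-n\rho^2/4}$. This absorbs the $n^{d/2}$ coming from $1/q_n^z$.
\end{itemize}
With this repair your proof goes through, provided (b) is taken from the standard local large-deviation (saddle-point) theorem for lattice walks. That theorem gives relative error $O(1/n)$ uniformly on compact subsets of the interior.
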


For $y \in \Z^d$ and $n \in \N_0$, set 
\begin{equation}    \label{eq:def_iota} 
\iota(y,n) = \begin{cases} 
                       n, & \quad \|y\|_1 \equiv n, \\ 
                       n+1, & \quad \|y\|_1 \not\equiv n.  
                       \end{cases}
\end{equation}         
Fix $\Jtconst \in (\tfrac{1}{2},1)$ and set  
\begin{equation} \label{eq:Jt_def}
	J(t) \coleq \left\{n \in \N: \left \lvert \frac{n}{t} - 1 \right \rvert < 1 - \Jtconst \right\} = \{n \in \N : \Jtconst t < n < (2 - \Jtconst)t\}. 
\end{equation}
%

\begin{lemma}     \label{lm:q_iota} 
Let $\sublin \in (\tfrac{3}{4},1)$ and $\xi_1 \in (0, 1 - \sublin)$. There are constants $T, c > 0$ such that the following holds: For any $t \geq T$, $y \in \Z^d$ such that $\|y\| \leq t^{\sublin}$, $m \in J(2 t^{\xi_1})$, and $l \in J(t-2t^{\xi_1})$, we have 
$$ 
q^y_{m+l} \leq c q^y_{\iota(y,l)}. 
$$
\end{lemma}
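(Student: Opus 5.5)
The plan is to derive the estimate directly from Lemma~\ref{lm:ratio_estimate}, taking $n=\iota(y,l)$, $z=z'=y$ and $n'=m+l$. First I will dispose of the parity issue. If $\|y\|_1\not\equiv m+l$, then $q^y_{m+l}=0$ and there is nothing to prove. So assume $\|y\|_1\equiv m+l$. By the definition~\eqref{eq:def_iota} we always have $\|y\|_1\equiv\iota(y,l)$, which means $n=\iota(y,l)$ satisfies the parity hypothesis of Lemma~\ref{lm:ratio_estimate}.

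Next I will check the size hypothesis $\|y\|\le\rho n$. Since $l\in J(t-2t^{\xi_1})$, we have
\[
n\ge l>\Jtconst\,(t-2t^{\xi_1})\ge \tfrac{\Jtconst}{2}\,t
\]
for $t$ large. Because $\|y\|\le t^{\sublin}$ with $\sublin<1$, this gives $\|y\|\le\rho n$ once $t\ge T$.

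The core of the argument is bounding the exponent in Lemma~\ref{lm:ratio_estimate}. Here $z=z'$, so $\|z-z'\|=0$. Since $m\in J(2t^{\xi_1})$ and $\iota(y,l)\in\{l,l+1\}$, we have $|n'-n|\le m+1\lesssim t^{\xi_1}$. The exponent is therefore bounded by a constant times
\[
\frac{\|y\|}{n}\,t^{\xi_1}+\ln(n)\,\frac{t^{\xi_1}}{n}\lesssim t^{\sublin-1+\xi_1}+t^{\xi_1-1}\ln t .
\]
Both terms tend to $0$ because $\xi_1<1-\sublin$. This is precisely where the constraint on $\xi_1$ enters. The prefactor $1+O(n^{-2/5})$ is bounded, so the ratio $q^y_{m+l}/q^y_{\iota(y,l)}$ is bounded by a constant $c$ for $t\ge T$.

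I expect no serious obstacle. The one point needing care is that $q^y_{\iota(y,l)}>0$, so that the ratio makes sense. This holds because $\|y\|_1\le\sqrt d\,\|y\|\le\sqrt d\,t^{\sublin}<l\le\iota(y,l)$ for large $t$ and the parity matches, so $y$ is reachable in $\iota(y,l)$ steps. The hypothesis $\sublin>\tfrac34$ is not needed for this argument; it presumably serves later applications. All constants depend only on $d$, $\Jtconst$, $\sublin$ and $\xi_1$, which makes $T$ and $c$ uniform as claimed.
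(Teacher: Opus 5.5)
Your proposal is correct and follows essentially the same route as the paper. The paper also applies Lemma~\ref{lm:ratio_estimate} with $n=\iota(y,l)$, $n'=m+l$, $z=z'=y$, notes that $q^y_{\iota(y,l)}>0$ by parity and $\iota(y,l)>\|y\|_1$, and uses $\xi_1<1-\sublin$ to make the exponent vanish.
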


For $t > 0$, $y \in \Z^d$, and $x \in \Z^d$ such that $\|x\| \leq t^{\sublin}$, let 
$$ 
\Dc_t(y,x) \coleq \frac{\sum_{n \notin J(t)} e^{-t} \frac{t^n}{n!} q_n^{y-x}}{\sum_{n \in J(t)} e^{-t} \frac{t^n}{n!} q_n^{y-x}}. 
$$ 
%
\begin{lemma}     \label{lm:uniform_D_conv}
Let $y \in \Z^d$. For any $\sublin \in (\tfrac{3}{4}, 1)$, we have 
$$ 
\lim_{t \to \infty} \sup_{\|x\| \leq t^{\sublin}} \Dc_t(y,x) = 0. 
$$ 
\end{lemma}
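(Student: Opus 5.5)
We have to show that, uniformly over $\|x\|\le t^{\sublin}$, the Poisson-weighted sum of $q_n^{y-x}$ over $n\notin J(t)$ is negligible compared with the sum over $n\in J(t)$. Writing $z=y-x$, we have $\|z\|\le t^{\sublin}+\|y\|\le 2t^{\sublin}$ for $t$ large, since $y$ is fixed. So it suffices to prove
\[
\lim_{t\to\infty}\sup_{\|z\|\le 2t^{\sublin}}\Dc_t(z)=0,
\qquad
\Dc_t(z)=\frac{\sum_{n\notin J(t)} e^{-t}\frac{t^n}{n!}q_n^{z}}{\sum_{n\in J(t)} e^{-t}\frac{t^n}{n!}q_n^{z}} .
\]
The plan is to bound the numerator above by an exponentially small quantity $e^{-ct}$, to bound the denominator below by something of the form $e^{-Ct^{2\sublin-1}}$ (or at worst $e^{-t^{\sublin'}}$ with $\sublin'<1$), and then compare.

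\emph{Numerator.} Since $q_n^z\le 1$, the numerator is at most $\Pp(|N_t-t|\ge(1-\Jtconst)t)$, where $N_t\sim\mathrm{Poisson}(t)$. A Chernoff bound makes this at most $e^{-c_\Jtconst t}$ for some $c_\Jtconst>0$ depending only on $\Jtconst$.

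\emph{Denominator.} I restrict to the single $n\in J(t)$ nearest to $t$ with $n\equiv\|z\|_1$. It lies within distance $2$ of $t$, so $e^{-t}t^n/n!\gtrsim t^{-1/2}$ by Stirling. For this $n$ I need a lower bound on $q_n^z$ with $\|z\|\le 2t^{\sublin}$ and $n\approx t$. There are two routes. The first is a Gaussian local CLT with large-deviation correction for discrete time, $q_n^z\gtrsim n^{-d/2}\exp(-C\|z\|^2/n - C\|z\|^3/n^2)$, valid for $\|z\|\le\rho n$. The second is to apply Lemma~\ref{lm:ratio_estimate} in reverse: take the reference point $z'=0$ (or a neighbour of $0$ to fix parity), $n'=n$, and the ratio $q_n^{z'}/q_n^{z}$ with the roles arranged so that it bounds $q_n^0/q_n^z$. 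The lemma as stated needs $\|z\|\le\rho n$, which holds, and it gives
\[
\frac{q_n^{0}}{q_n^{z}}\lesssim\exp\Bigl(c\frac{\|z\|^2}{n}\Bigr).
\]
Combined with $q_n^0\gtrsim n^{-d/2}$, this yields
\[
q_n^z\gtrsim t^{-d/2}\exp\bigl(-c\,4t^{2\sublin-1}\bigr).
\]
As a cruder fallback, we could also just use Lemma~\ref{lm:lclt_con_time}. That lemma gives $p_t^z\ge e^{-t^{\sublin}}$, and since $p_t^z$ equals the full Poisson sum, we get $\text{denominator}\ge e^{-t^{\sublin}}-\text{numerator}\ge e^{-t^{\sublin}}-e^{-c_\Jtconst t}$. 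This fallback is actually the cleanest route, since it avoids the parity bookkeeping entirely.

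\emph{Conclusion.} Using the fallback, uniformly in $\|z\|\le 2t^{\sublin}\le t^{\sublin''}$ for some $\sublin''\in(\sublin,1)$ and $t$ large,
\[
\Dc_t(z)\le\frac{e^{-c_\Jtconst t}}{e^{-t^{\sublin''}}-e^{-c_\Jtconst t}}\to0 .
\]
Here \eqref{ineq:lclt_con_time} is applied with exponent $\sublin''$, which is allowed since that inequality holds for any exponent in $(0,1)$.

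The main obstacle is ensuring that the denominator's lower bound is uniform in $z$ over the whole sub-ballistic range. Lemma~\ref{lm:lclt_con_time} handles exactly this, so the remaining care is only in the Chernoff constant and in absorbing the shift by the fixed $y$.
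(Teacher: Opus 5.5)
Your final argument is correct and is essentially the paper's proof. Both bound $p_t^{y-x}=\sum_{n} e^{-t}\frac{t^n}{n!}q_n^{y-x}$ from below via Lemma~\ref{lm:lclt_con_time}, show that the Poisson mass outside $J(t)$ is exponentially small, and use that the denominator equals $p_t^{y-x}$ minus the numerator.

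The only difference is how the numerator is controlled. You use a direct Chernoff bound on the Poisson tail. The paper instead invokes (C1) of Proposition~\ref{prop:key}, which brings in the quantities $A(t,n,r)$ and a smallness condition on $\beta$. Your route is more elementary and makes clear that this purely random-walk lemma does not depend on $\beta$.
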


\begin{lemma}     \label{lm:p_ratio}
Let $\xi \in (0,1)$ and $\sublin \in (\tfrac{3}{4},1)$. There are constants $T, c > 0$ such that for any $t \geq T$ and $y \in \Z^d$ with $\|y\| \leq t^{\sublin}$, we have
$$ 
\frac{p^y_{t-2t^{\xi}}}{p^y_t} \leq c e^{\beta^2 t^{\xi}}.  
$$ 
\end{lemma}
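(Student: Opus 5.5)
We use the Poisson representation $p^y_s = \sum_{n \geq 0} e^{-s} \frac{s^n}{n!} q^y_n$ at times $s = t$ and $s = t' := t - 2t^{\xi}$, and compare the two series term by term. The idea is to trade the time shift $2t^{\xi}$ for an index shift in the discrete-time walk. Lemma~\ref{lm:ratio_estimate} and Lemma~\ref{lm:uniform_D_conv} then control the ratio of the resulting terms.

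\textbf{Localization of the denominator.} Lemma~\ref{lm:uniform_D_conv} (with $x=0$ and $y$ in place of $y-x$; by symmetry it applies with the roles of the fixed point and the sup variable swapped) gives $p^y_t \geq \tfrac12 \sum_{n\in J(t)} e^{-t}\frac{t^n}{n!} q^y_n$ for $t\ge T$ and $\|y\|\le t^{\sublin}$. For the numerator we split $p^y_{t'}$ into the part with $n \in J(t')$ and the remainder. Applying Lemma~\ref{lm:uniform_D_conv} at time $t'$ bounds the remainder by $o(1)$ times the $J(t')$ part, since $\|y\| \le t^{\sublin} \le 2 t'^{\sublin}$, and we enlarge $\sublin$ slightly if needed. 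So it suffices to bound
$$\sum_{n\in J(t')} e^{-t'}\tfrac{t'^n}{n!} q^y_n \Big/ \sum_{m\in J(t)} e^{-t}\tfrac{t^m}{m!} q^y_m .$$

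\textbf{Term-by-term comparison.} Pair each $n \in J(t')$ with $m = n + k$, where $k \approx 2t^{\xi}$ is chosen with $k$ even so that parity is preserved and $q^y_m \neq 0$ whenever $q^y_n \neq 0$. For $t$ large, $m \in J(t)$ or lies just outside it, and the latter boundary terms are handled by slightly enlarging $J$. Lemma~\ref{lm:ratio_estimate}, with $z = z' = y$, gives
$$\frac{q^y_n}{q^y_m} \lesssim \exp\Big(c\big(\tfrac{\|y\|}{m}k+\ln(m)\tfrac{k}{m}\big)\Big),$$
since $\|y\| \le t^{\sublin} \le \rho m$. The exponent is $O(t^{\sublin-1+\xi} + t^{\xi-1}\ln t)$, which is $O(t^{\xi})$. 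For the Poisson weights, a Stirling computation gives
$$\frac{e^{-t'}t'^n/n!}{e^{-t}t^m/m!} = e^{2t^{\xi}}\Big(\frac{t'}{t}\Big)^n \frac{m!}{n!\,t^{k}}.$$
Using $m!/n! \le m^k$ and $m/t \le 2-\Jtconst$, together with $(t'/t)^n \le 1$, this is at most $\exp(O(t^{\xi}))$. Summing over the injective map $n \mapsto n+k$ then bounds the ratio by $c\,e^{C t^{\xi}}$.

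\textbf{Matching the constant.} The statement asks for $e^{\beta^2 t^{\xi}}$, whereas the crude bound above gives $e^{Ct^{\xi}}$. This is the step I expect to be the main obstacle, and the comparison must be done much more carefully. First, $(t'/t)^n = \exp(-2t^{\xi}\, n/t + O(t^{2\xi-1}))$, and the factor $e^{2t^\xi}$ cancels against it up to $e^{O(|n/t - 1|\, t^{\xi})}$. Second, $m!/(n!\,t^k) = \exp(k\ln(n/t) + O(k^2/n))$.

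So I would restrict to the central window $|n-t|\le t^{1/2+\epsilon}$, where all these errors are $o(t^{\xi})$ or $O(1)$ as long as $\xi < 1/2$. The complementary large-deviation part is handled by Lemma~\ref{lm:uniform_D_conv}-type bounds. Alternatively, and more cleanly, I would use the local CLT of Lemma~\ref{lm:lclt_con_time} directly in the diffusive and moderate range: $p^y_{t'}/p^y_t = (t/t')^{d/2}\exp\big(-\tfrac d2\|y\|^2(\tfrac1{t'}-\tfrac1t)\big)\,e^{O(\cdot)} \le c$, since the Gaussian factor is at most $1$. The Poisson-series comparison is then reserved for $\|y\|^3 \gtrsim t^2$. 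There the error terms must be shown to be $\le \beta^2 t^{\xi}$; if $\xi$ is not small this is the delicate point. I would try to absorb them using $\sublin - 1 + \xi < \xi$, so that every error exponent is $o(t^{\xi})$, which would yield the bound with any positive constant, in particular $\beta^2$.
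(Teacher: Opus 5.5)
There is a genuine gap, and it sits exactly at the step you flag. Your first two steps are correct, but they only give $p^y_{t'}/p^y_t\lesssim e^{Ct^{\xi}}$ with $C\approx 2+2\ln(2-\nu)$. The lemma needs the small constant $\beta^2$, and none of your proposed repairs closes the difference.

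\begin{itemize}
\item The central window $|n-t|\le t^{1/2+\epsilon}$ is only claimed for $\xi<\frac12$. Its complement is not controlled by Lemma~\ref{lm:uniform_D_conv}, which concerns windows of width proportional to $t$. You would need a separate moderate-deviation bound, with $\epsilon>\sigma-\frac12$, to beat $p^y_t\ge e^{-Ct^{2\sigma-1}}$.
\item The local CLT route only works where $\|y\|^3/t^2=o(t^{\xi})$. The reason is that the $O(t^{-1/2}+\|y\|^3/t^2)$ errors of Lemma~\ref{lm:lclt_con_time} at times $t$ and $t'$ need not cancel.
\item In the remaining regime, your remark $\sigma-1+\xi<\xi$ only controls $q^y_n/q^y_m$. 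It does not control the ratio of Poisson weights, which is where the $Ct^{\xi}$ came from.
\end{itemize}
As written, the argument is therefore incomplete. In particular, for $\xi\ge\frac12$ and $\|y\|^3\gtrsim t^{2+\xi}$ nothing in it yields the claim.

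The missing idea is to keep the cancellation between $e^{t-t'}=e^{2t^{\xi}}$ and $(t'/t)^n$, rather than discarding it through $(t'/t)^n\le 1$.

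\emph{The paper's route.} The paper compares the two series at the \emph{same} index $n\in J(t')$, so $q^y_n$ cancels and no ratio estimate for $q$ is needed. After applying Lemma~\ref{lm:uniform_D_conv} and bounding $p^y_t$ from below by its $J(t')$-part, the ratio is at most
\[
(1+o(1))\,e^{2t^{\xi}}\max_{n\in J(t')}(1-2t^{\xi-1})^n\lesssim e^{4(1-\nu)t^{\xi}}.
\]
The constant is then matched by taking $\nu$ so close to $1$ that $4(1-\nu)<\beta^2$. This is legitimate because Lemma~\ref{lm:uniform_D_conv} holds for every $\nu\in(\frac12,1)$.

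\emph{Completing your route.} Your shifted pairing also works if you combine the two factors with their signs, instead of bounding each separately by $e^{O(|n/t-1|t^{\xi})}$. Set $a=2t^{\xi}$ and $k=2\lfloor t^{\xi}\rfloor$. Using $\ln(1-a/t)\le -a/t$, $\ln\frac{(n+k)!}{n!}\le k\ln(n+k)$ and $\ln x\le x-1$, you get
\[
\ln\frac{e^{-t'}t'^{\,n}/n!}{e^{-t}t^{\,n+k}/(n+k)!}\le (a-k)\Bigl(1-\frac{n}{t}\Bigr)+\frac{k^2}{t}\le 2+4t^{2\xi-1}
\]
for all $n\in J(t')$. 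Together with your bound from Lemma~\ref{lm:ratio_estimate}, this gives $p^y_{t'}/p^y_t\le e^{o(t^{\xi})}$ for every $\xi\in(0,1)$. That is stronger than needed, and it requires no central window and no case split.

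The trade-off between the two routes: yours costs the parity bookkeeping and the ratio lemma, while the paper's costs only the freedom in the choice of $\nu$.
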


\section{Reduction to Discrete Time} 
\label{sec:D_sequences}
In this section, we formulate a key lemma (Lemma~\ref{lm:standard_arg}) that will allow us in many cases to deduce convergence statements in the continuous-time setting from convergence results in discrete time. The latter have typically been established in~\cite{HKNN}. \\

We begin by formulating a framework that is general enough to treat a variety of expansions arising in calculations with partition functions. Let $(R_n)_{n \in \N_0}$ be a family of sets such that $R_n \subset \{1, \ldots, n+1\}$ for every $n \in \N_0$.
Let $(I_{n,r})_{n \in \N_0, r \in R_n}$ be a collection of index sets such that
\begin{equation*}
	I_{n,r}
		\subset \{\ibf = (i_1, \ldots, i_r) \in \N_0^r ~:~  0 \leq i_1 < \ldots < i_r \leq n\}
\end{equation*}
for every $n \in \N_0$ and $r \in R_n$.
In addition, let $\mathscr{q}_{n,r}^y (\ibf, \zbf)$ be a collection of nonnegative real numbers indexed by $n \in \N_0, r \in R_n, y \in \Z^d, \ibf \in I_{n,r}, \zbf = (z_1, \ldots, z_r) \in (\Z^d)^r$, which satisfy the following finiteness condition: for every $n \in \N_0, r \in R_n, y \in \Z^d, \ibf \in I_{n,r}$,
\begin{equation}                   \label{eq:finite_cond_q}
\sum_{\zbf} \mathscr{q}_{n,r}^y(\ibf, \zbf) \leq 2. 
\end{equation}
Notice that~\eqref{eq:finite_cond_q} implies 
\begin{equation}
\label{190921123823}
	\sum_{\zbf} 
		\mathscr{q}_{n,r}^y(\ibf, \zbf)^2 
		\leq 4, \qquad \forall  n \in \N_0, \ r \in R_n, \ y \in \Z^d,  \ \ibf \in I_{n,r}. 
\end{equation}

\noindent For a fixed realization of $\eta$, for $y \in \Z^d$ and $s, t \in \R$ such that $s < t$, define
$$
	\Tc(y;s,t) \coleq
		\sum_{r \in R_{n_{s,t}}} 
		\sum_{\ibf \in I_{n_{s,t},r}, \zbf} 
			\mathscr{q}_{n_{s,t},r}^y(\ibf, \zbf) 
			\prod_{j=1}^r h(z_j; s_{i_j}, s_{i_j +1}), 
$$
where one should recall that $s_0 \coleq s$ and $s_{n_{s,t}+1} \coleq t$. Note that 
\begin{equation}     \label{eq:T_square_bracket}  
\langle \Tc(y;s,t)^2 \rangle = \sum_{r \in R_{n_{s,t}}} \sum_{\ibf \in I_{n_{s,t}, r}, \zbf} \mathscr{q}_{n_{s,t}, r}(\ibf, \zbf)^2 \prod_{j=1}^r \left(e^{\beta^2 (s_{i_j + 1} - s_{i_j})} - 1 \right) 
\end{equation}  
by the properties of $h$. For any $\vartheta > 0$, we define a sequence of functions $(D^{\vartheta}_n)_{n \in \N_0}$ on the lattice $\Z^d$ by  
\begin{equation*}
	D^{\vartheta}_n(y)
		\coleq 	\sum_{r \in R_n} \vartheta^r
			\sum_{\ibf \in I_{n,r}, \zbf} 
				\mathscr{q}_{n,r}^y(\ibf, \zbf)^2, \qquad y \in \Z^d.   
\end{equation*}
We call $(D^{\vartheta}_n)_{n \in \N_0}$ the \emph{$D$-sequence} associated with $\Tc$.
As a consequence of Condition~\eqref{eq:finite_cond_q}, every $D_n^{\vartheta}(y)$ is finite. 
Recall that $n_t$ is a shorthand for $n_{0,t}$. In addition, we use the shorthand $\E \coleq \E_{0,0}$. 

\begin{lemma}[Key Lemma]
\label{lm:standard_arg} 
For every $\vartheta > 0$, there exists $\beta_* > 0$ such that the following holds: Assume that there are $\tilde \sublin \in (\tfrac{3}{4}, 1)$ and $\theta > 0$ such that 
\begin{equation*}
\lim_{n \to \infty} n^{\theta} \sup_{\|y\| \leq n^{\tilde \sublin}} D^{\vartheta}_n(y) = 0.  
\end{equation*}
Then, if $\beta \leq \beta_*$, one has 
\begin{equation*}
\lim_{t \to \infty} t^{\theta /2} \sup_{\|y\| \leq t^{\sublin}} \frac{1}{p_t^y} \E \left[q_{n_{t}}^y \left \langle \left \lvert \Tc(y; 0,t) \right \rvert \right \rangle \right] = 0
\end{equation*}
for every $\sigma \in (0, \tilde \sigma)$. 
\end{lemma}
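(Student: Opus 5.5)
The plan is to condition on the number of jumps $n_t$, apply Cauchy--Schwarz in the environment, and split the resulting Poisson sum into $n\in J(t)$ and $n\notin J(t)$. On $J(t)$ we will use the hypothesis on the $D$-sequence; off $J(t)$ we will use a crude exponential bound beaten by Poisson large deviations. Both parts impose smallness conditions on $\beta$, which is how $\beta_*$ is chosen.

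\emph{Step 1: reduction.} By Cauchy--Schwarz in $Q$, $\langle|\Tc(y;0,t)|\rangle\le\langle\Tc(y;0,t)^2\rangle^{1/2}$. Conditioning on $\{n_t=n\}$ and applying Jensen to the conditional expectation gives
\begin{equation*}
\E\bigl[q_{n_t}^y\langle|\Tc|\rangle\bigr]\le\sum_{n\ge0}e^{-t}\tfrac{t^n}{n!}\,q_n^y\,\bigl(\E[\langle\Tc^2\rangle\mid n_t=n]\bigr)^{1/2}.
\end{equation*}
By~\eqref{eq:T_square_bracket},
\begin{equation*}
\E[\langle\Tc^2\rangle\mid n_t=n]=\sum_{r\in R_n}\sum_{\ibf\in I_{n,r},\zbf}\mathscr q^y_{n,r}(\ibf,\zbf)^2\,\E\Bigl[\prod_{j=1}^r\bigl(e^{\beta^2\Delta_{i_j}}-1\bigr)\Bigm| n_t=n\Bigr],
\end{equation*}
where, given $n_t=n$, the spacings $\Delta_k=s_{k+1}-s_k$, $k=0,\dots,n$, are the spacings of $n$ i.i.d.\ uniform points on $[0,t]$ (a scaled Dirichlet vector).

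\emph{Step 2: $n\in J(t)$, the main obstacle.} Here we need
\begin{equation*}
\E\Bigl[\prod_{j=1}^r\bigl(e^{\beta^2\Delta_{i_j}}-1\bigr)\Bigm| n_t=n\Bigr]\le\vartheta^r\qquad\text{for }\beta\le\beta_*,\ n>\Jtconst t.
\end{equation*}
\begin{itemize}
\item The first attempt is negative association of Dirichlet spacings. Since each factor is an increasing nonnegative function of a distinct spacing, the expectation is at most the product of the marginals.
\item If negative association is inconvenient, we can instead represent $\Delta_k=tE_k/\sum_l E_l$ with i.i.d.\ exponentials $E_l$. Then restrict to the event $\sum_l E_l\ge n/2$, and treat its complement, whose probability is exponentially small in $n$, separately.
\item Each marginal satisfies $\Delta_k/t\sim\mathrm{Beta}(1,n)$, with density bounded by $n e^{-(n-1)b}$. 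Hence $\E[e^{\beta^2\Delta_k}]-1\lesssim \beta^2t/(n-1-\beta^2t)\le C\beta^2/\Jtconst$, and this is at most $\vartheta$ once $\beta$ is small.
\end{itemize}
Given this bound, $\E[\langle\Tc^2\rangle\mid n_t=n]\le D^\vartheta_n(y)$. For $\|y\|\le t^\sublin$ and $n>\Jtconst t$ we have $\|y\|\le n^{\tilde\sublin}$ for large $t$, because $\sublin<\tilde\sublin$. The hypothesis then gives $D^\vartheta_n(y)\le\epsilon n^{-\theta}\le\epsilon(\Jtconst t)^{-\theta}$. Summing over $n\in J(t)$, and using $\sum_n e^{-t}\tfrac{t^n}{n!}q_n^y=p_t^y$, bounds this part by $\epsilon^{1/2}\Jtconst^{-\theta/2}t^{-\theta/2}p_t^y$.

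\emph{Step 3: $n\notin J(t)$.} Here we use a crude bound. Summing over $\zbf$ with~\eqref{190921123823} gives at most $4$. Summing over all $r$ and $\ibf$ of the products of $(e^{\beta^2\Delta_k}-1)$ is dominated by $\prod_{k=0}^n\bigl(1+(e^{\beta^2\Delta_k}-1)\bigr)=e^{\beta^2t}$. Hence $\langle\Tc^2\rangle\le4e^{\beta^2t}$ pathwise. Poisson large deviations give $\Pp(n_t\notin J(t))\le e^{-c(\Jtconst)t}$, and $q_n^y\le1$. By~\eqref{ineq:lclt_con_time}, $1/p_t^y\le e^{t^\sublin}$. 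So this part contributes at most $2e^{\beta^2t/2-c(\Jtconst)t+t^\sublin}$ after dividing by $p_t^y$, which is $o(t^{-\theta/2})$ provided $\beta^2<2c(\Jtconst)$. Taking $\beta_*$ to satisfy both this condition and the one from Step 2, and letting $\epsilon\to0$, yields the claim uniformly over $\|y\|\le t^\sublin$.
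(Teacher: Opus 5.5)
Your argument is correct, and it is a genuinely different and much shorter route than the paper's.

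\emph{How the paper argues.} The paper controls $\E[\prod_j(e^{\beta^2 t_{i_j+1}}-1)\mid n_t=l]$ through the quantity $A(t,l,r)$. Lemma~\ref{lm:A_estimate} is a direct computation with the joint density of the first $r$ spacings, and it gives $A\lesssim\psi(\beta)^r$ only for $l\in J(t)$ and $r<\nuone l$. The remaining ranges are $l\le\nuzero t$, $\nuzero t<l\notin J(t)$, and $l\in J(t)$ with $r\ge\nuone l$. These are handled separately using the simplex-integral bound of Lemma~\ref{lm:expected_value}, Stirling and Poisson-tail estimates (Lemmas~\ref{lm:precise_I_bounds} and~\ref{lm:tail_exponential}), and finiteness of $\sup_n\sup_{\|y\|\le n^{\tilde\sublin}}D^\vartheta_n(y)$.

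\emph{What you do instead.} You replace all of this with two observations.
\begin{enumerate}
\item Negative association of uniform spacings gives $\E[\prod_j(e^{\beta^2\Delta_{i_j}}-1)\mid n_t=n]\le(C\beta^2/\Jtconst)^r$ for every $r$ at once when $n>\Jtconst t$. So no split in $r$ is needed.
\item The pathwise estimate $\sum_{r,\ibf}\prod_j(e^{\beta^2\Delta_{i_j}}-1)\le\prod_{k=0}^{n}e^{\beta^2\Delta_k}=e^{\beta^2t}$ gives $\langle\Tc^2\rangle\le 4e^{\beta^2t}$. Together with a Chernoff bound, this disposes of all $n\notin J(t)$ in one line, at the price of the explicit condition $\beta^2<2c(\Jtconst)$.
\end{enumerate}

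\emph{Trade-off.} The paper's route is self-contained, and it builds the $A(t,l,r)$ machinery that is reused in Lemma~\ref{lm:building_blocks} for the short windows of length $t^{\xi_1}$. Yours is far more economical, but your $J(t)$ estimate rests entirely on an external theorem, so you should state and cite it. The result is: independent random variables with log-concave densities, conditioned on their sum, are negatively associated (Joag-Dev and Proschan, 1983). Here it is applied to i.i.d.\ exponentials.

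\emph{A caution on your fallback bullet.} Do not handle the complement of $\{\sum_l E_l\ge n/2\}$ term by term in $(r,\ibf)$. An additive error of order $e^{\beta^2t}e^{-c'n}$ per term, summed over up to $2^{n+1}$ index sets, is not small. Instead, either use that $(E_k/\sum_l E_l)_k$ is independent of $\sum_l E_l$, so you may condition on the event for free, or bound the whole sum on the complement by your Step~3 pathwise estimate.
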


\noindent We prove Lemma~\ref{lm:standard_arg} in Subsection~\ref{ssec:key_lemma_proof}.

\begin{remark}    \rm 
We do not attempt to find the supremum over those $\beta_* > 0$ for which the conclusion of Lemma~\ref{lm:standard_arg} holds for a given $\vartheta$. 
\end{remark}

\begin{remark}  \rm     \label{rm:standard_arg}
The proof of Lemma~\ref{lm:standard_arg} implies the following: If $\Tc$, $D_n^{\vartheta}$ do not depend on $y$ and if $\lim_{n \to \infty} n^{\theta} D_n^{\vartheta} = 0$ for some $\theta, \vartheta > 0$, then, for $\beta$ sufficiently small (depending on $\vartheta$),  
\begin{equation*}
\lim_{t \to \infty} t^{\theta} \E \left \langle \left( \Tc(0,t) \right)^2 \right \rangle = 0.
\end{equation*}
\end{remark}

\subsection{Some preliminary estimates}
\label{ssec:key_lemma_estimates}

Before proving Lemma~\ref{lm:standard_arg}, we collect several auxiliary statements whose proofs can be found in Section~\ref{ssec:key_lemma_estimates_proofs} of the appendix. For any $t > 0$, $l \in \N_0$, and $1 \leq r \leq l+1$, we introduce the following notation:
\begin{equation}   \label{eq:def_hat_A}
A(t,l,r) \coleq \E \biggl[ \prod_{j=1}^{r-2} \left(e^{\beta^2 t_j} - 1 \right) e^{\beta^2 (t_{r-1} + t_r)} \bigg\vert n_t = l \biggr], 
\end{equation} 
where $t_0 \coleq 0$, $t_j \coleq s_j - s_{j-1}$ for $1 \leq j \leq l$, and $t_{l+1} \coleq t-s_l$. On account of~\eqref{eq:T_square_bracket}, for any $l \in \N_0$, 
\begin{equation}    \label{eq:T_square_bracket_A_est}
\E \left[ \left \langle \Tc(y;0,t)^2 \right \rangle \big\vert n_t = l \right] \leq \sum_{r \in R_l} \sum_{\ibf \in I_{l,r}, \zbf} \mathscr{q}_{l,r}^y(\ibf,\zbf)^2 A(t,l,r). 
\end{equation} 




\begin{lemma}\label{lm:expected_value}
If $\beta < 1$, then   
\begin{equation} \label{eq:A_bound}
	A(t,l,r) 
		\lesssim
		(l+1)^2 e^{\beta^2 t} \beta^{2r} \frac{t^r}{(l+1) \ldots (l+r)}. 
\end{equation} 
\end{lemma}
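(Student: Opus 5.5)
Conditionally on $n_t = l$, the jump times $s_1<\dots<s_l$ are distributed as the order statistics of $l$ i.i.d.\ uniform points on $[0,t]$. Hence the spacings $(t_1,\dots,t_{l+1})$ are uniform on the simplex $\{t_j\ge 0,\ \sum_j t_j = t\}$, i.e.\ they have the law of $t\cdot(\text{Dirichlet}(1,\dots,1))$ with $l+1$ coordinates. The plan is to expand the product and integrate term by term over this simplex.

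Write $e^{\beta^2 t_{r-1}+\beta^2 t_r} = e^{\beta^2(t_{r-1}+t_r)}$ and bound each factor $e^{\beta^2 t_j}-1$ for $1\le j\le r-2$ by
\[
e^{\beta^2 t_j}-1 \le \beta^2 t_j\, e^{\beta^2 t_j}.
\]
All the exponentials then combine into
\[
\prod_{j=1}^{r} e^{\beta^2 t_j} = e^{\beta^2 (t_1+\dots+t_r)} \le e^{\beta^2 t},
\]
since $\sum_{j\le r} t_j\le t$. This yields
\[
A(t,l,r)\le e^{\beta^2 t}\,\beta^{2(r-2)}\,\E\Bigl[\prod_{j=1}^{r-2} t_j \,\Big|\, n_t=l\Bigr].
\]
The remaining conditional moment is a Dirichlet moment. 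For the uniform distribution on the simplex with $l+1$ coordinates,
\[
\E\Bigl[\prod_{j=1}^{k} t_j\Bigr] = t^k\,\frac{l!}{(l+k)!} = \frac{t^k}{(l+1)\cdots(l+k)},
\]
and here $k=r-2$. One can check this directly via the Dirichlet density $l!\,t^{-l}$ on the simplex, or via the Gamma representation $t_j = tE_j/\sum_i E_i$ together with the independence of $\sum_i E_i$ from the normalized vector.

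Finally, match the claimed form. We have
\[
\frac{t^{r-2}}{(l+1)\cdots(l+r-2)} = \frac{t^r}{(l+1)\cdots(l+r)}\cdot\frac{(l+r-1)(l+r)}{t^2}.
\]
Since $r\le l+1$, the factor $(l+r-1)(l+r)$ is at most $4(l+1)^2$. Also $\beta^{2(r-2)} = \beta^{2r}\beta^{-4}$. The constant $4\beta^{-4}$ is allowed, because the constant in $\lesssim$ may depend on $\beta$. The extra $t^{-2}$ is at most $1$ once $t\ge 1$, which is permitted by the convention that the inequality need only hold for $t\ge c$.

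The cases $r=1$ and $r=2$ are degenerate. For $r=1$ the index $t_{r-1}=t_0=0$, and in both cases the product over $j$ is empty, so $A\le e^{\beta^2 t}$. The target bound is then at least $e^{\beta^2 t}\beta^{2r}\,t^r(l+1)^2/((l+1)\cdots(l+r))$. For $r\le 2$ and $t\ge 1$ this is at least $e^{\beta^2 t}\beta^4/4$ up to constants, since $(l+1)^2/((l+1)(l+2))\ge 1/2$ and $t^r\ge 1$. So these cases are also covered with a $\beta$-dependent constant.

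The only real point is the Dirichlet moment identity together with the observation that all the exponentials are absorbed into $e^{\beta^2 t}$. The hypothesis $\beta<1$ is not needed in this argument; it presumably matters only for where the lemma is applied later.
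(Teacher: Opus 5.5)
Your proof is correct, and it takes a more direct route than the paper. Both proofs start from the simplex representation $A(t,l,r)=\frac{l!}{t^l}\Ic(t,l,r)$. The paper then proves $\Ic(t,l,r)\le e^{\beta^2 t}\beta^{2(r-3)}\frac{t^{l+r-2}}{(l+r-2)!}$ by induction on $l$. It treats the cases $r=l+1$, $r=l$ and $r<l$ separately, using recursions such as $\Ic(t,l+1,l+1)=\int_0^t(e^{\beta^2 s}-1)\,\Ic(t-s,l,l)\,ds$ and the Beta integral $\int_0^t(t-s)^k s^n\,ds=\frac{n!\,k!}{(n+k+1)!}t^{n+k+1}$. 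Each induction step removes one coordinate via $(e^{\beta^2 s}-1)e^{\beta^2(t-s)}\le \beta^2 s\, e^{\beta^2 t}$, which is your inequality $e^x-1\le xe^x$ applied one factor at a time.

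Your observation that $\sum_{j\le r}t_j\le t$ lets you absorb all the exponentials at once. The induction and case analysis then collapse into the single Dirichlet moment $\E[\prod_{j\le r-2}t_j\mid n_t=l]=t^{r-2}\,l!/(l+r-2)!$. You obtain the same intermediate estimate, with $\beta^{2(r-2)}$ in place of $\beta^{2(r-3)}$, valid for every $t>0$ and every $\beta>0$. The paper instead imposes $\beta<1$ and $t\ge\max\{\beta^{-2},2\}$ so that its base cases fit a common form. Your remark that $\beta<1$ is not really needed is therefore accurate. The paper's version uses only one-dimensional integrals, but it gains nothing in the final bound over your shorter argument.
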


\begin{lemma}     \label{lm:precise_I_bounds} 
Let $\nu \in (\tfrac{1}{2}, 1)$, $\delta \in (0,1)$, $\nuzero \in (0, \tfrac{\delta}{2})$, $\kappa \in (0, \delta)$, and $\hat \kappa \in (0, \nu)$. Then  
\begin{flalign}
\label{190921111055}
	\frac{t^{l+r}}{(l+r)!} \lesssim& \ t^{-1/2} \left(\frac{e}{\kappa} \right)^{\delta t} \qquad \text{for} \ 0 \leq l \leq \nuzero t, \ 1 \leq r \leq l+1;
\\
\label{190921111101}
	\frac{t^l}{l!} \lesssim& \ t^{-1/2} \left(\frac{e}{\hat \kappa} \right)^{\nu t} \qquad \text{for} \ \nuzero t < l \leq \nu t. 
\end{flalign}
\end{lemma}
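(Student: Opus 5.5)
The plan is to use Stirling's lower bound $m! \geq \sqrt{2\pi m}\,(m/e)^m$ together with two monotonicity facts. First, $m \mapsto t^m/m!$ is nondecreasing on integers $0 \leq m \leq t$, since the ratio of consecutive terms is $t/(m+1) \geq 1$ for $m+1 \leq t$. Second, the real function $f(m) = (et/m)^m$ is increasing on $(0,t]$, because $\frac{d}{dm} \ln f(m) = \ln(t/m) \geq 0$ there. Both bounds will follow by reducing to a single largest admissible index and then applying Stirling.

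For \eqref{190921111055}, set $m = l+r$. The constraints $1 \leq r \leq l+1$ and $l \leq \nuzero t$ give $1 \leq m \leq 2l+1 \leq 2\nuzero t + 1$. Since $\nuzero < \delta/2$, we have $2\nuzero t + 1 \leq \lfloor \delta t \rfloor =: M$ for all large $t$, and $M \leq \delta t < t$. By the first monotonicity fact, $t^m/m! \leq t^M/M!$. Stirling and the second fact then give
$$
\frac{t^M}{M!} \leq \frac{1}{\sqrt{2\pi M}} \left(\frac{et}{M}\right)^M \leq \frac{1}{\sqrt{2\pi M}} \left(\frac{e}{\delta}\right)^{\delta t}.
$$
Because $M \geq \delta t/2$ for large $t$, the prefactor is $\lesssim t^{-1/2}$. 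Finally, $\kappa < \delta$ gives $(e/\delta)^{\delta t} \leq (e/\kappa)^{\delta t}$.

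For \eqref{190921111101}, put $L = \lfloor \nu t \rfloor < t$. For $\nuzero t < l \leq \nu t$ we have $t^l/l! \leq t^L/L!$, and the same Stirling estimate yields
$$
\frac{t^L}{L!} \leq \frac{1}{\sqrt{2\pi L}} \left(\frac{e}{\nu}\right)^{\nu t}.
$$
Since $L \geq \nuzero t$, the prefactor is $\lesssim t^{-1/2}$, and $\hat\kappa < \nu$ gives $(e/\nu)^{\nu t} \leq (e/\hat\kappa)^{\nu t}$.

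The only delicate point is the bookkeeping with integer parts and the $+1$ in $m \leq 2l+1$. It is absorbed by taking $t$ large, which the convention $t \geq c$ in $\lesssim$ allows. The strict slack $\kappa < \delta$ (respectively $\hat\kappa < \nu$) is not even needed beyond this, so it only makes the estimates easier.
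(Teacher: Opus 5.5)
Your proof is correct and follows the paper's argument. Both reduce to the largest admissible index $\lfloor \delta t \rfloor$ (resp.\ $\lfloor \nu t \rfloor$) using that $t^m/m!$ is nondecreasing for $m \le t$, then apply Stirling. The only difference is cosmetic: the paper bounds $(et/M)^M$ by $(e/\kappa)^{\delta t}$ via $M > \kappa t$, which is where $\kappa,\hat\kappa$ enter and also gives the prefactor $(2\pi\kappa t)^{-1/2}$. You instead use monotonicity of $m \mapsto (et/m)^m$ on $(0,t]$ to get the slightly sharper $(e/\delta)^{\delta t}$ and then weaken it.
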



\begin{lemma}\label{lm:A_estimate}
Let $\nu \in (\tfrac{1}{2}, 1)$, $\nuone \in (\Jtconst^{-1} - 1,1)$, and let 
\begin{equation}    \label{eq:def_psi} 
\psi(\beta) \coleq \frac{\beta^2}{(1-\nuone) ((2-\Jtconst) (1-\nuone) - \beta^2)}.
\end{equation} 
Then, if $\beta$ is so small that $\psi(\beta) > 0$, one has  
$$
A(t,l,r) \lesssim \psi(\beta)^r
$$
for $\Jtconst t < l < (2-\Jtconst) t$ and $1 \leq r < \nuone l$.   
\end{lemma}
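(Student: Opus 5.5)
The plan is to compute $A(t,l,r)$ from the exact conditional law of the gaps. Conditionally on $n_t=l$, the jump times $s_1<\dots<s_l$ are the order statistics of $l$ i.i.d.\ uniform points on $(0,t)$. Hence $(t_1,\dots,t_{l+1})$ is uniform on the simplex $\{t_j\ge 0,\ \sum_j t_j=t\}$. By exchangeability the $r$ factors in~\eqref{eq:def_hat_A} may be taken to sit on any $r$ of the gaps, say $t_1,\dots,t_r$. Their marginal density on $\{S\coleq t_1+\dots+t_r<t\}$ is
\[
\frac{l!}{(l-r)!}\,t^{-r}\Bigl(1-\frac{S}{t}\Bigr)^{l-r}.
\]
I will bound $(1-S/t)^{l-r}\le e^{-aS}$ with $a\coleq (l-r)/t$, and $l!/(l-r)!\,t^{-r}\le (l/t)^r$. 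After extending the domain of integration to $[0,\infty)^r$, the integral factorizes into one-dimensional Laplace integrals.

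Each of the first $r-2$ coordinates then contributes
\[
\frac{l}{t}\int_0^\infty (e^{\beta^2 u}-1)e^{-au}\,du=\frac{l}{t}\cdot\frac{\beta^2}{a(a-\beta^2)},
\]
and each of the last two contributes $\frac{l}{t}\cdot\frac{1}{a-\beta^2}$. Using $r<\nuone l$, so that $a\ge(1-\nuone)\,l/t$, the $l/t$ in front cancels against the $a$ in the denominator. This gives a per-factor bound
\[
\frac{\beta^2}{(1-\nuone)\bigl((1-\nuone)\,l/t-\beta^2\bigr)}.
\]
The two exceptional factors are bounded by a constant (depending on $\Jtconst,\nuone$), and that constant is absorbed into $\lesssim$ after multiplying by $\psi(\beta)^{-2}$.

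The remaining step is to replace $(1-\nuone)\,l/t$ by $(2-\Jtconst)(1-\nuone)$, and I expect this to be the main obstacle. Since the per-factor bound is decreasing in $l/t$, the worst case in $J(t)$ is $l/t\approx\Jtconst$, not $2-\Jtconst$. A naive use of $l/t>\Jtconst$ only yields the weaker bound with $\Jtconst$ in place of $2-\Jtconst$. To reach the stated $\psi(\beta)$, I would try two refinements:
\begin{itemize}
\item keep the exact Beta-type integrals instead of the crude bounds $(1-S/t)^{l-r}\le e^{-aS}$ and $l!/(l-r)!\le l^r$, which gives per-factor quantities of the form $\frac{l}{t}\sum_{k\ge1}\frac{\beta^{2k}t^k (l-r)!}{(l-r+k)!}$;
\item use the hypothesis $\nuone>\Jtconst^{-1}-1$, which is exactly what makes $(1-\nuone)$ small relative to $\Jtconst$, so that the product $(1-\nuone)\,l/t$ can be compared with $(2-\Jtconst)(1-\nuone)^2\cdot(1-\nuone)^{-1}$.
\end{itemize}
The elementary inequality needed is $(1-\nuone)\Jtconst\ge(2-\Jtconst)(1-\nuone)^2$, i.e.\ $1-\nuone\le\Jtconst/(2-\Jtconst)$. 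This follows from $\nuone>\Jtconst^{-1}-1$ when $\Jtconst\le 2/3$. For $\Jtconst>2/3$ I would need the sharper exact integral in place of the exponential bound, and I am not certain that it closes the gap. Finally, positivity of $\psi(\beta)$, i.e.\ $\beta^2<(2-\Jtconst)(1-\nuone)$, guarantees $a>\beta^2$ throughout the resulting range, so all the Laplace integrals converge.
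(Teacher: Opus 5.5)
\textbf{The step you leave open cannot be closed by this method.} Your setup is the paper's: uniform spacings, the marginal density $\frac{l!}{(l-r)!}t^{-r}(1-S/t)^{l-r}$, the bounds $(1-S/t)^{l-r}\le e^{-aS}$ and $\frac{l!}{(l-r)!}t^{-r}\le(l/t)^r$, then extension to $[0,\infty)^r$ and factorization. Your per-factor bound $\frac{\beta^2}{(1-\nu_1)(\nu(1-\nu_1)-\beta^2)}$ is correct. But the proposal never reaches the stated $\psi(\beta)$, and your suggested refinements do not get there:
\begin{itemize}
\item Comparing your bound with $\psi(\beta)$ would require $(1-\nu_1)\,l/t\ge(2-\nu)(1-\nu_1)$, i.e.\ $l/t\ge 2-\nu$, which never holds on $J(t)$. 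Your inequality $(1-\nu_1)\nu\ge(2-\nu)(1-\nu_1)^2$ is off by a factor $1-\nu_1$ on the right-hand side.
\item The exponential-tail bound is itself too lossy. For $r$ near $\nu_1 l$ and $l/t$ near $\nu$ one has $a\approx\nu(1-\nu_1)$. The factorized integral per factor is then essentially your uniform bound, which exceeds $\psi(\beta)$.
\item Your final sentence is wrong. $\psi(\beta)>0$ only gives $\beta^2<(2-\nu)(1-\nu_1)$, whereas your Laplace integrals need $\beta^2<\nu(1-\nu_1)$.
\end{itemize}
In fact $\psi(\beta)>0$ alone cannot suffice for the lemma. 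Take $\nu=0.9$, $\nu_1=0.12$, $\beta^2=0.95$, so that $\psi(\beta)>0$. For $l=\lfloor\nu t\rfloor+1$, the quantity $A(t,l,1)=\E[e^{\beta^2 t_1}\mid n_t=l]$ grows like $e^{ct+o(t)}$ with $c=\beta^2-\nu+\nu\ln(\nu/\beta^2)>0$.

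\textbf{The paper does not overcome this either.} It decouples even more crudely, using $(l/t)^r\le(2-\nu)^r$ and $(l-r)/t\ge\nu(1-\nu_1)$. Done honestly, this yields $\bigl(\frac{(2-\nu)\beta^2}{\nu(1-\nu_1)(\nu(1-\nu_1)-\beta^2)}\bigr)^r$. Its last equality simply writes $(2-\nu)(1-\nu_1)$ where its own integrals produce $\nu(1-\nu_1)$.

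\textbf{Simplest repair.} Downstream, $\psi$ is used only through $\psi(\beta)\to0$ as $\beta\to0$ (conditions such as $\psi<\vartheta$ and $\alpha_d\psi<1$). So define $\psi$ as your bound, assume $\beta^2<\nu(1-\nu_1)$, and stop. Your argument is then complete and slightly sharper than the paper's.

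\textbf{Getting the stated $\psi$ for small $\beta$.} Your first bullet is the right direction; execute it via Dirichlet moments. Given $n_t=l$, with $K=\sum_j k_j$,
\[
\E\Bigl[\prod_j t_j^{k_j}\Bigm| n_t=l\Bigr]=t^K\frac{l!}{(l+K)!}\prod_j k_j!.
\]
Taylor-expand the exponentials and use $\frac{l!}{(l+K)!}\le l^{-K}$. With $x=\beta^2t/l<\beta^2/\nu$, this gives
\[
A(t,l,r)\le(1-x)^{-2}\Bigl(\frac{x}{1-x}\Bigr)^{r-2}\qquad\text{for all } 2\le r\le l+1,
\]
with no need for $r<\nu_1 l$. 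For small $\beta$, the inequality $\frac{\beta^2}{\nu-\beta^2}\le\psi(\beta)$ is equivalent to $\nu_1\beta^2\le\nu-(2-\nu)(1-\nu_1)^2$. The right-hand side is positive because $1-\nu_1<(2\nu-1)/\nu$ and $\nu^3-(2-\nu)(2\nu-1)^2=(1-\nu)^2(5\nu-2)>0$ for $\nu\in(\tfrac12,1)$.
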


\begin{lemma}     \label{lm:tail_exponential}
Let $f(t)$ be an integer-valued function such that there are $\rho_2 > \rho_1 > 1$ for which $e^{\rho_2 - 1} < \rho_1^{\rho_2}$ and $\rho_1 t < f(t) < \rho_2 t$ for all $t$ sufficiently large. Then, we have for $\lambda \in (0, 1 - \rho_2 (1 - \ln(\rho_1)))$    
\begin{equation*}   
\lim_{t \to \infty} e^{(\lambda -1)t} \sum_{n = f(t)}^{\infty} \frac{t^n}{n!} = 0.
\end{equation*}
\end{lemma}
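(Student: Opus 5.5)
The plan is to bound the tail $\sum_{n \ge f(t)} t^n/n!$ by a geometric series and then control its leading term with Stirling's formula. Throughout, write $f = f(t)$ and take $t$ so large that $\rho_1 t < f < \rho_2 t$.

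First, for $n \ge f$ we have $t^{n+1}/(n+1)! = \frac{t}{n+1}\cdot t^n/n! \le \rho_1^{-1}\, t^n/n!$. Comparing with a geometric series of ratio $\rho_1^{-1}$ gives
$$
\sum_{n=f}^{\infty} \frac{t^n}{n!} \le \frac{t^{f}}{f!}\sum_{k\ge 0}\rho_1^{-k} = \frac{\rho_1}{\rho_1-1}\,\frac{t^{f}}{f!}.
$$
Next, the Stirling bound $f! \ge \sqrt{2\pi f}\,(f/e)^f$ yields
$$
\frac{t^f}{f!} \le \frac{1}{\sqrt{2\pi f}}\exp\bigl(g(f)\bigr), \qquad g(u) \coleq u\bigl(1+\ln(t/u)\bigr).
$$
Since $g'(u) = \ln(t/u) < 0$ for $u > t$, the function $g$ is decreasing on $(t,\infty)$. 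Hence $g(f) \le g(\rho_1 t) = \rho_1 t\,(1-\ln\rho_1)$.

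The main point is to compare this exponent with $(1-\lambda)t$, and this is where the hypothesis on $\lambda$ enters. Consider the relevant regime $\rho_1 \le e$, i.e.\ $1-\ln\rho_1 \ge 0$. There $\rho_1 t(1-\ln\rho_1) \le \rho_2 t(1-\ln\rho_1)$, so
$$
e^{(\lambda-1)t}\sum_{n\ge f(t)}\frac{t^n}{n!} \lesssim t^{-1/2}\exp\Bigl(t\bigl(\lambda - 1 + \rho_2(1-\ln\rho_1)\bigr)\Bigr).
$$
The exponent is negative precisely because $\lambda < 1-\rho_2(1-\ln\rho_1)$, so the expression tends to $0$. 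The hypothesis $e^{\rho_2-1}<\rho_1^{\rho_2}$ is exactly what makes this interval of admissible $\lambda$ nonempty.

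The one delicate point is the case $\rho_1 > e$. There $1-\ln\rho_1<0$, and the inequality $\rho_1(1-\ln\rho_1) \le \rho_2(1-\ln\rho_1)$ reverses. Using only $f>\rho_1 t$, the sharp exponent is $\rho_1(1-\ln\rho_1)$, since $f$ may sit just above $\rho_1 t$. Then the argument goes through for every $\lambda < 1-\rho_1(1-\ln\rho_1)$, and in particular for all $\lambda\in(0,1)$. In that regime I would therefore verify that the applications only use $\lambda<1$, or state the bound with $\rho_1$ in place of $\rho_2$. I expect this case distinction, rather than the estimate itself, to be the only point needing care. The polynomial factor $t^{-1/2}$ and the constant $\rho_1/(\rho_1-1)$ play no role in either case.
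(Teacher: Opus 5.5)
Your proof is correct and is essentially the paper's argument (geometric tail bound plus Stirling). The paper bounds $(et/f)^f\le(e/\rho_1)^{f}\le(e/\rho_1)^{\rho_2 t}$, and that last step silently requires $\rho_1\le e$. That is exactly the case you treat, and your monotonicity of $u\mapsto u(1+\ln(t/u))$ even gives the sharper exponent $\rho_1(1-\ln\rho_1)$.

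Your caution about $\rho_1>e$ is well founded. For $f(t)=\lfloor\rho_1 t\rfloor+1$ one gets $t^f/f!\ge c\,t^{-1/2}e^{\rho_1 t(1-\ln\rho_1)}$ for some $c>0$, so the statement as written fails for $\lambda\in\bigl(1-\rho_1(1-\ln\rho_1),\,1-\rho_2(1-\ln\rho_1)\bigr)$. However, every application in the paper has $\rho_1<2<e$ (hence $\lambda<1$), so nothing downstream is affected.
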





\subsection{Proof of the Key Lemma (Lemma~\ref{lm:standard_arg})}
\label{ssec:key_lemma_proof}

Fix constants $\delta \in (0,1)$, $\kappa \in \left(e^{1-\frac{1}{\delta}}, \delta \right)$, $\nu \in (\tfrac{1}{2}, 1)$, and $\hat \kappa \in \left(e^{1-\frac{1}{\nu}}, \nu\right)$. Let $\nuzero \in (0, \tfrac{\delta}{2})$ be so small that 
$$ 
e^{1-\nuzero} > \left(\frac{e}{\kappa} \right)^{\delta}
$$ 
and let $\nuone \in (\nu^{-1} - 1,1)$. Let $\rho_1, \rho_2, \tilde \rho_1, \tilde \rho_2$ satisfy the estimates
\begin{align*} 
1 <& \rho_1 < (2-\nu) < \rho_2, & e^{\rho_2 - 1} <& \rho_1^{\rho_2}, \\
1 <& \tilde \rho_1 < \nu (1 + \nuone) < \tilde \rho_2, & e^{\tilde \rho_2 -1} <& \tilde \rho_1^{\tilde \rho_2}. 
\end{align*} 
Let $\vartheta > 0$. We choose $\beta_* > 0$ so small that the following inequalities hold for every $\beta \in (0, \beta^*]$: 
\begin{align*}
\beta <& 1, & e^{1 - \nuzero - \beta^2} >& \left(\frac{e}{\kappa} \right)^{\delta}, & \beta^2/\nuzero <& \vartheta, \\
e^{1 - \beta^2} >& \left(\frac{e}{\hat \kappa} \right)^{\nu}, & \beta^2 <& 1 - \rho_2 (1 - \ln(\rho_1)), & \beta^2 <& 1 - \tilde \rho_2 (1 - \ln(\tilde \rho_1)).     
\end{align*} 
Assume in addition that $0 < \psi(\beta) < \vartheta$ for every $\beta \in (0, \beta_*]$ (recall that $\psi(\beta)$ was defined in~\eqref{eq:def_psi}). 

Let $\tilde \sigma \in (\tfrac{3}{4}, 1)$ and $\theta > 0$ such that 
$$
\lim_{n \to \infty} n^{\theta} \sup_{\|y\| \leq n^{\tilde \sigma}} D^{\vartheta}_n(y) = 0. 
$$
Fix $\sigma \in (0, \tilde \sigma)$ and assume that the inverse temperature $\beta$ is less than or equal to $\beta_*$. By Jensen's inequality, in order to prove Lemma~\ref{lm:standard_arg}, it is enough to show that 
$$
\lim_{t \to \infty} t^{\theta}  \sup_{\|y\| \leq t^{\sublin}} \frac{1}{p_t^y} \E \left[ q^y_{n_t} \left \langle \Tc(y;0,t)^2 \right \rangle \right] = 0. 
$$
Using~\eqref{eq:T_square_bracket_A_est}, one has for $t > 0$ and $y \in \Z^d$ such that $\|y\| \leq t^{\sublin}$ the estimate   
$$
t^{\theta} (p_t^y)^{-1} \E \left[q^y_{n_t} \left \langle \Tc(y;0,t)^2 \right \rangle \right] \leq \sum_{l=0}^{\infty} t^{\theta} \frac{q^y_l}{p^y_t} e^{-t} \frac{t^l}{l!} \ \sum_{r \in R_l} \sum_{\ibf \in I_{l,r}, \zbf} \mathscr{q}^y_{l,r}(\ibf, \zbf)^2 \ A(t,l,r).  
$$
Let us write the right-hand side as  
\begin{equation}     \label{eq:three_Y_sums}
\sum_{0 \leq l \leq \nuzero t} t^{\theta} \frac{q^y_l}{p^y_t} Y_l(y,t) + \sum_{l > \nuzero t, l \notin J(t)} t^{\theta} \frac{q^y_l}{p^y_t} Y_l(y,t) + \sum_{l \in J(t)} t^{\theta} \frac{q^y_l}{p^y_t} Y_l(y,t),  
\end{equation} 
where 
$$ 
J(t) := \{l \in \N: \ \nu t < l < (2-\nu) t\} 
$$ 
was introduced in~\eqref{eq:Jt_def} and where 
\begin{equation}    \label{eq:Y_def} 
Y_l(y,t) \coleq e^{-t} \frac{t^l}{l!} \ \sum_{r \in R_l} \sum_{\ibf \in I_{l,r}, \zbf} \mathscr{q}^y_{l,r}(\ibf, \zbf)^2 \ A(t,l,r). 
\end{equation}  
Using Lemma~\ref{lm:lclt_con_time} and the fact that $q^y_l \leq 1$, one has the estimate  
\begin{equation}    \label{eq:q_p_ratio} 
\frac{q^y_l}{p^y_t} \lesssim e^{t^{\sigma}} \quad \text{for} \ \|y\| \leq t^{\sigma}, \ l \in \N_0. 
\end{equation} 
In addition,~\eqref{190921123823} yields %
\begin{equation}   \label{eq:q_square_sum}
\sum_{r \in R_l} \sum_{\ibf \in I_{l,r}, \zbf} \mathscr{q}^y_{l,r}(\ibf, \zbf)^2 \leq 4 \sum_{r \in R_l} \binom{l+1}{r} \leq 2^{l+3} \leq 2^3 e^{\nuzero t}.  
\end{equation}
The estimate in~\eqref{eq:q_p_ratio}, Lemma~\ref{lm:expected_value}, Lemma~\ref{lm:precise_I_bounds}, and~\eqref{eq:q_square_sum} then imply  
\begin{flalign*}
&\sum_{0 \leq l \leq \nuzero t} t^{\theta} \ \frac{q^y_l}{p^y_t} \ Y_l(y,t) \\
	\lesssim& e^{t^{\sublin}} t^{\theta} \sum_{0 \leq l \leq \nuzero t} Y_l(y,t) \\
    \lesssim& e^{t^{\sublin}} e^{(\beta^2 - 1)t} \ t^{\theta} \sum_{0 \leq l \leq \nuzero t} (l+1)^2 
    \sum_{r \in R_l} \beta^{2r} \frac{t^{l+r}}{(l+r)!} \sum_{\ibf \in I_{l,r}, \zbf} \mathscr{q}_{l,r}^y(\ibf, \zbf)^2 \\ 
	\lesssim& e^{t^{\sublin}} e^{(\beta^2 - 1)t} \ t^{\theta-\frac{1}{2}} \left(\frac{e}{\kappa} \right)^{\delta t} \sum_{0 \leq l \leq \nuzero t} (l+1)^2  
	\sum_{r \in R_l} \beta^{2r} \sum_{\ibf \in I_{l,r}, \zbf} \mathscr{q}^y_{l,r}(\ibf,\zbf)^2 \\
	\lesssim& e^{t^{\sublin}} e^{(\beta^2 + \nuzero -1)t} \ t^{\theta-\frac{1}{2}} \left( \frac{e}{\kappa} \right)^{\delta t} (\nuzero t + 1)^3. 
\end{flalign*}
The right-hand side does not depend on $y$ and tends to $0$ as $t \to \infty$ because $e^{1- \beta^2-\nuzero} > (e/\kappa)^{\delta}$.

With regard to the second term in~\eqref{eq:three_Y_sums}, we use~\eqref{eq:q_p_ratio}, Lemma~\ref{lm:expected_value} and the fact that $\frac{t^r}{(l+1)\cdots(l+r)} \leq \nuzero^{-r}$ for $l > \nuzero t$ to obtain  
\begin{align*} 
&\sum_{l > \nuzero t, l \notin J(t)} t^{\theta} \frac{q^y_l}{p^y_t} Y_l(y,t) \\
\lesssim& e^{t^{\sublin}} t^{\theta} \sum_{l > \nuzero t, l \notin J(t)} Y_l(y,t) \\
\lesssim& e^{t^{\sublin}} e^{(\beta^2-1)t} \ t^{\theta} \sum_{l > \nuzero t, l \notin J(t)} (l+1)^2 \frac{t^l}{l!} \sum_{r \in R_l} \beta^{2r} \frac{t^r}{(l+1) \ldots (l+r)} \sum_{\ibf \in I_{l,r}, \zbf} \mathscr{q}_{l,r}^y(\ibf,\zbf)^2  \\
\leq& e^{t^{\sigma}} e^{(\beta^2 - 1)t} \ t^{\theta} \sum_{l > \nuzero t, l \notin J(t)} (l+1)^2 \frac{t^l}{l!} \sum_{r \in R_l} (\beta^2/\nuzero)^r \sum_{\ibf \in I_{l,r}, \zbf} \mathscr{q}_{l,r}^y(\ibf, \zbf)^2 \\
=&  e^{t^{\sublin}} e^{(\beta^2 -1) t} \ t^{\theta} \sum_{l > \nuzero t, l \notin J(t)} (l+1)^2 \frac{t^l}{l!} D_l^{\beta^2 \nuzero^{-1}}(y).   
\end{align*}
For $\|y\| \leq t^{\sublin}$ and $l > \nuzero t$, we have 
$
\|y\| \leq \nuzero^{-\sublin} l^{\sublin} < l^{\tilde \sublin}
$
for $t$ large enough.
Since $\beta^2 \nuzero^{-1} < \vartheta$ and $\lim_{n \to \infty} \sup_{\|y\| \leq n^{\tilde \sublin}} D_n^{\vartheta}(y) = 0$, one has
$$
D_l^{\beta^2 \nuzero^{-1}}(y) \leq \sup_{\|z\| \leq l^{\tilde \sublin}} D_l^{\vartheta}(z) \leq \sup_{n \in \N} \sup_{\|z\| \leq n^{\tilde \sublin}} D_n^{\vartheta}(z) < \infty
$$
for such $l$ and $y$, so 
$$
\lim_{t \to \infty} \sup_{\|y\| \leq t^{\sublin}} \sum_{l > \nuzero t, l \notin J(t)} t^{\theta} \ \frac{q^y_l}{p^y_t} \ Y_l(y,t) = 0 
$$
will follow once we show that  
\begin{equation}    \label{eq:conv_tail_sum} 
\lim_{t \to \infty} e^{t^{\sublin}} e^{(\beta^2 - 1) t} \ t^{\theta} \sum_{l > \nuzero t, l \notin J(t)} (l+1)^2 \frac{t^l}{l!} = 0. 
\end{equation} 
Let us write 
$$
\sum_{l > \nuzero t, l \notin J(t)} (l+1)^2 \frac{t^l}{l!} = \sum_{\nuzero t < l \leq \nu t} (l+1)^2 \frac{t^l}{l!} + \sum_{l \geq (2-\nu) t} (l+1)^2 \frac{t^l}{l!}. 
$$
By Lemma~\ref{lm:precise_I_bounds}, the first sum on the right is dominated by 
$$
t^{-1/2} \left(\frac{e}{\hat \kappa} \right)^{\nu t} \sum_{\nuzero t < l \leq \nu t} (l+1)^2 \leq t^{-1/2} \left(\frac{e}{\hat \kappa} \right)^{\nu t} (\nu t + 1)^3. 
$$
Since $e^{1 - \beta^2} > (e/\hat \kappa)^{\nu}$, this gives 
\begin{equation}       \label{eq:conv_tail_sum_1} 
\lim_{t \to \infty} e^{t^{\sublin}} e^{(\beta^2 - 1)t} \ t^{\theta} \sum_{\nuzero t < l \leq \nu t} (l+1)^2 \frac{t^l}{l!} = 0. 
\end{equation}
And 
\begin{equation}    \label{eq:conv_tail_sum_2} 
\lim_{t \to \infty} e^{t^{\sublin}} e^{(\beta^2 - 1)t} \ t^{\theta} \sum_{l \geq (2 - \nu) t} (l+1)^2 \frac{t^l}{l!} = 0 
\end{equation} 
by virtue of Lemma~\ref{lm:tail_exponential}, with $f(t) = \lfloor (2-\nu) t \rfloor$ and with $\rho_1, \rho_2$ introduced at the beginning of the proof. The convergence statements in~\eqref{eq:conv_tail_sum_1} and~\eqref{eq:conv_tail_sum_2} imply~\eqref{eq:conv_tail_sum}.  

It remains to show that 
$$
\lim_{t \to \infty} t^{\theta} \sup_{\|y\| \leq t^{\sublin}} \sum_{l \in J(t)}  \frac{q^y_l}{p^y_t} Y_l(y,t) = 0. 
$$ 
For $l \in J(t)$, set 
\begin{align*}
Y^1_l(y,t) \coleq& e^{-t} \frac{t^l}{l!} \sum_{r \in R_l, r < \nuone l} \sum_{\ibf \in I_{l,r}, \zbf} \mathscr{q}_{l,r}^y(\ibf, \zbf)^2 A(t,l,r), \\ 
Y^2_l(y,t) \coleq& e^{-t} \frac{t^l}{l!} \sum_{r \in R_l, r \geq \nuone l} \sum_{\ibf \in I_{l,r}, \zbf} \mathscr{q}_{l,r}^y(\ibf, \zbf)^2 A(t,l,r), 
\end{align*}
and note that $Y_l(y,t) = Y^1_l(y,t) + Y^2_l(y,t)$. Let us first show that 
$$
\lim_{t \to \infty} t^{\theta} \sup_{\|y\| \leq t^{\sublin}} \sum_{l \in J(t)} \frac{q^y_l}{p^y_t} Y^1_l(y,t) = 0. 
$$
Fix $\eps > 0$ and choose $L \in \N$ so large that 
$$
l^{\theta} \sup_{\|z\| \leq l^{\tilde \sublin}} D^{\vartheta}_l(z) < \eps \Jtconst^{\theta}, \quad \forall l \geq L. 
$$
Let $t$ be so large that $\Jtconst t > L$ and $\Jtconst^{-\sublin} l^{\sublin} < l^{\tilde \sublin}$ for all $l > \Jtconst t$. Since $\psi(\beta) < \vartheta$, we have for $l > \Jtconst t$ and $y \in \Z^d$ such that $\|y\| \leq t^{\sublin}$ the estimate 
$$
t^{\theta} D_l^{\psi(\beta)}(y) \leq \Jtconst^{-\theta} l^{\theta} \sup_{\|z\| \leq l^{\tilde \sublin}} D_l^{\vartheta}(z) \leq \eps. 
$$
By Lemma~\ref{lm:A_estimate}, 
\begin{align*}
t^{\theta} \sup_{\|y\| \leq t^{\sublin}} \sum_{l \in J(t)} \frac{q^y_l}{p^y_t} Y^1_l(y,t) \lesssim& \ t^{\theta} \sup_{\|y\| \leq t^{\sublin}} (p_t^y)^{-1} \sum_{l \in J(t)} e^{-t} \frac{t^l}{l!} D^{\psi(\beta)}_l(y) \\
\leq& \ \eps \sup_{\|y\| \leq t^{\sublin}} (p_t^y)^{-1} \sum_{l \in J(t)} e^{-t} \frac{t^l}{l!} q^y_l \leq \eps. 
\end{align*}
To complete the proof of the Key Lemma, we show 
$$
\lim_{t \to \infty} t^{\theta} \sup_{\|y\| \leq t^{\sublin}} \sum_{l \in J(t)} \frac{q^y_l}{p^y_t} Y^2_l(y,t) = 0. 
$$
Let $t$ be so large that $\nu^{-\sigma} l^{\sigma} < l^{\tilde \sigma}$ for all $l > \nu t$. For $\| y \| \leq t^{\sublin}$,~\eqref{eq:q_p_ratio} and Lemma~\ref{lm:expected_value} imply
\begin{align}    
&t^{\theta} \sum_{l \in J(t)} \frac{q^y_l}{p^y_t} Y^2_l(y,t) \notag \\
	\lesssim& 
		t^{\theta} e^{t^{\sublin}} e^{(\beta^2 -1) t} \sum_{l \in J(t)} (l+1)^2 \sum_{r \in R_l, r \geq \nuone l} \beta^{2r} \frac{t^{l+r}}{(l+r)!} \sum_{\ibf \in I_{l,r}, \zbf} \mathscr{q}_{l,r}^y(\ibf,\zbf)^2.  \label{eq:Y_2_est} 
\end{align}  
For $l \in J(t)$ and $r \geq \nuone l$, 
$$
l+r \geq \lceil l (1 + \nuone) \rceil > t \Jtconst (1 + \nuone) > t, 
$$
so the expression on the right-hand side of~\eqref{eq:Y_2_est} is dominated by 
$$
t^{\theta} e^{t^{\sublin}} e^{(\beta^2 - 1) t} \sum_{l \in J(t)} (l+1)^2 \frac{t^{\lceil (1 + \nuone) l \rceil}}{\lceil (1+\nuone) l \rceil !} D^{\beta^2}_l(y)  
			  \lesssim t^{\theta} e^{t^{\sublin}} e^{(\beta^2 - 1)t} \sum_{l \geq \Jtconst (1+\nuone) t} (l+1)^2 \frac{t^l}{l!}, 
$$
where we used that 
$$
D^{\beta^2}_l(y) \leq \sup_{\|z\| \leq l^{\tilde \sublin}} D^{\vartheta}_l(z) \leq \sup_{n \in \N} \sup_{\|z\| \leq n^{\tilde \sublin}} D^{\vartheta}_n(z) < \infty. 
$$
It remains to show 
$$
\lim_{t \to \infty} t^{\theta} e^{t^{\sublin}} e^{(\beta^2 - 1)t} \sum_{l \geq \Jtconst (1+\nuone) t} (l+1)^2 \frac{t^l}{l!} = 0, 
$$
which follows from Lemma~\ref{lm:tail_exponential} with $f(t) = \lfloor \Jtconst (1+\nuone) t \rfloor$ and $\tilde \rho_1, \tilde \rho_2$ introduced at the beginning of the proof. 
 	




\section{Proof of Theorem \protect\ref{thm:factorization}}
\label{sec:proof_lm_factorization}

The proof of the factorization formula for the continuous-time model is structured in almost exactly the same way as the proof in discrete time from \\~\cite{HKNN}. There are, however, a number of technical obstacles in the continuous setting, mainly due to the fact that the number of jumps in any given time interval is random and unbounded. 

Since the law of the field of two-sided Wiener processes is invariant under regular shifts in space and under Wiener shifts in time, it suffices to show the following statement for $\beta$ sufficiently small: For every $\sublin \in (0,1)$, there is $\theta > 0$, depending on $\sublin$ but independent of $\beta$, such that 
$$ 
\lim_{t \to \infty} t^{\theta} \sup_{y \in \Z^d: \|y\| < t^{\sublin}} \langle \lvert \delta_{0,0}^{y,t} \rvert \rangle = 0. 
$$ 
For $y \in \Z^d$, $n \in \N_0$, $1 \leq r \leq n+1$, $\ibf = (i_1, \ldots, i_r) \in I(n,r)$, and $\zbf = (z_1, \ldots, z_r) \in (\Z^d)^r$, define
\begin{equation*}
q_{n}^y(\ibf,\zbf) \coleq q_{i_1}^{z_1} q_{i_2 - i_1}^{z_2 - z_1} \ldots q_{n - i_r}^{y-z_r}.
\end{equation*}
Then, from the expansion for the partition function $Z^{y,t}_{0,0}$ from Section~\ref{sec:setting}, one has
\begin{equation}    \label{eq:compact_expansion} 
	Z_{0,0}^{y,t} 
		=  p_{t}^{y} + \sum_{n=0}^{\infty} \sum_{r=1}^{n+1} \sum_{\ibf \in I(n,r), \zbf} q_{n}^y(\ibf,\zbf) \E \biggl[\prod_{j=1}^r h(z_j; s_{i_j}, s_{i_j +1}) \id_{n_t = n} \biggr], 
\end{equation} 
where we wrote $\E$ instead of $\E_{0,0}$ to simplify notation. The first step is to split the triple sum on the right into terms according to the size of the largest gap between indices, as discussed in the next subsection.

\subsection{Large and huge gaps}
\label{ssec:large_huge_gaps}

Fix $\sublin \in (0,1)$ and assume without loss of generality that $\sublin > 3/4$. We define the notions of \emph{large} and \emph{huge gaps} as in~\cite{HKNN}: Fix positive constants $\kappa_1, \kappa_2 \in (\tfrac{1}{2} (3 \sublin -1), 1)$ such that $\kappa_1 < \kappa_2$.
Let $N_{\kappa_2} \in \N$ be so large that 
$
2 (n - n^{\kappa_2}) > n 
$
for all $n \geq N_{\kappa_2}$.
Then, define
$$
k(n) \coleq \begin{cases}
             ( n-N_{\kappa_2} )^{\kappa_1} - 1, & \quad (n-N_{\kappa_2} )^{\kappa_1} - 1 \geq 1, \\
                                       0, & \quad (n-N_{\kappa_2})^{\kappa_1} - 1 < 1. 
                                       \end{cases}
$$
A collection of indices $0 \leq i_1 < \ldots < i_r \leq n$ is said to have \emph{many gaps} if $r > k (n)$.

Fix $\xi \in (0, \min \big\{ 1-\sublin, \kappa_2 - \kappa_1 \})$.
Let $n \in \N$ such that $k(n) \geq 1$, let $r \in \{1, \ldots, k(n)\}$, and consider a sequence of indices $0 = i_0 \leq i_1 < \ldots < i_r \le i_{r+1} = n$.
We call the gap between two consecutive indices $i_{j-1}$ and $i_j$ 
\begin{itemize}
\item \emph{large} if $i_j - i_{j-1} \geq n^{\xi}$;
\item \emph{huge} if $i_j - i_{j-1} \geq n - r n^{\xi}$.
\end{itemize}
Below we state some elementary facts about large and huge gaps. See~\cite{HKNN} for more details. 
\begin{enumerate}
\item There is always at least one large gap.
\item A huge gap is also large.
\item If there is only one large gap, then this large gap is also huge.
\item There is at most one huge gap.
\item Even if there is a huge gap, there may be several other large gaps. 
\end{enumerate} 
Let $r \in \N$ and $n \in \N_0$. As in~\cite{HKNN}, define for $1 \leq m \leq r+1$
$$
	I_1 (n, r, m)
		\coleq
		\left\{ (i_1, \ldots, i_r) \in I(n,r) ~:~
			\text{the gap between $i_{m-1}$ and $i_m$ is huge}
			\right\}. 
$$
Also define  
$$
	I_2 (n, r)
		\coleq
		\left\{ (i_1, \ldots, i_r) \in I(n,r)~:~
			\text{there is no huge gap}
			\right\}.
$$
Then we decompose the expansion of $Z^{y,t}_{0,0}$ from~\eqref{eq:compact_expansion} as follows:
$$ 
Z_{0,0}^{y,t} = p_t^y + \sum_{j=1}^3 \E B_j^{y,t}, 
$$
where
\begin{align*}
B_1^{y,t} \coleq &  \sum_{k(n_t) < r \leq n_t +1} \sum_{\ibf \in I(n_t,r), \zbf} q_{n_t}^y(\ibf,\zbf)  \prod_{j=1}^r h(z_j; s_{i_j}, s_{i_j +1}), \\
B_2^{y,t} \coleq & \sum_{1 \leq r \leq k(n_t)} \sum_{\ibf \in I_2 (n_t, r), \zbf} q_{n_t}^y(\ibf,\zbf)  \prod_{j=1}^r h(z_j; s_{i_j}, s_{i_j +1}), \\
B_3^{y,t}  \coleq & \sum_{1 \leq r \leq k(n_t)} \sum_{m=1}^{r+1} \sum_{\ibf \in I_1 (n_t, r, m), \zbf} q_{n_t}^y(\ibf,\zbf)  \prod_{j=1}^r h(z_j; s_{i_j}, s_{i_j +1}).  
\end{align*}
With this decomposition in hand, Theorem~\ref{thm:factorization} is a direct consequence of the following lemma.

\begin{lemma}[Central Lemma]
\label{lm:central_lemma}
For an inverse temperature $\beta$ sufficiently small, the following statements are true:
\begin{enumerate}
\item For all $\theta > 0$,	
\begin{equation}
\label{190918115555}
	\lim_{t \to \infty} t^{\theta} 
	\sup_{\|y\| \leq t^{\sublin}} 
	\frac{\langle \lvert \E B_1^{y,t} \rvert \rangle}{p_t^y} 
		= 0. 
\end{equation}
\item There is $\theta = \theta(\sigma) > 0$, independent of $\beta$, such that 
\begin{equation}
\label{190918141213}	
\lim_{t \to \infty} t^{\theta} \sup_{\|y\| \leq t^{\sublin}} \frac{\langle \lvert \E B_2^{y,t} \rvert \rangle}{p_t^y} = 0. 
\end{equation}
\item There is $\theta = \theta(\sigma) > 0$, independent of $\beta$, such that 
\begin{equation}    \label{eq:convergence_3}
\lim_{t \to \infty} t^{\theta} \sup_{\|y\| \leq t^{\sublin}} \left \langle \left \lvert 1 + \frac{\E B_3^{y,t}}{p_t^y}  - Z_{0,0}^{\infty} Z_{-\infty}^{y,t} \right \rvert \right \rangle = 0. 
\end{equation}
\end{enumerate}
\end{lemma}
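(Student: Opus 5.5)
We prove the three parts of the Central Lemma separately. In each part we reduce to a discrete-time estimate through the Key Lemma (Lemma~\ref{lm:standard_arg}), or through Remark~\ref{rm:standard_arg} when the relevant sum does not depend on $y$.

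\textbf{Parts 1 and 2.} We write $B_1^{y,t}$ and $B_2^{y,t}$ in the form $q^y_{n_t}\Tc(y;0,t)$. To do so we set
\[
\mathscr{q}^y_{n,r}(\ibf,\zbf) \coleq q^y_n(\ibf,\zbf)/q^y_n,
\]
with $R_n=\{r: k(n)<r\le n+1\}$ and $I_{n,r}=I(n,r)$ for $B_1$, and $R_n=\{1,\dots,k(n)\}$ and $I_{n,r}=I_2(n,r)$ for $B_2$. The finiteness condition~\eqref{eq:finite_cond_q} then becomes
\[
\sum_\zbf q^y_n(\ibf,\zbf)\le 2q^y_n ,
\]
which is false far from the diagonal. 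We therefore first restrict to $\|y\|\le n^{\tilde\sublin}$, with the right parity, and absorb the bounded ratio using Lemma~\ref{lm:ratio_estimate}. Alternatively, we note that $\sum_\zbf q^y_n(\ibf,\zbf)=q^y_n$ exactly, by the Chapman--Kolmogorov equation, so the condition holds with constant $1$.

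Once this is set up, $\langle|\E B_j^{y,t}|\rangle\le \E[q^y_{n_t}\langle|\Tc|\rangle]$, and the Key Lemma applies as soon as the $D$-sequences satisfy
\[
n^\theta\sup_{\|y\|\le n^{\tilde\sublin}}D_n^\vartheta(y)\to0 .
\]
In both cases $D_n^\vartheta(y)$ coincides with the discrete-time quantity that~\cite{HKNN} controls in their proof of the factorization formula: the many-gaps sum for $B_1$, which decays faster than any power since $(\vartheta\alpha_d)^{k(n)}$ with $k(n)\sim n^{\kappa_1}$, and the no-huge-gap sum for $B_2$, which gives a polynomial rate $\theta(\sigma)$. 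We can take $\vartheta$ slightly larger than $\lambda$, or simply a fixed small constant, and this fixes $\beta_*$.

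\textbf{Part 3.} For $\ibf\in I_1(n,r,m)$, the huge gap separates the indices into a left block near time $0$ and a right block near time $t$. We compare $1+\E B_3^{y,t}/p^y_t$ with the product of the truncated expansions of $Z^{t_1}_{0,0}$ and $Z^{y,t}_{t-t_1}$, where $t_1$ is of order $t^{\xi}$, and proceed in four steps.

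First, we replace the factor $q^{z_m-z_{m-1}}_{i_m-i_{m-1}}$ across the huge gap by $q^y_{n}$, or by $q^y_{\iota(y,l)}$, at the cost of a multiplicative error $1+O(t^{-\theta})$. This uses Lemma~\ref{lm:ratio_estimate} together with Lemma~\ref{lm:q_iota}, since the displacements in the small blocks are of order at most $t^{\xi}$ with high probability.

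Second, we decouple the continuous-time jump structure. Conditioning on $n_t$ and splitting the Poisson process into the pieces on $[0,t_1]$, $[t_1,t-t_1]$ and $[t-t_1,t]$, the middle piece contributes the sum $\sum_l e^{-(t-2t_1)}\frac{(t-2t_1)^l}{l!}q^y_l$, which is close to $p^y_{t-2t_1}$. We compare this with $p^y_t$ using Lemma~\ref{lm:p_ratio} and Lemma~\ref{lm:uniform_D_conv}, restricting to $l\in J(\cdot)$.

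Third, we control the remaining errors in $L^2$ via Remark~\ref{rm:standard_arg}. These are the terms where $m$ does not split the blocks in the expected way, and the tails $Z^{t_1}_{0,0}-Z^\infty_{0,0}$, which are handled by Theorem~\ref{thm:limiting_part_fun}.

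Finally, we use Cauchy--Schwarz to pass from the product of $L^2$ errors to an $L^1$ bound on the difference.

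\textbf{Main obstacle.} We expect the second step of Part~3 to be the hardest. The huge gap is defined in terms of jump indices, but the jump count $n_t$ is random. The conditional laws of the jump times inside the two blocks therefore do not factor exactly. Moreover, the normalizing factor $e^{\beta^2 t_1}$ from Lemma~\ref{lm:p_ratio} has to be compensated by the smallness of the $L^2$ errors. This forces us to choose $t_1=t^{\xi}$ with $\xi$ small and to take $\beta$ small.
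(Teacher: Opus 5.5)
Parts 1 and 2 follow the paper's argument: write $B_j^{y,t}=q^y_{n_t}\Tc$ with coefficients $q^y_n(\ibf,\zbf)/q^y_n$, apply the Key Lemma (Lemma~\ref{lm:standard_arg}) with a fixed $\vartheta<1/\alpha_d$, and import the $D$-sequence decay from~\cite{HKNN}. The Chapman--Kolmogorov identity $\sum_{\zbf}q^y_n(\ibf,\zbf)=q^y_n$ is the correct check of~\eqref{eq:finite_cond_q}. Your preceding remark that the condition fails is wrong and should be dropped.

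Part 3 has two genuine gaps. \emph{First gap: the error terms.} Every error has to be bounded in the weighted form $\frac{1}{p^y_t}\E[q^y_{n_t}\langle|\cdot|\rangle]$. Once you truncate at time $t_1$, the expansions depend on the split jump counts $(n_-,n_\bullet,n_+)$, not on $n_t$ alone, so neither the Key Lemma nor Remark~\ref{rm:standard_arg} applies.
\begin{itemize}
\item The Remark bounds only the unweighted $\E\langle\Tc^2\rangle$, whereas $q^y_{n_t}/p^y_t$ is unbounded (only $\lesssim e^{t^{\sublin}}$).
\item The Remark's polynomial rate also cannot absorb the factor $e^{\beta^2 t_1}$ from Lemma~\ref{lm:p_ratio}, as you propose.
\end{itemize}
The paper instead conditions on $(l_-,l_\bullet,l_+)$.
\begin{itemize}
\item In the typical regime ($l_\pm\in J(t^{\xi_1})$, $l_\bullet\in J(t-2t^{\xi_1})$, $r_\pm<\nuone l_\pm$), it bounds the conditional second moment uniformly by $\psi(\beta)^r$ via Lemma~\ref{lm:A_estimate}. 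It then uses $\sum q^y_{l_-+l_\bullet+l_+}\Pp(l_-,l_\bullet,l_+)\le p^y_t$, so Lemma~\ref{lm:p_ratio} is never needed there.
\item In the atypical regimes ($l_\pm$ or $l_\bullet$ outside their windows, or $r_\pm\ge\nuone l_\pm$, where Lemma~\ref{lm:A_estimate} fails), it replaces $q^y_{l_-+l_\bullet+l_+}$ by $e^{Ct^{\sublin-1}(l_-+l_+)}q^y_{\iota(y,l_\bullet)}$ and pays a factor $e^{\beta^2t^{\xi_1}}$ or $e^{t^{\sublin}}$. That factor is beaten by the Poisson large-deviation and $A(t,l,r)$ bounds (A3)--(A5) of Lemma~\ref{lm:building_blocks}, which are of order $e^{-ct^{\xi_1}}$ with $c>\beta^2$, resp.\ $e^{-ct}$.
\end{itemize}
This is the bulk of Section~\ref{ssec:main_contribution_lemmas}, and your outline has nothing in its place.

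Relatedly, your Step 1 cannot hold as a uniform factor $1+O(t^{-\theta})$. The short blocks can carry displacements up to $rn^{\xi}$ with $r$ up to $k(n)\approx n^{\kappa_1}$, and there Lemma~\ref{lm:ratio_estimate} only gives $\exp(ct^{\sublin-1}rn^{\xi})$. The replacement has to go through the weighted error terms $L_i$, whose coefficients depend only on $n_t$ (and on $y$). These are handled by the full Key Lemma together with~\cite[Lemma~4.2]{HKNN}.

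\emph{Second gap: decoupling the main term.} You correctly identify this as the obstacle, but the tools you name do not resolve it, and conditioning on $n_t$ goes the wrong way. Exact factorization $\E[q^y_{n_\bullet}T_0^{y,t}T_{0,0}^t]=p^y_{t-2t^{\xi_1}}\,\E T_0^{y,t}\,\E T_{0,0}^t$ requires replacing the weight $q^y_{n_t}$ by $q^y_{n_\bullet}$. Your Step 2 does this silently: the displayed sum is exactly $p^y_{t-2t_1}=\E q^y_{n_\bullet}$, not merely close to it. You then need $p^y_{t-2t^{\xi_1}}/p^y_t=1+O(t^{-\theta})$ uniformly in $\|y\|\le t^{\sublin}$. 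Neither cited lemma gives this: Lemma~\ref{lm:p_ratio} only gives the upper bound $\lesssim e^{\beta^2t^{\xi}}$, and Lemma~\ref{lm:uniform_D_conv} only discards $n\notin J(t)$.

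More seriously, $q^y_{n_t}-q^y_{n_\bullet}$ is not small pointwise: when $n_-+n_+$ is odd, exactly one of the two vanishes by parity. The paper handles this in three steps.
\begin{itemize}
\item It restricts $n_\bullet$ to the window $K(t-2t^{\xi_1})$ of relative width $t^{-\chi}$. The complement is negligible even against $e^{t^{\sublin}}$ because $\chi<\frac12(1-\sublin)$.
\item When the parities agree, it uses Lemma~\ref{lm:ratio_estimate}, with error $\exp(ct^{\sublin+\xi_1-1})-1$.
\item When they disagree, it pairs $n_\bullet=2p$ with $2p+1$, using $\Pp^{\bullet}(2p)=\Pp^{\bullet}(2p+1)\frac{2p+1}{t-2t^{\xi_1}}=\Pp^{\bullet}(2p+1)(1+O(t^{-\chi}))$ on $K$.
\end{itemize}
The same averaging gives $p^y_{t-2t^{\xi_1}}/p^y_t\to1$. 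Without this parity-pairing argument, Part 3 does not close. Once it is in place, your tail comparison via Theorem~\ref{thm:limiting_part_fun} and the final Cauchy--Schwarz step go through as in the paper.
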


Sections~\ref{ssec:proof_central_lemma_part12} and~\ref{ssec:proof_central_lemma_part3} are devoted to the proof of this lemma.

\subsection{Proof of the Central Lemma, Parts 1 and 2: Small contributions}
\label{ssec:proof_central_lemma_part12}

In this subsection, we show that the contributions of the terms $\E B_1^{y,t}$ and $\E B_2^{y,t}$ to $Z_{0,0}^{y,t}$ are negligible.
In both cases, the strategy is to reduce the task at hand to showing convergence to $0$ of the $D$-sequence associated with $B_j^{y,t}/q_{n_t}^y$. This reduction is done by applying the Key Lemma~\ref{lm:standard_arg}. Convergence of the $D$-sequence follows immediately from the proof of the discrete-time factorization formula in~\cite{HKNN}.  
Notice that $B_j^{y,t} = 0$ whenever $q^y_{n_t} = 0$. For $j = 1,2$ one has 
$$
	\frac{1}{p^y_t} 
		\Big\langle \big| \E B_j^{y,t} \big| \Big\rangle
		\leq
		\frac{1}{p^y_t} 
		\E \left[ 
			\id_{q^y_{n_t} > 0} \ q_{n_t}^y
			\left\langle 
				\left| 
					\frac{B_j^{y,t}}{q_{n_t}^y}
				\right| \right\rangle
			\right]. 
$$

\subsubsection{Proof of Part 1: Many gaps}
\label{190918135324}

Fix $\vartheta < 1/\alpha_d$. The $D$-sequence associated with $\id_{q^y_{n_t} > 0} \ B_1^{y,t}/q_{n_t}^y$ is given by
$$
	D_n^\vartheta (y)
		\coleq
		\begin{cases}
			\displaystyle \bigg.
				~ 0 & \qquad \text{if $q^y_n = 0$,}
	\\		\displaystyle
			\frac{1}{(q^y_n)^{2}}
			\sum_{k(n) < r \leq n+1} 
			\vartheta^r
			\sum_{\ibf \in I(n,r), \zbf} q_{n}^y(\ibf,\zbf)^2
				& \qquad \text{if $q^y_n > 0$.}
		\end{cases}
$$
Fix $\theta > 0$ and let $\tilde \sublin \in (\sublin, 1)$ be so close to $\sublin$ that the inequalities for $\kappa_1, \kappa_2,$ and $\xi$ continue to hold if $\sublin$ is replaced with $\tilde \sublin$, i.e., suppose 
$$ 
\kappa_1, \kappa_2 > \tfrac{1}{2} (3 \tilde \sublin -1) \quad \text{and} \quad \xi < \min\{1 - \tilde \sublin, \kappa_2 - \kappa_1\}. 
$$ 
As $\vartheta < 1/\alpha_d$, one has  
$$
\lim_{n \to \infty} n^{2 \theta} \sup_{\|y\| \leq n^{\tilde \sublin}} D_n^{\vartheta}(y) = 0,  
$$
see Section~4.2.1 of~\cite{HKNN}. Thanks to the Key Lemma~\ref{lm:standard_arg}, this yields for $\beta \leq \beta_*$, with $\beta_*$ the inverse temperature corresponding to $\vartheta$,  
$$
	\lim_{t \to \infty} t^{\theta} \sup_{\|y\| \leq t^{\sublin}} \frac{1}{p_t^y} \E \left[
			\id_{q^y_{n_t} > 0} \ q_{n_t}^y
			\left\langle 
				\left| 
					\frac{B_1^{y,t}}{q_{n_t}^y}
				\right| \right\rangle
			\right] = 0,
$$
and hence the limit~\eqref{190918115555}.

\subsubsection{Proof of Part 2: No huge gaps}
\label{190918135426}

The $D$-sequence associated with $\id_{q_{n_t}^y > 0} \ B_2^{y,t}/q_{n_t}^y$ is given by
$$
	D_n^\vartheta (y)
		\coleq
		\begin{cases}
			\displaystyle \bigg.
				~ 0 & \qquad \text{if $q^y_n = 0$,}
	\\		\displaystyle
			\frac{1}{(q^y_n)^{2}}
			\sum_{1\leq  r \leq k(n)}
			\vartheta^r
			\; \sum_{\ibf \in I_2(n,r), \zbf} q_{n}^y(\ibf,\zbf)^2
				& \qquad \text{if $q^y_n > 0$.}
		\end{cases}
$$
Choose $\tilde \sublin$ as in the proof of Part 1 and let $\theta \in (0, \xi/8)$. As $\vartheta < 1/\alpha_d$, one has 
$$
\lim_{n \to \infty} n^{2 \theta} \sup_{\|y\| \leq n^{\tilde \sublin}} D_n^{\vartheta}(y) = 0,  
$$
see Section~4.2.2 of~\cite{HKNN}. By Lemma~\ref{lm:standard_arg}, this implies the limit~\eqref{190918141213} if $\beta \leq \beta_*$.

\subsection{Proof of the Central Lemma, Part 3: The main contribution}
\label{ssec:proof_central_lemma_part3}

We follow the strategy from~\cite{HKNN}. For $\ibf \in I_1(n, r,m)$ and $\zbf \in (\Z^d)^r$, define
\begin{equation} \label{eq:def_qminusm}
q_{n,\widehat{m}}^y(\ibf,\zbf) := q_{i_1}^{z_1} \ldots  \widehat{ q_{i_{m} - i_{m-1}}^{z_{m}-z_{m-1}} }\ldots q_{n-i_r}^{y-z_r},
\end{equation}
where the factor with the hat is equal to $1$; in other words, we remove the transition probability corresponding to the huge gap.

Now decompose $B_3^{y,t}$ further, depending on the position of the huge gap 
1) at the beginning, 2) in the middle, or 3) at the end, as follows:

\begin{equation*}
B_3^{y,t} = \id_{q^y_{n_t} > 0} \ q_{n_t}^y \sum_{i=1}^3 \left(F_i^{y,t} + L_i^{y,t} \right),
\end{equation*}
where
\begin{align*}
	F_{1}^{y,t} \coleq & \sum_{1 \leq r \leq k(n_t)}  \sum_{\ibf \in I_1(n_t, r,1), \zbf} 
	q_{n_t,\widehat{1}}^y(\ibf,\zbf)  
	\prod_{j=1}^r h(z_j; s_{i_j}, s_{i_j + 1}), \\ 
	F_2^{y,t} \coleq & \sum_{2 \leq r \leq k(n_t)} \sum_{m=2}^r \; 
	\sum_{\ibf \in I_1(n_t,r,m),  \zbf }  
	q_{n_t,\widehat{m}}^y(\ibf,\zbf)   
	\prod_{j=1}^r h(z_j; s_{i_j}, s_{i_j +1}), \\ 
	F_3^{y,t} \coleq & \sum_{1 \leq r \leq k(n_t)} \; 
	\sum_{\ibf \in I_1(n_t,r,r+1),  \zbf } 
	q_{n_t,\widehat{r+1}}^y(\ibf,\zbf)
	\prod_{j=1}^r h(z_j; s_{i_j}, s_{i_j +1});
\end{align*}
and the error terms are given by
\begin{align*}
	L_1^{y,t} \coleq & \sum_{1 \leq r \leq k(n_t)} \;
	\sum_{\ibf \in I_1(n_t,r,1), \zbf}
	\frac{q_{i_1}^{z_1} - q_{n_t}^{y}}{q^{y}_{n_t}} 
	q_{n_t,\widehat{1}}^y(\ibf,\zbf)  
	\prod_{j=1}^r h(z_j; s_{i_j}, s_{i_j + 1}), \\
	L_2^{y,t} \coleq & \sum_{2 \leq r \leq k(n_t)} \sum_{m=2}^r \;
	\sum_{\ibf \in I_1(n_t,r,m), \zbf} 
	\frac{q_{i_m - i_{m-1}}^{z_m - z_{m-1}} - q_{n_t}^{y}}{q_{n_t}^{y}} 
	q_{n_t,\widehat{m}}^y(\ibf,\zbf)     
	\prod_{j=1}^r h(z_j; s_{i_j}, s_{i_j + 1}),   \\  
	L_3^{y,t} \coleq & \sum_{1 \leq r \leq k(n_t)} \;
	\sum_{\ibf \in I_1(n_t,r,r+1), \zbf} 
	\frac{q_{n_t-i_r}^{y-z_r}- q_{n_t}^{y}}{q^{y}_{n_t}} 
	q_{n_t,\widehat{r+1}}^y(\ibf,\zbf) 
	\prod_{j=1}^r h(z_j; s_{i_j}, s_{i_j + 1}).  
\end{align*}

\bigskip
\noindent 
We first show that the contribution from each error term is negligible.

\begin{lemma} \label{lm:L_convergence}
For an inverse temperature $\beta \leq \beta_*$, the following is true: There is $\theta = \theta(\sigma) > 0$, independent of $\beta$, such that
\begin{equation} 
\lim_{t \to \infty} t^{\theta} \sup_{\|y\| \leq t^{\sublin}} \frac{1}{p_t^y} \biggl \langle \biggl \lvert \E \biggl[\id_{q_{n_t}^y > 0} \ q_{n_t}^y \sum_{i=1}^3 L_i^{y,t}\biggr] \biggr \rvert \biggr \rangle  = 0. \label{eq:conv_1}
\end{equation}
\end{lemma}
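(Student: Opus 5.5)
The plan is to treat each error term $L_i^{y,t}$, $i=1,2,3$, as a random sum of the form $\Tc(y;0,t)$ considered in Section~\ref{sec:D_sequences}. We then apply the Key Lemma~\ref{lm:standard_arg} directly, so that the problem reduces to a decay estimate on the associated discrete-time $D$-sequence. Such estimates are exactly what Section~4.3 of~\cite{HKNN} provides for the discrete-time error terms.

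\textbf{Setting up the framework.} For $i=1$, take $R_n=\{1,\ldots,k(n)\}$, $I_{n,r}=I_1(n,r,1)$, and
$$
\mathscr{q}^y_{n,r}(\ibf,\zbf)=\id_{q^y_n>0}\,\frac{q_{i_1}^{z_1}-q_n^y}{q_n^y}\,q^y_{n,\widehat 1}(\ibf,\zbf).
$$
The coefficients for $i=2,3$ are defined analogously. Two points need care.

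\begin{enumerate}
\item These coefficients are not nonnegative. However, the identity~\eqref{eq:T_square_bracket} and the proof of the Key Lemma use only $\mathscr{q}^2$ together with a bound on $\sum_{\zbf}\mathscr{q}^2$. We can therefore either replace $\mathscr{q}$ by $|\mathscr{q}|$ in the hypotheses or observe that the argument goes through verbatim.
\item We must verify the finiteness condition~\eqref{eq:finite_cond_q}, or at least its consequence~\eqref{190921123823} with some fixed constant. For $\ibf\in I_1(n,r,1)$ the gap $i_1$ is huge, so $i_1\ge n-rn^\xi$ with $r\le k(n)\le n^{\kappa_1}$. Moreover $q^y_{n,\widehat 1}(\ibf,\zbf)$ summed over $\zbf$ equals $1$. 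Hence it suffices to bound $q_{i_1}^{z_1}/q_n^y$ uniformly, for $\|y\|\le n^{\tilde\sublin}$ and for those $z_1$ that contribute non-negligibly. This follows from Lemma~\ref{lm:ratio_estimate}: with $\|z_1-y\|\lesssim rn^{\xi}$, the exponent is $O(n^{\tilde\sublin-1}\cdot n^{\kappa_1+\xi}+\ln n\cdot n^{\kappa_1+\xi-1})$, which is $o(1)$ by the choice $\kappa_1,\kappa_2>\frac12(3\tilde\sublin-1)$ and $\xi<\min\{1-\tilde\sublin,\kappa_2-\kappa_1\}$. Contributions with $\|z_1-y\|$ larger are controlled by Gaussian tails of $q$ along the short gaps.
\end{enumerate}

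If a uniform constant cannot be obtained, we will instead split off these far-away $\zbf$ into a separate term and bound it crudely by Lemma~\ref{lm:expected_value}. The factor $e^{t^\sublin}$ from~\eqref{eq:q_p_ratio} is harmless against the super-polynomially small tail.

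\textbf{Applying the Key Lemma.} With $\vartheta<1/\alpha_d$ fixed and $\tilde\sublin\in(\sublin,1)$ chosen as in Part 1, we invoke the discrete-time estimate from Section~4.3 of~\cite{HKNN}: there is $\theta'=\theta'(\sublin)>0$, for instance any $\theta'<\xi/8$ as in Part 2, with
$$
\lim_{n\to\infty}n^{2\theta'}\sup_{\|y\|\le n^{\tilde\sublin}}D_n^\vartheta(y)=0
$$
for each of the three $D$-sequences. These sequences are the sums over $r\le k(n)$ of $\vartheta^r\sum_{\ibf,\zbf}\mathscr{q}^2$. Their decay comes from the following mechanism:
\begin{itemize}
\item The relative error $(q_{i_m-i_{m-1}}^{z_m-z_{m-1}}-q_n^y)/q_n^y$ is small, by Lemma~\ref{lm:ratio_estimate} again, when the remaining steps displace by at most $n^{\text{small}}$.
\item Otherwise, configurations where the short gaps carry large displacement are suppressed by $\sum_z (q_j^z)^2\lesssim j^{-d/2}$.
\item Summation over $r$ converges because $\vartheta\alpha_d<1$.
\end{itemize}
The Key Lemma then yields, for $\beta\le\beta_*$, the bound $t^{\theta'}\sup_{\|y\|\le t^\sublin}(p^y_t)^{-1}\E[q^y_{n_t}\langle|L_i^{y,t}|\rangle]\to0$ for each $i$. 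The triangle inequality and Jensen's inequality ($|\E X|\le\E|X|$) then give~\eqref{eq:conv_1} with $\theta=\theta'$, which is independent of $\beta$.

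\textbf{Main obstacle.} The main difficulty will be the middle term $L_2$. There the huge gap sits between two groups of short gaps, so the relative error must be controlled jointly in both displacements and both time-shifts, $i_{m-1}$ and $n-i_m$. Establishing the uniform-in-$y$ decay of its $D$-sequence up to $\|y\|\le n^{\tilde\sublin}$ requires the full sub-ballistic ratio estimate, not merely the diffusive local CLT. Here I would rely on the argument of~\cite{HKNN} rather than reprove it. A secondary technical point is checking that the $|\mathscr{q}|$ version of~\eqref{eq:finite_cond_q} holds with a constant other than $2$. This only changes the constant $4$ in~\eqref{190921123823} and hence harmless constants in the proof of the Key Lemma.
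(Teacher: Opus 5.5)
Your route is the same as the paper's. The paper writes down the $D$-sequences of $\id_{q^y_{n_t}>0}L_i^{y,t}$, cites Lemma~4.2 of \cite{HKNN} for
\[
n^{2\theta}\sup_{\|y\|\le n^{\tilde\sublin}}D^\vartheta_n(i,y)\to 0 \quad\text{with}\quad 0<\theta<\min\{1/5,\tfrac12(1-\tilde\sublin-\xi)\},
\]
and then applies the Key Lemma. The rate $\theta'<\xi/8$ you propose is the one used for $B_2^{y,t}$, not for the error terms. It exceeds $\tfrac12(1-\tilde\sublin-\xi)$ once $\xi>\tfrac45(1-\tilde\sublin)$, so take the rate from the cited lemma instead.

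Your observation that the Key Lemma only sees $\mathscr{q}^2$, so signed coefficients are harmless, is correct; the paper passes over this silently. However, your verification of \eqref{eq:finite_cond_q} in item~2 is wrong as written.

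\begin{itemize}
\item The exponent $n^{\tilde\sublin-1}\cdot rn^{\xi}$, with $r$ up to $k(n)\approx n^{\kappa_1}$, is not $o(1)$. Since $\kappa_1>\tfrac12(3\tilde\sublin-1)$ and $\tilde\sublin>3/4$, you get $\tilde\sublin+\kappa_1+\xi-1>\tfrac{5\tilde\sublin-3}{2}>\tfrac38$. Hence $q^{z_1}_{i_1}/q^y_n$ is not uniformly bounded.
\item There are no far-away $\zbf$ to split off: $\|y-z_1\|_1\le n-i_1\le rn^\xi$ holds deterministically.
\item Restricting to $\|y\|\le n^{\tilde\sublin}$ would not suffice anyway. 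The Key Lemma's proof uses \eqref{190921123823} in the range $l\le \nu_0 t$, where only $\|y\|\le t^{\sublin}$ is available.
\end{itemize}

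The correct justification is a one-line Chapman--Kolmogorov identity. For every $y$ with $q^y_n>0$, every $m$, and every $\ibf\in I_1(n,r,m)$, one has $\sum_{\zbf}q^y_n(\ibf,\zbf)=q^y_n$ and $\sum_{\zbf}q^y_{n,\widehat m}(\ibf,\zbf)=1$. Therefore
\[
\sum_{\zbf}\bigl|\mathscr{q}^y_{n,r}(\ibf,\zbf)\bigr|\le \frac{1}{q^y_n}\sum_{\zbf}q^y_n(\ibf,\zbf)+\sum_{\zbf}q^y_{n,\widehat m}(\ibf,\zbf)=2 .
\]
The sets $I_1(n,r,m)$ are disjoint in $m$, because there is at most one huge gap. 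So this is exactly \eqref{eq:finite_cond_q} for $|\mathscr{q}|$, uniformly in $y$. With this replacement, your argument reduces to the paper's, resting on the same citation to \cite{HKNN}.
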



\begin{proof}
For $i \in \{1,2,3\}$, let $D^{\vartheta}_n(i,y)$ denote the $D$-sequence associated with $\id_{q^y_{n_t} > 0} \ L_i^{y,t}$. If $q^y_n = 0$, one has $D^{\vartheta}_n(i,y) = 0$. And if $q^y_n > 0$, 
$$ 
D^{\vartheta}_n(i,y) = \sum_{1 \leq r \leq k(n)} \vartheta^r a_i(r) \sum_{\substack{\nbf \in \Xi^i_r, \zbf}} \left(q_{n_1}^{z_1} \right)^2 \ldots \left(q_{n_r}^{z_r} \right)^2 \frac{(q_{n - n_1 - \ldots - n_r}^{y - z_1 - \ldots - z_r} - q^y_n)^2}{(q^y_n)^2}, 
$$ 
where $a_i(r) := 1$ if $i = 1,3$, $a_2(r) \coleq (r-1) \id_{r \geq 2}$,\\ 
\begin{align*}
\Xi_r^i \coleq& \left\{\nbf = (n_1,\ldots, n_r) \in \N_0^r ~:~
			\substack{ \displaystyle  n_1 \geq 0, n_2,\ldots, n_r > 0 
			\\ \displaystyle
			n_1 + \cdots + n_r  \leq r n^\xi }\right\}, \quad i=1,3, \\
\Xi_r^2 \coleq& \left\{\nbf = (n_1,\ldots, n_r) \in \N_0^r  
~:~
			\substack{ \displaystyle  n_1,n_r \geq 0, n_2,\ldots,n_{r-1} > 0 
			\\ \displaystyle
			n_1 + \cdots + n_r  \leq r n^\xi } \right\}.
\end{align*}
Choose $\tilde \sublin$ as in the proof of Part 1 and let 
$$
0 < \theta < \min\{1/5; 1/2 (1 - \tilde \sublin - \xi)\}.
$$
As $\vartheta < 1/\alpha_d$, one has 
$$ 
\lim_{n \to \infty} n^{2 \theta} \sup_{\|y\| \leq n^{\tilde \sublin}} D^{\vartheta}_n(i,y) = 0, \quad i \in \{1,2,3\},  
$$ 
see Lemma~4.2 of~\cite{HKNN}. The Key Lemma~\ref{lm:standard_arg} then implies~\eqref{eq:conv_1}. 
\end{proof} 


In order to deal with the $F_i$'s, we first truncate the partition functions $Z_{0,0}^t$ and $Z_0^{y,t}$.
Fix $\xi_1, \xi_2, \xi_3$ satisfying 
$$
0 < \xi_1 < \xi_2 < \xi_3 < \xi,
$$ 
and notice that since $\xi + \sublin < 1$, we have $\xi_1 + \sublin < 1$. For $n \in \N_0$, set 
$$ 
v(n) \coleq \left \lceil \frac{n}{2} \right \rceil^{\xi_2}, 
\quad w(n) \coleq \left \lceil \frac{n}{2} \right \rceil^{\xi_3}. 
$$

\noindent To avoid heavy notation, we write
\begin{equation}    \label{eq:def_n_symb} 
n \coleq n_t, \quad 
n_{-} \coleq n_{t^{\xi_1}}, \quad
n_{\bullet} \coleq n_{t^{\xi_1}, t-t^{\xi_1}}, \quad 
n_+ \coleq n_{t-t^{\xi_1}, t}
\end{equation} 
and observe that $n = n_- + n_{\bullet} + n_+$. 
Now define
$$
T_{0,0}^t \coleq 
1 + \sum_{1 \leq r \leq v(n_-)+1} 
\sum_{\substack{\ibf \in I(n,r), \zbf \\ i_r \leq w(n_-)}} 
q_{n,\widehat{r+1}}^y(\ibf,\zbf)
\prod_{j=1}^r h(z_j; s_{i_j}, s_{i_j +1})
$$ 
and 
$$
T^{y,t}_0  \coleq
1 + \sum_{1 \leq r \leq v(n_+)+1} 
\sum_{\substack{\ibf \in I(n,r), \zbf \\ n_- w(n_+) \leq i_1}} 
q_{n,\widehat{1}}^y(\ibf,\zbf)
 \prod_{j=1}^r h(z_j; s_{i_j}, s_{i_j +1}). 
$$
Notice that $\E[T_{0,0}^t]$ and $\E[T_0^{y,t}]$ are truncations of the partition functions $Z_{0,0}^t$ and $Z_{0}^{y,t}$, respectively.
In light of Lemma~\ref{lm:L_convergence}, the convergence statement in~\eqref{eq:convergence_3} is implied by the following two lemmas.

\begin{lemma} \label{lm:main_claim1-4}
For an inverse temperature $\beta$ sufficiently small, the following is true: There is $\theta = \theta(\sigma) > 0$, independent of $\beta$, such that 
\begin{align}
	\lim_{t \to \infty} t^{\theta} \sup_{\|y\| \leq t^{\sublin}} \frac{1}{p_t^y} \left \langle \left \lvert \E  \left[ q^y_{n_t} \left(F_2^{y,t} - (T_{0,0}^t - 1) (T^{y,t}_0 - 1 ) \right) \right] \right \rvert \right \rangle &= 0, \label{eq:conv_4} \\
	\lim_{t \to \infty} t^{\theta} \sup_{\|y\| \leq t^{\sublin}} \frac{1}{p_t^y}\left \langle \left \lvert \E \left[ q^y_{n_t} \left(F_1^{y,t} - (T^{y,t}_0 - 1) \right)\right] \right \rvert \right \rangle &= 0, \label{eq:conv_2} \\
	\lim_{t \to \infty} t^{\theta} \sup_{\|y\| \leq t^{\sublin}} \frac{1}{p_t^y} \left \langle \left \lvert \E \left[ q^y_{n_t} \left(F_3^{y,t} - (T_{0,0}^t - 1) \right) \right] \right \rvert \right \rangle &= 0. \label{eq:conv_3} 
\end{align}
\end{lemma}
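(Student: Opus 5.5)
The natural approach is the same as for Lemma~\ref{lm:L_convergence}: write each difference as a random multilinear expression of the form $\Tc(y;0,t)$, prove that its associated $D$-sequence decays polynomially, and transfer this to continuous time via the Key Lemma~\ref{lm:standard_arg}. I will treat~\eqref{eq:conv_4} in detail; \eqref{eq:conv_2} and~\eqref{eq:conv_3} are the one-sided versions of the same argument. In what follows, $n=n_t$ and $n_\pm$, $n_\bullet$ are as in~\eqref{eq:def_n_symb}.

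\textbf{Step 1: the product.} Multiplying out $(T^t_{0,0}-1)(T^{y,t}_0-1)$ gives a sum over index vectors $\ibf=(\ibf',\ibf'')$. Here $\ibf'$ consists of at most $v(n_-)+1$ indices in $[0,w(n_-)]$, and $\ibf''$ consists of at most $v(n_+)+1$ indices in $[n-w(n_+),n]$, where I read the truncation as $n-w(n_+)\le i_1$. The weight of such a term is $q^y_{n,\widehat m}(\ibf,\zbf)$, with $m-1$ the number of indices in $\ibf'$, and the $h$-factors are evaluated at the jump times $s_{i_j}$ of the same path. For large $t$, the gap between the last index of $\ibf'$ and the first index of $\ibf''$ is at least $n-w(n_-)-w(n_+)$. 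Since $n_\pm\approx t^{\xi_1}$ and $\xi_3<\xi$, this exceeds $n-rn^\xi$, so the gap is huge. Hence every term of the product appears in $F_2^{y,t}$. I expect this to hold on the event where $n_\pm$ are comparable to $t^{\xi_1}$ and $n\in J(t)$; the complement of that event I will control with the Poisson tail estimates (Lemmas~\ref{lm:precise_I_bounds} and~\ref{lm:tail_exponential}). The difference $F_2^{y,t}-(T^t_{0,0}-1)(T^{y,t}_0-1)$ is then a sum over $\ibf\in I_1(n,r,m)$ for which at least one of the following fails: the left block lies in $[0,w(n_-)]$ with fewer than $v(n_-)+2$ indices, or the right block satisfies the corresponding condition. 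Writing $q^y_{n_t}$ times this difference in the form $\Tc(y;0,t)$ with coefficients $\mathscr q=q_n^y\, q^y_{n,\widehat m}$, the finiteness condition~\eqref{eq:finite_cond_q} holds because $\sum_{\zbf} q^y_{n,\widehat m}\le 1$.

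\textbf{Step 2: the obstacle.} The truncation thresholds depend on $n_-$ and $n_+$, that is, on where the jumps fall, and not only on $n$. The coefficients are therefore not of the form $\mathscr q_{n,r}^y(\ibf,\zbf)$ required by the Key Lemma. This is the main obstacle. My first attempt is to condition on $(n_-,n_\bullet,n_+)$. Given $n_t=n$, the jump times are uniform order statistics, so $n_-$ is binomial with mean $nt^{\xi_1-1}$. I will replace $n_\pm$ by deterministic surrogates $m_\pm\in J(t^{\xi_1})$ (or $J(2t^{\xi_1})$, in the spirit of Lemma~\ref{lm:q_iota}) and bound the discrepancy by moderate deviations. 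If this fails, I will instead redo the Key Lemma argument directly with the joint Poisson law of $(n_-,n_\bullet,n_+)$. In that version $p_t^y$ is compared with $\sum q^y_{n_-+n_\bullet+n_+}$ using Lemma~\ref{lm:q_iota} and Lemma~\ref{lm:p_ratio}, whose factor $e^{\beta^2 t^{\xi}}$ is designed to absorb the cost of the two short windows of length $t^{\xi_1}$. Within each conditional sector, $\langle\Tc^2\rangle$ factorizes by~\eqref{eq:T_square_bracket}. The $A(t,l,r)$ bounds of Lemmas~\ref{lm:expected_value} and~\ref{lm:A_estimate} then apply unchanged, and I expect the off-bulk ranges to be handled exactly as in the three-sum decomposition~\eqref{eq:three_Y_sums}.

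\textbf{Step 3: the discrete $D$-sequence.} With $n_\pm$ frozen at values of order $n^{\xi_1}$, the discrete $D$-sequence has the form
\[
\sum_r\vartheta^r\sum (q_{n_1}^{z_1})^2\cdots(q_{n_r}^{z_r})^2\,\frac{(q^{y-\cdots}_{n-\cdots})^2}{(q_n^y)^2}
\]
summed over configurations violating the truncation. It decays polynomially for $\vartheta<1/\alpha_d$, uniformly in $\|y\|\le n^{\tilde\sublin}$. This follows from the argument in~\cite{HKNN}, the discrete counterpart of this lemma, which estimates the huge-gap ratio with Lemma~\ref{lm:ratio_estimate}. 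The violating configurations have either more than $v(n_\pm)\sim n^{\xi_1\xi_2}$ factors, which costs $(\vartheta\alpha_d)^{v}$, or a block extending beyond $w\gg v$, which forces a single gap of length at least $w/v$ and costs a power of $n$ through $\sum_{k\ge w/v}\sum_z (q_k^z)^2\lesssim (w/v)^{1-d/2}$. Choosing $\tilde\sublin\in(\sublin,1)$ as in Part 1 and applying the Key Lemma~\ref{lm:standard_arg} with $\beta\le\beta_*$ gives~\eqref{eq:conv_4} for some $\theta(\sublin)>0$. The same three steps, with only one of the two blocks present, give~\eqref{eq:conv_2} and~\eqref{eq:conv_3}.
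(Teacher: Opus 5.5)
Your Step~1 and the two decay mechanisms in Step~3 ($(\vartheta\alpha_d)^{v}$ and $(w/v)^{1-d/2}$) are the right ones; the paper uses exactly these for $f^{y,t}_{2;1}$ and $f^{y,t}_{2;2}$. \textbf{The gap} is that the obstacle you identify in Step~2 is never removed, and Step~3 still applies Lemma~\ref{lm:standard_arg} ``with $n_\pm$ frozen''. The Key Lemma only covers coefficients that depend on the path through $n_t$ alone. Its proof conditions on $n_t=l$, pulls the squared coefficients out of $\E[\,\cdot\mid n_t=l]$, and bounds the rest by $A(t,l,r)$. Here the set of surviving configurations depends on $(n_-,n_+)$. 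Given $n_t=l$, these are functions of the same spacings that carry the weights $e^{\beta^2t_{i_j+1}}-1$. For example, the left block overshoots $w(n_-)$ exactly when $n_-$ drops below a threshold set by $i_{m-1}$, i.e.\ when the early spacings are long, which is when the left-block weights tend to be large.

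So freezing $n_\pm$ proves the bound for differently truncated objects. You also cannot freeze the thresholds inside $T^t_{0,0},T^{y,t}_0$: their $n_\pm$-dependence is what makes them (essentially) functions of the path on $[0,t^{\xi_1})$ and $[t-t^{\xi_1},t)$, which Lemma~\ref{lm:main_claim5} needs. Your fallback does not work ``exactly as in~\eqref{eq:three_Y_sums}'' either, for two reasons. First, there off-bulk $n_t$ is killed because tails $e^{-ct}$ beat $q^y_l/p^y_t\lesssim e^{t^{\sublin}}$. But $\Pp(n_-\notin J(t^{\xi_1}))$ is only of order $e^{-ct^{\xi_1}}$, while $1/p^y_t$ can be of order $e^{ct^{2\sublin-1}}$ with $2\sublin-1>\xi_1$. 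Second, $r$ runs up to $k(n)\gg t^{\xi_1}$, so configurations with $r_-\ge\nuone l_-$ occur, and for these Lemma~\ref{lm:A_estimate} gives nothing on $[0,t^{\xi_1})$.

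The paper closes this without the Key Lemma. It conditions on $(n_-,n_\bullet,n_+)$, factorizes the weights via~\eqref{eq:triple_A}, and uses Lemma~\ref{lm:A_estimate} window by window on the bulk. For large $r_-$ or off-bulk $n_\pm$, it replaces $q^y_{l_-+l_\bullet+l_+}$ by $q^y_{\iota(y,l_\bullet)}$ (Lemmas~\ref{lm:q_iota} and~\ref{lm:ql+._}), pays $p^y_{t-2t^{\xi_1}}/p^y_t\lesssim e^{\beta^2t^{\xi_1}}$ (Lemma~\ref{lm:p_ratio}), and beats that cost with (A0)--(A7) of Lemma~\ref{lm:building_blocks}.

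Your surrogate idea can also be completed, arguably more simply, but it needs two ingredients you do not supply.
\begin{enumerate}
\item[(i)] \emph{One-sided domination.} $v$ and $w$ are nondecreasing. Hence on $\{n_\pm\ge\lfloor c_0n_t^{\xi_1}\rfloor\}$, with $c_0$ small, the bad set lies inside the one defined with these $n_t$-dependent thresholds. That set is a genuine Key-Lemma object, with a $y$-independent $D$-sequence $\lesssim n^{-\xi_1(\xi_3-\xi_2)(d/2-1)}$.
\item[(ii)] \emph{Decoupling on the complement.} Given $n_t=l\in J(t)$, one has $n_\pm\sim\mathrm{Bin}(l,t^{\xi_1-1})$, so the complement has conditional probability $\le e^{-ct^{\xi_1}}$. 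Apply Cauchy--Schwarz against $\E[\prod_j(e^{\beta^2t_{i_j+1}}-1)^2\mid n_t=l]\lesssim(C\beta^4)^r$, which holds because the spacings are exchangeable and $r\le k(l)<\nuone l$. The contribution is then $\lesssim e^{-ct^{\xi_1}/2}\sum_l\Pp(n_t=l)q^y_l=e^{-ct^{\xi_1}/2}p^y_t$, with no $e^{t^{\sublin}}$ loss.
\end{enumerate}
Two minor points. The Key Lemma already carries $q^y_{n_t}$, so $\Tc$ should have coefficients $q^y_{n,\widehat m}(\ibf,\zbf)$, not $q^y_nq^y_{n,\widehat m}(\ibf,\zbf)$. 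And once the huge-gap factor is removed, $\sum_{\zbf}q^y_{n,\widehat m}(\ibf,\zbf)^2$ is a product of terms $\sum_z(q^z_j)^2$. So the ratio in your displayed $D$-sequence, which belongs to the $L_i$ terms, does not arise here.
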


\begin{lemma} \label{lm:main_claim5}
For an inverse temperature $\beta$ sufficiently small, the following is true: There is $\theta = \theta(\sigma) > 0$, independent of $\beta$, such that
\begin{equation}.  \label{eq:conv_5} 
\lim_{t \to \infty} t^{\theta} \sup_{\|y\| \leq t^{\sublin}} \frac{1}{p_t^y} \left \langle \left \lvert \E \left[ q^y_{n_t} T^{y,t}_0 T_{0,0}^t \right] - p_t^y Z_{0,0}^{\infty} Z_{-\infty}^{y,t} \right \rvert \right \rangle = 0.   
\end{equation}
\end{lemma}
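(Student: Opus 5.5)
The strategy is to exploit the localization built into the truncations. In $T_{0,0}^t$ the indices satisfy $i_r \le w(n_-) = \lceil n_-/2\rceil^{\xi_3} < n_-$, and in $T_0^{y,t}$ they lie within the last $w(n_+)$ jumps. So, modulo the transition-probability weights, $T_{0,0}^t$ depends only on the path and the environment on $[0,t^{\xi_1}]$, and $T_0^{y,t}$ only on those on $[t-t^{\xi_1},t]$. The weights are another matter: $q^y_{n,\widehat{r+1}}$ and $q^y_{n,\widehat{1}}$ remove the huge gap, so the remaining factors involve only the end windows. The whole dependence on the middle window is therefore carried by the single factor $q^y_{n_t}$, with $n_t = n_-+n_\bullet+n_+$.

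\textbf{Step 1: average out the middle window.} I condition on the jump data in the two end windows. By the Poisson structure, $n_\bullet$ is independent of them and is Poisson with mean $t-2t^{\xi_1}$. This gives
\[
\E\big[q^y_{n_t}T_{0,0}^tT_0^{y,t}\big]=\E\Big[T_{0,0}^tT_0^{y,t}\sum_{l} e^{-(t-2t^{\xi_1})}\tfrac{(t-2t^{\xi_1})^l}{l!}\,q^y_{n_-+l+n_+}\Big].
\]
I then split the sum over $l$ and over the end counts. On the typical event $n_\pm \in J(t^{\xi_1})$, $l\in J(t-2t^{\xi_1})$, I use Lemma~\ref{lm:ratio_estimate} with $n=\iota(y,l)$ and $n'=n_-+l+n_+$ (same parity, otherwise $q$ vanishes). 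Since $\|y\|(n_-+n_+)/l \lesssim t^{\sublin+\xi_1-1}\to0$ and $\ln(l)\,t^{\xi_1}/t\to 0$, this yields
\[
q^y_{n_-+l+n_+}=\big(1+O(t^{-\theta'})\big)\,2\,q^y_{\iota(y,l)}\,\id_{\text{parity}}.
\]
Summing over $l$, the parity indicator and the factor $2$ combine. Using Lemma~\ref{lm:uniform_D_conv}, the local CLT (Lemma~\ref{lm:lclt_con_time}) and Lemma~\ref{lm:p_ratio}, the inner Poisson sum then equals $p^y_t(1+O(t^{-\theta'}))$ uniformly in $\|y\|\le t^\sublin$. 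Atypical $n_\pm$ or $l$ are treated with Lemma~\ref{lm:q_iota} and crude bounds $q\le1$. Their contribution is killed by the Poisson tail bounds (Lemma~\ref{lm:tail_exponential}, Lemma~\ref{lm:precise_I_bounds}) against the factor $e^{t^\sublin}$ from~\eqref{ineq:lclt_con_time}. Here I use Cauchy--Schwarz and $L^2$ bounds on $T_{0,0}^t,T_0^{y,t}$, which hold uniformly by the same series argument as in Theorem~\ref{thm:limiting_part_fun_exists}.

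\textbf{Step 2: factorize and identify the limits.} After Step 1, and up to an $L^1(Q)$ error $o(t^{-\theta})p^y_t$, we get $\E[q^y_{n_t}T_{0,0}^tT_0^{y,t}] \approx p_t^y\,\E[T_{0,0}^t]\,\E[T_0^{y,t}]$. This uses independence of the path in the two disjoint end windows, since with the weight removed the remaining expectation factorizes. Next I show
\[
\langle(\E T_{0,0}^t - Z_{0,0}^{t^{\xi_1}})^2\rangle\lesssim t^{-2\theta}, \qquad \langle(\E T_0^{y,t}-Z^{y,t}_{t-t^{\xi_1}})^2\rangle\lesssim t^{-2\theta}.
\]
The difference consists of terms with many gaps ($r>v(n_-)$) or an index beyond $w(n_-)$. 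Its $D$-sequence tends to zero polynomially, as in~\cite{HKNN}, so Remark~\ref{rm:standard_arg} applies. Theorem~\ref{thm:limiting_part_fun} then replaces $Z_{0,0}^{t^{\xi_1}}$ by $Z_{0,0}^\infty$ and $Z^{y,t}_{t-t^{\xi_1}}$ by $Z_{-\infty}^{y,t}$, with error $O(t^{-\xi_1\theta/2})$ in $L^2$. Finally, Cauchy--Schwarz together with uniform $L^2$ bounds on all four partition functions turns the product differences into the required $L^1$ estimate, with $\theta$ depending only on $\sublin$ through $\xi_1$.

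\textbf{Main obstacle.} I expect the main difficulty to be the uniformity in Step 1. The ratio $q^y_{n'}/q^y_{\iota(y,l)}$ must be $1+o(t^{-\theta})$ uniformly over $\|y\|\le t^\sublin$ in the sub-ballistic regime. At the same time, every atypical configuration is multiplied by $1/p_t^y\le e^{t^\sublin}$, so each exceptional event must have probability $e^{-ct}$ rather than merely polynomially small. The constraint $\xi_1+\sublin<1$ is exactly what should make the first requirement work. For the second, I would try first to reuse the three-regime split of the Key Lemma's proof: $l\le\nu_0 t$, $l\notin J(t)$, and $l\in J(t)$.
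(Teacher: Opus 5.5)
Your architecture matches the paper's. Step~2 is essentially Part~2 of Claim~\ref{lm:final_conv_step_1}: independence of the end windows, the $D$-sequence comparison of $\E T_{0,0}^t$ with $Z_{0,0}^{t^{\xi_1}}$, then Theorem~\ref{thm:limiting_part_fun}. Step~1 reorganizes Part~1, where $q^y_{n_t}$ is compared with $q^y_{n_\bullet}$. Step~1, however, has two genuine gaps.

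\textbf{Atypical end-window counts.} The event $\{n_-\notin J(t^{\xi_1})\}$ has probability only about $e^{-ct^{\xi_1}}$, and $\xi_1<1-\sigma<\sigma$. So bounding $q\le 1$ and paying $1/p^y_t\le e^{t^{\sigma}}$ can never be compensated by a Poisson tail. This is exactly the obstacle you flag yourself. Lemma~\ref{lm:q_iota} does not rescue it, since it requires $m\in J(2t^{\xi_1})$.

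The missing ingredient is Lemma~\ref{lm:ql+._}: for typical $l_\bullet$ and \emph{arbitrary} $l_\pm$,
$q^y_{l_-+l_\bullet+l_+}\lesssim e^{Ct^{\sigma-1}(l_-+l_+)}q^y_{\iota(y,l_\bullet)}$.
With it, $1/p^y_t$ is absorbed by the middle-window sum, since $\sum_{l_\bullet}\Pp^{\bullet}(l_\bullet)q^y_{\iota(y,l_\bullet)}\lesssim e^{\beta^2t^{\xi_1}}p^y_t$ by (A0)--(A1). What remains is (A4):
$t^{\theta}e^{\beta^2t^{\xi_1}}\sum_{l\notin J(t^{\xi_1})}e^{Ct^{\sigma-1}l}\,\Pp^-(l)\sum_{0\le r\le l}(r+1)\alpha_d^rA(t^{\xi_1},l,r+1)\to 0$.
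This is a conditional second-moment bound for the truncated $T$'s on the atypical event, carrying the weight $e^{Ct^{\sigma-1}l}$. It is where small $\beta$ is genuinely used, and unconditional $L^2$ bounds plus Cauchy--Schwarz, as you state them, do not deliver it.

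\textbf{Parity.} Your sentence ``the parity indicator and the factor $2$ combine'' hides the difficulty, and the factor $2$ is wrong. When parities agree, Lemma~\ref{lm:ratio_estimate} gives $q^y_{n_-+l+n_+}/q^y_{\iota(y,l)}=1+O(t^{-\theta'})$, with no factor $2$.

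If $n_-+n_+$ is odd, only $l\not\equiv\|y\|_1$ contribute. To identify $\sum_{l\not\equiv\|y\|_1}\Pp^{\bullet}(l)q^y_{l+1}$ with $\sum_{l\equiv\|y\|_1}\Pp^{\bullet}(l)q^y_{l}$, you need $\Pp^{\bullet}(l)/\Pp^{\bullet}(l+1)=(l+1)/(t-2t^{\xi_1})=1+O(t^{-\chi})$. On $J(t-2t^{\xi_1})$ this ratio is only bounded. The paper therefore restricts $n_\bullet$ to the window $K(t-2t^{\xi_1})$ of width $t^{1-\chi}$; this is the term $B^{(1)}(y,t)$. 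It then shows separately that $J\setminus K$ is negligible even against $e^{t^{\sigma}}$ (Claim~\ref{cl:sum_notin_K}, via (C3) of Proposition~\ref{prop:key}), which forces $\chi<\tfrac12(1-\sigma)$.

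\textbf{Ratio of $p$'s.} Your cited tools do not yield $p^y_{t-2t^{\xi_1}}/p^y_t=1+O(t^{-\theta'})$. Lemma~\ref{lm:p_ratio} gives only the upper bound $e^{\beta^2t^{\xi}}$. Lemma~\ref{lm:lclt_con_time} has error $\exp(O(\|y\|^3/t^2))$, which is unbounded once $\sigma>2/3$. The paper obtains the ratio by writing $p^y_t=\E q^y_{n_t}$ and $p^y_{t-2t^{\xi_1}}=\E q^y_{n_\bullet}$, then rerunning the same comparison with $T\equiv 1$.
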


The convergence statement in~\eqref{eq:conv_4} is shown in Section~\ref{ssec:convergence_middle}. We show the convergence statements in~\eqref{eq:conv_2} and~\eqref{eq:conv_3} in Section~\ref{ssec:gap_end}, and the one in~\eqref{eq:conv_5} in Section~\ref{ssec:convergence_ergodicity}.

\section{Main Contribution: \\ Proofs of Lemmas~\ref{lm:main_claim1-4} and~\ref{lm:main_claim5}}
\label{ssec:main_contribution_lemmas}

\subsection{Preliminaries}

Recall that $n, n_-, n_{\bullet}, n_+$ were defined in~\eqref{eq:def_n_symb} and depend on $t > 0$. We set $\oplus := \{-, \bullet, +\}$ and introduce the following shorthands: 
\begin{align*}
\Pp^i(m) =& \Pp(n_i = m), \quad i \in \oplus, \\
\Pp^{i,j}(m,l) =& \Pp(n_i= m, n_j = l), \quad i,j \in \oplus, \ i \neq j, \\
\Pp(m,l,k) =& \Pp(n_- =m, n_{\bullet} = l, n_+ = k), \\
\Pp^{(i,j)}(m) =& \Pp(n_i + n_j = m), \quad i,j \in \oplus, \ i \neq j, \\
\Pp^{-,(\bullet,+)}(m,l) =& \Pp(n_- = m, n_{\bullet} + n_+ = l), \\
\Pp^{\bullet,(-,+)}(m,l,k) =& \Pp(n_{\bullet} = m, n_- + n_+ = l),
\end{align*}
\noindent as well as the analogous definitions for $\E^i[\cdot \vert m]$, $\E^{i,j}[\cdot \vert m,l]$, $\E[\cdot \vert m,l,k]$, etc.

Next, we make the following preliminary observations, which will be applied several times in the proofs of Lemmas~\ref{lm:main_claim1-4} and~\ref{lm:main_claim5}. Let 
$$
R \subseteq \{1, \ldots , n\}.
$$
For $r \in R$, let
$$
M_r \subseteq \{1, \ldots , r+1\}
$$
and for $m \in M_r$, let 
$$
H_{r,m} \subseteq I_1(n,r,m).
$$
Define
\begin{equation}
f \coleq 
\sum_{r \in R} \ \sum_{m \in M_r}  
\sum_{\ibf \in H_{r,m}, \zbf} 
q^{y}_{n,\widehat m}(\ibf,\zbf)^2
\prod_{j=1}^r h(z_j; s_{i_j}, s_{i_j +1}).
\end{equation}
Notice that, for example, each $F_i^{y,t}$ can be expressed in this way.
Then, given $S \subset \N^3$, one has 
\begin{align} \label{eq:E_gen_f}
& \E \left[ q^y_n \left \langle f ^2 \right \rangle 
\id_{(n_-, n_{\bullet}, n_+) \in S} \right] \notag \\
=&  \sum_{(l_-, l_{\bullet}, l_+) \in S} 
q_{l_-+l_\bullet+l_+}^y
 \Pp(l_-,l_{\bullet},l_+) \E \left[ \left \langle f ^2 \right \rangle \bigg| l_-,l_{\bullet},l_+ \right]   
\end{align}
and 
\begin{align}      \label{eq:f_i_expansion}
 \E \left[ \left \langle f^2 \right \rangle \bigg\vert l_-, l_{\bullet}, l_+ \right]   
=& \sum_{r \in R} \ \sum_{m \in M_r}  \ \sum_{\ibf \in H_{r,m}, \zbf} \notag \\
& q^{y}_{l_-+l_\bullet + l_+,\widehat m}(\ibf,\zbf)^2 \ \E \biggl[ \prod_{j=1}^r \left(e^{\beta^2 t_{i_j +1}} - 1 \right) \bigg\vert l_-, l_{\bullet}, l_+ \biggr].      
\end{align}

\noindent For $(l_-, l_{\bullet}, l_+) \in S$, 
$r \in R$, $m \in M_r$, and $\ibf \in H_{r,m}$, let 
\begin{align*}
r_- &= \left \lvert \left\{1 \leq j \leq r: i_j < l_- \right\} \right \rvert, \\
r_{\bullet} &= \left \lvert \left\{1 \leq j \leq r: l_- \leq i_j < l_- + l_{\bullet}  \right\} \right \rvert, \\
r_+ &= r - r_- - r_{\bullet}
\end{align*}
and observe that 
\begin{align}   \label{eq:triple_A}
&\E \biggl[ \prod_{j=1}^r  \left(e^{\beta^2 t_{i_j +1}} - 1 \right) \bigg\vert l_-, l_{\bullet}, l_+ \biggr]  \notag \\
\leq& \ A(t^{\xi_1}, l_-, r_- +1) \ A(t-2t^{\xi_1}, l_{\bullet}, r_{\bullet}+1) \ A(t^{\xi_1},l_+,r_+ +1),  
\end{align}
where $A(t,l,r)$ was defined in~\eqref{eq:def_hat_A}.

In the proofs of of Lemmas~\ref{lm:main_claim1-4} and~\ref{lm:main_claim5}, we will need the following technical results, which we prove in Appendix~\ref{ssec:building_blocks_proofs}.


\begin{lemma}    \label{lm:building_blocks}
Let $\nu \in (\tfrac{1}{2}, 1)$. Recall from~\eqref{eq:Jt_def} that $J(t) = \{n \in \N: \nu t < n < (2-\nu) t\}$, and recall from~\eqref{eq:def_iota} that $\iota(y,n) = n$ if $\|y\|_1 \equiv n$ and $\iota(y,n) = n+1$ if $\|y\|_1 \not\equiv n$. For $\beta$ sufficiently small, the following holds: For any $\theta, c > 0$, we have
{\fontsize{10}{9}\selectfont
\normalfont
\begin{enumerate}[label=(A\arabic*)]
\setcounter{enumi}{-1}
\item \label{A0}
$\displaystyle
\limsup_{t \to \infty} e^{-\beta^2 t^{\xi_1}} \sup_{\|y\| \leq t^{\sublin}} \frac{1}{p_t^y} \sum_{l \in J(t-2t^{\xi_1})} q^y_{\iota(y,l)} \Pp^{\bullet}(l) < \infty;
$
\bigskip
\item \label{A1}
$\displaystyle
\limsup_{t \to \infty} e^{-\beta^2 t^{\xi_1}} \hspace{-3mm} \sup_{\|y\| \leq t^{\sublin}} \frac{1}{p_t^y} \\ \hspace{-1mm} \sum_{l \in J(t-2t^{\xi_1})} \hspace{-4mm} q^y_{\iota(y,l)} \Pp^{\bullet}(l) \hspace{-2mm} \sum_{0 \leq r \leq l} \hspace{-1mm}(r+1) \alpha_d^r \
A(t-2t^{\xi_1},l,r+1) <  \infty;
$
\bigskip
\item \label{A2}
$\displaystyle
\limsup_{t \to \infty} \sum_{l \in J(t^{\xi_1})} \Pp^{-}(l) \sum_{0 \leq r < \nuone l -1} (r+1) \alpha_d^r \ A(t^{\xi_1},l,r+1) < \infty;
$
\bigskip
\item \label{A3}
$\displaystyle 
\lim_{t \to \infty} t^{\theta} e^{\beta^2 t^{\xi_1}} \sum_{l \in J(t^{\xi_1})} \Pp^-(l) \sum_{\nuone l -1 \leq r \leq l} (r+1) \alpha_d^r \ A(t^{\xi_1},l,r+1) = 0$; 
\bigskip
\item \label{A4}
$\displaystyle
\lim_{t \to \infty} t^{\theta} e^{\beta^2 t^{\xi_1}} \sum_{l \notin J(t^{\xi_1})} e^{c t^{\sublin -1} l} \ \Pp^-(l) \sum_{0 \leq r \leq l} (r+1) \alpha_d^r \ A(t^{\xi_1},l,r+1) = 0;
$ 
\bigskip   
\item \label{A5}
$\displaystyle 
\lim_{t \to \infty} t^{\theta} e^{t^{\sublin}} \sum_{l \notin J(t-2t^{\xi_1})} \Pp^{\bullet}(l) \sum_{0 \leq r \leq l} (r+1) \alpha_d^r \ A(t-2t^{\xi_1},l,r+1) = 0;
$
\bigskip   
\item \label{A6}
$\displaystyle 
\lim_{t \to \infty} \sum_{l \notin J(t^{\xi_1})} \Pp^-(l) \sum_{0 \leq r \leq l} (r+1) \alpha_d^r \ A(t^{\xi_1},l,r+1) = 0;
$
\bigskip   
\item \label{A2A3}
$\displaystyle 
\limsup_{t \to \infty} \sum_{l \in J(t^{\xi_1})} \Pp^{-}(l) \sum_{0 \leq r < l } (r+1) \alpha_d^r \ A(t^{\xi_1},l,r+1) < \infty. 
$
\end{enumerate}
}
\end{lemma}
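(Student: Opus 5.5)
The plan is to treat all eight statements as consequences of the same ingredients that drive the proof of the Key Lemma~\ref{lm:standard_arg}. These are the bounds on $A(t,l,r)$ from Lemma~\ref{lm:expected_value} and Lemma~\ref{lm:A_estimate}, the Poisson tail estimates from Lemma~\ref{lm:precise_I_bounds} and Lemma~\ref{lm:tail_exponential}, and the transition-probability comparisons Lemma~\ref{lm:lclt_con_time}, Lemma~\ref{lm:q_iota} and Lemma~\ref{lm:p_ratio}. Throughout, $\Pp^{-}(l)=e^{-t^{\xi_1}}t^{\xi_1 l}/l!$ and $\Pp^{\bullet}(l)=e^{-(t-2t^{\xi_1})}(t-2t^{\xi_1})^l/l!$. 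Writing $\tau$ for the relevant time length ($t^{\xi_1}$ or $t-2t^{\xi_1}$), each sum has the shape
\[
\sum_l e^{-\tau}\frac{\tau^l}{l!}\sum_r (r+1)\alpha_d^r A(\tau,l,r+1).
\]

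First I treat the bulk terms (A2), (A2A3) and the inner factor of (A1). For $l\in J(\tau)$ and $r+1<\nuone l$, Lemma~\ref{lm:A_estimate} gives $A(\tau,l,r+1)\lesssim\psi(\beta)^{r+1}$. Since $\psi(\beta)\to0$ as $\beta\to0$, we can take $\beta$ so small that $\alpha_d\psi(\beta)<1$. Then $\sum_r (r+1)(\alpha_d\psi)^r$ is a bounded constant, and the Poisson weights sum to at most $1$; this gives (A2).

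For $r+1\geq\nuone l$ I instead use Lemma~\ref{lm:expected_value}:
\[
A\lesssim (l+1)^2e^{\beta^2\tau}\beta^{2r+2}\frac{\tau^{r+1}}{(l+1)\cdots(l+r+1)}.
\]
Combined with the Poisson weight this becomes
\[
e^{(\beta^2-1)\tau}(l+1)^2\beta^{2(r+1)}\frac{\tau^{l+r+1}}{(l+r+1)!},
\]
where now $l+r+1\geq(1+\nuone)\nu\tau>\tau$. Summing over $r\le l$ with the factor $(r+1)\alpha_d^r$ costs only $(l+1)^2(\alpha_d\beta^2)^r\le (l+1)^2$ once $\alpha_d\beta^2<1$. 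Lemma~\ref{lm:tail_exponential} (with $\tilde\rho_1,\tilde\rho_2$ as in the proof of the Key Lemma) then makes this exponentially small in $\tau$, which beats $t^\theta e^{\beta^2 t^{\xi_1}}$ when $\tau=t^{\xi_1}$. This gives (A3), hence (A2A3), and similarly (A1) restricted to $l\in J(\tau)$ is bounded.

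Next come the off-bulk terms (A4), (A5) and (A6), for $l\notin J(\tau)$. I split $l\le\nuzero\tau$, $\nuzero\tau<l\le\nu\tau$, and $l\geq(2-\nu)\tau$, exactly as in the three-sum decomposition~\eqref{eq:three_Y_sums} of the Key Lemma proof. In each range the bound from Lemma~\ref{lm:expected_value} together with Lemma~\ref{lm:precise_I_bounds} or Lemma~\ref{lm:tail_exponential} yields $e^{-c'\tau}$ with $c'>0$ once $\beta$ is small. In the middle range the factor $t^r/((l+1)\cdots(l+r))\le\nuzero^{-r}$ is absorbed by taking $\alpha_d\beta^2/\nuzero<1$.
\begin{itemize}
\item For (A5), $\tau\approx t$, and the decay $e^{-c't}$ dominates $t^\theta e^{t^\sigma}$ since $\sigma<1$.
\item For (A6), decay in $\tau=t^{\xi_1}$ suffices.
\item For (A4), the extra factor $e^{ct^{\sigma-1}l}$ only modifies the Poisson weight to $e^{-\tau}(\tau e^{ct^{\sigma-1}})^l/l!$, i.e. $\tau$ is replaced by $\tau(1+o(1))$ because $t^{\sigma-1}\to0$. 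This small perturbation keeps the exponential rates strict. The prefactor $t^\theta e^{\beta^2t^{\xi_1}}$ is still dominated because the decay rate $c'$ can be made larger than $\beta^2$ by shrinking $\beta$.
\end{itemize}

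Finally, (A0) and the outer part of (A1) need a comparison of $q^y_{\iota(y,l)}$ with $p^y_t$. Since $\sum_{l} q^y_{\iota(y,l)}\Pp^{\bullet}(l)$ is essentially $p^y_{t-2t^{\xi_1}}$ up to parity, (A0) should follow from $p^y_{t-2t^{\xi_1}}/p^y_t\lesssim e^{\beta^2 t^{\xi_1}}$ (Lemma~\ref{lm:p_ratio}).

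This parity step is the main obstacle. When $\iota(y,l)=l+1$, the Poisson weight at $l$ must be compared with that at $l+1$, with ratio $(l+1)/\tau$, which is bounded on $J(\tau)$. One must also control restricting to $l\in J$ versus the full sum, but only an upper bound is needed. So I would bound
\[
\sum_{l\in J}q^y_{\iota(y,l)}\Pp^{\bullet}(l)\le \tfrac{2-\nu}{1}\,\bigl(1+o(1)\bigr)\,p^y_{t-2t^{\xi_1}}\cdot 2
\]
by shifting the index and using $\Pp^{\bullet}(l)\le\frac{l+1}{\tau}\Pp^{\bullet}(l+1)$. Combining this with the uniform-in-$l\in J$ bound on the inner $r$-sum from the bulk step then gives (A1).
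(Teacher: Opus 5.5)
Your proposal is correct and is essentially the paper's proof with Proposition~\ref{prop:key} unpacked. It uses the same three-range off-$J$ estimate, with $e^{ct^{\sublin-1}l}$ absorbed into the Poisson parameter exactly as in (C1), the same Lemma~\ref{lm:A_estimate}/Lemma~\ref{lm:expected_value} split at $r+1=\nuone l$ as in (C2), and the same parity shift plus Lemma~\ref{lm:p_ratio} for (A0)/(A1); the paper's extra (C3) only serves a stronger $K$-version of (A5) needed later for Claim~\ref{cl:sum_notin_K}, so your bound suffices for (A5) as stated. The only step you leave implicit is the $r+1\ge\nuone l$ part of the inner sum in (A1), since your bulk step gives a uniform bound only for $r+1<\nuone l$. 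Close it either as the paper does, via $q^y_{\iota(y,l)}\le 1$, $1/p^y_t\lesssim e^{t^{\sublin}}$ and your exponential smallness in $t-2t^{\xi_1}$, or directly from Lemma~\ref{lm:expected_value} with $(\tau/l)^{r+1}\le \nu^{-(r+1)}$, which gives a bound of order $\tau^3 e^{\beta^2\tau}(\alpha_d\beta^2/\nu)^{\nu\nuone\tau}$ uniformly in $l\in J(\tau)$.
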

The following lemma is a corollary of Lemma~3.4 in~\cite{HKNN} and can be shown in the same way as Lemma~3.3 (see Appendix~\ref{ssec:transition_prob_proofs}).  
\begin{lemma} \label{lm:ql+._}
There exists a constant $C > 0$ such that for $\| y \| < t^\sigma$, $l_\bullet \in J(t-2t^{\xi_1})$, $l_-,l_+\in \N_0$, one has 
\begin{equation*}
q_{l_- + l_\bullet + l_+}^y \lesssim \prod_{ s \in \{-,+\} } \exp\left(Ct^{\sigma -1}l_s \right) q_{\iota(y,l_\bullet)}^y.
\end{equation*}
\end{lemma}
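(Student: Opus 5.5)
We have to prove Lemma~\ref{lm:ql+._}: for $\|y\|<t^\sigma$, $l_\bullet\in J(t-2t^{\xi_1})$ and $l_\pm\in\N_0$,
$$q^y_{l_-+l_\bullet+l_+}\lesssim e^{Ct^{\sigma-1}(l_-+l_+)}\,q^y_{\iota(y,l_\bullet)}.$$
The plan is to apply the ratio estimate Lemma~\ref{lm:ratio_estimate} with $n=\iota(y,l_\bullet)$, $z=z'=y$ and $n'=l_-+l_\bullet+l_+$, then control each term in the exponent.

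\textbf{Parity and trivial cases.} First note that $q^y_{n'}=0$ unless $\|y\|_1\equiv n'$, so we may assume this. By definition $\|y\|_1\equiv\iota(y,l_\bullet)$, so the parity hypothesis of Lemma~\ref{lm:ratio_estimate} holds for $n=\iota(y,l_\bullet)$.

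\textbf{Hypothesis $\|y\|\le\rho n$.} Since $l_\bullet>\nu(t-2t^{\xi_1})\ge \nu t/2$ for large $t$, we have $n\ge l_\bullet\gtrsim t$. Because $\sigma<1$, $\|y\|<t^\sigma\le\rho n$ for $t$ large. (For small $t$ the statement is absorbed into the meaning of $\lesssim$.)

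\textbf{Bounding the exponent.} Lemma~\ref{lm:ratio_estimate} gives
$$\frac{q^y_{n'}}{q^y_n}\le(1+O(n^{-2/5}))\exp\Big(c\Big(\tfrac{\|y\|}{n}|n'-n|+\ln(n)\tfrac{|n'-n|}{n}\Big)\Big).$$
Here $|n'-n|\le l_-+l_++1$, and $n\ge\nu t/2$.

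The first term is bounded by $\frac{\|y\|}{n}|n'-n|\le \frac{2t^\sigma}{\nu t}(l_-+l_++1)$, which has the required form.

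The second term, $\ln(n)|n'-n|/n$, is the main obstacle. It is of order $t^{-1}\ln t\,(l_-+l_+)$, and since $\sigma>0$ we have $\ln t\lesssim t^{\sigma}$ for large $t$. Hence this term is also at most $C't^{\sigma-1}(l_-+l_++1)$. No upper bound on $l_\pm$ is needed, because everything is linear in $l_\pm$.

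The leftover $+1$ produces the factor $e^{C't^{\sigma-1}}\le e^{C'}$, which is a constant. The prefactor $1+O(n^{-2/5})$ is bounded.

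\textbf{Conclusion.} Combining these bounds gives the claim with $C=c(2/\nu+1)$, up to a bounded multiplicative constant. This is the same mechanism as in Lemma~\ref{lm:q_iota}, where the extra steps $m$ were of order $t^{\xi_1}$; here they are arbitrary, at the exponential cost stated.
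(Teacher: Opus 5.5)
Your proposal is correct and is the paper's route: the paper proves this lemma the same way as Lemma~\ref{lm:q_iota}, applying Lemma~\ref{lm:ratio_estimate} with $n=\iota(y,l_\bullet)$, $n'=l_-+l_\bullet+l_+$ and $z=z'=y$. The only difference is that, since $l_\pm$ are unbounded here, the terms $\frac{\|y\|}{n}|n'-n|$ and $\ln(n)\frac{|n'-n|}{n}$ are absorbed into $C t^{\sigma-1}(l_-+l_++1)$ rather than into a constant, exactly as you do.
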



\subsection{Proof of Lemma~\ref{lm:main_claim1-4}, Part 1: Convergence for one huge gap in the middle}
\label{ssec:convergence_middle}

In this subsection, we prove the convergence statement in~\eqref{eq:conv_4}. Let 
$$ 
f_2^{y,t} \coleq F_2^{y,t} - (T_{0,0}^t -1) (T^{y,t}_0 -1).
$$
In order to prove~\eqref{eq:conv_4}, it is clearly enough to show the following claim. 

\begin{claim} \label{cl:one_huge}
Let $\beta$ be sufficiently small. There is $\theta = \theta(\sigma) > 0$, independent of $\beta$, such that 
\begin{align}
\lim_{t \to \infty} t^{\theta} \sup_{\|y\| \leq t^{\sublin}} \frac{1}{p_t^y} \left \langle \left \lvert \E \left[ q^y_n f_2^{y,t} \id_{n_-, n_+ \in J(t^{\xi_1}), n_{\bullet} \in J(t - 2 t^{\xi_1})} \right] \right \rvert \right \rangle =& 0, \label{eq:ibob_1} \\
\lim_{t \to \infty} t^{\theta} \sup_{\|y\| \leq t^{\sublin}} \frac{1}{p_t^y} \left \langle \left \lvert \E \left[ q^y_n f_2^{y,t} \left(1 - \id_{n_-, n_+ \in J(t^{\xi_1}), n_{\bullet} \in J(t-2 t^{\xi_1})} \right) \right] \right \rvert \right \rangle =& 0. \label{eq:ibob_2} 
\end{align}
\end{claim}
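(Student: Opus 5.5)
In both parts I use Jensen's inequality to replace $\langle|\cdot|\rangle$ by $\langle(\cdot)^2\rangle^{1/2}$. Conditionally on $(n_-,n_\bullet,n_+)=(l_-,l_\bullet,l_+)$, $f_2^{y,t}$ is a sum of the form $f$ introduced in the preliminaries. The terms of $F_2^{y,t}$ are indexed by $(r,m,\ibf,\zbf)$ with a huge gap in the middle. The product $(T_{0,0}^t-1)(T_0^{y,t}-1)$ is indexed by concatenations of an initial block with $i_r\le w(n_-)$ and a terminal block with $i_1\ge n-w(n_+)$ (as intended by the truncation). In both, the weight is $q^y_{n,\widehat m}(\ibf,\zbf)$ with the huge gap removed. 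So $f_2^{y,t}$ is a signed sum over the symmetric difference of two index sets:
\begin{itemize}
\item configurations with a middle huge gap whose left block does not end by $w(n_-)$, whose right block does not start after $n-w(n_+)$, or whose block sizes exceed $v(n_\pm)+1$;
\item concatenations whose total $r$ exceeds $k(n)$, or whose gap is not huge.
\end{itemize}
By orthogonality of the $h$'s, $\langle f_2^2\rangle$ is then bounded by \eqref{eq:f_i_expansion}, i.e.\ a sum of $q^2$ weights times $\E[\prod(e^{\beta^2t_{i_j+1}}-1)\mid l_-,l_\bullet,l_+]$. I bound the latter by the triple product \eqref{eq:triple_A}.

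For \eqref{eq:ibob_1} I restrict to $l_\pm\in J(t^{\xi_1})$ and $l_\bullet\in J(t-2t^{\xi_1})$. Summing the squared weights over $\zbf$ gives powers $\alpha_d^{r}$, restricted to the offending configurations. The key point is that every offending configuration has either $r_\bullet\ge1$ (some index lands in the middle window) or an index in $[w(l_-),l_-)$ (respectively the symmetric range for $l_+$). In the first case the middle factor carries an extra $\alpha_d$ and a tail in $r_\bullet$. In the second case one factor is a tail $\sum_{j\ge w(l_-)}\sum_z(q_j^z)^2\lesssim w(l_-)^{1-d/2}$, which is polynomially small in $t$ because $w(l_-)\approx t^{\xi_1\xi_3}$. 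The middle case is controlled as follows.
\begin{itemize}
\item I use Lemma~\ref{lm:ql+._} to replace $q^y_{l_-+l_\bullet+l_+}$ by $q^y_{\iota(y,l_\bullet)}$ up to $e^{Ct^{\sigma-1}(l_-+l_+)}$, which is bounded since $l_\pm\lesssim t^{\xi_1}$ and $\xi_1+\sigma<1$.
\item Summing over $l_\bullet$, \ref{A1} gives a factor $e^{\beta^2t^{\xi_1}}p_t^y$.
\item The outer blocks are bounded via \ref{A2A3}.
\end{itemize}
For the middle contribution I need an extra gain beyond \ref{A1}. I get it from the restriction that a middle index $i_j$ forces the remaining huge gap to have length at most $n-i_j$, so it cannot be a huge gap on the other side. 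Where that fails, I use the $D$-sequence bounds of \cite{HKNN} (Section 4.3 there) combined with the Key Lemma~\ref{lm:standard_arg}, via its $J(t)$-part argument applied to the middle segment alone. The factor $e^{\beta^2t^{\xi_1}}$ coming from \ref{A0}, \ref{A1} must be absorbed. Since the outer sums are only bounded, I will ensure that the gained polynomial factor $t^{-2\theta}$ dominates after choosing $\beta$ small. This is the obstacle: the factor is subexponential, not polynomial, so I expect to choose $\beta$ depending on $t$-independent constants only by exploiting that in the decomposition the $e^{\beta^2t^{\xi_1}}$ arises from $A(t^{\xi_1},\cdot)$ of the outer pieces. \ref{A2} shows those pieces are $O(1)$ on $J(t^{\xi_1})$, so the exponential factor only appears in the off-$J$ regimes, which are handled in \eqref{eq:ibob_2}.

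For \eqref{eq:ibob_2} I split the complement of the indicator into three cases:
\begin{itemize}
\item $l_-\notin J(t^{\xi_1})$;
\item $l_+\notin J(t^{\xi_1})$, which is handled symmetrically;
\item $l_\bullet\notin J(t-2t^{\xi_1})$.
\end{itemize}
In the first two cases I bound $q^y_{l_-+l_\bullet+l_+}$ by Lemma~\ref{lm:ql+._} with the factor $e^{Ct^{\sigma-1}l_-}$, sum over $l_\bullet$ via \ref{A1}, which costs $e^{\beta^2t^{\xi_1}}p_t^y$, and kill the outer block with \ref{A4}, whose $t^\theta e^{\beta^2t^{\xi_1}}$ weight is exactly what is needed. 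The other outer block is bounded by \ref{A2A3}, or by \ref{A6} when it too is off $J$. In the third case I use $q\le1$ and $1/p_t^y\le e^{t^\sigma}$ from Lemma~\ref{lm:lclt_con_time}, then \ref{A5} for the middle block and \ref{A6}/\ref{A2A3} for the outer blocks. In all cases the counting of $(r,m,\ibf)$ produces at most the factor $(r+1)\alpha_d^r$ present in \ref{A1}–\ref{A2A3}, since the position $m$ contributes a factor $r$ and \eqref{190921123823}-type bounds give $\alpha_d^r$ after summing over $\zbf$. Taking square roots yields $t^{\theta}$ decay in both parts.
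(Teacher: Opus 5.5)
Your plan for \eqref{eq:ibob_2} is essentially the paper's. The argument for \eqref{eq:ibob_1}, however, has a real gap, and it is exactly the obstacle you flag.

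\textbf{Where the exponential loss comes from.} When you replace $q^y_{l_-+l_\bullet+l_+}$ by $q^y_{\iota(y,l_\bullet)}$ and sum against $\Pp^{\bullet}$ via (A1) or (A0), you lose a factor $e^{\beta^2 t^{\xi_1}}$. This factor is the ratio $p^y_{t-2t^{\xi_1}}/p^y_t$ from Lemma~\ref{lm:p_ratio}. It is produced by the \emph{middle} window and is present whatever $l_\pm$ are. So your proposed way out, that it comes from the outer factors $A(t^{\xi_1},\cdot)$ and hence appears only off $J(t^{\xi_1})$, is wrong.

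On $J(t^{\xi_1})$ the only gains available to you are polynomial (tails $\sum_{j\geq J} j^{-d/2}$) or of the form $(\alpha_d\psi)^{c\,v(l_-)}=e^{-c' t^{\xi_1\xi_2}}$. Neither beats $e^{\beta^2 t^{\xi_1}}$ for fixed $\beta>0$. An extra factor $\alpha_d$ in the middle gives no decay in $t$ at all. Running the Key Lemma on the middle segment alone brings back the normalisation $p^y_{t-2t^{\xi_1}}$.

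\textbf{The missing idea: do not decouple in the main regime.} Show that $\E[\langle (f_2^{y,t})^2\rangle \mid l_-,l_\bullet,l_+]$ is polynomially small \emph{uniformly} in $l_\pm\in J(t^{\xi_1})$, $l_\bullet\in J(t-2t^{\xi_1})$. Then sum using \eqref{eq:E_of_q_is_p}, i.e.\ $\sum q^y_{l_-+l_\bullet+l_+}\Pp(l_-,l_\bullet,l_+)\leq p_t^y$, which costs nothing.

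The uniformity comes from Lemma~\ref{lm:A_estimate} applied to each factor of \eqref{eq:triple_A}. This is legitimate only when $r_s+1<\nuone l_s$ for $s\in\{-,\bullet,+\}$. You decouple (Lemma~\ref{lm:q_iota} and (A0)) only in the residual region $r_-\geq \nuone l_- -1$, or its analogue for $+$. There (A3) supplies a gain $o(t^{-\theta}e^{-\beta^2t^{\xi_1}})$ that absorbs the loss. Your plan for \eqref{eq:ibob_1} never uses (A3) and never treats this region.

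\textbf{The classification of offending configurations also needs repair.} Your dichotomy ``$r_\bullet\geq 1$ or an index in $[w(l_-),l_-)$'' misses configurations whose indices all lie in $[0,w(l_-)]\cup[n-w(l_+),n]$ but with more than $v(l_\pm)+1$ of them on one side. You yourself list these as offending. Moreover, the tail bound $w(l_-)^{1-d/2}$ only follows when there are $O(v(l_-))$ gaps; in general you only know that some gap exceeds $w(l_-)/r$.

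So you have to split according to $r$, as the paper does with $R^1,R^2,R^3$. Assume $l_-\leq l_+$.
\begin{itemize}
\item For $r\leq v(l_-)+2$, the block-size constraints hold automatically. Any configuration with $i_{m-1}>w(l_-)$ or $i_m<n-w(l_+)$ (in particular any with $r_\bullet\geq1$) has non-removed gaps summing to more than $w(l_-)$. So one of them is $\gtrsim t^{\xi_1(\xi_3-\xi_2)}$, which gives the bound $t^{-\xi_1(\xi_3-\xi_2)(d/2-1)}$.
\item For larger $r$ with $r_\pm+1<\nuone l_\pm$, the factor $(\alpha_d\psi(\beta))^r$ is super-polynomially small.
\item All remaining configurations lie in the residual region handled by (A0) and (A3).
\end{itemize}
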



\subsubsection{{Proof of Claim~\ref{cl:one_huge}, Part 1}}

We first show~\eqref{eq:ibob_1}. By symmetry considerations, it suffices to show that there is $\theta = \theta(\sigma) > 0$ such that 
\begin{equation}   \label{eq:n_minus_n_plus}
\lim_{t \to \infty} t^{\theta} \sup_{\|y\| \leq t^{\sublin}} \frac{1}{p_t^y} \E \left[ q^y_n \left \langle \left( f_2^{y,t} \right)^2 \right \rangle \id_{n_-, n_+ \in J(t^{\xi_1}), n_{\bullet} \in J(t-2t^{\xi_1}), n_- \leq n_+} \right] = 0. 
\end{equation} 
We have   
	\begin{align}   \label{eq:T_product_1}
	& (T_{0,0}^t - 1) (T^{y,t}_0 - 1)   \notag \\
	=& \sum_{1 \leq r \leq v(n_-)+1} \sum_{1 \leq s \leq v(n_+)+1} \sum_{\substack{0 \leq i_1 < \ldots < i_r \leq w(n_-), \\ z_1, \ldots, z_r \in \Z^d}} \sum_{\substack{n - w(n_+) \leq l_1 < \ldots < l_s \leq n, \\ c_1, \ldots, c_s \in \Z^d}} \notag \\
	&  q_{i_1}^{z_1} \ldots q_{i_r - i_{r-1}}^{z_r - z_{r-1}} q_{l_2 - l_1}^{c_2 - c_1} \ldots  q_{n - l_s}^{y - c_s} \prod_{j=1}^r h(z_j; s_{i_j}, s_{i_j +1}) \prod_{k=1}^s h(c_k; s_{l_k}, s_{l_k +1}).  
\end{align}
%
Define the set
$$
	V_{r,m}
		\coleq
		\left\{ \ibf = (i_1, \ldots, i_r) \in I_1(n,r,m) ~:~
			\substack{ \displaystyle 
				0 \leq i_1 < \ldots < i_{m-1} \leq w(n_-) 
			\\	\displaystyle
				\quad n - w(n_+) \leq i_m < \ldots < i_r \leq n
			}
			\right\}
$$
and its complement in $I_1(n,r,m)$
$$
	W_{r,m}
		\coleq
		\left\{ \ibf = (i_1, \ldots, i_r) \in I_1(n,r,m) ~:~
			\substack{ \displaystyle 
				i_m -  i_{m-1} < n - w(n_-) - w(n_+) 
			}
			\right\}.
$$
Suppose now that $n_- \leq n_+$, so that $v(n_-) \leq v(n_+)$, and 
recall the notation $q^{y}_{r,\widehat m}(\ibf,\zbf)$ from \eqref{eq:def_qminusm}:
\begin{equation*}
q_{n,\widehat{m}}^y(\ibf,\zbf) := q_{i_1}^{z_1} \ldots  \widehat{ q_{i_{m} - i_{m-1}}^{z_{m}-z_{m-1}} }\ldots q_{n-i_r}^{y-z_r}, \quad \ibf \in I_1(n,r,m), \ \zbf \in (\Z^d)^r. 
\end{equation*}
Making the change of summation indices $r\coleq r+s$ and $m\coleq r+1$, the expression in~\eqref{eq:T_product_1} can be written as
\begin{align}    \label{eq:p6_10}
&\hspace{-3mm}  \sum_{2 \leq r \leq v(n_-)+ 2} \ \sum_{m=2}^r \ 
\sum_{\ibf \in V_{r,m}, \zbf}  
q_{n,\widehat m}^y(\ibf,\zbf) \prod_{j=1}^r h(z_j; s_{i_j}, s_{i_j +1}) \notag \\
+& \sum_{v(n_-)+ 2 < r \leq v(n_+)+2} \ \sum_{2 \leq m \leq v(n_-)+ 2} \ 
\sum_{\ibf \in V_{r,m}, \zbf} 
q_{n,\widehat m}^y(\ibf,\zbf) \prod_{j=1}^r h(z_j; s_{i_j}, s_{i_j +1})  \notag \\
+& \sum_{\substack{v(n_+)+ 2  < r, \\ r \leq v(n_-) + v(n_+)+ 2}} \
 \sum_{\substack{r - v(n_+) \leq m, \\ m \leq v(n_-) + 2}} \ 
\sum_{\ibf \in V_{r,m}, \zbf} 
q_{n,\widehat m}^y(\ibf,\zbf) \prod_{j=1}^r h(z_j; s_{i_j}, s_{i_j +1}).   
	\end{align}
Conditional on $n_-, n_+ \in J(t^{\xi_1})$, $n_{\bullet} \in J(t-2t^{\xi_1})$, and $n_- \leq n_+$, and provided that $t$ is sufficiently large,~\eqref{eq:p6_10} allows us to rewrite $f_2^{y,t}$ as
$
f^{y,t}_{2;1} + f^{y,t}_{2;2} +f^{y,t}_{2;3}, 
$
where 
	\begin{align*}
	f^{y,t}_{2;1} \coleq & 
	\sum_{r \in R^1} \ 
	\sum_{m \in M_r^1}  \ 
	\sum_{\ibf \in W_{r,m}, \zbf} 
	q^{y}_{n,\widehat m}(\ibf,\zbf) 
	\prod_{j=1}^r h(z_j; s_{i_j}, s_{i_j +1}), \\
	f^{y,t}_{2;2} \coleq & 
	\sum_{r \in R^2} 
 	\sum_{m=2}^r 
	\sum_{\ibf \in I_1(n_t,r,m),  \zbf } q^y_{n, \widehat m}(\ibf, \zbf) \prod_{j=1}^r h(z_j; s_{i_j}, s_{i_j +1}) \\
	& - \sum_{r \in R^2} 	 \sum_{2 \leq m \leq v(n_-)+ 2} \ 
\sum_{\ibf \in V_{r,m}, \zbf} 
q_{n,\widehat m}^y(\ibf,\zbf) \prod_{j=1}^r h(z_j; s_{i_j}, s_{i_j +1}),	\\
	f^{y,t}_{2;3} \coleq & 
	\sum_{r \in R^3} \ 
	\sum_{m \in M_r^3} \ 
	\sum_{\ibf \in I_1(n,r,m), \zbf} 
	q^{y}_{n,\widehat m}(\ibf,\zbf) 
	\prod_{j=1}^r h(z_j; s_{i_j}, s_{i_j +1}),
	\end{align*}

\noindent with $R^1 \coleq \{2, \ldots, v(n_-) + 2\}$, $R^2 \coleq \{v(n_-) + 3, \ldots, v(n_-) + v(n_+) + 2\}$, and $R^3 \coleq \{v(n_-) + v(n_+) + 3, \ldots, k(n)\}$; and $M_r^1 \coleq \{2, \ldots, r\}$, $M_r^3 \coleq \{2, \ldots, r\}$.

\bigskip
	
In order to prove~\eqref{eq:n_minus_n_plus}, it is then enough to show existence of $\theta = \theta(\sigma) > 0$ such that for $i=1,2,3$,
\begin{equation}  \label{eq:1_i_7}
\lim_{t \to \infty} t^{\theta} \sup_{\|y\| \leq t^{\sublin}} \frac{1}{p_t^y} \E \left[ q^y_n \left \langle \left( f_{2;i}^{y,t} \right)^2 \right \rangle \id_{n_-, n_+ \in J(t^{\xi_1}), n_{\bullet} \in J(t-2t^{\xi_1}), n_- \leq n_+} \right] = 0. 
\end{equation}
Let $i=1,2,3$. Conditioning on the number of jumps in $[0,t^{\xi_1})$, $[t^{\xi_1}, t-t^{\xi_1})$, and $[t-t^{\xi_1},t)$, equation \eqref{eq:E_gen_f} allows us to write 
\begin{align*}
	& \E \left[ q^y_n 
	\left \langle \left(f_{2;i}^{y,t} \right)^2 \right \rangle 
	\id_{n_-, n_+ \in J(t^{\xi_1}), n_{\bullet} \in J(t-2t^{\xi_1}), n_- \leq n_+} 
	\right] \\
=& \sum_{(l_-, l_{\bullet}, l_+) \in S} q^y_{l_- + l_{\bullet} + l_+} \Pp(l_-,l_{\bullet},l_+) 
\E \left[ \left \langle \left(f_{2;i}^{y,t} \right)^2 \right \rangle \bigg| l_-,l_{\bullet},l_+ \right],    
\end{align*}
\noindent where $S = \{(l_-, l_{\bullet}, l_+): l_-, l_+ \in J(t^{\xi_1}), \ l_{\bullet} \in J(t - 2t^{\xi_1}), \ l_- \leq l_+\}$.

In order to show~\eqref{eq:1_i_7} for $i=1,2$, we will show that the term
$$
 \E [  \langle (f_{2;i}^{y,t} )^2  \rangle | l_-,l_{\bullet},l_+ ]
$$
can be bounded, uniformly in $(l_-, l_{\bullet}, l_+) \in S$, by a function that goes to 0 at least polynomially fast as $t \to \infty$; 
then~\eqref{eq:1_i_7} follows from the fact that 
\begin{equation}	\label{eq:E_of_q_is_p}
\sum_{(l_-, l_{\bullet}, l_+) \in S} q^y_{l_- + l_{\bullet} + l_+} \Pp(l_-,l_{\bullet},l_+) \leq p_t^y.
\end{equation}
Notice that for $i = 1, 3$, the expression $ \E [  \langle (f_{2;i}^{y,t})^2  \rangle \vert l_-, l_{\bullet}, l_+ ]$ can be expanded in the following way, using~\eqref{eq:f_i_expansion}:

\begin{equation}     \label{eq:f2_i_expansion}
 \sum_{r \in R^i} \ \sum_{m \in M_r^i}  \ \sum_{\ibf \in H_{r,m}^i, \zbf} 
q^{y}_{l_-+l_\bullet + l_+,\widehat m}(\ibf,\zbf)^2 \ \E \biggl[ \prod_{j=1}^r \left(e^{\beta^2 t_{i_j +1}} - 1 \right) \bigg\vert l_-, l_{\bullet}, l_+ \biggr], 
\end{equation}
where $H_{r,m}^1 = W_{r,m}$ and $H_{r,m}^3 = I_1(n,r,m)$. Notice further that 
$$
 \E [  \langle (f_{2;2}^{y,t})^2  \rangle \vert l_-, l_{\bullet}, l_+ ]
 $$
is bounded by~\eqref{eq:f2_i_expansion} with $i=2$ and $M_r^2 = \{2,\ldots,r\}$, $H_{r,m}^2 = I_1(n,r,m)$.

Recall that $\sublin$ is the coefficient associated with the family of sets $J(t), t > 0$, and fix $\nuone \in (\Jtconst^{-1} -1,1)$. For $i = 1,2$, for $t$ sufficiently large, and for $l_- \leq l_+$ in $J(t^{\xi_1})$ and $l_{\bullet} \in J(t-2 t^{\xi_1})$, we have 
$$ 
r+1 \leq v(l_-) + v(l_+) + 3 = \Big\lceil\frac{l_-}{2}\Big\rceil^{\xi_2} + \Big\lceil\frac{l_+}{2}\Big\rceil^{\xi_2} + 3 < \nuone l_-,  \quad r \in R^i, 
$$ 
so in particular 
$
r_s + 1 < \nuone l_s  
$ 
for $s \in \oplus$.
Therefore, since $r = r_- + r_\bullet + r_+$, by Lemma~\ref{lm:A_estimate} and for $\beta$ so small that $\psi(\beta) > 0$, 
\begin{align}\label{eq:triple_A_bound}
A(t^{\xi_1}, l_-, r_- +1)  \  A&(t-2t^{\xi_1}, l_{\bullet}, r_{\bullet}+1) \notag \\
& A(t^{\xi_1},l_+,r_+ +1) \lesssim \psi(\beta)^{r_- + r_{\bullet} + r_+} = \psi(\beta)^r. 
\end{align}
Now, we take up cases $i=1,2,3$ separately.
\paragraph*{\textsc{Case} $i = 1$.} Using the bounds in~\eqref{eq:triple_A} and~\eqref{eq:triple_A_bound}, and taking $H_{r,m} = W_{r,m}$, the term in~\eqref{eq:f2_i_expansion} is less than a constant times
\begin{align*} 
& \sum_{r \in R^1}
\sum_{m \in M_r^1}
\sum_{\substack{\jbf = (j_1, \ldots, j_{r}) \in \N^{r}, \\ j_{m}-j_{m-1} < n - w(l_-) - w(l_+)}} 
\sum_{\zbf} \left(q_{j_1}^{z_1} \right)^2 \ldots \left(q_{j_r}^{z_r} \right)^2 
\psi(\beta)^{r}  \\
\lesssim& \sum_{r \in R^1} r  
\sum_{\substack{j_1, \ldots, j_{r-1} \in \N, \\ j_1+ \ldots + j_{r-1} > \frac{1}{3} (w(l_-) + w(l_+))}} 
\sum_{\zbf} 
\left(q_{j_1}^{z_1} \right)^2 \ldots \left(q_{j_r}^{z_r} \right)^2 \psi(\beta)^{r} \\ 
\lesssim& \sum_{r=1}^{\infty} r^2 (\alpha_d \psi(\beta))^{r} \sum_{j > \frac{w(l_-) + w(l_+)}{3 (v(l_-)+2)}} \frac{1}{j^{\frac{d}{2}}} \lesssim \left(\frac{w(l_-) + w(l_+)}{v(l_-)} \right)^{1-\frac{d}{2}}. 
\end{align*} 
For the last estimate we used that $0 < \psi(\beta) < \vartheta$ and thus $\alpha_d \psi(\beta) < \alpha_d \vartheta < 1$. Since 
$$
\frac{w(l_-) + w(l_+)}{v(l_-)} \gtrsim l_-^{\xi_3-\xi_2} \gtrsim t^{\xi_1 (\xi_3-\xi_2)}, 
$$
it follows that 
$$
\sup_{\|y\| \leq t^{\sublin}} \frac{1}{p_t^y} \E \left[ q^y_n \left \langle \left( f^{y,t}_{2;1} \right)^2 \right \rangle \id_{n_-, n_+ \in J(t^{\xi_1}), n_{\bullet} \in J(t-2 t^{\xi_1}), n_- \leq n_+} \right] \lesssim t^{\xi_1 (\xi_3-\xi_2) (1-\frac{d}{2})}. 
$$
This implies~\eqref{eq:1_i_7} for $i = 1$ and $\theta < \xi_1 (\xi_3-\xi_2) (\tfrac{d}{2}-1)$.

\paragraph*{\textsc{Case} $i=2$.} Again, using~\eqref{eq:triple_A_bound}, we bound~\eqref{eq:f2_i_expansion} by 

\begin{align*}
		\sum_{r \in R^2} r &
		\sum_{\jbf = (j_1, \ldots, j_{r}) \in \N^{r}} 
		\sum_{\zbf}
			\left(q_{j_1}^{z_1} \right)^2 \ldots \left(q_{j_{r}}^{z_{r}} \right)^2 
			\psi(\beta)^{r} 
	\\
	&\lesssim \sum_{r > \frac{1}{3} v(l_-)} r (\alpha_d \psi(\beta))^r
	\lesssim  (\alpha_d \psi(\beta))^{\frac{1}{3} v(l_-)}\frac{1}{3} v(l_-)
	\lesssim  (\alpha_d \psi(\beta))^{C t^{\xi_1\xi_2}}
\end{align*}
for some constant $C>0$. Here, we used that $l_-\in J(t^{\xi_1})$.
From this estimate we deduce~\eqref{eq:1_i_7} for $i = 2$ and for any $\theta > 0$. 

\paragraph*{\textsc{Case} $i = 3$.} For $t$ sufficiently large, $l_- \leq l_+$ in $J(t^{\xi_1})$, $l_{\bullet} \in J(t-2t^{\xi_1})$, we have 
$$
r_{\bullet} + 1 \leq k(l_- + l_{\bullet} + l_+) < \nuone l_{\bullet}, \quad r \in R^3. 
$$
However, it is not true in general that $r_- + 1 < \nuone l_-$ and $r_+ + 1 < \nuone l_+$. Thus, Lemma~\ref{lm:A_estimate} only gives  
$$
\E \biggl[ \prod_{j=1}^r \left(e^{\beta^2 t_{i_j +1}} - 1 \right) \bigg\vert l_-, l_{\bullet}, l_+ \biggr] \lesssim \psi^{r_{\bullet}} A(t^{\xi_1},l_-,r_- +1) A(t^{\xi_1},l_+, r_+ + 1). 
$$
Consequently, for $i =3$, the expression in~\eqref{eq:f2_i_expansion} is less than a constant times
\begin{align}
\label{eq:i210}
	& \sum_{\substack{r_-, r_{\bullet}, r_+ \in \N_0, \\ r_- + r_{\bullet} + r_+ \in R^3}} 
		(r_- + r_{\bullet} + r_+) (\alpha \psi)^{r_- + r_{\bullet} + r_+} 
\\ \label{eq:i21}
	+& \sum_{\substack{r_- \leq l_-, \ r_+ \leq l_+, \ r_{\bullet} \in \N_0, \\ r_- \geq \nuone l_- \ \text{or} \ r_+ \geq \nuone l_+}} 
		\hspace{-3mm} (r_- + r_{\bullet} + r_+) 
		(\alpha \psi)^{r_{\bullet}} \\
		& \alpha^{r_- + r_+} 
		A(t^{\xi_1},l_-, r_- +1) A(t^{\xi_1},l_+,r_+ +1). \label{eq:i22} 
\end{align}
%
Notice that the expression in~\eqref{eq:i210} converges to $0$ as $t\to\infty$ faster than any polynomial by the same argument as in the case $i=2$. So the limit statement for this term is also yielded by~\eqref{eq:E_of_q_is_p}. Therefore, it remains only to consider the expression in~\eqref{eq:i21} and~\eqref{eq:i22}; that is, we need to show that for any $\theta > 0$, 
\begin{align}    
\lim_{t \to \infty} t^{\theta} \sup_{\|y\| \leq t^{\sublin}} \frac{1}{p_t^y} & \sum_{\substack{l_-, l_+ \in J(t^{\xi_1}), \\ l_- \leq l_+}} \sum_{\l_{\bullet} \in J(t-2 t^{\xi_1})}  q^y_{l_- + l_{\bullet} + l_+} \Pp(l_-, l_{\bullet}, l_+) \notag \\
\sum_{\substack{r_- \leq l_-, \ r_+ \leq l_+, \ r_{\bullet} \in \N_0, \\ r_- \geq \nuone l_- \ \text{or} \ r_+ \geq \nuone l_+}} 
		\hspace{-3mm} & (r_- + r_{\bullet} + r_+) 
		(\alpha \psi)^{r_{\bullet}} \notag \\
		& \alpha^{r_- + r_+} 
		A(t^{\xi_1},l_-, r_- +1) A(t^{\xi_1},l_+,r_+ +1) = 0.      \label{eq:R_sums_1} 
\end{align}
Recall from Section~\ref{sec:transition_prob} that for $y \in \Z^d$ and $n \in \N_0$, we define
$$
\iota(y,n) = \begin{cases}
                          n, & \ \|y\|_1 \equiv n, \\
                        n+1, & \ \|y\|_1 \not\equiv n.
\end{cases}
$$
By Lemma~\ref{lm:q_iota}, for $t$ sufficiently large, $y \in \Z^d$ such that $\|y\| \leq t^{\sublin}$, $l_-, l_+ \in J(t^{\xi_1})$, and $l_{\bullet} \in J(t-2t^{\xi_1})$,
$$
q^y_{l_- + l_{\bullet} + l_+} \lesssim q^y_{\iota(y,l_{\bullet})}.
$$
%
Also, note that in~\eqref{eq:R_sums_1} at least one of the conditions $r_- \geq \nuone l_-$ or  $r_+ \geq \nuone l_+$ must be satisfied. Without loss of generality, assume $r_- \geq \nuone l_-$  (the other case is argued similarly). Therefore, since $r_- + r_\bullet + r_+ \leq (r_- + 1)(r_\bullet + 1)(r_+ + 1)$, we can bound the expression in~\eqref{eq:R_sums_1} from above by
%
\begin{align}   \label{eq:R_sums_3_1}
	& \sup_{\|y\| \leq t^{\sublin}} \frac{1}{p_t^y} \ 
	e^{-\beta^2t^{\xi_1}}
	\sum_{l_{\bullet} \in J(t-2t^{\xi_1})} 
	q^y_{\iota(y,{l_\bullet})} 
	\Pp(l_{\bullet})  \\  
	\label{eq:R_sums_3_2}
	& \sum_{r_\bullet\in \N_0} (r_\bullet+1) (\alpha \psi)^{r_\bullet}  \\
	\label{eq:R_sums_3_3}
	&e^{\beta^2t^{\xi_1}} \hspace{-2mm} \sum_{\substack{l_- \in J(t^{\xi_1})}} 
	\sum_{\substack{\nuone l_- \leq r_- \leq l_-}}
	(r_-+1) \alpha^{r_-} \Pp(l_-, l) A(t^{\xi_1},l_-,r_-+1)  \\  
	\label{eq:R_sums_3_4}
	&\phantom{e^{\beta^2t^{\xi_1}} } \sum_{\substack{l_+ \in J(t^{\xi_1})}} 
	\sum_{\substack{0\leq r_+ \leq l_+}}
	(r_++1) \alpha^{r_+} \Pp(l_+, l) A(t^{\xi_1},l_+,r_++1). 
\end{align}
Here, note that we multiplied and divided by $e^{\beta^2t^{\xi_1}} $, so that the term in~\eqref{eq:R_sums_3_1} is finite in the limit by~(A0) in Lemma~\ref{lm:building_blocks}; the term in~\eqref{eq:R_sums_3_2} is bounded provided that $\alpha\psi <1$. Finally, the term in~\eqref{eq:R_sums_3_3} goes to 0 as $t\to \infty$ by~(A3) in Lemma~\ref{lm:building_blocks} and the term in~\eqref{eq:R_sums_3_4} goes to 0 as $t\to \infty$ by~(A7) in Lemma~\ref{lm:building_blocks}.

%
%
%


\subsubsection{{Proof of Claim~\ref{cl:one_huge}, Part 2}}

With regard to the convergence statement in~\eqref{eq:ibob_2}, we first note that by the triangle inequality it is enough to show that for $\beta$ sufficiently small and for any $\theta > 0$,  
\begin{equation}    \label{eq:ibob_2_1}
\lim_{t \to \infty} t^{\theta} \sup_{\|y\| \leq t^{\sublin}} \frac{1}{p_t^y} 
\left \langle \E \left[ q^y_n \left \lvert F_2^{y,t} \right \rvert 
\left(1 - \id_{n_-, n_+ \in J(t^{\xi_1}), n_{\bullet} \in J(t-2t^{\xi_1})} \right) \right] \right \rangle  = 0, 
\end{equation}
and 
\begin{align}     
&\lim_{t \to \infty} t^{\theta}  \sup_{\|y\| \leq t^{\sublin}} \frac{1}{p_t^y}  \notag \\
&\left \langle \E \left[ q^y_n \left \lvert (T_{0,0}^t -1) (T^{y,t}_0 -1) \right \rvert 
\left(1 - \id_{n_-, n_+ \in J(t^{\xi_1}), n_{\bullet} \in J(t-2t^{\xi_1})} \right) \right] \right \rangle = 0. \label{eq:ibob_2_2} 
\end{align}
We first show~\eqref{eq:ibob_2_1}, which will follow from  
\begin{equation} \label{eq:lim_F2_sq}
\lim_{t \to \infty} t^{\theta} \sup_{\|y\| \leq t^{\sublin}} \frac{1}{p_t^y} \E \left[ q^y_n \left \langle \left(F_2^{y,t} \right)^2 \right \rangle \left(1-\id_{n_-, n_+ \in J(t^{\xi_1}), n_{\bullet} \in J(t-2t^{\xi_1})} \right) \right] = 0 
\end{equation}
for any $\theta > 0$. 
From~\eqref{eq:E_gen_f}, for given families of sets $S_-(t), S_{\bullet}(t), S_+(t) \subset \N_0$, we have 
\begin{align*}
& \E \left[ q^y_n \left \langle \left(F_2^{y,t} \right)^2 \right \rangle \id_{n_s \in S_s(t), s \in \oplus} \right] \\
=& \sum_{l_s \in S_s(t), s \in \oplus} q^y_{l_- + l_{\bullet} + l_+} \Pp(l_-, l_{\bullet}, l_+) \E \left[ \left \langle \left(F_2^{y,t} \right)^2 \right \rangle \bigg\vert l_-, l_{\bullet}, l_+ \right]. 
\end{align*}
Also, \eqref{eq:f_i_expansion} and \eqref{eq:triple_A}, imply that for $l_-, l_{\bullet}, l_+ \in \N_0$,  
\begin{align*}
& \E \left[ \left \langle \left(F_2^{y,t} \right)^2 \right \rangle \bigg\vert l_-, l_{\bullet}, l_+ \right] \\
\lesssim& \hspace{-2mm}\sum_{0 \leq r_s \leq l_s, s \in \oplus} (r_- + r_{\bullet} + r_+)  
 \alpha^{r_- + r_{\bullet} + r_+} \\
 &A(t^{\xi_1},l_-, r_- +1) A(t-2t^{\xi_1}, l_{\bullet}, r_{\bullet} +1) A(t^{\xi_1},l_+, r_+ +1) \\
\lesssim& \sum_{0 \leq r_\bullet \leq l_{\bullet}} (r_\bullet + 1) \alpha^{r_\bullet} A(t-2t^{\xi_1},l_{\bullet}, r_\bullet+1) \\
&\prod_{s \in \{-,+\}} \sum_{0 \leq r_s \leq l_s} (r_s +1) \alpha^{r_s} A(t^{\xi_1},l_s,r_s +1). 
\end{align*} 
By Lemma~\ref{lm:lclt_con_time}, for $t$ sufficiently large $1/p_t^y \lesssim e^{t^\sigma}$ for $\| y \| \leq t^\sigma$. Therefore,
\begin{align*}
\sup_{\|y\| \leq t^{\sublin}} \frac{1}{p_t^y} & \E \left[ q^y_n \left \langle \left(F_2^{y,t} \right)^2 \right \rangle \id_{n_s \in S_s(t), s \in \oplus} \right] \\
\lesssim e^{t^{\sublin}} & \sum_{l \in S_{\bullet}(t)} \Pp^{\bullet}(l) \sum_{0 \leq r \leq l} (r + 1) \alpha^{r} A(t-2t^{\xi_1},l,r + 1) \\
& \prod_{s \in \{-,+\}} \sum_{l_s \in S_s(t)} \Pp^s(l_s) \sum_{0 \leq r_s \leq l_s} (r_s +1) \alpha^{r_s} A(t^{\xi_1},l_s, r_s +1).  
\end{align*}  

In order to complete the proof of~\eqref{eq:ibob_2_1}, we will consider two main cases: (1) $S_{\bullet}(t)$ is the complement of $J(t-2t^{\xi_1})$ and (2) $S_{\bullet}(t)$ is $J(t-2t^{\xi_1})$.

\medskip

\paragraph*{\textsc{Case 1.}} If $S_{\bullet}(t)$ is the complement of $J(t-2t^{\xi_1})$, then~(A5)  in Lemma~\ref{lm:building_blocks} yields 
$$
\lim_{t \to \infty} t^{\theta} e^{t^{\sublin}} \sum_{l \in S_{\bullet}(t)} \Pp^{\bullet}(l) \sum_{0 \leq r \leq l} (r+1) \alpha^r A(t-2t^{\xi_1}, l, r+1) = 0, \quad \theta > 0. 
$$
Then either of two possibilities occur:\\
If either $S_-$ or $S_+$ is the complement of $J(t^{\xi_1})$, then (A6)  in Lemma~\ref{lm:building_blocks} implies
		$$ 
		\lim_{t \to \infty} \sum_{l \notin J(t^{\xi_1})} \Pp^-(l) \sum_{0 \leq r \leq l} (r+1) \alpha^r A(t^{\xi_1},l,r+1) = 0. 
		$$ 
If either $S_-$ or $S_+$ is of $J(t^{\xi_1})$, (A7) implies
		$$ 
		\lim_{t \to \infty} \sum_{l \in J(t^{\xi_1})} \Pp^-(l) \sum_{0 \leq r \leq l} (r+1) \alpha^r A(t^{\xi_1},l,r+1) < \infty. 
		$$ 	
Therefore,
$$
\lim_{t \to \infty} t^{\theta} \sup_{\|y\| \leq t^{\sublin}} \frac{1}{p_t^y} \E \left[ q^y_n \left \langle \left(F_2^{y,t} \right)^2 \right \rangle \id_{n_{\bullet} \notin J(t-2t^{\xi_1})} \right] = 0, \quad \theta > 0. 
$$

\paragraph*{\textsc{Case 2.}} If $S_\bullet = J(t-2t^{\xi_1})$, then
by Lemma~\ref{lm:ql+._},
for $\|y \| \leq t^{\sublin}$, $l_{\bullet} \in S_\bullet(t)$, and $l_-, l_+ \in \N_0$, we have 
\begin{equation}    \label{eq:q_exp_bound} 
q^y_{l_- + l_{\bullet} + l_+} \lesssim \prod_{s \in \{-,+\}} \exp \left(C t^{\sublin -1} l_s \right) q^y_{\iota(y,l_{\bullet})}.
\end{equation} 
Therefore, 
\begin{align*}
	& \sup_{\|y\| \leq t^{\sublin}} \frac{1}{p_t^y} \sum_{l_s \in S_s(t), s \in \{-,+\}} \sum_{l_{\bullet} \in J(t-2t^{\xi_1})} q^y_{l_- + l_{\bullet} + l_+} \Pp(l_-, l_{\bullet}, l_+)  \\ 
	& \sum_{0 \leq r \leq l_{\bullet}} (r +1) \alpha^{r} A(t-2t^{\xi_1},l_{\bullet},r +1) \prod_{s \in \{-,+\}} \sum_{0 \leq r_s \leq l_s} (r_s +1) \alpha^{r_s} A(t^{\xi_1},l_s,r_s +1) \\
	\lesssim& H_1(t) H_2(t), 
	\end{align*} 
where 
$$
H_1(t) := e^{-\beta^2 t^{\xi_1}} \sup_{\|y\| \leq t^{\sublin}} \frac{1}{p_t^y} \sum_{l \in J(t-2t^{\xi_1})} q^y_{\iota(y,l)} \Pp^{\bullet}(l) \sum_{0 \leq r \leq l} (r +1) \alpha^{r} A(t-2t^{\xi_1},l,r +1)
$$
and 
\begin{align*} 
H_2(t) :=& e^{\beta^2 t^{\xi_1}} \prod_{s \in \{-,+\}} \sum_{l_s \in S_s(t)} \\
&\exp \left(C t^{\sublin -1} l_s \right) \Pp^s(l_s) \sum_{0 \leq r_s \leq l_s} (r_s +1) \alpha^{r_s} A(t^{\xi_1},l_s,r_s +1). 
\end{align*} 

Notice $\lim_{t \to \infty} H_1(t) < \infty$ by (A1) in Lemma~\ref{lm:building_blocks}. To deal with $H_2(t)$, first note that at least one of $S_-$ or $S_+$ must be the complement of $J(t^{\xi_1})$; without loss of generality, assume it is $S_-(t)$, in which case $S_+(t)$ is allowed to be $J(t^{\xi_1})$ or its complement. Then, (A4) in Lemma~\ref{lm:building_blocks} implies that the factor in $H_2(t)$ corresponding to $s=-$ satisfies
$$
\lim_{t \to \infty} t^{\theta} e^{\beta^2 t^{\xi_1}} \sum_{l \notin J(t^{\xi_1})} e^{c t^{\sublin -1} l} \Pp^-(l) \sum_{0 \leq r \leq l} (r+1) \alpha^r A(t^{\xi_1},l,r+1) = 0.
$$
If $S_+(t)$ is the complement of $J(t^{\xi_1})$, then the factor in $H_2(t)$ corresponding to $s=+$ also satisfies the above. However, if $S_+(t)=J(t^{\xi_1})$, then $\exp(Ct^{\sigma - 1} l_+) \leq \exp(C't^{\sigma - 1 + \xi_1})$ which remains bounded because $\sigma - 1 + \xi_1 < 0$. Finally, (A7) implies
$$ 
\limsup_{t \to \infty} \sum_{l \in J(t^{\xi_1})} \Pp^{-}(l) \sum_{0 \leq r < l } (r+1) \alpha^r A(t^{\xi_1},l,r+1) < \infty. 
$$
Therefore, we also have 
$$
\lim_{t \to \infty} t^{\theta} \sup_{\|y\| \leq t^{\sublin}} \frac{1}{p_t^y} \E \left[ q^y_n \left \langle \left(F_2^{y,t} \right)^2 \right \rangle \id_{n_{\bullet} \in J(t-2t^{\xi_1})} \left(1 - \id_{n_-, n_+ \in J(t^{\xi_1})} \right) \right] = 0, \quad \theta > 0. 
$$
This completes the proof of~\eqref{eq:ibob_2_1}. 

\medskip

For the convergence statement in~\eqref{eq:ibob_2_2}, we only need to note that by the Cauchy--Schwarz inequality, Fubini Theorem, and symmetry, 
\begin{align*}
& \left \langle \E \left[q^y_n \left \lvert (T_{0,0}^t -1) (T^{y,t}_0 -1) \right \rvert \left(1-\id_{n_-, n_+ \in J(t^{\xi_1}), n_{\bullet} \in J(t-2t^{\xi_1})} \right) \right] \right \rangle \\
\leq& \E \left[ q^y_n \left \langle \left(T_{0,0}^t -1 \right)^2 \right \rangle \left(1 - \id_{n_-, n_+ \in J(t^{\xi_1}), n_{\bullet} \in J(t-2t^{\xi_1})} \right) \right]. 
\end{align*}
The rest of the proof can be carried out in full analogy to the proof of~\eqref{eq:lim_F2_sq}.

\subsection{Proof of Lemma~\ref{lm:main_claim1-4} Part 2 and 3: Convergence for one huge gap at the start or the end}
\label{ssec:gap_end}

We only show the convergence statement in~\eqref{eq:conv_3} as the proof of~\eqref{eq:conv_2} is analogous. We write 
$$
F_3^{y,t} - T_{0,0}^t + 1 = f_{3;1}^t  + f_{3;2}^t  
$$
where for $i= 1,2$,
\begin{align}    \label{eq:p6_3}
	f_{3;i}^t \coleq &
	\sum_{r \in R^i} \sum_{\substack{\ibf \in H_{r,n}^i, \zbf}} 
	q_r(\ibf,\zbf)
	\prod_{j=1}^r h(z_j; s_{i_j}, s_{i_j +1}) 
\end{align}
where
$q_r(\ibf,\zbf) \coleq q_{n, \widehat{r+1}}^y = q_{i_1}^{z_1} \ldots q_{i_r - i_{r-1}}^{z_r - z_{r-1}}$,\\
$R^1 \coleq \{1,\ldots, v(n_-)+1 \}$, 
$R^2 \coleq \{v(n_-)+1, \ldots k(n) \}$, \\
$H_{r,n}^1 \coleq 
		\left\{ \ibf = (i_1, \ldots, i_r)  ~:~
			\substack{ \displaystyle 
				0 \leq i_1 < \ldots < i_{r} \leq rn^{\xi} 
			\\	\displaystyle
				i_r > w(n_-) 
			}
			\right\},
$
$
H_{r,n}^2 \coleq I_1(n,r,r+1).
$\\

\smallskip
By Jensen's inequality, it is enough to prove the following claim.

\begin{claim} \label{cl:one_huge_start}
For $\beta >0$ sufficiently small, there is $\theta > 0$ such that 
for $i \in \{1,2\}$,  
\begin{align}   
\lim_{t \to \infty} t^{\theta} \sup_{\|y\| \leq t^{\sublin}} \frac{1}{p_t^y} \E \left[ q^y_n \left \langle \left(f_{3;i}^t \right)^2 \right \rangle \id_{n_- \in J(t^{\xi_1}), \ n_{\bullet} + n_+ \in J(t-t^{\xi_1})} \right] =& 0, \label{eq:ib_J_I} \\
\lim_{t \to \infty} t^{\theta} \sup_{\|y\| \leq t^{\sublin}} \frac{1}{p_t^y} \E \left[ q^y_n \left \langle \left(f_{3;i}^t \right)^2 \right \rangle \left(1 - \id_{n_- \in J(t^{\xi_1}), n_{\bullet} + n_+ \in J(t-t^{\xi_1})} \right) \right] =& 0. \label{eq:ib_J_ob_I}
\end{align}
\end{claim}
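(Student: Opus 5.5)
We follow the template of the proof of Claim~\ref{cl:one_huge}, but now with only two blocks of jumps: $n_-$ in $[0,t^{\xi_1})$ and $n_\bullet+n_+$ in $[t^{\xi_1},t)$. Conditioning on $(n_-, n_\bullet+n_+)=(l_-,l)$, the analogue of~\eqref{eq:E_gen_f} reads
\begin{align*}
\E\bigl[ q^y_n \langle (f_{3;i}^t)^2\rangle \id_{\ldots}\bigr]=\sum_{(l_-,l)} q^y_{l_-+l}\,\Pp^{-,(\bullet,+)}(l_-,l)\,\E\bigl[\langle (f_{3;i}^t)^2\rangle \,\big\vert\, l_-,l\bigr].
\end{align*}
The conditional second moment is expanded as in~\eqref{eq:f_i_expansion}. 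The product of the factors $(e^{\beta^2 t_{i_j+1}}-1)$ is bounded by $A(t^{\xi_1},l_-,r_-+1)\,A(t-t^{\xi_1},l,r_++1)$, where $r_-$ and $r_+$ count the indices below and above $l_-$. Since $f_{3;i}^t$ contains no factor for the final huge gap, the sum over $\zbf$ of the squared transition probabilities factorizes into a product of $\sum_z (q_j^z)^2$ terms.

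\textbf{Proof of~\eqref{eq:ib_J_I}.} Here $l_-\in J(t^{\xi_1})$ and $l\in J(t-t^{\xi_1})$. Whenever $r_-+1<\nuone l_-$ and $r_++1<\nuone l$, Lemma~\ref{lm:A_estimate} bounds the $A$-product by $\psi(\beta)^r$.
\begin{itemize}
\item For $i=1$, every $\ibf\in H^1_{r,n}$ has $i_r>w(l_-)$ while $r\le v(l_-)+1$. Hence some gap $j_k=i_k-i_{k-1}$ exceeds $w(l_-)/(v(l_-)+1)\gtrsim t^{\xi_1(\xi_3-\xi_2)}$. Exactly as in Case $i=1$ of Claim~\ref{cl:one_huge}, this gives the bound
\begin{align*}
\sum_r r^2(\alpha_d\psi)^r\sum_{j>c\,t^{\xi_1(\xi_3-\xi_2)}} j^{-d/2}\lesssim t^{-\xi_1(\xi_3-\xi_2)(d/2-1)}.
\end{align*}
This bound is uniform in $(l_-,l)$, and $\sum q^y_{l_-+l}\Pp^{-,(\bullet,+)}(l_-,l)\le p^y_t$ finishes the case.
\item For $i=2$, we have $r\ge v(l_-)+1\gtrsim t^{\xi_1\xi_2}$. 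When $r_-<\nuone l_-$, we get a factor $(\alpha_d\psi)^{r}$, which is stretched-exponentially small as in Case $i=2$. When $r_-\ge\nuone l_-$ (possible, since $k(n)$ is much larger than $l_-$), we proceed as in Case $i=3$ of Claim~\ref{cl:one_huge}. We bound $q^y_{l_-+l}$ via Lemma~\ref{lm:q_iota} (or Lemma~\ref{lm:ql+._} with the $\bullet,+$ blocks merged) by $q^y_{\iota(y,l)}$. We then separate the $t^{\xi_1}$-block factor, which is controlled by (A3), from a long-block factor, which is controlled by the analogue of (A0)/(A1) on the interval $[t^{\xi_1},t)$; the $e^{\pm\beta^2 t^{\xi_1}}$ factors cancel. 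Note that $r_+<\nuone l$ automatically, since $r\le k(n)\ll l$.
\end{itemize}

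\textbf{Proof of~\eqref{eq:ib_J_ob_I}.} We drop the structural restrictions and bound the conditional moment by
\begin{align*}
\prod_{\text{blocks}}\sum_{r_s\le l_s}(r_s+1)\alpha_d^{r_s}A(\cdot,l_s,r_s+1),
\end{align*}
as in the proof of~\eqref{eq:lim_F2_sq}.
\begin{itemize}
\item If $l\notin J(t-t^{\xi_1})$, we use $1/p_t^y\lesssim e^{t^\sigma}$ from Lemma~\ref{lm:lclt_con_time} together with the analogue of (A5) for the interval length $t-t^{\xi_1}$, and bound the $t^{\xi_1}$-block by (A6) or (A7).
\item If $l\in J(t-t^{\xi_1})$ but $l_-\notin J(t^{\xi_1})$, we bound $q^y_{l_-+l}\lesssim e^{Ct^{\sigma-1}l_-}q^y_{\iota(y,l)}$ by Lemma~\ref{lm:ql+._}. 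The long-block factor is then handled by the (A1)-analogue, and the short block by (A4), which absorbs the weight $e^{Ct^{\sigma-1}l_-}$ and yields decay faster than any power of $t$.
\end{itemize}

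\textbf{Main obstacle.} The main obstacle is the subcase $r_-\ge\nuone l_-$ in~\eqref{eq:ib_J_I} for $i=2$. There Lemma~\ref{lm:A_estimate} is unavailable on the short block, and the decay must come entirely from (A3) while the long block is shown to stay $O(e^{\beta^2t^{\xi_1}}p^y_t)$. This requires verifying that (A0)/(A1) hold with $t-2t^{\xi_1}$ replaced by $t-t^{\xi_1}$. We expect this to follow from the same proof in Appendix~\ref{ssec:building_blocks_proofs}, using Lemma~\ref{lm:p_ratio} with the single shift $t^{\xi_1}$.
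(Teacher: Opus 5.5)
Your proposal is correct and follows essentially the same route as the paper. The shared steps are: conditioning on $(n_-, n_\bullet+n_+)$; the $\psi(\beta)^r$ bound plus a long-gap estimate for $i=1$; for $i=2$, stretched-exponential smallness when $r_-+1<\nu_1 l_-$, and (A3) against an (A0)-type bound when $r_-+1\ge \nu_1 l_-$; and, off the typical set, either (A5) with (A6)/(A7), or Lemma~\ref{lm:ql+._} with (A1)/(A4). Note that for $i=2$ the split should be at $r_-+1$ rather than $r_-$, to match Lemma~\ref{lm:A_estimate} and (A3). The paper uses (A0), (A1) and (A5) with $t-t^{\xi_1}$ in place of $t-2t^{\xi_1}$ only implicitly; you are right to flag this, and the same appendix argument, with Lemma~\ref{lm:p_ratio} adapted to a single shift $t^{\xi_1}$, covers it.
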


Let $i \in \{1,2\}$. Conditioning on the number of jumps in $[0,t^{\xi_1})$ and $[t^{\xi_1},t)$, we write
\begin{align}
& \E \left[ q^y_n \left \langle \left(f_{3;i}^t \right)^2 \right \rangle \id_{n_- \in J(t^{\xi_1}), n_{\bullet} + n_+ \in J(t-t^{\xi_1})} \right] \notag \\
=& \sum_{l_- \in J(t^{\xi_1})} \sum_{l_{\oplus} \in J(t-t^{\xi_1})} q^y_{l_- +l_{\oplus}} \notag \\
& \Pp^{-,(\bullet,+)}(l_-,l_{\oplus}) \ \E^{-,(\bullet,+)}\left[ \left \langle \left(f_{3;i}^t \right)^2 \right \rangle \big\vert l_-,l_{\oplus} \right]. \label{eq:exp_Ef3} 
\end{align} 
For $l_-, l_{\oplus} \in \N_0$, 
\begin{align}    
&\E^{-,(\bullet,+)} \left[ \left \langle \left(f_{3;i}^t \right)^2 \right \rangle \big\vert l_-, l_{\oplus} \right] \notag \\
=& \sum_{r \in R^i} \sum_{\ibf \in I^i_r, \zbf} q_r(\ibf, \zbf)^2 \E^{-,(\bullet,+)} \biggl[\prod_{j=1}^r \left(e^{\beta^2 t_{i_j +1}} - 1 \right) \bigg\vert l_-,l_{\oplus} \biggr],  \label{eq:exp_val_bd} 
\end{align}
and for $r \in R^i$, and $\ibf \in H_{r,n}^i$, let 
$$
r_- = \left \lvert \left\{1 \leq j \leq r: i_j < l_- \right\} \right \rvert, \quad r_{\oplus} = r - r_-. 
$$
Observe that 
\begin{align}   
\E^{-,(\bullet,+)} & \biggl[ \prod_{j=1}^r \left(e^{\beta^2 t_{i_j +1}} - 1 \right) \bigg\vert l_-, l_{\oplus} \biggr] \\
\leq& A(t^{\xi_1}, l_-, r_- +1) A(t-t^{\xi_1},l_{\oplus}, r_{\oplus}+1). \label{eq:double_A} 
\end{align}

\subsubsection{{Proof of Claim~\ref{cl:one_huge_start}, Part 1}}
We will consider the cases $i=1$ and $i=2$ separately.

\paragraph*{\textsc{Case} $i=1$.} In order to show~\eqref{eq:ib_J_I} for $i=1$, we will show that the term
$ \E [  \langle (f_{3;i}^{t} )^2  \rangle | l_-,l_{\oplus} ]
$
can be bounded by a function that goes to 0 as $t \to \infty$ at least polynomially fast; then the fact that 
\begin{equation} \label{eq:ql.+_pty}
\sum_{\substack{l_s\in S_s(t) \\ l_-\leq l_+ }} q^y_{l_- + l_{\bullet} + l_+} \Pp(l_-,l_{\bullet},l_+) \leq p_t^y
\end{equation}
yields the limit in~\eqref{eq:ib_J_I}.\\
For $t$ sufficiently large, $l_- \in J(t^{\xi_1}), l_{\oplus} \in J(t-t^{\xi_1})$, we have for any $r \in R^1$ 
$$ 
r+1 \leq v(l_-) + 2  < \nuone l_{-},
$$ 
so in particular
$
r_s + 1 < \nuone l_s
$
for  $s \in \{-,\oplus\}$. 
By Lemma~\ref{lm:A_estimate}, the expression on the right side of~\eqref{eq:double_A} is less than a constant times 
$$
\psi^{r_- + r_{\oplus}}  = \psi^r. 
$$
As a result, for $i=1$, the expression in~\eqref{eq:exp_val_bd} is less than a constant times

    \begin{align*}
    \sum_{r \in R^1} \sum_{\substack{\ibf \in H_{r,m}^1, \zbf}} 
    \psi^r
    q_r(\ibf,\zbf)^2
    \lesssim &
    \sum_{1\leq r\leq v(n_-)+1} 
    \psi^r
    \sum_{\substack{j_1, \ldots, j_r \in \N, \\ j_1 + \ldots + j_r > w(l_-)}} \sum_{c_1, \ldots, c_r \in \Z^d} \left(q_{j_1}^{c_1} \right)^2 \ldots \left(q_{j_r}^{c_r} \right)^2 
    \\
       \lesssim &
     \sum_{1\leq r\leq v(n_-)+1} 
     \psi^r
    \sum_{l=1}^r \sum_{\substack{j_1, \ldots, j_r \in \N, \\ j_l \geq \frac{1}{2} \frac{w(l_-)}{v(l_-)}}} \sum_{k=1}^r \biggl(\sum_{c_k \in \Z^d} \left(q_{j_k}^{c_k} \right)^2 \biggr)
    \\
    \lesssim &
    \sum_{1\leq r\leq \infty}   r  (\alpha\psi)^r
    \sum_{j > \frac{1}{2}\frac{w(l_-)}{v(l_-)} }
    \frac{1}{j^{d/2}}\\
    \lesssim &
    \left(\frac{w(l_-)}{v(l_-)} \right)^{1-\frac{d}{2}}. 
    \end{align*}
 
Since 
$$ 
\frac{w(l_-)}{v(l_-)} \gtrsim l_-^{\xi_3-\xi_2} \gtrsim t^{\xi_1 (\xi_3-\xi_2)},  
$$ 
this implies~\eqref{eq:ib_J_I} for $i=1$ and $\theta < \xi_1 (\xi_3-\xi_2) (\tfrac{d}{2}-1)$. 

\medskip

\paragraph*{\textsc{Case} $i=2$.}
For $l_- \in J(t^{\xi_1})$ and $l_{\oplus} \in J(t-t^{\xi_1})$, 
$$ 
r+1 \leq k(l_- +l_{\oplus}) + 1 < \nuone l_{\oplus}, \quad r \in R^2, 
$$ 
as long as $t$ is sufficiently large. Then, 
$$ 
\E^{-,(\bullet,+)} \biggl[ \prod_{j=1}^r \left(e^{\beta^2 t_{i_j +1}} - 1 \right) \bigg\vert l_-, l_{\oplus} \biggr] \lesssim \psi^{r_{\oplus}} A(t^{\xi_1},l_-,r_- +1). 
$$ 
Consequently, for $i=2$, the expression in~\eqref{eq:exp_val_bd} is less than a constant times 
$$
\sum_{\substack{0 \leq r_- \leq l_-, r_{\oplus} \geq 0, \\ v(l_-) + 1 < r_- + r_{\oplus}}} (\alpha \psi)^{r_{\oplus}} A(t^{\xi_1},l_-,r_- +1)  \alpha^{r_-}. 
$$
If $r_- + 1 < \nuone l_-$, we have as before 
$ A(t^{\xi_1},l_-,r_- +1) \lesssim (\psi)^{r_-}, $ 
so the term in~\eqref{eq:exp_val_bd} is less than a constant times
\begin{align*}
 \sum_{r > (v(l_-)+1)/2} (\alpha \psi)^r \lesssim (\alpha\psi)^{(v(l_-)+1)/2}
\end{align*}   
For $l_- \in J(t^{\xi_1})$, we have that
$
v(l_-) \gtrsim l_-^{\xi_2} \gtrsim t^{\xi_1 \xi_2}
$.
Therefore, for $\alpha\psi<1$, the estimate in~\eqref{eq:ql.+_pty} yields

\begin{align*}
\lim_{t \to \infty} t^{\theta} \sup_{\|y\| \leq t^{\sublin}} \frac{1}{p_t^y} 
\hspace{-1mm}
\sum_{\substack{l_- \in J(t^{\xi_1}) \\ l_{\oplus} \in J(t-t^{\xi_1})}}
\hspace{-3mm}
q^y_{l_- + l_{\oplus}} \Pp^{-,(\bullet,+)}(l_-, l_{\oplus}) 
\sum_{\substack{0 \leq r_- < \nuone l_- \\ r_- + r_\oplus \in R^2 }} (\alpha\psi)^{r_- + r_\oplus}
=0.
\end{align*}
To complete the proof of~\eqref{eq:ib_J_I} for $i=2$, it only remains to consider the case $r_- + 1 \geq \nuone l_-$: We show that for any $\theta > 0$, 
\begin{align}    
\lim_{t \to \infty} t^{\theta} & \sup_{\|y\| \leq t^{\sublin}} \frac{1}{p_t^y} \sum_{l_-\in J(t^{\xi_1})} \sum_{l_{\oplus} \in J(t-t^{\xi_1})} q^y_{l_- + l_{\oplus}} \Pp^{-,(\bullet,+)}(l_-,l_{\oplus})  \notag \\
& \sum_{\nuone l_- - 1 \leq r_- \leq l_-, r_{\oplus} \in \N_0} (\alpha \psi)^{r_{\oplus}} \alpha^{r_-} A(t^{\xi_1},l_-,r_- +1) = 0.  \label{eq:sums_1}
\end{align}
By Lemma~\ref{lm:q_iota},  for $t$ sufficiently large, $y \in \Z^d$ such that $\|y\| \leq t^{\sublin}$, $l_- \in J(t^{\xi_1})$, and $l_{\oplus} \in J(t-t^{\xi_1})$, we have that
$
q^y_{l_- + l_{\oplus}} \lesssim q^y_{\iota(y,l_{\oplus})}.
$
Therefore, 
\begin{align*}
	& \sup_{\|y\| \leq t^{\sublin}} \frac{1}{p_t^y} \sum_{l_- \in J(t^{\xi_1})} \sum_{l_{\oplus} \in J(t-t^{\xi_1})} q^y_{l_- + l_{\oplus}} \Pp^{-,(\bullet,+)}(l_-, l_{\oplus}) \\
	& \sum_{r \in \N_0} (\alpha \psi)^r \sum_{\nuone l_- - 1 \leq r_- \leq l_-} (r_- +1) \alpha^{r_-} A(t^{\xi_1},l_-,r_- +1) \\
\lesssim& \sup_{\|y\| \leq t^{\sublin}} \frac{1}{p_t^y} \sum_{l_{\oplus} \in J(t-t^{\xi_1})} q^y_{\iota(y,l_{\oplus})} \Pp^{(\bullet,+)}(l_{\oplus}) \\
	&\sum_{l_- \in J(t^{\xi_1})} \Pp^-(l_-) \sum_{\nuone l_- - 1 \leq r_- \leq l_-} \alpha^{r_-} A(t^{\xi_1},l_-,r_- +1).
\end{align*}
and convergence to 0 as $t \to \infty$ follows from~(A0) and~(A3) in Lemma~\ref{lm:building_blocks}. 

\subsubsection{{Proof of Claim~\ref{cl:one_huge_start}, Part 2}}
Notice that by~\eqref{eq:exp_val_bd} and~\eqref{eq:double_A}, for $l_-, l_{\oplus} \in \N_0$, we have
\begin{align*}
&\E^{-,(\bullet,+)} \left[ \left \langle \left(f_{3;i}^t \right)^2 \right \rangle \bigg\vert l_-, l_{\oplus} \right] \\
\lesssim& \sum_{0 \leq r_- \leq l_-} \alpha^{r_-} A(t^{\xi_1},l_-, r_- +1) \sum_{0 \leq r_{\oplus} \leq l_{\oplus}} \alpha^{r_{\oplus}} A(t-t^{\xi_1},l_{\oplus},r_{\oplus} +1). 
\end{align*}
Therefore, since Lemma~\ref{lm:lclt_con_time} implies that $1/p_t^y \lesssim e^{t^\sigma}$ for $\|y\| \leq t^{\sigma}$, using the expansion in~\eqref{eq:exp_Ef3}, we obtain the following bound
\begin{align*}
& \sup_{\|y\| \leq t^{\sublin}} \frac{1}{p_t^y} \E \left[ q^y_n \left \langle \left(f_{3;i}^t \right)^2 \right \rangle \id_{n_s \in S_s(t), s \in \{-, \oplus\}} \right] \\
\lesssim& e^{t^{\sublin}} \sum_{l_- \in S_-(t)} \Pp^-(l_-) \sum_{0 \leq r_- \leq l_-} \alpha^{r_-} A(t^{\xi_1},l_-,r_-+1) \\
& \sum_{l_{\oplus} \in S_{\oplus}(t)} \Pp^{(\bullet,+)}(l_{\oplus}) \sum_{0 \leq r_{\oplus} \leq l_{\oplus}} \alpha^{r_{\oplus}} A(t-t^{\xi_1},l_{\oplus},r_{\oplus}+1). 
\end{align*}

We now consider two cases: (1) $S_{\oplus}(t)$ is the complement of $J(t-t^{\xi_1})$ and (2) $S_{\oplus}(t)$ is $J(t-t^{\xi_1})$.

\paragraph*{\textsc{Case 1.}} If $S_{\oplus}(t)$ is the complement of $J(t-t^{\xi_1})$,~(A5)  in Lemma~\ref{lm:building_blocks} yields 
$$
\lim_{t \to \infty} t^{\theta} e^{t^{\sublin}} \sum_{l \in S_{\oplus}(t)} \Pp^{(\bullet,+)}(l) \sum_{0 \leq r \leq l} \alpha^r A(t-t^{\xi_1},l,r+1) = 0, \quad \theta > 0.
$$
Notice that $S_-(t)$ is allowed to be either $J(t^{\xi_1})$ or its complement. If $S_-(t)$ is the complement of $J(t^{\xi_1})$, (A6) in Lemma~\ref{lm:building_blocks} yields
$$
\lim_{t \to \infty} \sum_{l \notin J(t^{\xi_1})} \Pp^-(l) \sum_{0 \leq r \leq l} \alpha^r A(t^{\xi_1},l,r+1) = 0. 
$$
If on the contrary, $S_-(t)=J(t^{\xi_1})$, by (A7) in Lemma~\ref{lm:building_blocks} we have

$$
\limsup_{t \to \infty} \sum_{l \in J(t^{\xi_1})} \Pp^-(l) \sum_{0 \leq r \leq l} \alpha^r A(t^{\xi_1},l,r+1) < \infty. 
$$

Therefore, this establishes for $i \in \{1,2\}$ and any $\theta > 0$:
$$
\lim_{t \to \infty} t^{\theta} \sup_{\|y\| \leq t^{\sublin}} \frac{1}{p_t^y} \E \left[q^y_n \left \langle \left( f_{3;i}^t \right)^2 \right \rangle \id_{n_{\bullet} + n_+ \notin J(t-t^{\xi_1})} \right] = 0. 
$$

\paragraph*{\textsc{Case 2.}} If $S_{\oplus}(t)$ is $J(t-t^{\xi_1})$, then $S_-(t)$ must be the complement of $J(t^{\xi_1})$. Also, notice that by Lemma~\ref{lm:ql+._}, for $\|y\| \leq t^{\sublin}$, $l_{\oplus} \in J(t-t^{\xi_1})$, and $l_- \in \N_0$, we have
$$
q^y_{l_- + l_{\oplus}} \lesssim \exp \left(C t^{\sublin -1} l_- \right) q^y_{\iota(y,l_{\oplus})}.
$$
Hence, 
\begin{align*}
& \sup_{\|y\| \leq t^{\sublin}} \frac{1}{p_t^y} \sum_{l_{\oplus} \in J(t-t^{\xi_1})} \sum_{l_- \in S_-(t)} q^y_{l_- + l_{\oplus}} \Pp^{-,(\bullet,+)}(l_-, l_{\oplus}) \\
& \sum_{0 \leq r_{\oplus} \leq l_{\oplus}} \alpha^{r_{\oplus}} A(t-t^{\xi_1},l_{\oplus}, r_{\oplus}+1) \sum_{0 \leq r_- \leq l_-} \alpha^{r_-} A(t^{\xi_1},l_-,r_-+1) \\
\lesssim& e^{-\beta^2 t^{\xi_1}} \sup_{\|y\| \leq t^{\sublin}} \frac{1}{p_t^y} \sum_{l_{\oplus} \in J(t-t^{\xi_1})} q^y_{\iota(y,l_{\oplus})} \Pp^{\oplus}(l_{\oplus}) \sum_{0 \leq r_{\oplus} \leq l_{\oplus}} \alpha^{r_{\oplus}} A(t-t^{\xi_1},l_{\oplus},r_{\oplus}+1) \\
& e^{\beta^2 t^{\xi_1}} \sum_{l_- \notin J(t^{\xi_1})} \exp \left(C t^{\sublin -1} l_- \right) \Pp^-(l_-) \sum_{0 \leq r_- \leq l_-} \alpha^{r_-} A(t^{\xi_1},l_-,r_-+1). 
\end{align*}
Then, by~(A1) and~(A4) in Lemma~\ref{lm:building_blocks} , we have for $i \in \{1,2\}$
$$
\lim_{t \to \infty} t^{\theta} \sup_{\|y\| \leq t^{\sublin}} \frac{1}{p_t^y} \E \left[ q^y_n \left \langle \left( f_{3;i}^t \right)^2 \right \rangle \id_{n_{\bullet} + n_+ \in J(t-t^{\xi_1}), n_- \notin J(t^{\xi_1})} \right] = 0, \quad \theta > 0. 
$$
This completes the proof of~\eqref{eq:ib_J_ob_I}.

\subsection{Proof of Lemma~\ref{lm:main_claim5}: Convergence to limiting partition functions}
\label{ssec:convergence_ergodicity}

Recall that we write $n$ for $n_t$, $n_-$ for $n_{t^{\xi_1}}$, $n_{\bullet}$ for $n_{t^{\xi_1}, t - t^{\xi_1}}$, and $n_+$ for $n_{t-t^{\xi_1},t}$. We also maintain the notational shorthands $\Pp^-$ $\Pp^{\bullet}$, etc., introduced at the beginning of Section~\ref{ssec:gap_end}. 

\begin{claim} \label{lm:final_conv_step_1}
For $\beta$ sufficiently small there is $\theta > 0$ such that 
$$ 
\lim_{t \to \infty} t^{\theta} \sup_{\|y\| \leq t^{\sublin}} \frac{1}{p_t^y} \left \langle \left \lvert \E \left[\left(q_n^y - q_{n_{\bullet}}^y \right) T^{y,t}_0 T_{0,0}^t \right] \right \rvert \right \rangle = 0. 
$$ 
$$ 
\lim_{t \to \infty} t^{\theta} \sup_{\|y\| \leq t^{\sublin}} \frac{1}{p_t^y} \left \langle \left \lvert \E \left[ q^y_{n_{\bullet}} T^{y,t}_0 T_{0,0}^t \right] - p_t^y Z^{y,t}_{-\infty} Z_{0,0}^{\infty} \right \rvert \right \rangle = 0. 
$$
\end{claim}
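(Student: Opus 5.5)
The plan is to exploit the fact that, given the path, $T_{0,0}^t$ and $T_0^{y,t}$ live on disjoint pieces of randomness. The indices in $T_{0,0}^t$ satisfy $i_r\le w(n_-)<n_-$, so every factor $h(z_j;s_{i_j},s_{i_j+1})$ involves only the time window $[0,t^{\xi_1})$. Likewise the factors in $T_0^{y,t}$ involve only $[t-t^{\xi_1},t)$. Moreover, the coefficient $q_{i_1}^{z_1}\cdots q_{i_r-i_{r-1}}^{z_r-z_{r-1}}$ in $T_{0,0}^t$ depends only on the jumps in the first window, and by reindexing from the end the same holds for $T_0^{y,t}$ and the last window. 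Since the Poisson process has independent increments on $[0,t^{\xi_1})$, $[t^{\xi_1},t-t^{\xi_1})$ and $[t-t^{\xi_1},t)$, and the Wiener increments on these windows are independent under $Q$, both limits should reduce to one-window estimates.

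\emph{Second statement.} By the independence above,
\[
\E\bigl[q^y_{n_\bullet}T_0^{y,t}T_{0,0}^t\bigr]=\E[q^y_{n_\bullet}]\,\E[T_{0,0}^t]\,\E[T_0^{y,t}],
\qquad \E[q^y_{n_\bullet}]=p^y_{t-2t^{\xi_1}} .
\]
\begin{enumerate}
\item I compare $p^y_{t-2t^{\xi_1}}$ with $p^y_t$ for $\|y\|\le t^{\sublin}$. I would not use Lemma~\ref{lm:lclt_con_time} directly, because its error term $\|y\|^3/t^2$ is not small when $\sublin>2/3$. Instead I write both as Poisson mixtures of $q_l^y$ over $l\in J(\cdot)$, with Lemma~\ref{lm:uniform_D_conv} controlling the tails. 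I then apply Lemma~\ref{lm:ratio_estimate} with $|n-n'|\lesssim t^{\xi_1}$ plus Poisson fluctuations of order $\sqrt t$. This costs a factor $\exp(O(t^{\sublin-1}(t^{\xi_1}+\sqrt t)+\ln t\cdot t^{-1/2}))$, which should give $\E[q^y_{n_\bullet}]/p_t^y=1+O(t^{-\theta})$.
\item I show that $\E[T_{0,0}^t]$ is within $O(t^{-\theta})$ of $Z_{0,0}^\infty$ in $L^2(Q)$. First, $\E[T_{0,0}^t]-Z^{t^{\xi_1}}_{0,0}$ collects exactly the terms with $r>v(n_-)+1$ or $i_r>w(n_-)$. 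Its $L^2$ norm is handled by Remark~\ref{rm:standard_arg} together with~(A2),~(A3),~(A6), with decay $t^{-\theta}$ coming from $v,w$ being polynomial in $t^{\xi_1}$. Second, Theorem~\ref{thm:limiting_part_fun} bounds $Z^{t^{\xi_1}}_{0,0}-Z^\infty_{0,0}$ by $t^{-\xi_1\theta/2}$.
\item The same holds for $\E[T_0^{y,t}]$ and $Z_{-\infty}^{y,t}$, by time reversal and shift invariance.
\item The product difference is finished with $\langle|XY-X'Y'|\rangle\le\|X-X'\|_2\|Y\|_2+\|X'\|_2\|Y-Y'\|_2$, using the uniform $L^2$ bounds from Theorem~\ref{thm:limiting_part_fun_exists}.
\end{enumerate}

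\emph{First statement.} By Cauchy--Schwarz in $Q$ and the conditional independence of the two $T$'s,
\[
\bigl\langle|\E[(q_n^y-q_{n_\bullet}^y)T T]|\bigr\rangle\le \E\Bigl[|q_n^y-q^y_{n_\bullet}|\,\langle (T_{0,0}^t)^2\rangle^{1/2}\langle (T_0^{y,t})^2\rangle^{1/2}\Bigr].
\]
The conditional second moments given $n_\pm$ are bounded by sums of the type $\sum_r(r+1)\alpha_d^rA(t^{\xi_1},n_\pm,r+1)$, which~(A2),~(A7) control on $J(t^{\xi_1})$ and~(A4),~(A6) control off it. Off $J$ I would use Lemma~\ref{lm:ql+._} to absorb the $e^{Ct^{\sublin-1}l_\pm}$ factors.

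\emph{Main obstacle: parity.} $q_n^y$ and $q_{n_\bullet}^y$ vanish on opposite parity classes whenever $n_-+n_+$ is odd, so $|q_n^y-q^y_{n_\bullet}|$ is not pointwise small. The estimate must therefore use cancellation in the $\E$-average over $n_-,n_+$ before absolute values are taken. I would first replace both by $q^y_{\iota(y,\cdot)}$-type quantities. On the event that $n_-+n_+$ has the right parity, Lemma~\ref{lm:ratio_estimate} gives $q^y_n/q^y_{\iota(y,n_\bullet)}=1+O(t^{\sublin-1+\xi_1}+t^{-2/5})$. For the parity mismatch I would couple $n_+$ with $n_++1$, i.e.\ add one extra jump near time $t$. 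This flips parity, and since $n_+\in J(t^{\xi_1})$ is large, it perturbs the law of $T_0^{y,t}$ only by a polynomially small amount in $L^2$. Combined with $\Pp(n_-+n_+\ \text{even})-\Pp(n_-+n_+\ \text{odd})=e^{-4t^{\xi_1}}$, the two parity classes should then average out to $q^y_{n_\bullet}$ up to $O(t^{-\theta})p_t^y$. If the coupling fails to give polynomial control, the fallback is to run the whole argument with $q^y_{\iota(y,n_\bullet)}/2$ in place of $q^y_{n_\bullet}$, where the parity bookkeeping is explicit.
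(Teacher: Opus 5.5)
Your steps 2--4 for the second statement follow the paper. The first statement, however, has a genuine gap at exactly the parity step you flag, and the mechanism you propose cannot close it.

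\textbf{The cancellation comes from $n_\bullet$, not from $n_-,n_+$.} To see this, freeze $n_\bullet=l$ with $l\equiv\|y\|_1$. Then $q^y_{l+n_-+n_+}=0$ whenever $n_-+n_+$ is odd. Hence the conditional expectation of $(q^y_n-q^y_{n_\bullet})T_0^{y,t}T_{0,0}^t$ is about $-q^y_l\,\E[T_0^{y,t}T_{0,0}^t\id_{\{n_-+n_+\ \mathrm{odd}\}}]$, which is not small relative to $q^y_l$.

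Consequently, coupling $n_+$ with $n_++1$ cannot by itself give the claim. This holds even if you grant the unproved $L^2$-stability of $T_0^{y,t}$ under inserting a jump, and use $\Pp(n_-+n_+\ \mathrm{even})-\Pp(n_-+n_+\ \mathrm{odd})=e^{-4t^{\xi_1}}$. At best you get
$\E[q^y_nT_0^{y,t}T_{0,0}^t]\approx\tfrac12\E[q^y_{\iota(y,n_\bullet)}]\,\E[T_0^{y,t}T_{0,0}^t]$,
not $\E[q^y_{n_\bullet}]\,\E[T_0^{y,t}T_{0,0}^t]$. The discrepancy is $\tfrac12\sum_{l\equiv\|y\|_1}\bigl(\Pp^{\bullet}(l-1)-\Pp^{\bullet}(l)\bigr)q^y_l$ times $\E[T_0^{y,t}T_{0,0}^t]$.

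Showing this is $O(t^{-\theta})p^y_t$ is the missing ingredient: it amounts to smoothness of the law of $n_\bullet$ across adjacent integers. Your fallback with $q^y_{\iota(y,n_\bullet)}/2$ runs into exactly the same term, because the claim is stated with $q^y_{n_\bullet}$.

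\textbf{How the paper handles it.} On $\{n_\bullet\in K(t-2t^{\xi_1})\}$ it pairs $n_\bullet=2p$ with $2p+1$ (the term $B^{(1)}$) and uses
$\lvert\Pp^{\bullet}(2p+1)-\Pp^{\bullet}(2p)\rvert=\Pp^{\bullet}(2p+1)\bigl\lvert 1-\tfrac{2p+1}{t-2t^{\xi_1}}\bigr\rvert\lesssim\Pp^{\bullet}(2p+1)\,t^{-\chi}$.
Since $n_\bullet$ is independent of $T_0^{y,t}T_{0,0}^t$, this parity flip costs nothing on the $T$'s, so no coupling of $T_0^{y,t}$ is needed at all. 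The remaining same-parity differences are handled by Lemma~\ref{lm:ratio_estimate} with index gap $\lesssim t^{\xi_1}$. The event $n_\bullet\notin K(t-2t^{\xi_1})$ is handled by Claim~\ref{cl:sum_notin_K}.

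\textbf{Step~1 of your second statement is also wrong as written.} With $\|y\|\le t^{\sublin}$, Lemma~\ref{lm:ratio_estimate} with index differences of order $\sqrt t$ costs $\exp(c\,t^{\sublin-1}\sqrt t)=\exp(c\,t^{\sublin-1/2})$. This diverges because $\sublin>3/4$. If you instead keep the gap $\lesssim t^{\xi_1}$ by writing $n_t=n_\bullet+(n_-+n_+)$, you are back to the parity problem. Indeed $p^y_t-p^y_{t-2t^{\xi_1}}=\E[q^y_n-q^y_{n_\bullet}]$ is just the first statement with $T\equiv1$, which is how the paper treats it.

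A parity-safe alternative is to compare the Poisson$(t)$ and Poisson$(t-2t^{\xi_1})$ weights at the same $l$. Their ratio is $1+O(t^{-\epsilon}+t^{2\xi_1-1})$ for $|l-t|\le t^{1-\xi_1-\epsilon}$. The remaining Poisson tails beat $p^y_t\ge e^{-Ct^{2\sublin-1}}$ because $\sublin+\xi_1<1$.

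With that version of Step~1, plus your coupling argument applied also with $T\equiv1$ to get $p^y_t\approx\tfrac12\E[q^y_{\iota(y,n_\bullet)}]$, your route could in principle be closed. As proposed, though, that chain is absent, and the key assertion that ``the two parity classes average out to $q^y_{n_\bullet}$'' is unjustified.
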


\subsubsection{{Proof of Claim~\ref{lm:final_conv_step_1}, Part 1}}
Let $\chi \in (0,\tfrac{1}{2} (1-\sublin))$ and $\Jtconst \in (\tfrac{1}{2},1)$. For $t > 0$ and $y \in \Z^d$, let $\chi_1(t)$ be the smallest even integer $\geq t (1- t^{-\chi})$, and let $\chi_2(t)$ be the largest odd integer $\leq t (1+t^{-\chi})$. Let 
$$ 
K(t) = \{l \in \N: \chi_1(t) \leq l \leq \chi_2(t)\}, \quad t > 0. 
$$ 
We will show that there is $\theta > 0$ such that 
\begin{align}    
\lim_{t \to \infty} t^{\theta} & \sup_{\|y\| \leq t^{\sublin}} \frac{1}{p_t^y} \notag \\
&\left \langle \left \lvert \E \left[ \left(q_n^y - q_{n_{\bullet}}^y \right) T^{y,t}_0 T_{0,0}^t \id_{n_- + n_+ \in J(2t^{\xi_1}), n_{\bullet} \in K(t- 2 t^{\xi_1})} \right] \right \rvert \right \rangle = 0,  \label{eq:fcv_1} 
\end{align}
and 
\begin{align}    
\lim_{t \to \infty} & t^{\theta}  \sup_{\|y\| \leq t^{\sublin}} \frac{1}{p_t^y} \notag \\
& \left \langle \left \lvert \E \left[ \left(q_n^y - q_{n_{\bullet}}^y \right) T^{y,t}_0 T_{0,0}^t \left(1 - \id_{n_- + n_+ \in J(2 t^{\xi_1}), n_{\bullet} \in K(t-2 t^{\xi_1})} \right)  \right] \right \rvert \right \rangle = 0.  \label{eq:fcv_2} 
\end{align}
Let us first show~\eqref{eq:fcv_1}.  
If $t$ is sufficiently large, a point $y \in \Z^d$ such that $\|y\| \leq t^{\sublin}$ can be connected to the origin by a path of length less than $(t-2 t^{\xi_1}) (1-(t-2 t^{\xi_1})^{-\chi})$. 
For such $t$, for $l_{\pm} \in J(2 t^{\xi_1})$, and $l_{\bullet} \in K(t-2 t^{\xi_1})$, 
it follows that 
$q_{l_{\bullet}}^y > 0$ if $l_{\bullet} \equiv \|y\|_1$, 
$q_{l_{\bullet}}^y = 0$ if $l_{\bullet} \not \equiv \|y\|_1$, 
$q^y_{l_{\bullet} +l_{\pm}} > 0$ if $l_{\bullet}+l_{\pm} \equiv \|y\|_1$, and 
$q^y_{l_{\bullet}+l_{\pm}} = 0$ if $l_{\bullet}+l_{\pm} \not \equiv \|y\|_1$. 
Thus,  
\begin{equation}     \label{eq:pfq_3}
\E \left[ (q^y_n - q^y_{n_{\bullet}}) T^{y,t}_0 T_{0,0}^t \id_{n_- + n_+ \in J(2 t^{\xi_1}), n_{\bullet} \in K(t- 2 t^{\xi_1})} \right]  = A(y,t) + B(y,t), 
\end{equation}
where 
\begin{align*}
A(y,t) \coleq & 
\sum_{\substack{l_{\pm} \in J(2 t^{\xi_1}), \\ l_{\pm} \equiv 0}} 
\sum_{\substack{l_{\bullet} \in K(t- 2 t^{\xi_1}), \\ l_{\bullet} \equiv \|y\|_1}} 
\left(q^y_{l_{\bullet}+l_{\pm}}-q^y_{l_{\bullet}} \right) 
\E[T^{y,t}_0 T_{0,0}^t \id_{n_{\bullet} = l_{\bullet}, n_- + n_+ = l_{\pm}}], \\
B(y,t) \coleq & 
\sum_{\substack{l_{\pm} \in J(2 t^{\xi_1}), \\ l_{\pm} \equiv 1}} 
\sum_{l_{\bullet} \in K(t- 2 t^{\xi_1})} \\
&\left(\id_{l_{\bullet} \not \equiv \|y\|_1} q^y_{l_{\bullet}+l_{\pm}} - \id_{l_{\bullet} \equiv \|y\|_1} q^y_{l_\bullet} \right) \E[T^{y,t}_0 T_{0,0}^t \id_{n_{\bullet} = l_{\bullet}, n_- + n_+ = l_{\pm}}].  
\end{align*}


We will show that there is $\theta > 0$ such that 
\begin{align}
\lim_{t \to \infty} t^{\theta} \sup_{\|y\| \leq t^{\sublin}} \frac{1}{p_t^y} \langle \lvert A(y,t) \rvert \rangle =& 0, \label{eq:pfq_5} \\
\lim_{t \to \infty} t^{\theta} \sup_{\|y\| \leq t^{\sublin}} \frac{1}{p_t^y} \langle \lvert B(y,t) \rvert \rangle =& 0. \label{eq:pfq_6} 
\end{align}
To do so, we first prove the following claim, which gives us an upper bound for the factor
$
\E[T^{y,t}_0 T_{0,0}^t \id_{n_{\bullet} = l_{\bullet}, n_- + n_+ = l_{\pm}}].
$
\begin{claim} \label{cl: E_T_bound}
\begin{equation*} 
\E[T^{y,t}_0 T_{0,0}^t \id_{n_{\bullet} = l_{\bullet}, n_- + n_+ = l_{\pm}}] 
\lesssim \sum_{m=0}^{l_{\pm}} 
 \Pp(m, l_{\bullet}, l_{\pm}-m) 
 \left( \hspace{-1mm}1 + \hspace{-2mm}
 \sum_{1 \leq r  \leq m+1} \hspace{-2mm} A(t^{\xi_1},m,r) \alpha^{r}\right)\hspace{-1mm}.
\end{equation*}
\end{claim}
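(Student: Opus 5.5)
The plan is to condition on the numbers of jumps in the three time windows $[0,t^{\xi_1})$, $[t^{\xi_1},t-t^{\xi_1})$, $[t-t^{\xi_1},t)$. Then I would use the independence of the environment over disjoint time intervals, followed by a symmetrization in $m \leftrightarrow l_{\pm}-m$. I read the claim as a bound on $\langle \lvert \E[T^{y,t}_0 T_{0,0}^t \id_{\ldots}] \rvert \rangle$, which is the quantity used afterwards in \eqref{eq:pfq_5}--\eqref{eq:pfq_6}.

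\textbf{Step 1 (conditioning and factorization).} Write
\[
\E[T^{y,t}_0 T_{0,0}^t \id_{n_{\bullet} = l_{\bullet}, n_- + n_+ = l_{\pm}}] = \sum_{m=0}^{l_{\pm}} \Pp(m,l_{\bullet},l_{\pm}-m)\, \E[T^{y,t}_0 T_{0,0}^t \mid m, l_{\bullet}, l_{\pm}-m].
\]
In $T_{0,0}^t$ the indices satisfy $i_r \le w(n_-) < n_-$, so every factor $h(z_j; s_{i_j}, s_{i_j+1})$ involves only times in $[0,t^{\xi_1}]$ and only jump times of the first window. Symmetrically, every factor in $T^{y,t}_0$ involves only times in $[t-t^{\xi_1},t]$. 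Conditional on $(m,l_{\bullet},k)$, the jump times in distinct windows are independent. In addition, the Wiener increments over disjoint time intervals are independent under $Q$.

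Hence, by Fubini and the triangle inequality,
\[
\langle \lvert \E[T_0^{y,t} T_{0,0}^t \mid m,l_\bullet,k] \rvert \rangle \le \E[\langle \lvert T_{0,0}^t\rvert \rangle \mid m]\; \E[\langle \lvert T_0^{y,t}\rvert\rangle \mid k].
\]
Next I use $ab \le \tfrac12(a^2+b^2)$ and Jensen. The bound becomes $\tfrac12\big(\E[\langle (T_{0,0}^t)^2\rangle\mid m] + \E[\langle (T_0^{y,t})^2\rangle \mid k]\big)$.

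\textbf{Step 2 (symmetrization).} The first and last windows have the same length $t^{\xi_1}$, so $\Pp(m,l_\bullet,k)=\Pp(k,l_\bullet,m)$. After the substitution $m \mapsto l_{\pm}-m$, the $T_0^{y,t}$ term turns into a sum of exactly the same shape as the $T_{0,0}^t$ term. Time reversal of $\eta$ makes the two second moments structurally identical, with $n_+$ playing the role of $n_-$. It therefore suffices to bound $\E[\langle (T_{0,0}^t)^2\rangle \mid n_-=m]$.

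\textbf{Step 3 (second moment via orthogonality).} As in \eqref{eq:T_square_bracket} and \eqref{eq:f_i_expansion}, the cross terms vanish because $\langle h\rangle=0$ and the $h$'s attached to distinct pairs (site, interval) are independent. This gives
\[
\E[\langle (T_{0,0}^t)^2\rangle\mid m] = 1 + \sum_{r} \sum_{\ibf,\zbf} q^y_{n,\widehat{r+1}}(\ibf,\zbf)^2\, \E\Big[\prod_{j}(e^{\beta^2 t_{i_j+1}}-1)\,\Big|\, n_-=m\Big].
\]
\begin{itemize}
\item The expectation is bounded by $A(t^{\xi_1},m,r+1)$ or $A(t^{\xi_1},m,r)$. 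This holds because each factor $e^{\beta^2 t_j}-1$ is dominated by $e^{\beta^2 t_j}$, and all indices lie in the first window.
\item Summing the squared transition probabilities over $\zbf$ and over the gaps gives at most $\alpha^r$ per configuration, up to a harmless constant factor from gaps of length zero (where $q_0^0=1$). The gap between $i_r$ and the end has been removed.
\end{itemize}
Combining these yields $1+\sum_{1\le r\le m+1} A(t^{\xi_1},m,r)\alpha^r$ up to a multiplicative constant, which is the asserted bound.

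\textbf{Main obstacle.} The step needing the most care is the factorization in Step 1. I must check that $w(n_-)<n_-$, so that $s_{i_r+1}\le t^{\xi_1}$, and the analogous fact at the other end. This guarantees that the two truncated sums really depend on disjoint pieces of both the path and the environment, so the conditional independence is legitimate.
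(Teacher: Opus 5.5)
This is essentially the paper's argument. The paper also conditions on the window counts and passes to the second moment of $T_{0,0}^t$ via Cauchy--Schwarz and the symmetry between the first and last windows; your AM--GM step plus the substitution $m\mapsto l_{\pm}-m$ does the same job, and your factorization is not even needed, since $\langle \lvert ab\rvert\rangle\le\frac12\langle a^2+b^2\rangle$ requires no independence. It then expands $\E^-[\langle (T_{0,0}^t)^2\rangle\mid m]$ orthogonally and bounds each term by $A(t^{\xi_1},m,r)\alpha^r$, where exchangeability of the spacings given $n_-=m$ is used tacitly, as in your sketch.

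One correction, which the paper's proof glosses over in the same way. The check $w(n_-)<n_-$ that you single out holds only for $n_-\ge 2$: for $n_-\in\{0,1\}$ the interval $(s_{n_-},s_{n_-+1})$ of the last $h$-factor reaches past $t^{\xi_1}$, and symmetrically for $n_+\in\{0,1\}$. For these degenerate counts neither the factorization nor the reduction to a quantity depending on $m$ alone is justified, and the stated bound actually fails. For instance, with $l_\bullet=l_\pm=0$ and $y=0$, the left side is $e^{-t}e^{\beta^2 t}$ while the right side is $e^{-t}(1+\alpha e^{\beta^2 t^{\xi_1}})$. This is harmless for the later uses with $\beta$ small. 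The excess comes only from one spacing of the middle window, which is controlled when $l_\bullet$ is large and otherwise absorbed by the exponentially small probability of small $l_\bullet$.
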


\paragraph*{\textit{Proof.}}
Since $T^{y,t}_0 T_{0,0}^t$ and $n_{\bullet}$ are independent, we have that
\begin{equation}    \label{eq:A_upper_bound}
\E[T^{y,t}_0 T_{0,0}^t \id_{n_{\bullet} = l_{\bullet}, n_- + n_+ = l_{\pm}}]
=
\Pp^{\bullet, (-,+)}(l_{\bullet},l_{\pm})  \E^{(-,+)} \left[ \left \langle \left \lvert T^{y,t}_0 T_{0,0}^t \right \rvert \right \rangle \vert l_{\pm} \right].  
\end{equation}
By the Cauchy--Schwarz inequality and symmetry, 
$$ 
\E^{(-,+)} \left[ \left \langle \left \lvert T^{y,t}_0 T_{0,0}^t \right \rvert \right \rangle \vert l_{\pm} \right] \leq \E^{(-,+)} \left[ \left \langle (T_{0,0}^t)^2 \right \rangle \vert l_{\pm} \right]. 
$$ 
If we also condition on $n_-$, this implies that the right side of~\eqref{eq:A_upper_bound} is bounded from above by  
\begin{equation}  \label{eq:pfq_7}
\sum_{m=0}^{l_{\pm}} 
 \Pp(m, l_{\bullet}, l_{\pm}-m) \E^- \left[\left \langle (T_{0,0}^t)^2 \right \rangle \vert m \right].  
\end{equation} 
Now, for a fixed $m$, we have  
\begin{equation}    \label{eq:E_T_est}   
\E^-[ \langle (T_{0,0}^t)^2 \rangle \vert m] 
\leq 2 + 2 \sum_{r \in R} \sum_{\ibf \in H_r, \zbf} q_r(\ibf,\zbf)^2   
\E^- \biggl[ \prod_{j=1}^r \left(e^{\beta^2 t_{i_j +1}} - 1 \right) \bigg\vert m \biggr],  
\end{equation}
where 
\begin{align*}
R \coleq & \left\{1, \ldots, v(m) + 1 \right\},  \\
H_r \coleq & \left\{\ibf = (i_1, \ldots, i_r) \in \N_0^r: 0 \leq i_1 < \ldots < i_r \leq w(m) \right\}, \\
q_r(\ibf,\zbf) =& q_{i_1}^{z_1} \ldots q_{i_r - i_{r-1}}^{z_r - z_{r-1}}. 
\end{align*}

Since
$$
\E^- \biggl[ \prod_{j=1}^r \left(e^{\beta^2 t_{i_j +1}} -1 \right) \bigg\vert m \biggr] \leq A(t^{\xi_1},m,r),  
$$
we have 
$$
\sum_{r \in R} \sum_{\ibf \in I_r, \zbf} q_r(\ibf,\zbf)^2 \E^- \biggl[ \prod_{j=1}^r \left(e^{\beta^2 t_{i_j +1}} - 1 \right) \bigg\vert m \biggr]  
\lesssim 
\sum_{1 \leq r  \leq m+1} A(t^{\xi_1},m,r) \alpha^{r},
$$
which proves the claim.
\epf

Now we show~\eqref{eq:pfq_5}. By Claim~\ref{cl: E_T_bound}, we have that $A(y,t)$ is less than a constant times
\begin{align*}
\sum_{\substack{l_{\pm} \in J(2 t^{\xi_1}), \\ l_{\pm} \equiv 0}} 
\sum_{\substack{l_{\bullet} \in K(t- 2 t^{\xi_1}), \\ l_{\bullet} \equiv \|y\|_1}} &
\left(q^y_{l_{\bullet}+l_{\pm}}-q^y_{l_{\bullet}} \right) \notag \\
& \sum_{m=0}^{l_{\pm}} 
 \Pp(m, l_{\bullet}, l_{\pm}-m) 
 \left( \hspace{-1mm}1 + \hspace{-2mm}
 \sum_{1 \leq r  \leq m+1} \hspace{-2mm} A(t^{\xi_1},m,r) \alpha^{r}\right)\hspace{-1mm}. 
\end{align*}
We first show that 
\begin{equation}\label{eq:sum_w_const_term}
\lim_{t \to \infty} 
t^{\theta} \sup_{\|y\| \leq t^{\sublin}} \frac{1}{p_t^y}
\sum_{\substack{l_{\pm} \in J(2 t^{\xi_1}), \\ l_{\pm} \equiv 0}} 
\sum_{\substack{l_{\bullet} \in K(t- 2 t^{\xi_1}), \\ l_{\bullet} \equiv \|y\|_1}} 
\hspace{-2mm}
\left \lvert q^y_{l_{\bullet}+l_{\pm}}-q^y_{l_{\bullet}} \right\lvert
 \Pp(l_{\bullet}, l_{\pm}) 
 = 0.
\end{equation}

Using the fact that $q_{l_\bullet + l_\pm} > 0$, we can write
$$
\left\lvert q^y_{l_{\bullet}+l_{\pm}}-q^y_{l_{\bullet}} \right\lvert  = q^y_{l_{\bullet} + l_{\pm}} \left \lvert \frac{q^y_{l_{\bullet}}}{q^y_{l_{\bullet}+l_{\pm}}} - 1 \right \rvert .
$$
But as
$$
\sum_{\substack{l_{\pm} \in J(2 t^{\xi_1}), \\ l_{\pm} \equiv 0}} 
\sum_{\substack{l_{\bullet} \in K(t- 2 t^{\xi_1}), \\ l_{\bullet} \equiv \|y\|_1}} 
\hspace{-2mm}
q^y_{l_{\bullet}+l_{\pm}}
 \Pp(l_{\bullet}, l_{\pm}) \leq p_t^y,
$$
in order to prove~\eqref{eq:sum_w_const_term}, it is therefore enough to show that there exists a function $g$ such that $t^{\theta}g(t)$ converges to $0$ as $t\to\infty$ for some $\theta >0$ and which satisfies

\begin{equation}
\label{191126225309}
 \left \lvert \frac{q^y_{l_{\bullet}}}{q^y_{l_{\bullet}+l_{\pm}}} - 1 \right \rvert \leq g(t).
\end{equation}

By Lemma~\ref{lm:ratio_estimate}, there is $c > 0$ such that for $t$ sufficiently large, for $y \in \Z^d$ with $\|y\| \leq t^{\sublin}$, and for $l_{\bullet} \in K(t-2 t^{\xi_1})$, $l_{\pm} \in J(2 t^{\xi_1})$ with $q^y_{l_{\bullet}}, q^y_{l_{\bullet}+l_{\pm}} > 0$, we have
$$ 
\frac{q^y_{l_{\bullet}}}{q^y_{l_{\bullet}+l_{\pm}}} - 1 \leq \left(1 + O(t^{-\frac{2}{5}}) \right) \exp(c t^{\sublin + \xi_1 -1}) - 1
$$ 
and 
$$ 
1 - \frac{q^y_{l_{\bullet}}}{q^y_{l_{\bullet}+l_{\pm}}} = 1 - \frac{1}{q^y_{l_{\bullet}+l_{\pm}} / q^y_{l_{\bullet}}} \leq 1 - \left(1 +O(t^{-\frac{2}{5}}) \right) \exp(-c t^{\sublin + \xi_1 - 1}).
$$ 
Therefore, the inequality \eqref{191126225309} can be satisfied by
\begin{equation}\label{eq: g_funct}
g(t) \coleq \left(1 + O(t^{-\frac{2}{5}}) \right) \exp(c t^{\sublin +\xi_1-1}) - 1,
\end{equation}
and $\lim_{t \to \infty} t^{\theta} g(t) = 0$ for $\theta \in (0, 1 - \sublin - \xi_1)$. 

Let us now show that for $\beta$ sufficiently small, there is $\theta > 0$ such that for $S_-(t) = J(t^{\xi_1})$ and for $S_-(t)$ equal to the complement of $J(t^{\xi_1})$,
\begin{align}  
\lim_{t \to \infty} t^{\theta}  \sup_{\|y\| \leq t^{\sublin}} \frac{1}{p_t^y} \sum_{\substack{l_{\pm} \in J(2 t^{\xi_1}), \\ l_{\pm} \equiv 0}}  & \sum_{\substack{0 \leq m \leq l_{\pm}, \\ m \in S_-(t)}}  \sum_{\substack{l_{\bullet} \in K(t-2 t^{\xi_1}), \\ l_{\bullet} \equiv \|y\|_1}} \left \lvert q^y_{l_{\bullet}+l_{\pm}} - q^y_{l_{\bullet}} \right \rvert  \notag \\
& \Pp(m, l_{\bullet}, l_{\pm} - m) 
\sum_{1 \leq r  \leq m+1} A(t^{\xi_1},m,r) \alpha^{r}
= 0.  \label{eq:pfq_10} 
\end{align} 
Notice that if $g(t)$ is as in~\eqref{eq: g_funct}, then for $l_{\bullet} \in K(t-2 t^{\xi_1})$ such that $l_{\bullet} \equiv \|y\|_1$ and for $l_{\pm} \in J(2 t^{\xi_1})$ such that $l_{\pm} \equiv 0$, 
$$ 
\left \lvert q^y_{l_{\bullet}+l_{\pm}} - q^y_{l_{\bullet}} \right \rvert = q^y_{l_{\bullet}} \left \lvert \frac{q^y_{l_{\bullet}+ l_{\pm}}}{q^y_{l_{\bullet}}} - 1 \right \rvert \leq q_{l_{\bullet}}^y g(t).
$$
As a result, if $S_-$ is the complement of $J(t^{\xi_1})$, we obtain~\eqref{eq:pfq_10} from~(A0) and~(A4)   from Lemma~\ref{lm:building_blocks} as well as the following estimate:
\begin{align*}
& \sup_{\|y\| \leq t^{\sublin}} \frac{1}{p_t^y} \sum_{\substack{l_{\pm} \in J(2 t^{\xi_1}), \\ l_{\pm} \equiv 0}} \sum_{\substack{0 \leq m \leq l_{\pm}, \\ m \notin J(t^{\xi_1})}} \sum_{\substack{l_{\bullet} \in K(t-2 t^{\xi_1}), \\ l_{\bullet} \equiv \|y\|_1}} \left \lvert q^y_{l_{\bullet}+l_{\pm}} - q_{l_{\bullet}}^y \right \rvert \Pp(m, l_{\bullet}, l_{\pm} - m) \\
& \sum_{r \in R} \sum_{\ibf \in I_r, \zbf} q_r(\ibf,\zbf)^2 \E^- \biggl[ \prod_{j=1}^r \left(e^{\beta^2 t_{i_j +1}} - 1 \right) \bigg\vert m \biggr] \\
\lesssim& g(t) e^{-\beta^2 t^{\xi_1}} \sup_{\|y\| \leq t^{\sublin}} \frac{1}{p_t^y} \sum_{l \in J(t-2t^{\xi_1})} q_{\iota(y,l)}^y \Pp^{\bullet}(l) \\
&e^{\beta^2 t^{\xi_1}} \sum_{m \notin J(t^{\xi_1})} \Pp^-(m) 
\sum_{1 \leq r \leq m+1} \alpha^r A(t^{\xi_1},m,r). 
\end{align*}

If $S_-$ is $J(t^{\xi_1})$, consider two subcases: (1) $1\leq r < \nuone m$ and (2) $\nuone m \leq r \leq m + 1$.
In the first subcase, Lemma~\ref{lm:A_estimate} gives
\begin{align*}
& \sup_{\| y \| \leq t^{\sublin}} \frac{1}{p_t^y} \sum_{\substack{l_{\pm} \in J(2 t^{\xi_1}), \\ l_{\pm} \equiv 0}} \sum_{\substack{0 \leq m \leq l_{\pm}, \\ m \in J(t^{\xi_1})}} \sum_{\substack{l_{\bullet} \in K(t-2 t^{\xi_1}), \\ l_{\bullet} \equiv \|y\|_1}} \\
&\left \lvert q^y_{l_{\bullet} + l_{\pm}} - q^y_{l_{\bullet}} \right \rvert \Pp(m, l_{\bullet}, l_{\pm}-m) 
\sum_{1 \leq r < \nuone m} A(t^{\xi_1},m,r) \alpha^{r} \\
\lesssim& \sup_{\|y\| \leq t^{\sublin}} \frac{1}{p_t^y} \sum_{\substack{l_{\pm} \in J(2 t^{\xi_1}), \\ l_{\pm} \equiv 0}} \sum_{\substack{l_{\bullet} \in K(t-2 t^{\xi_1}), \\ l_{\bullet} \equiv \|y\|_1}} \left \lvert q^y_{l_{\bullet} + l_{\pm}}  - q^y_{l_{\bullet}} \right \rvert \Pp^{\bullet, (-,+)}(l_{\bullet}, l_{\pm}).
\end{align*}
The desired limit then follows from~\eqref{eq:sum_w_const_term} for $\theta \in (0, 1 - \sublin - \xi_1)$.

In the second subcase, we estimate as follows:

\begin{align*}
& \sup_{\|y\| \leq t^{\sublin}} \frac{1}{p_t^y} \sum_{\substack{l_{\pm} \in J(2 t^{\xi_1}), \\ l_{\pm} \equiv 0}} \sum_{\substack{0 \leq m \leq l_{\pm}, \\ m \in J(t^{\xi_1})}}  \sum_{\substack{l_{\bullet} \in K(t-2 t^{\xi_1}), \\ l_{\bullet} \equiv \|y\|_1}} \\
&\left \lvert q^y_{l_{\bullet}+l_{\pm}} - q^y_{l_{\bullet}} \right \rvert \Pp(m, l_{\bullet}, l_{\pm} - m) \sum_{\nuone m \leq r \leq m+1} A(t^{\xi_1},m,r) \alpha^{r} \\
\lesssim& g(t) e^{-\beta^2 t^{\xi_1}} \sup_{\|y\| \leq t^{\sublin}} \frac{1}{p_t^y} \sum_{l \in J(t-2t^{\xi_1})} q^y_{\iota(y,l)} \Pp^{\bullet}(l) \\
&e^{\beta^2 t^{\xi_1}} \sum_{m \in J(t^{\xi_1})} \Pp^-(m) \sum_{\nuone m \leq r \leq m+1} \alpha^r A(t^{\xi_1},m,r).
\end{align*} 
By~(A0) and~(A3), this tends to $0$ as $t \to \infty$ faster than $t^{-\theta}$ for any $\theta > 0$.
This completes the proof of~\eqref{eq:pfq_5}. 

   
Let us now show~\eqref{eq:pfq_6}, which will imply~\eqref{eq:fcv_1}. For convenience, we assume that $\|y\|_1 \equiv 0$. In the case $\|y\|_1 \equiv 1$, the proof proceeds analogously. We have 
$$
B(y,t) = B^{(1)}(y,t) + B^{(2)}(y,t), 
$$ 
where 
\begin{align*}
B^{(1)}(y,t) \coleq & \sum_{\substack{l_{\pm} \in J(2 t^{\xi_1}), \\ l_{\pm} \equiv 1}} \sum_{p = \frac{1}{2} \chi_1(t-2 t^{\xi_1})}^{\frac{1}{2} (\chi_2(t-2 t^{\xi_1})-1)} \\ 
&q^y_{2p+1+l_{\pm}} \E \left[ T^{y,t}_0 T_{0,0}^t \id_{n_- + n_+ = l_{\pm}}\left(\id_{n_{\bullet} = 2p+1} - \id_{n_{\bullet} = 2p} \right) \right], \\
B^{(2)}(y,t) \coleq & \sum_{\substack{l_{\pm} \in J(2 t^{\xi_1}), \\ l_{\pm} \equiv 1}} \sum_{p = \frac{1}{2} \chi_1(t-2 t^{\xi_1})}^{\frac{1}{2} (\chi_2(t-2 t^{\xi_1})-1)} \\
& \left(q^y_{2p+1+l_{\pm}} - q^y_{2p} \right) \E \left[ T^{y,t}_0 T_{0,0}^t \id_{n_{\bullet} = 2p, n_- + n_+ = l_{\pm}} \right].   
\end{align*} 
Following the proof of~\eqref{eq:pfq_5}, one shows 
$$
\lim_{t \to \infty} t^{\theta} \sup_{\|y\| \leq t^{\sublin}} \frac{1}{p_t^y} \left \langle \left \lvert B^{(2)}(y,t) \right \rvert \right \rangle = 0 
$$
for $\theta \in (0, 1 - \sublin - \xi_1)$. To establish~\eqref{eq:pfq_6}, it remains to show that there is $\theta > 0$ such that 
\begin{equation}     \label{eq:pfq_2_14} 
\lim_{t \to \infty} t^{\theta} \sup_{\|y\| \leq t^{\sublin}} \frac{1}{p_t^y} \left \langle \left \lvert B^{(1)}(y,t) \right \rvert \right \rangle = 0. 
\end{equation}

By Claim~\ref{cl: E_T_bound}, we have
\begin{align*}
\left \langle \left \lvert B^{(1)}(y,t) \right \rvert \right \rangle 
\leq \sum_{\substack{l_{\pm} \in J(2 t^{\xi_1}), \\ l_{\pm} \equiv 1}} \sum_{m=0}^{l_{\pm}} & \sum_{p=\frac{1}{2} \chi_1(t-2 t^{\xi_1})}^{\frac{1}{2} (\chi_2(t-2 t^{\xi_1})-1)} \\
q^y_{2p+1+l_{\pm}} &
\left \lvert \Pp^{\bullet}(2p+1) - \Pp^{\bullet}(2p) \right \rvert \\
&\Pp^{-,+}(m,l_{\pm}-m) 
\left( 1 + \sum_{1 \leq r \leq m+1} \alpha^r A(t^{\xi_1},m,r)\right). 
\end{align*}
Since $n_{\bullet}$ is Poisson distributed with intensity $t-2t^{\xi_1}$, 
$$ 
\Pp^{\bullet}(2p+1) - \Pp^{\bullet}(2p) = \Pp^{\bullet}(2p+1) \left(1 - \frac{2p+1}{t-2t^{\xi_1}} \right). 
$$
Therefore, for $\tfrac{1}{2} \chi_1(t-2 t^{\xi_1}) \leq p \leq \tfrac{1}{2} (\chi_2(t-2 t^{\xi_1})-1)$, we have
$$ 
\lvert \Pp^{\bullet}(2p+1) - \Pp^{\bullet}(2p) \lvert
\leq
\Pp^{\bullet}(2p+1) \hat g(t),$$ 
where 
$ 
\hat g(t) = \left(t-2 t^{\xi_1} \right)^{-\sigma}.
$
Since
$$
\sum_{\substack{l_{\pm} \in J(2 t^{\xi_1}), \\ l_{\pm} \equiv 1}} \sum_{p = \frac{1}{2} \chi_1(t-2 t^{\xi_1})}^{\frac{1}{2} (\chi_2(t- 2 t^{\xi_1})-1)} q^y_{2p+1+l_{\pm}} \Pp^{\bullet, (-,+)}(2p+1,l_{\pm}) \leq p_t^y, 
$$
we have 
$$
\lim_{t \to \infty} t^{\theta} \hat g(t) \sup_{\|y\| \leq t^{\sublin}} \frac{1}{p_t^y} \sum_{\substack{l_{\pm} \in J(2 t^{\xi_1}), \\ l_{\pm} \equiv 1}}  \sum_{p=\frac{1}{2} \chi_1(t-2 t^{\xi_1})}^{\frac{1}{2} (\chi_2(t- 2 t^{\xi_1})-1)} q^y_{2p+1+l_{\pm}} \Pp^{\bullet, (-,+)}(2p+1, l_{\pm}) = 0 
$$
for $\theta \in (0, \sigma)$. Since we also have that
$$
q^y_{2p+1+l_{\pm}}  \leq q^y_{2p+1} (1+g(t)) \lesssim q^y_{2p+1} = q_{\iota(y,2p+1)}, 
$$
we can complete the proof of~\eqref{eq:pfq_2_14} by following the reasoning for~\eqref{eq:pfq_5}. 

\bigskip 

Next, we show~\eqref{eq:fcv_2}.  Let $\| y \| \leq t^{\sigma}$, then by Lemma~\ref{lm:lclt_con_time}, we have that $1/p_t^y \leq e^{t^{\sigma}}$. For any $n, n_\bullet \in \N_0$,  we also have that $| q_n^y - q_{n_\bullet} | \leq 2$. Therefore, using Claim~\ref{cl: E_T_bound}, we have that for any given families of sets $S_{\bullet}(t), S_{\pm}(t) \subset \N_0$,
\begin{align}    
& \sup_{\|y\| \leq t^{\sublin}} \frac{1}{p_t^y} \left \langle \left \lvert \E \left[ \left(q^y_n - q^y_{n_{\bullet}} \right) T^{y,t}_0 T_{0,0}^t \id_{n_{\bullet} \in S_{\bullet}(t), n_- + n_+ \in S_{\pm}(t)} \right] \right \rvert \right \rangle \notag \\
\lesssim& e^{t^{\sublin}} \sum_{l_{\bullet} \in S_{\bullet}(t)} \Pp^{\bullet}(l_{\bullet}) \notag \\
&\sum_{l_{\pm} \in S_{\pm}(t)} \sum_{m=0}^{l_{\pm}} \Pp^{-,+}(m,l_{\pm}-m) \biggl(1 + \sum_{1 \leq r \leq m+1} \alpha^r A(t^{\xi_1},m,r) \biggr).    \label{eq:l_k_sum}
\end{align}


\noindent If $S_{\bullet}(t)$ is the complement of $K(t-2 t^{\xi_1})$, the following claim, which we prove in Appendix~\ref{ssec:building_blocks_proofs}, allows us to deal with the term in the second line of~\eqref{eq:l_k_sum}.
\begin{claim}\label{cl:sum_notin_K}
$$
\lim_{t \to \infty} t^{\theta} e^{t^{\sublin}} \sum_{l \notin K(t-2 t^{\xi_1})} \Pp^{\bullet}(l) = 0, \quad \theta > 0. 
$$
\end{claim}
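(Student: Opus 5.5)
Write $\tau \coleq t - 2t^{\xi_1}$, so that $n_{\bullet}$ is Poisson with mean $\tau$, and $\tau \sim t$ as $t \to \infty$. The plan is to bound $\sum_{l \notin K(\tau)} \Pp^{\bullet}(l) = \Pp\big(|n_{\bullet} - \tau| \gtrsim \tau^{1-\chi}\big)$ with a Chernoff estimate whose exponent is a power of $t$ strictly larger than $\sublin$. Then the factor $e^{t^{\sublin}}$ and any polynomial $t^{\theta}$ are absorbed.

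First I record where $K(\tau)$ starts and ends. By definition $\chi_1(\tau) \le \tau(1-\tau^{-\chi}) + 2$ and $\chi_2(\tau) \ge \tau(1+\tau^{-\chi}) - 2$. Hence $l \notin K(\tau)$ implies $|l - \tau| \ge \tau^{1-\chi} - 2 \ge \tfrac12 \tau^{1-\chi}$ once $t$ is large. The parity adjustments in the definitions of $\chi_1,\chi_2$ therefore cost only an additive constant.

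Next comes the Poisson concentration bound. For $N$ Poisson with mean $\tau$ and $0 < a \le \tau$, the exponential Markov inequality gives $\Pp(N \ge \tau + a) \le \exp(-\tau h(a/\tau))$ and $\Pp(N \le \tau - a) \le \exp(-\tau h(-a/\tau))$, where $h(u) = (1+u)\ln(1+u) - u \ge u^2/4$ for $|u| \le 1$. (One optimizes $\E e^{\lambda N} = e^{\tau(e^\lambda - 1)}$ over $\lambda$.) With $a = \tfrac12\tau^{1-\chi}$ this yields
$$
\sum_{l \notin K(\tau)} \Pp^{\bullet}(l) \le 2\exp\big(-\tfrac{1}{16}\tau^{1-2\chi}\big).
$$

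Finally I compare exponents. Since $\chi < \tfrac12(1-\sublin)$, we have $1 - 2\chi > \sublin$. For large $t$ we also have $\tau \ge t/2$. Therefore
$$
t^{\theta} e^{t^{\sublin}} \sum_{l\notin K(\tau)} \Pp^{\bullet}(l) \le 2\, t^{\theta} \exp\big(t^{\sublin} - c\, t^{1-2\chi}\big) \to 0
$$
for every $\theta > 0$. The only real point in the argument is this exponent comparison, and it is exactly what the choice $\chi \in (0, \tfrac12(1-\sublin))$ was made to guarantee. I expect no genuine obstacle; the remaining steps are just the elementary Chernoff estimate and the bookkeeping with the endpoints $\chi_1, \chi_2$.
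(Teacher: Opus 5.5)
Your proof is correct, but it takes a different and more elementary route than the paper.

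\textbf{The paper's route.} The paper gets the claim as a byproduct of the stronger weighted statement~\eqref{eq:stronger_B4}; the weights $\sum_{r}(r+1)\alpha^r A(t-2t^{\xi_1},l,r+1)$ are at least $1$ because of the $r=0$ term. It proves~\eqref{eq:stronger_B4} by splitting $l\notin K(t-2t^{\xi_1})$ into two ranges:
\begin{itemize}
\item $l\notin J$, handled by (C1) of Proposition~\ref{prop:key} via Stirling's formula and Lemma~\ref{lm:tail_exponential};
\item $l\in J\setminus K$, handled by (C3). There the weights are first removed with Lemma~\ref{lm:A_estimate}. The remaining Poisson tails beyond $\chi_1,\chi_2$ are then bounded through Stirling's formula and a fairly long L'H\^opital computation showing the exponent beats $t^{\sublin}$. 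The factor $t^{\theta}$ is absorbed by passing to some $\tilde\sublin\in(\sublin,1)$.
\end{itemize}

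\textbf{Your route.} Your Chernoff bound $\Pp(|n_{\bullet}-\tau|\ge a)\le 2\exp(-\tau h(a/\tau))$ handles all $l\notin K$ at once. It makes the decay $\exp(-c\,t^{1-2\chi})$ explicit, so the role of the condition $\chi<\tfrac12(1-\sublin)$ is transparent. The endpoint bookkeeping for $\chi_1,\chi_2$ and the inequality $h(u)\ge u^2/4$ on $|u|\le 1$ are both fine.

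\textbf{Trade-off.} The paper's route also delivers the weighted estimate needed for (A5), which is why the claim comes for free there. Yours is shorter and self-contained. The same Chernoff argument could also replace the Stirling/L'H\^opital part of the proof of (C3).
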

\noindent In addition, the expression in the third line of~\eqref{eq:l_k_sum} is bounded from above by  
$$
1 + \sum_{m \in \N_0} \Pp^-(m) \sum_{1 \leq r \leq m+1} \alpha^r A(t^{\xi_1},m,r), 
$$
and~(A7) and~(A4)   imply 
$$
\limsup_{t \to \infty} \sum_{m \in \N_0} \Pp^-(m) \sum_{1 \leq r \leq m+1} \alpha^r A(t^{\xi_1},m,r) < \infty. 
$$
From this, we may already infer 
$$
\lim_{t \to \infty} t^{\theta} \sup_{\|y\| \leq t^{\sublin}} \frac{1}{p_t^y} \left \langle \left \lvert \E \left[ \left(q^y_n - q^y_{n_{\bullet}} \right) T^{y,t}_0 T_{0,0}^t \id_{n_{\bullet} \notin K(t-2 t^{\xi_1})} \right] \right \rvert \right \rangle = 0, \quad \theta > 0.  
$$
If $S_\bullet(t)$ is $K(t-2 t^{\xi_1})$, then $S_\pm(t)$ must be the complement of $J(2t^{\xi_1})$.
For $\|y\| \leq t^{\sublin}$, $l_{\bullet} \in K(t-2 t^{\xi_1}) \subset J(t-2t^{\xi_1})$, and $l_{\pm} \in \N_0$, we have as in~\eqref{eq:q_exp_bound}  
$$
q^y_{l_{\bullet}+l_{\pm}} \lesssim \exp \left(C t^{\sublin-1} l_{\pm} \right) q^y_{\iota(y,l_{\bullet})}. 
$$
Thus,
\begin{align}    \label{eq:second_to_last} 
& \sup_{\|y\| \leq t^{\sublin}} \frac{1}{p_t^y} \left \langle \left \lvert \E \left[ \left(q^y_n - q^y_{n_{\bullet}} \right) T^{y,t}_0 T_{0,0}^t \id_{n_{\bullet} \in K(t-2 t^{\xi_1}), n_- + n_+ \notin J(2 t^{\xi_1})} \right] \right \rvert \right \rangle  \\
\lesssim& e^{-\beta^2 t^{\xi_1}} \sup_{\|y\| \leq t^{\sublin}} \frac{1}{p_t^y} \sum_{l \in K(t-2 t^{\xi_1})} q^y_{\iota(y,l)} \Pp^{\bullet}(l) \notag \\
& e^{\beta^2 t^{\xi_1}} \sum_{\substack{l_-, l_+ \in \N_0, \\ l_- + l_+ \notin J(2t^{\xi_1})}} 
\exp \left(Ct^{\sublin-1} (l_- + l_+) \right) \Pp^{-,+}(l_-, l_+) \sum_{1 \leq r \leq l_- +1} \alpha^r A(t^{\xi_1},l_-,r). \notag
\end{align}
By~(A1) of Lemma~\ref{lm:building_blocks}, 
$$
\limsup_{t \to \infty} e^{-\beta^2 t^{\xi_1}} \sup_{\|y\| \leq t^{\sublin}} \frac{1}{p_t^y} \sum_{l \in K(t-2 t^{\xi_1})} q^y_{\iota(y,l)} \Pp^{\bullet}(l) < \infty. 
$$
If $l_-, l_+ \in \N_0$ such that $l_- + l_+ \notin J(2t^{\xi_1})$, we have $l_- \notin J(t^{\xi_1})$ or $l_+ \notin J(t^{\xi_1})$. Consequently, for any $\theta > 0$, the product of $t^{\theta}$ and the expression in the third line of~\eqref{eq:second_to_last} is less than a constant times  
\begin{align}     
&t^{\theta} e^{\beta^2 t^{\xi_1}} \sum_{l \notin J(t^{\xi_1})} \exp \left(C t^{\sublin -1} l \right) \Pp^-(l) \sum_{1 \leq r \leq l+1} \alpha^r A(t^{\xi_1},l,r)  \notag \\
& \biggl(\sum_{l \in J(t^{\xi_1})} \Pp^-(l) \sum_{1 \leq r \leq l+1} \alpha^r A(t^{\xi_1},l,r)  \notag \\
& + \sum_{l \notin J(t^{\xi_1})} \exp \left(C t^{\sublin -1} l \right) \Pp^-(l) \sum_{1 \leq r \leq l+1} \alpha^r A(t^{\xi_1},l,r) \biggr).  \label{eq:fe_1}
\end{align}
By~(A4) in Lemma~\ref{lm:building_blocks}, as $t \to \infty$, the expression in the first line of~\eqref{eq:fe_1} tends to $0$ for any $\theta > 0$ and the expression in the third line of~\eqref{eq:fe_1} tends to $0$. The second line of~\eqref{eq:fe_1} is bounded on account of~(A7). This completes the proof of~\eqref{eq:fcv_2}. 
\epf 

\subsubsection{{Proof of Claim,~\ref{lm:final_conv_step_1} Part 2}}

Since $q^y_{n_{\bullet}}$, $T^{y,t}_0$ and $T_{0,0}^t$ are independent with respect to $\Pp$, we have 
$$
\E \left[ q^y_{n_{\bullet}} T^{y,t}_0 T_{0,0}^t \right] = p^y_{t-2t^{\xi_1}} \E T^{y,t}_0 \E T_{0,0}^t. 
$$
It is hence enough to show existence of $\theta > 0$ such that 
\begin{align}
\lim_{t \to \infty} t^{\theta} \sup_{\|y\| \leq t^{\sublin}} \frac{\lvert p^y_{t-2t^{\xi_1}} - p^y_t \rvert}{p_t^y} \left \langle  \left \lvert \E T_{0,0}^t \E T^{y,t}_0 \right \rvert \right \rangle =& 0,   \label{eq:end_pf_1}  \\
\lim_{t \to \infty} t^{\theta} \sup_{\|y\| \leq t^{\sublin}} \left \langle \left \lvert \E T_{0,0}^t \E T^{y,t}_0 - Z_{0,0}^{\infty} Z^{y,t}_{-\infty} \right \rvert \right \rangle =& 0. \label{eq:end_pf_2}
\end{align}
Let us point out that there is $\theta > 0$ such that 
$$
\lim_{t \to \infty} t^{\theta} \sup_{\|y\| \leq t^{\sublin}} \frac{\lvert p^y_{t-2t^{\xi_1}} - p^y_t \rvert}{p^y_t} = 0. 
$$
Since $p^y_{t-2t^{\xi_1}} = \E q^y_{n_{\bullet}}$ and $p^y_t = \E q^y_n$, 
this can be shown similarly to~\eqref{eq:sum_w_const_term}. Hence, in order to prove~\eqref{eq:end_pf_1}, it is enough to show that 
$$
\limsup_{t \to \infty} \left \langle \left \lvert \E T_{0,0}^t \E T^{y,t}_0 \right \rvert \right \rangle  < \infty.
$$
By the Cauchy--Schwarz inequality and symmetry,  
$$
\left \langle \left \lvert \E T_{0,0}^t \E T^{y,t}_0 \right \rvert \right \rangle \leq \left \langle \left(\E T_{0,0}^t \right)^2 \right \rangle.
$$

Let us now show that the truncated partition function $\E T_{0,0}^t$ converges to the limiting partition function $Z_{0,0}^\infty$ in the $L^2$ sense and obtain a rate of convergence. In order to do this we will first prove that there is $\theta > 0$ such that 
\begin{equation}    \label{eq:convergence_int_1} 
\lim_{t \to \infty} t^{\theta} \left \langle \left(\E T_{0,0}^t - Z_{0,0}^{t^{\xi_1}} \right)^2 \right \rangle = 0. 
\end{equation} 
We keep the notation introduced at the beginning of Subsection~\ref{ssec:convergence_middle}. Let $t > 0$. For $\Pp_{0,0}$-almost every realization of the continuous-time simple symmetric random walk $\eta$ on $\Z^d$, we have 
$$
e^{-\frac{\beta^2}{2} t^{\xi_1}} e^{\beta \AC_0^{t^{\xi_1}}} - T_{0,0}^t = N_1^{t^{\xi_1}} + N_2^{t^{\xi_1}},
$$
where
\begin{align}     \label{eq:M_T_diff_1} 
N_1^{t^{\xi_1}}
=& \sum_{1 \leq r \leq v(n_-) + 1} \sum_{\substack{0 \leq i_1 < \ldots < i_r \leq n_-, i_r > w(n_-), \\ z_1, \ldots, z_r \in \Z^d}} q_r(\ibf,\zbf) \prod_{j=1}^r h(z_j; s_{i_j}, s_{i_j +1})  \notag \\
N_2^{t^{\xi_1}} 
= & \sum_{v(n_-) + 1 < r \leq n_- +1} \sum_{\substack{0 \leq i_1 < \ldots < i_r \leq n_-, \\ z_1, \ldots, z_r \in \Z^d}} q_r(\ibf,\zbf) \prod_{j=1}^r h(z_j; s_{i_j}, s_{i_j +1}).  \notag 
\end{align}
By Jensen's inequality, 
$$
\left \langle \left(\E T_{0,0}^t - Z_{0,0}^{t^{\xi_1}} \right)^2 \right \rangle \leq \E \left[\left \langle \left( N_1^{t^{\xi_1}} + N_2^{t^{\xi_1}} \right)^2 \right \rangle \right], 
$$
so it is enough to show existence of $\theta > 0$ such that 
$$
\lim_{t \to \infty} t^{\theta} \E \left[ \left \langle \left(N_i^t \right)^2 \right \rangle \right] = 0, \quad i \in \{1,2\}, 
$$
i.e., we need to check convergence of the $D$-sequences 
\begin{align*}
D^{\vartheta}_n(1) =& \sum_{1 \leq r \leq v(n) +1} \vartheta^r \sum_{\substack{0 \leq i_1 < \ldots < i_r \leq n, i_r > w(n), \\ z_1, \ldots, z_r \in \Z^d}} q_r(\ibf,\zbf)^2, \\
D^{\vartheta}_n(2) =& \sum_{v(n)+ 1 < r \leq n+1} \vartheta^r \sum_{\substack{0 \leq i_1 < \ldots i_r \leq n, \\ z_1, \ldots, z_r \in \Z^d}} q_r(\ibf,\zbf)^2. 
\end{align*}
For $\vartheta < \alpha^{-1}$, 
$$
D^{\vartheta}_n(2) \lesssim \sum_{v(n)+1 < r \leq n+1} (\vartheta \alpha)^r \leq \frac{(\vartheta \alpha)^{n/2}}{1-\vartheta \alpha}, 
$$
so 
$$
\lim_{n \to \infty} n^{\theta} D_n^{\vartheta}(2) = 0, \quad \theta > 0, \ \vartheta < \alpha^{-1}. 
$$
Moreover, 
\begin{align*}
D^{\vartheta}_n(1) \lesssim& \sum_{1 \leq r \leq v(n) +1} \vartheta^r \sum_{\substack{j_1, \ldots, j_r \in \N, \\ j_1 + \ldots + j_r > w(n)}} \sum_{c_1, \ldots, c_r \in \Z^d} \left(q_{j_1}^{c_1} \right)^2 \ldots \left(q_{j_r}^{c_r} \right)^2 \\
\leq& \sum_{1 \leq r \leq v(n)+1} \vartheta^r \sum_{l=1}^r \sum_{\substack{j_1, \ldots, j_r \in \N, \\ j_l \geq \frac{1}{2} \frac{w(n)}{v(n)}}} \sum_{k=1}^r \biggl(\sum_{c_k \in \Z^d} \left(q_{j_k}^{c_k} \right)^2 \biggr) \\
\lesssim& \sum_{1 \leq r \leq v(n)+1} r (\vartheta \alpha)^r \sum_{j \geq \frac{1}{2} \frac{w(n)}{v(n)}} \frac{1}{j^{\frac{d}{2}}} 
\lesssim \sum_{r=1}^{\infty} r (\vartheta \alpha)^r \left(\frac{1}{2} \frac{w(n)}{v(n)} - 1 \right)^{1-\frac{d}{2}}, 
\end{align*}
and  
$$
\lim_{n \to \infty} n^{\theta} \left(\frac{1}{2} \frac{w(n)}{v(n)} - 1 \right)^{1-\frac{d}{2}} = 0, \quad \theta \in (0, (\xi_3 - \xi_2) (\tfrac{d}{2}-1)), 
$$ 
from which we deduce~\eqref{eq:convergence_int_1}. If we combine this result with Theorem~\ref{thm:limiting_part_fun}, we obtain in particular that there is $\theta > 0$ such that 
\begin{equation}    \label{eq:truncated_limiting} 
\lim_{t \to \infty} t^{\theta} \left \langle \left(\E T_{0,0}^t - Z_{0,0}^{\infty} \right)^2 \right \rangle = 0.
\end{equation} 
Finally, since $\left \langle \left(Z_{0,0}^{\infty} \right)^2 \right \rangle < \infty$, we obtain~\eqref{eq:end_pf_1}. 

In order to prove~\eqref{eq:end_pf_2}, first notice that for $y \in \Z^d$ such that $\|y\| \leq t^{\sublin}$, we have 
$$
\left \langle \left \lvert \E T_{0,0}^t \E T^{y,t}_0 - Z_{0,0}^{\infty} Z_{-\infty}^{y,t} \right \rvert \right \rangle \leq \left \langle \left \lvert \E T^{y,t}_0 \left(\E T_{0,0}^t - Z_{0,0}^{\infty} \right) \right \rvert \right \rangle + \left \langle \left \lvert Z_{0,0}^{\infty} \left(\E T^{y,t}_0 - Z_{-\infty}^{y,t} \right) \right \rvert \right \rangle. 
$$
Therefore, we obtain~\eqref{eq:end_pf_2} by applying Cauchy--Schwarz to the two summands on the right, and using~\eqref{eq:truncated_limiting} together with  
$$
\lim_{t \to \infty} \left \langle \left(\E T_{0,0}^t \right)^2 \right \rangle = \left \langle \left(Z_{0,0}^{\infty} \right)^2 \right \rangle < \infty.
$$

\section{Proof of Theorem \protect\ref{thm:limiting_part_fun}}
\label{sec:proof_thm_limiting_part_fun}

Before presenting the proof of Theorem \ref{thm:limiting_part_fun}, we introduce some additional notation and establish two lemmas. Let $n \in \N_0$ and $1 \leq r \leq n+1$. For $\ibf = (i_1, \ldots, i_r) \in \N_0^r$ such that $0 \leq i_1 < \ldots < i_r \leq n$, and for $\zbf = (z_1, \ldots, z_r) \in (\Z^d)^r$, define
$q(\ibf,\zbf) \coleq q_{i_1}^{z_1} \ldots q_{i_r - i_{r-1}}^{z_r - z_{r-1}}$. 
For $\tbf = (t_1, t_2, \ldots)$ an infinite sequence of positive numbers, let
	\begin{equation*}
		M_n(\tbf) \coleq \sum_{r=1}^{n+1} 
		\sum_{\substack{0 \leq i_1 < \ldots < i_r \leq n, \\
		 z_1, \ldots, z_r \in \Z^d}}
		q(\ibf,\zbf)^2 \prod_{j=1}^r\left(e^{\beta^2 t_{i_j +1}} -1\right),
	\end{equation*}
which is monotone increasing in $n$.  
Set 
	\begin{equation*}
		M(\tbf) \coleq \lim_{n \to \infty} M_n(\tbf) \in (0,+\infty].
	\end{equation*}
\begin{lemma}     \label{lm:finite_Q}
Let $\beta$ be so small that $\alpha_d \lambda < 1$, and let $\tau = (\tau_k)_{k \in \N}$ be a sequence of independent exponentially distributed random variables with rate $1$. Then
\begin{equation*}
\lim_{n \to \infty} n^{\theta} \ \E \left[M\left(\tau \right) - M_{n-1}(\tau) \right]  = 0, \quad \forall \theta \in \left(0, \min\{\tfrac{d}{2} -1; -\ln(\alpha_d \lambda)\} \right). 
\end{equation*} 
\end{lemma}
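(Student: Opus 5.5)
Taking expectations term by term (everything is nonnegative, so Tonelli applies), I will write $\E[M(\tau)-M_{n-1}(\tau)]$ as the sum over all $r\ge1$ and all $0\le i_1<\dots<i_r$ with $i_r\ge n$ of $\sum_{\zbf} q(\ibf,\zbf)^2\,\E\prod_{j=1}^r(e^{\beta^2\tau_{i_j+1}}-1)$. Since the indices $i_j+1$ are distinct, independence of the $\tau_k$ gives that the expectation equals $\lambda^r$, exactly as in~\eqref{eq:eq_lambda}. Next I substitute the increments $j_1=i_1\ge0$ and $j_k=i_k-i_{k-1}\ge1$ for $k\ge2$. Summing over $\zbf$ factorizes, so that with $a_j\coleq\sum_z (q_j^z)^2$ the quantity to bound becomes
$$\sum_{r\ge1}\lambda^r\sum_{\substack{j_1\ge0,\ j_2,\dots,j_r\ge1\\ j_1+\dots+j_r\ge n}} a_{j_1}a_{j_2}\cdots a_{j_r}.$$
The term $j_1$ ranges over $\N_0$ with $a_0=1$, which only costs a factor $1+\alpha_d$ relative to the standard sum. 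The other $a_j$ sum to $\alpha_d$ by~\eqref{eq:def_alpha}, and they satisfy the local CLT bound $a_j=q_{2j}^0\lesssim j^{-d/2}$.

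The core estimate is the tail of a convolution. Whenever $j_1+\dots+j_r\ge n$, some $j_k$ is at least $n/r$. A union bound over $k$ then bounds the inner sum for fixed $r$ by $(1+\alpha_d)\, r\,\alpha_d^{r-1}\sum_{j\ge n/r}a_j\lesssim r\,\alpha_d^{r}\,(n/r)^{1-d/2}$. I will then split the $r$-sum at a threshold $r_0=\lceil c\ln n\rceil$, or at $n^{\epsilon}$. For $r\le r_0$ the contribution is at most $\sum_r r^{d/2}(\alpha_d\lambda)^r\, n^{1-d/2}$, which is $O(n^{1-d/2})$ because $\alpha_d\lambda<1$. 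For $r>r_0$ I drop the constraint $i_r\ge n$ entirely and obtain a contribution of at most $\sum_{r>r_0}(1+\alpha_d)\alpha_d^{r-1}\lambda^r\lesssim(\alpha_d\lambda)^{r_0}=n^{c\ln(\alpha_d\lambda)}$.

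It remains to choose the threshold so that both pieces are $o(n^{-\theta})$ for each $\theta<\min\{\tfrac d2-1,-\ln(\alpha_d\lambda)\}$. I take $r_0=c\ln n$ with $c$ slightly larger than $1$; the second piece is then $n^{-c|\ln(\alpha_d\lambda)|}$, which is $o(n^{-\theta})$ whenever $\theta<-\ln(\alpha_d\lambda)$. The first piece is actually independent of the cut, since the whole series $\sum_r r^{d/2}(\alpha_d\lambda)^r$ converges, so it is always $O(n^{1-d/2})$. In fact the first estimate alone already handles all $r$ and yields the rate $n^{1-d/2}$; the $-\ln(\alpha_d\lambda)$ restriction in the statement only needs to be checked for consistency, and the split above covers it. The main obstacle is to keep the polynomial factor $r\cdot r^{d/2-1}$ from the union bound and from $(n/r)^{1-d/2}$ under control against the geometric decay, which is what the strict inequality $\alpha_d\lambda<1$ guarantees. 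A secondary detail is to verify, through the parity of the walk, that $a_j\lesssim j^{-d/2}$ holds uniformly for all $j$.
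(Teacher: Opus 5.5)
Your proof is correct and follows the paper's route. The shared steps are: Tonelli to get the factor $\lambda^r$, a split of the $r$-sum at order $\ln n$, a largest-gap union bound with $\sum_{j\ge m}a_j\lesssim m^{1-d/2}$ for small $r$, and the geometric tail $(\alpha_d\lambda)^{r_0}$ for large $r$.

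Your additional remark is also right and slightly sharpens the paper. The paper bounds $(n/r)^{1-d/2}$ by $(n/\ln n)^{1-d/2}$. If you instead keep $(n/r)^{1-d/2}=n^{1-d/2}r^{d/2-1}$, the series $\sum_r r^{d/2}(\alpha_d\lambda)^r$ converges, so no split is needed. The bound is then $O(n^{1-d/2})$, and the restriction $\theta<-\ln(\alpha_d\lambda)$ is superfluous for this lemma.

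There is one harmless constant slip. For $k\ge 2$ the remaining factors give $(1+\alpha_d)\alpha_d^{r-2}$, not $(1+\alpha_d)\alpha_d^{r-1}$; this is absorbed by your final $\lesssim$.
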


\begin{proof}
With $\lambda$ defined by~\eqref{eq:def_lambda}, one has 
\begin{align}      
 \E \left[M \left(\tau \right) - M_{n-1}(\tau) \right]  
\leq&  \sum_{1 \leq r \leq \ln(n)} \lambda^r \sum_{\substack{0 \leq i_1 < \ldots i_r, i_r \geq n, \\ z_1, \ldots, z_r \in \Z^d}} 
q(\ibf,\zbf)^2 \notag \\
&+ \sum_{r > \ln(n)} \lambda^r  \sum_{\substack{0 \leq i_1 < \ldots < i_r, \\ z_1, \ldots, z_r \in \Z^d}} 
		q(\ibf,\zbf)^2. \label{eq:series_remainder}
\end{align}
The expression in the second line of~\eqref{eq:series_remainder} is dominated by 
$$
\sum_{r > \ln(n)} (\alpha_d \lambda)^r \lesssim n^{\ln(\alpha_d \lambda)}, 
$$
and 
$$
\lim_{n \to \infty} n^{\theta} n^{\ln(\alpha_d \lambda)} = 0, \quad \forall \theta \in (0, -\ln(\alpha_d \lambda)). 
$$
As explained in more detail in~\cite{HKNN} (proof of Theorem 2.2), the expression in the first line of~\eqref{eq:series_remainder}, to the right of the inequality sign, is dominated by 
$$
\sum_{r=1}^{\infty} r (\alpha_d \lambda)^r \left(\frac{n}{\ln(n)} \right)^{1 - \frac{d}{2}}, 
$$
and 
$$
\lim_{n \to \infty} n^{\theta} \left(\frac{n}{\ln(n)} \right)^{1 - \frac{d}{2}} = 0, \quad \forall \theta \in (0, \tfrac{d}{2}-1). 
$$
\end{proof} 
\vspace{-3mm}

\begin{lemma}     \label{lm:Z_increment}
Let $\beta$ be so small that Key Lemma~\ref{lm:standard_arg} holds. Let $T > t > 0$. For $\Pp_{0,0}$-almost every realization of $\eta$ and for every sequence $\tbf = (t_1, t_2, \ldots)$ of positive numbers such that $t_i = s_i - s_{i-1}$, $1 \leq i \leq n_{T}$ and $t_{n_T + 1} \geq T - s_{n_T}$, one has 
$$   
\biggl \langle \biggl(e^{-\frac{\beta^2}{2} T} e^{\beta \AC_0^T} - e^{-\frac{\beta^2}{2} t} e^{\beta \AC_0^t} \biggr)^2 \biggr \rangle \\
\leq 3 \left(M \left(\tbf \right) - M_{n_{t}-1} \left(\tbf \right) \right) + 3 \left \langle \left(R^t \right)^2 \right \rangle,
$$
where $R^t$ is a random variable depending on $\eta$ and $(W^x)_{x \in \Z^d}$ that satisfies
\begin{equation}     \label{eq:saw_2}
\lim_{t \to \infty} t^{\theta} \ \E_{0,0} \left \langle \left(R^t \right)^2 \right \rangle = 0, \quad \forall \theta \in \left(0, \tfrac{d}{2}\right). 
\end{equation}
\end{lemma}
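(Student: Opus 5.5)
Fix a realization of $\eta$ and write $G_t := e^{-\frac{\beta^2}{2} t} e^{\beta \AC_0^t}$. The plan is to expand $G_t$ and $G_T$ in chaos form, cancel the common part exactly, and isolate the last interval $[s_{n_t}, t)$ as the error.

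\textbf{Expansion of $G_T$.} For fixed $\eta$, the action is a sum over holding intervals $[s_j, s_{j+1})$ at sites $\gamma_j$. Hence
$$G_T = \prod_{j=0}^{n_T} \bigl(1 + h(\gamma_j; s_j, s_{j+1})\bigr) = \sum_{J \subseteq \{0,\dots,n_T\}} \prod_{j \in J} h(\gamma_j; s_j, s_{j+1}),$$
with $s_{n_T+1} = T$. The products indexed by different subsets $J$ are orthogonal in $L^2(Q)$, even when the same site is revisited, because the intervals are disjoint. The same holds for $G_t$, whose last factor is $h(\gamma_{n_t}; s_{n_t}, t)$.

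\textbf{Splitting $G_T - G_t$.} Write $G_T - G_t = D_1 + D_2 + D_3$, where:
\begin{itemize}
\item $D_1$ collects the subsets $J$ with $\max J > n_t$;
\item $D_2$ collects the subsets $J$ with $n_t \in J$, each taken with the full interval $[s_{n_t}, s_{n_t+1})$;
\item $D_3 = -R^t$ with $R^t := (\text{terms with } n_t \in J \text{ in } G_t)$, using the truncated factor $h(\gamma_{n_t}; s_{n_t}, t)$.
\end{itemize}
The subsets with $\max J < n_t$ cancel exactly. By $(a+b+c)^2 \le 3(a^2+b^2+c^2)$, it suffices to bound $\langle D_1^2\rangle + \langle D_2^2\rangle \le M(\tbf) - M_{n_t-1}(\tbf)$ and to handle $\langle (R^t)^2\rangle$ separately. (Combining $D_1$ and $D_2$ into one set of terms with $\max J \ge n_t$ would even give the bound with constant $1$.)

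\textbf{Matching with $M(\tbf)$.} By orthogonality,
$$\langle D_1^2 + D_2^2 \rangle = \sum_{J:\ \max J \ge n_t} \prod_{j\in J} \bigl(e^{\beta^2 t_{j+1}} - 1\bigr),$$
where the last interval has length $T - s_{n_T} \le t_{n_T+1}$; monotonicity of $e^{\beta^2 u} - 1$ in $u$ lets us replace that length by $t_{n_T+1}$. This is the quenched analogue for a single path. In $M(\tbf) - M_{n_t-1}(\tbf)$, the same sets $\{i_1 < \dots < i_r\}$ with $i_r \ge n_t$ appear, weighted by $\sum_{\zbf} q(\ibf,\zbf)^2$ instead of the indicator that the path visits $\zbf$ at $\ibf$.

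These two weights do not match pathwise, so the inequality should be read after averaging over the jump chain $\gamma$ conditional on the jump times. I would state it in that form: conditional on $(s_i)$, the expectation over $\gamma$ of $\prod_j \id_{\gamma_{i_j} = z_j}$ is $q(\ibf,\zbf)$. The second moment, however, involves $q$ rather than $q^2$. This is where I expect the main obstacle. The intended reading (as in \cite{HKNN}) is presumably the annealed-in-$\gamma$ chaos expansion, whose Itô--Wiener coefficients are exactly $q(\ibf,\zbf)$, so that the squared norm is $\sum q^2 \prod (e^{\beta^2 t} - 1)$. I would therefore redefine the objects as $\E[\,\cdot \mid s_1, s_2, \dots]$ of the $G$'s. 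Jensen's inequality then makes this harmless for later use.

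\textbf{Bound on $R^t$.} We have
$$\langle (R^t)^2\rangle \le \Bigl(\sum_{r}\sum_{\zbf} q(\ibf,\zbf)^2 \prod (\cdot)\Bigr)\bigl(e^{\beta^2 (t - s_{n_t})} - 1\bigr).$$
The prefactor is bounded in expectation by $\sum_r (\alpha_d\lambda)^r$. The factor $e^{\beta^2(t - s_{n_t})} - 1$ has an $O(1)$ mean, so this crude bound does not decay. The decay must instead come from the spatial factor of the last chaos element: the term in $R^t$ whose last index $n_t$ sits at distance $n_t - i_{r-1}$ from the previous index. Using $\sum_z (q_k^z)^2 \lesssim k^{-d/2}$ and the fact that $n_t \approx t$, the terms with a large final gap give $t^{-d/2}$-type decay. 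The terms with a small final gap force many indices near $n_t$, or a large gap earlier; these are controlled by the $D$-sequence tail estimates and Remark~\ref{rm:standard_arg}, giving $\E\langle (R^t)^2\rangle = o(t^{-\theta})$ for all $\theta < d/2$.
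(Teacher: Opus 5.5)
This is essentially the paper's proof. Like you, the paper works with the expansion averaged over the jump chain given the jump times, with coefficients $q(\ibf,\zbf)$; it adopts this reading silently, which is exactly the one you settle on. It then cancels the terms whose last index is below $n_t$, bounds the two remaining groups by $M(\tbf)-M_{n_t-1}(\tbf)$ using orthogonality and monotonicity in the last interval length, and handles $R^t$ through Remark~\ref{rm:standard_arg}.

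The one step you leave as a heuristic is the decay of the $D$-sequence of $R^t$, namely $D_n^\vartheta=\sum_{r}\vartheta^r\sum_{0\le i_1<\dots<i_r=n}\sum_{\zbf}q(\ibf,\zbf)^2$. The paper bounds this by $O(n^{-d/2})$ for small $\vartheta$ using the convolution inequality of Lemma~\ref{lm:lclt_lemma}. Your dichotomy (either some gap is at least $n/r$, or $r$ is large) gives the same bound once written out. You should state it as that direct estimate, rather than appealing to unspecified ``$D$-sequence tail estimates'', to justify the full range $\theta<\tfrac d2$.
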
 
%
%

\begin{proof}[Proof of Lemma~\ref{lm:Z_increment}]
Since $t$ is $\Pp_{0,0}$-almost surely not a jump time for $\eta$, we have 
\begin{align*}       
		&e^{-\frac{\beta^2}{2} T} e^{\beta \AC_0^T} - e^{-\frac{\beta^2}{2} t} e^{\beta \AC_0^t}   \\
		=& \sum_{r=1}^{n_{t}} \sum_{\substack{0 \leq i_1 < \ldots < i_r \leq n_T,\ i_r \geq n_t, \\ z_1, \ldots, z_r \in \Z^d}} 
		q(\ibf,\zbf)
		\prod_{j=1}^r h(z_j; s_{i_j}, s_{i_j +1}) \\
		& + \sum_{r=n_t+1}^{n_T+1} \sum_{\substack{0 \leq i_1 < \ldots < i_r \leq n_T, \\ z_1, \ldots, z_r \in \Z^d}} q(\ibf,\zbf)
		 \prod_{j=1}^r h(z_j; s_{i_j}, s_{i_j + 1})   - R^t,  
	\end{align*}
where $s_{n_T +1} = T$ and 
$$
R^t \coleq \sum_{r=1}^{n_t +1} \sum_{\substack{0 \leq i_1 < \ldots < i_r = n_t, \\ z_1, \ldots, z_r \in \Z^d}} q(\ibf,\zbf)
 \prod_{j=1}^{r-1} h(z_j; s_{i_j}, s_{i_j +1}) h(z_r; s_{n_t}, t). 
$$
If we set $t_i \coleq s_i - s_{i-1}$ for $1 \leq i \leq n_T$ and fix $t_{n_T + 1} \geq T - s_{n_T}$, we obtain for any positive $(t_i)_{i > n_T+1}$ the estimate 
	\begin{align*}
		& \biggl \langle \biggl( e^{-\frac{\beta^2}{2} T} e^{\beta \AC_0^T} - e^{-\frac{\beta^2}{2} t} e^{\beta \AC_0^t} \biggr)^2 \biggr \rangle  \\
		\leq& 3 \sum_{r=1}^{n_t} \sum_{\substack{0 \leq i_1 < \ldots < i_r \leq n_T, i_r \geq n_t, \\ z_1, \ldots, z_r \in \Z^d}} 
		q(\ibf,\zbf)^2  
		\prod_{j=1}^r \left \langle  h(z_j; s_{i_j}, s_{i_j + 1})^2 \right\rangle  \\
		& + 3 \sum_{r=n_t+1}^{n_T+1} \sum_{\substack{0 \leq i_1 < \ldots < i_r \leq n_T, \\ z_1, \ldots, z_r \in \Z^d}} 
		q(\ibf,\zbf)^2  \prod_{j=1}^r \left \langle h(z_j; s_{i_j}, s_{i_j + 1})^2 \right \rangle + 3 \left \langle \left(R^t \right)^2 \right \rangle \\
		\leq& 3 \left(M\left(\tbf \right) - M_{n_t-1}\left(\tbf \right) \right) + 3 \left \langle \left(R^t \right)^2 \right \rangle.  
	\end{align*}
The only point left to show is~\eqref{eq:saw_2}.  The $D$-sequence associated with $R^t$ is 
$$ 
D_n^{\vartheta} = \sum_{r=1}^{n+1} \vartheta^r \sum_{\substack{0 \leq i_1 < \ldots < i_r = n, \\ z_1, \ldots, z_r \in \Z^d}} 
q(\ibf,\zbf)^2, \quad \vartheta > 0.
$$ 
\noindent For $n \geq 2$ and $\vartheta \leq 1$,  
	\begin{equation*}
		D_n^{\vartheta} = \vartheta U_{n,1} + \sum_{r=2}^n \vartheta^r (U_{n,r} + U_{n,r-1}) + \vartheta^{n+1} U_{n,n} \leq 2 \sum_{r=1}^n \vartheta^r U_{n,r},   
	\end{equation*} 
\noindent where  
	\begin{equation*}
		U_{n,r} \coleq \begin{cases}
		\displaystyle
		 \sum_{\substack{0 < i_1 < \ldots < i_{r-1} < n, \\ z_1, \ldots, z_r \in \Z^d}} \left(q_{i_1}^{z_1} \right)^2 \ldots \left(q_{n - i_{r-1}}^{z_r - z_{r-1}}\right)^2, & \quad 2 \leq r \leq n, \\
		 \displaystyle
		\sum_{z \in \Z^d} \left(q_n^z \right)^2, & \quad r = 1. 
			\end{cases}
	\end{equation*}
Let $C > 0$ such that $q_n^y \leq C n^{-\frac{d}{2}}$ for all $n \in \N$ and $y \in \Z^d$. Then,  
	\begin{equation*}
		U_{n,r} \leq \begin{cases}
                                      \displaystyle  C^r  \sum_{0 < i_1 < \ldots < i_{r-1} < n} i_1^{-\frac{d}{2}} \ldots (n - i_{r-1})^{-\frac{d}{2}}, & \quad 2 \leq r \leq n, \\ 
C n^{-\frac{d}{2}}, & \quad r = 1.
\end{cases} 
	\end{equation*}
By Lemma~\ref{lm:lclt_lemma} in the appendix, there is a constant $c > 1$, depending only on the dimension $d$, such that for any $r \in \N$ and $n \geq r+1$,     
	\begin{equation}    \label{eq:lclt_lemma}
		\sum_{0 < i_1 < \ldots < i_r < n} i_1^{-\frac{d}{2}} (i_2 - i_1)^{-\frac{d}{2}} \ldots (i_r - i_{r-1})^{-\frac{d}{2}} (n - i_r)^{-\frac{d}{2}} \leq c^r  n^{-\frac{d}{2}}.   
	\end{equation}  
Hence,	
	\begin{equation*}
		U_{n,r} \leq (Cc)^r n^{-\frac{d}{2}}, \quad 1 \leq r \leq n.
	\end{equation*}
As a result, 
	\begin{equation}       \label{eq:D_seq_R_est}
		D_n^{\vartheta} \leq \frac{2}{n^{\frac{d}{2}}} \sum_{r=1}^{\infty} (\vartheta Cc)^r. 
	\end{equation}
For $\vartheta < (Cc)^{-1}$ and $\theta < \tfrac{d}{2}$, we obtain $\lim_{n \to \infty} n^{\theta} D_n^{\vartheta} = 0$. Remark~\ref{rm:standard_arg} yields~\eqref{eq:saw_2} for $\beta$ sufficiently small. 
\end{proof}

\bigskip


\begin{proof}[Proof of Theorem~\ref{thm:limiting_part_fun}] 
Let $\beta$ be so small that the conclusion of Key Lemma~\ref{lm:standard_arg} holds, that $\alpha_d \lambda < 1$ and that 
$$
\E \left[ \left(e^{\beta^2 \tau} - 1 \right)^2 \right]^{\frac{1}{2}} < \alpha_d^{-1} 
$$
for $\tau$ an exponential random variable of rate $1$. Assume without loss of generality that $s = 0$ and that $x$ is the zero vector in $\R^d$, and set $\E \coleq \E_{0,0}$. Let $T > t > 0$ and let $b \in (0,1)$.  
Since \\ $Z_{0,0}^t = e^{-\frac{\beta^2}{2} t} \E e^{\beta \AC_0^t}$, we obtain with Jensen's inequality and Lemma~\ref{lm:Z_increment}
	\begin{align*}     
		& \left\langle \left(Z^T_{0,0} - Z^t_{0,0}\right)^2 \right\rangle \\
\leq&  \E \biggl[ \biggl \langle \biggl(e^{-\frac{\beta^2}{2} T} e^{\beta \AC_0^T} - e^{-\frac{\beta^2}{2} t} e^{\beta \AC_0^t} \biggr)^2 \biggr \rangle \left(\id_{s_{\lfloor t^b \rfloor} < t} + \id_{s_{\lfloor t^b \rfloor} \geq t} \right) \biggr] \\
		\leq& \E \left[3 \left(M \left(\tbf \right) - M_{\lfloor t^b \rfloor -1}\left(\tbf \right)\right) \right] + \E\left[3 M\left(\tbf \right) \ \id_{s_{\lfloor t^b \rfloor} \geq t}\right] + 3 \E \left \langle \left(R^t \right)^2 \right \rangle,     
	\end{align*} 
where $\tbf = (t_1, t_2, \ldots)$ is defined as $t_i \coleq s_i - s_{i-1}$ for $i \geq 1$, and is thus an i.i.d. sequence of exponentially distributed random variables with rate $1$. As a consequence of Lemma~\ref{lm:finite_Q}, 
\begin{equation*}
\lim_{t \to \infty} t^{\theta} \ \E \left[ 3 \left(M(\tbf) - M_{\lfloor t^b \rfloor - 1}(\tbf) \right) \right] = 0, \quad \forall \theta \in \left(0, b \min \left\{\tfrac{d}{2}-1; -\ln(\alpha_d \lambda) \right\} \right). 
\end{equation*}
Furthermore,~\eqref{eq:saw_2} gives 
\begin{equation*}
\lim_{t \to \infty} t^{\theta} \ \E \left \langle \left(R^t \right)^2 \right \rangle = 0, \quad \forall \theta \in \left(0, \tfrac{d}{2} \right).
\end{equation*} 
Finally, the Cauchy--Schwarz inequality implies  
\begin{align*}
\E \left[M(\tbf) \mathbbm{1}_{s_{\lfloor t^b \rfloor} \geq t} \right] 
\leq& \sum_{r=1}^{\infty} \sum_{\substack{0 \leq i_1 < \ldots < i_r, \\ z_1, \ldots, z_r \in \Z^d}} 
q(\ibf,\zbf)^2 \E \biggl[ \prod_{j=1}^r \left( e^{\beta^2 t_{i_j +1}} - 1 \right) \mathbbm{1}_{s_{\lfloor t^b \rfloor} \geq t} \biggr] \\
\leq& \ \Pp \left( s_{\lfloor t^b \rfloor} \geq t \right)^{\frac{1}{2}} \sum_{r=1}^{\infty} \rho^r \sum_{0 \leq i_1 < \ldots < i_r} \sum_{z_1, \ldots, z_r \in \Z^d} 
q(\ibf,\zbf)^2, 
\end{align*} 
where 
$$ 
\rho \coleq \E\left[ \left(e^{\beta^2 t_1} - 1 \right)^2 \right]^{\frac{1}{2}} < \alpha_d^{-1}, 
$$ 
so 
$$
\sum_{r=1}^{\infty} \rho^r \sum_{0 \leq i_1 < \ldots < i_r} \sum_{z_1, \ldots, z_r \in \Z^d} q(\ibf,\zbf)^2 \lesssim \sum_{r=1}^{\infty} (\rho \alpha_d)^r < \infty. 
$$
We have 
\begin{equation*}
\Pp \left( s_{\lfloor t^b \rfloor} \geq t \right) = e^{-t} \sum_{k=0}^{\lfloor t^b \rfloor - 1} \frac{t^k}{k!}. 
\end{equation*}
By Stirling's formula, 
$$
t^{\theta} e^{-t} \sum_{k=0}^{\lfloor t^b \rfloor -1} \frac{t^k}{k!} \leq \frac{t^{\theta+1} e^{-t} e^{\lfloor t^b \rfloor -1} t^{\lfloor t^b \rfloor -1}}{\sqrt{2 \pi (\lfloor t^b \rfloor -1)} (\lfloor t^b \rfloor -1)^{\lfloor t^b \rfloor -1}} \lesssim t^{\theta+1-\frac{b}{2}} e^{-t} \left(\frac{et}{\lfloor t^b \rfloor -1} \right)^{\lfloor t^b \rfloor -1},    
$$
and the right-hand side tends to $0$ as $t \to \infty$ for every $\theta > 0$.
Since $b$ was arbitrarily chosen from $(0,1)$, we obtain 
$$
\lim_{t \to \infty} t^{\theta} \left \langle \left(Z^t_{0,0} - Z_{0,0}^{\infty} \right)^2 \right \rangle = 0, \qquad \forall \theta \in (0, \min\{\tfrac{d}{2} -1; -\ln(\alpha_d \lambda)\}). 
$$
\end{proof}


\section{Proof of Theorem~\ref{thm:positivity}}     \label{sec:proof_uniformity} 

Theorem~\ref{thm:positivity} follows directly from Proposition~\ref{prop:uniform_as} (Section~\ref{ssec:uniform_as}) and Proposition~\ref{prop:uniform_positivity} (Section~\ref{ssec:positivity}):  

Let $\beta$ be so small that $\alpha_d \lambda < 1$. By Proposition~\ref{prop:uniform_as}, there exists $\Omega^{\lim} \in \F$ with $Q(\Omega^{\lim}) = 1$ such that for all $(x,s) \in \Z^d \times \R$ and all $\omega \in \Omega^{\lim}$, $Z_{x,s}^t(\omega)$ converges to a limit $Z_{x,s}^{\infty}(\omega)$ as $t \to \infty$. By Proposition~\ref{prop:uniform_positivity}, there exists  $\Omega^+ \in \F$ with $Q(\Omega^+) = 1$ such that for all $(x,s) \in \Z^d \times \R$ and all $\omega \in \Omega^+$, one has $Z_{x,s}^{\infty}(\omega) > 0$.

\subsection{Uniform almost sure convergence of the partition functions} \label{ssec:uniform_as} 

\begin{proposition}      \label{prop:uniform_as} 
Let $\beta$ be so small that $\alpha_d \lambda < 1$. Then, there exists $\Omega^{\lim}  \in \F$ with $Q(\Omega^{\lim}) = 1$ such that for all $(x,s) \in \Z^d \times \R$ and all $\omega \in \Omega^{\lim}$, $Z_{x,s}^t(\omega)$ converges to a limit $Z_{x,s}^{\infty}(\omega)$ in $[0, \infty)$ as $t \to \infty$.
\end{proposition}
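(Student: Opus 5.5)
The plan is to reduce the uncountable family of starting points $(x,s)$ to a countable one, and then to interpolate in $s$ using the Markov property of $\eta$ together with the continuity of the Wiener paths.

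First, for each fixed $(x,s)$, the process $(Z_{x,s}^t)_{t\ge s}$ is a nonnegative martingale with respect to the filtration generated by the increments of the $W^z$ on $[s,t]$. By the martingale convergence theorem it converges $Q$-almost surely to a finite limit. Intersecting over the countable set $(x,s)\in\Z^d\times\Q$ yields a full-measure set $\Omega_0$ on which $Z_{x,s}^t(\omega)\to Z_{x,s}^\infty(\omega)$ for all rational $s$.

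Next I will express $Z_{x,s}^t$ for real $s$ through partition functions started at a later rational time. Fix $s$ and a rational $u>s$. The Markov property of $\eta$ at time $u$, combined with the additivity of the action $\AC_s^t=\AC_s^u+\AC_u^t$, gives for $t>u$
\[
Z_{x,s}^t=\sum_{z\in\Z^d} Z_{x,s}^{z,u}\, Z_{z,u}^t .
\]
Here $Z_{x,s}^{z,u}$ does not depend on $t$. Moreover, for each $\omega$ with continuous paths it satisfies $\sum_z Z_{x,s}^{z,u}=Z_{x,s}^u<\infty$; finiteness holds because the number of jumps on $[s,u]$ is Poisson and the paths $W^z$ restricted to the finitely many visited sites are bounded, so one only needs $\E_{x,s}e^{\beta \AC_s^u}<\infty$, which follows from a growth bound of the form $\sup_{|\tau|\le K}|W^z_\tau|\le C(\omega,K)\sqrt{\log(2+\|z\|)}$, valid almost surely. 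Letting $t\to\infty$, each term $Z_{z,u}^t$ converges on $\Omega_0$. To exchange the limit with the sum over $z$, I need a domination $\sup_{t\ge u} Z_{z,u}^t\le G_z(\omega)$ with $\sum_z Z_{x,s}^{z,u}G_z<\infty$.

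This domination is the main obstacle. The plan is to use Doob's maximal inequality for the martingale $t\mapsto Z_{z,u}^t$: by Theorem~\ref{thm:limiting_part_fun_exists}, the quantity $\langle \sup_{t\ge u}(Z_{z,u}^t)^2\rangle\le 4\sup_t\langle (Z_{z,u}^t)^2\rangle=:C_2$ is finite and independent of $z$ and $u$. Borel--Cantelli over $z\in\Z^d$ and $u\in\Q$, with threshold $(1+\|z\|)^{d+1}$ (weighted summably over $u$, e.g.\ restricting to $u$ in a fixed countable enumeration with weights), then shows that almost surely $\sup_t Z^t_{z,u}\le C(\omega,u)(1+\|z\|)^{d+1}$. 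It therefore suffices to check $\sum_z Z_{x,s}^{z,u}(1+\|z\|)^{d+1}<\infty$, i.e.\ a polynomial moment of the endpoint under the weighted measure on $[s,u]$. This follows from the same Gaussian-growth bound on the $W^z$, since the Poisson tail of the jump count beats any polynomial in the displacement. Dominated convergence then gives convergence of $Z_{x,s}^t$ to $\sum_z Z_{x,s}^{z,u}Z_{z,u}^\infty\in[0,\infty)$ for every real $s$ and every $\omega$ in the intersection of these full-measure sets, which is the desired set $\Omega^{\lim}$. A potential subtlety is that the Borel--Cantelli step must handle countably many $u$ simultaneously; since each $u$ is treated separately and $\Q$ is countable, a per-$u$ constant $C(\omega,u)$ suffices.
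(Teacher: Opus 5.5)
Your argument is correct, but it takes a different route from the paper's. The paper also decomposes $Z_{x,s}^t=\sum_y Z_{x,s}^{y,n}Z_{y,n}^t$ at the next integer time $n$. It never builds a pathwise majorant for $\sup_t Z_{y,n}^t$, however. Instead it compares with the previous integer time: keeping only paths that sit at $x$ on $[n-1,s]$ gives $Z_{x,n-1}^{y,n}\ge\alpha(s)Z_{x,s}^{y,n}$ with an explicit $\alpha(s)>0$. The tail control in $y$, uniform in $t$, then comes from a Scheff\'e-type argument: $\sum_y Z_{x,n-1}^{y,n}Z_{y,n}^t=Z_{x,n-1}^t$ converges to $\sum_y Z_{x,n-1}^{y,n}Z_{y,n}^\infty$. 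That last identity is Lemma~\ref{lm:Z_Feynman_Kac_infinite}, which is proved by an $L^1$ argument and built into the definition of $\Omega^{\lim}$. The paper's route is soft: it uses only Chapman--Kolmogorov, positivity, and a.s.\ convergence at countably many starting points, with no maximal inequality and no growth estimates on the field. You replace this with an explicit domination. Doob's $L^2$ inequality plus Borel--Cantelli in $z$ gives $\sup_t Z_{z,u}^t\le C(\omega,u)(1+\|z\|)^{d+1}$. The Gaussian growth of $\sup_{|\tau|\le K}|W^z_\tau|$, beaten by the Poisson tail of the jump count, then makes $\sum_z Z_{x,s}^{z,u}(1+\|z\|)^{d+1}$ finite for all $s<u$ simultaneously. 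This bypasses Lemma~\ref{lm:Z_Feynman_Kac_infinite} and the comparison with an earlier time, and yields a pathwise majorant for the whole family, at the cost of more analytic input. Two points should be stated explicitly. First, finiteness of $Z_{x,s}^u$ and of those endpoint moments holds on the growth-bound event, not for every continuous $\omega$. Second, applying Doob's inequality (and the martingale convergence theorem) over real $t$ requires continuity of $t\mapsto Z_{z,u}^t$; this holds on the same event by dominated convergence over $\eta$, or you can take suprema over rational $t$. As you already observe, no weighting over $u$ is needed, since a per-$u$ constant and a countable intersection suffice.
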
 

In order to prove Proposition~\ref{prop:uniform_as}, we first establish two lemmas. 

\begin{lemma}     \label{lm:Z_Feynman_Kac} 
Let $x, y \in \Z^d$, $\omega \in \Omega$, and $s < u < t$. Then 
$$
Z_{x,s}^{y,t}(\omega) = \sum_{z \in \Z^d} Z_{x,s}^{z,u}(\omega) Z_{z,u}^{y,t}(\omega) \quad \text{and} \quad Z_{x,s}^t(\omega) = \sum_{z \in \Z^d} Z_{x,s}^{z,u}(\omega) Z_{z,u}^t(\omega). 
$$
\end{lemma}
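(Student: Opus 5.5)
We prove the Chapman--Kolmogorov-type identity by applying the Markov property of $\eta$ at the intermediate time $u$, combined with the additivity of the action. For fixed $\omega$, the definition~\eqref{eq:def_action} makes $\AC_s^t(\eta,\omega)$ a telescoping sum of increments of $\omega(\eta_\tau,\cdot)$ over the intervals on which $\eta$ is constant. Since $u$ is almost surely not a jump time under $\Pp_{x,s}$, the interval containing $u$ splits into two pieces, and the two increments add. Hence
\[
\AC_s^t(\eta,\omega)=\AC_s^u(\eta,\omega)+\AC_u^t(\eta,\omega)
\]
holds $\Pp_{x,s}$-almost surely, and therefore $e^{-\frac{\beta^2}{2}(t-s)}e^{\beta\AC_s^t}$ factors into the corresponding normalized exponentials on $[s,u]$ and $[u,t]$.

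Next we rewrite the partition function without conditioning. Since $p_{t-s}^{y-x}\E_{x,s}^{y,t}[G]=\E_{x,s}[G\,\id_{\eta_t=y}]$, we have
\[
Z_{x,s}^{y,t}=\E_{x,s}\Big[e^{-\frac{\beta^2}{2}(t-s)}e^{\beta\AC_s^t}\id_{\eta_t=y}\Big].
\]
We decompose according to the value $\eta_u=z$, summing over $z\in\Z^d$; all terms are nonnegative, so interchanging the sum and the expectation is harmless. We then condition on $\mathcal F_u$, the $\sigma$-field generated by $(\eta_\tau)_{\tau\le u}$.

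The factor $e^{\beta\AC_s^u}\id_{\eta_u=z}$ is $\mathcal F_u$-measurable. By the Markov property of the Poisson-driven walk, which uses the memorylessness of the exponential holding time straddling $u$, the future path $(\eta_\tau)_{\tau\ge u}$ given $\eta_u=z$ has law $\Pp_{z,u}$. Consequently
\[
\E_{x,s}\Big[e^{-\frac{\beta^2}{2}(t-u)}e^{\beta\AC_u^t}\id_{\eta_t=y}\,\Big|\,\mathcal F_u\Big]=Z_{z,u}^{y,t}(\omega)
\qquad\text{on }\{\eta_u=z\}.
\]
Taking expectations yields $\sum_z Z_{x,s}^{z,u}Z_{z,u}^{y,t}$, which is the first identity. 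Summing this over $y$ and using Tonelli's theorem gives the second identity.

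The only step needing care is the Markov property with the path-dependent functional $\AC_u^t$. This reduces to the fact that the restriction of a Poisson point process to $(u,t)$ is independent of its restriction to $(s,u]$, and that the residual holding time at $u$ is again exponential with rate $1$. We also have to check that the action on $[u,t)$, defined with $s_0=u$, is exactly what~\eqref{eq:def_action} assigns to the shifted path. Both points are routine once written out, so we expect no real obstacle.
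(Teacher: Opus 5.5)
Your proposal is correct and takes essentially the same route as the paper. Both arguments use the additivity $\AC_s^t=\AC_s^u+\AC_u^t$ at the almost surely non-jump time $u$, decompose according to the position $z$ at time $u$, and factorize the expectation. The paper does this by splitting each discrete skeleton path at index $n_{s,u}$ and averaging over the jump times, implicitly using the independence of the Poisson jump times in $(s,u)$ and $(u,t)$; your Markov-property step at $u$ is exactly that content.
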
 

\begin{proof}
For any $z, z' \in \Z^d$ and any $n \in \N_0$, let $\Gamma_z^{z', n}$ be the collection of all discrete-time random walk paths $\gamma = (\gamma_0, \gamma_1, \ldots, \gamma_n)$ with starting point $\gamma_0 = z$ and endpoint $\gamma_n = z'$. Let $\sbf = (s_n)_{n \in \N_0}$ be a sequence of jump times with $s = s_0 < s_1 < s_2 < \ldots$ and $u$ not a term in $\sbf$. Let $n_{s,u}$, $n_{u,t}$, and $n_{s,t}$ denote the number of jumps within the time intervals $(s,u)$, $(u,t)$, and $(s,t)$, respectively. For $z \in \Z^d$ and $\gamma = (\gamma_0, \ldots, \gamma_{n_{s,t}}) \in \Gamma_x^{y, n_{s,t}}$ with $\gamma_{n_{s,u}} = z$, let $\gamma' \in \Gamma_x^{z, n_{s,u}}$ and $\gamma'' \in \Gamma_z^{y,n_{u,t}}$ be the unique paths which concatenate to $\gamma$, with the endpoint of $\gamma'$ and the starting point of $\gamma''$ overlapping. By a slight abuse of notation, let the sample path for the continuous-time random walk described by $\gamma$ (or $\gamma'$, $\gamma''$) and $\sbf$ be denoted by $\gamma$ (or $\gamma'$, $\gamma''$) as well. Then  
$$
\AC_s^t(\gamma, \omega) = \AC_s^u(\gamma', \omega) + \AC_u^t(\gamma'', \omega). 
$$
Using the fact that it is enough to average over the sequences of jump times $\sbf$ not containing $u$ (because the complement has measure zero), one has 
\begin{align*}
Z_{x,s}^{y,t}(\omega) =& e^{-\frac{\beta^2}{2} (t-s)} \E_{\sbf} \biggl[ \sum_{\gamma \in \Gamma_x^{y,n_{s,t}}} \frac{1}{(2d)^{n_{s,t}}} e^{\beta \AC_s^t(\gamma, \omega)} \biggr] \\
=& e^{-\frac{\beta^2}{2} (u-s)} e^{-\frac{\beta^2}{2} (t-u)} \E_{\sbf} \biggl[ \sum_{z \in \Z^d} \sum_{\gamma' \in \Gamma_x^{z, n_{s,u}}} \sum_{\gamma'' \in \Gamma_z^{y,n_{u,t}}} \frac{e^{\beta \AC_s^u(\gamma', \omega)}}{(2d)^{n_{s,u}}}  \frac{e^{\beta \AC_u^t(\gamma'', \omega)}}{(2d)^{n_{u,t}}} \biggr] \\
=& \sum_{z \in \Z^d} Z_{x,s}^{z,u}(\omega) Z_{z,u}^{y,t}(\omega),    
\end{align*}
where $\E_{\sbf}$ denotes averaging with respect to $\sbf$. The identity 
$$
Z_{x,s}^t(\omega) = \sum_{z \in \Z^d} Z_{x,s}^{z,u}(\omega) Z_{z,u}^t(\omega)
$$
is obtained by summing on both sides over all $y \in \Z^d$. 
\end{proof}

Let $\beta$ be so small that $\alpha_d \lambda < 1$ and define 
$$
\Omega^{\lim} := \bigcap_{x \in \Z^d} \bigcap_{n \in \Z} \{\omega \in \Omega: \ \lim_{t \to \infty} Z_{x,n}^t(\omega) \ \text{exists in} \ [0, \infty)\}. 
$$
Since for any $(x,s) \in \Z^d \times \R$, the limit $\lim_{t \to \infty} Z_{x,s}^t(\omega)$ exists in $[0, \infty)$ $Q$-almost surely, one has $Q(\Omega^{\lim}) = 1$. 

\begin{lemma}     \label{lm:Z_Feynman_Kac_infinite}
Assume that $\beta$ is so small that $\alpha_d \lambda < 1$. Let $x \in \Z^d$ and let $s < u$. Then, $Q$-almost surely, 
$$
Z_{x,s}^{\infty} = \sum_{z \in \Z^d} Z_{x,s}^{z,u} Z_{z,u}^{\infty}. 
$$
\end{lemma}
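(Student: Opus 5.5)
We plan to pass to the limit $t \to \infty$ in the finite-time identity of Lemma~\ref{lm:Z_Feynman_Kac}. For $t > u$ it gives
$$
Z_{x,s}^t = \sum_{z \in \Z^d} Z_{x,s}^{z,u} Z_{z,u}^t
$$
for every $\omega$. The left side converges $Q$-almost surely to $Z_{x,s}^\infty$, by the martingale convergence theorem as in Theorem~\ref{thm:limiting_part_fun_exists}. For each fixed $z$, $Z_{z,u}^t \to Z_{z,u}^\infty$ almost surely as well. Since $\Z^d$ is countable, the intersection of these events still has full measure. What remains is to interchange limit and sum.

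We will do this in $L^1(Q)$ rather than pathwise. The key structural fact is that $Z_{x,s}^{z,u}$ is measurable with respect to the increments of $W$ on $[s,u]$, while $Z_{z,u}^t$ and $Z_{z,u}^\infty$ depend only on increments after time $u$. Hence the two factors are independent, and
$$
\Big\langle \Big| \sum_z Z_{x,s}^{z,u}\big(Z_{z,u}^t - Z_{z,u}^\infty\big)\Big| \Big\rangle \le \sum_z \langle Z_{x,s}^{z,u}\rangle \, \big\langle |Z_{z,u}^t - Z_{z,u}^\infty| \big\rangle .
$$
Here $\langle Z_{x,s}^{z,u}\rangle = p_{u-s}^{z-x}$. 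By stationarity, $\langle |Z_{z,u}^t - Z_{z,u}^\infty|\rangle = \langle |Z_{0,0}^{t-u} - Z_{0,0}^\infty|\rangle$, which is independent of $z$ and tends to $0$ by the $L^2$ convergence in Theorem~\ref{thm:limiting_part_fun_exists}. As $\sum_z p_{u-s}^{z-x} = 1$, this shows $\sum_z Z_{x,s}^{z,u} Z_{z,u}^t \to \sum_z Z_{x,s}^{z,u} Z_{z,u}^\infty$ in $L^1(Q)$. The same bound shows that the limiting series has finite expectation $1$, so it converges almost surely. Nonnegativity of all terms lets us use Tonelli freely in these exchanges.

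To conclude, we combine the two convergences. The left side $Z_{x,s}^t$ converges to $Z_{x,s}^\infty$ in $L^2$, hence in $L^1$, while the right side converges in $L^1$ to $\sum_z Z_{x,s}^{z,u} Z_{z,u}^\infty$. Uniqueness of $L^1$ limits then yields the identity $Q$-almost surely.

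The main obstacle is the measurability and independence claim. The point-to-point function $Z_{x,s}^{z,u}$ must be a functional of the increments $\omega(\cdot, r)-\omega(\cdot,s)$ for $r \in [s,u]$ only. This holds because the action~\eqref{eq:def_action} over $[s,u)$ involves only those increments. Likewise, $Z_{z,u}^\infty$ must be measurable with respect to the future increments, being an almost sure limit of such functionals. Both facts follow directly from the definitions, with care about completions. If one wanted to avoid independence, an alternative is Fatou's lemma for the inequality $\ge$, combined with equality of expectations (both sides have mean $1$) to upgrade it to almost sure equality. This also requires only nonnegativity and $\langle Z_{x,s}^\infty\rangle = 1$, which follows from $L^2$ convergence.
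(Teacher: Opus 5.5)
Your proof is correct and matches the paper's argument: pass to the limit in the finite-time identity of Lemma~\ref{lm:Z_Feynman_Kac}, bound the $L^1$ distance between the right-hand sides using the independence of $Z_{x,s}^{z,u}$ from the post-$u$ increments, stationarity in $z$, and $\sum_z \langle Z_{x,s}^{z,u}\rangle = 1$, and then identify the limits. The only difference is that the paper pairs the almost sure limit of the left side with the $L^1$ limit of the right side, whereas you use $L^1$ limits on both sides; this is immaterial. One caveat on your closing remark: the Fatou route does not actually avoid independence, because showing that the right-hand side has mean $1$ requires $\langle Z_{x,s}^{z,u} Z_{z,u}^{\infty}\rangle = p_{u-s}^{z-x}$, and that is essentially the same factorization.
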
 

\begin{proof}
By Lemma~\ref{lm:Z_Feynman_Kac}, for any $t > u$, 
	\begin{equation}    \label{eq:Z_lim_mid_step}
		Z_{x,s}^t= 
		\sum_{z\in \Z^d} Z_{x,s}^{z,u} Z_{z,u}^t, 
	\end{equation}	
so the expression on the right converges $Q$-almost surely to $Z_{x,s}^{\infty}$ as $t \to \infty$. To complete the proof of the lemma, it is then enough to show that the right-hand side of~\eqref{eq:Z_lim_mid_step} converges to $\sum_{z \in \Z^d} Z_{x,s}^{z,u} Z_{z,u}^{\infty}$ in $L^1$ as $t \to \infty$. 
One has 
\begin{align*}
	\biggl\langle \biggl \lvert  \sum_{z\in \Z^d} Z_{x,s}^{z,u} Z_{z,u}^t  - \sum_{z\in \Z^d} Z_{x,s}^{z,u} Z_{z,u}^\infty  \biggr \rvert  \biggr\rangle
	& \leq \biggl\langle   \sum_{z\in \Z^d} Z_{x,s}^{z,u} \left | Z_{z,u}^t - Z_{z,u}^\infty  \right|  \biggr\rangle \\
	& = \sum_{z\in \Z^d} \left\langle    Z_{x,s}^{z,u} \left | Z_{z,u}^t - Z_{z,u}^\infty  \right|  \right\rangle \\
	& = \sum_{z\in \Z^d} \left\langle    Z_{x,s}^{z,u} \right\rangle \left\langle \left | Z_{z,u}^t  - Z_{z,u}^\infty  \right|  \right\rangle\\
	& = \left\langle \left | Z_{0,u}^t - Z_{0,u}^\infty  \right|  \right\rangle,
\end{align*}
which converges to 0 as $t\to\infty$ by virtue of Theorem~\ref{thm:limiting_part_fun_exists}.
\end{proof}

\begin{proof}[Proof of Proposition~\ref{prop:uniform_as}] Let $\Omega^{\lim}$ be the set of $\omega \in \Omega$ such that 
$$
Z_{x,n}^{\infty}(\omega) := \lim_{t \to \infty} Z_{x,n}^t(\omega) \ \text{exists in} \ [0, \infty) \ \forall (x,n) \in \Z^d \times \Z 
$$
and 
$$
Z_{x,n}^{\infty}(\omega) = \sum_{z \in \Z^d} Z_{x,n}^{z,m}(\omega) Z_{z,m}^{\infty}(\omega) \ \forall x \in \Z^d \ \text{and} \ m, n \in \Z \ \text{with} \ n < m. 
$$
Then $\Omega^{\lim} \in \F$ and, by Lemma~\ref{lm:Z_Feynman_Kac_infinite}, $Q(\Omega^{\lim}) = 1$. 
Fix $\omega \in \Omega^{\lim}$, $x \in \Z^d$, and $s \in \R \setminus \Z$. Let $n \in \Z$ such that $n-1 < s < n$. 
For $y \in \Z^d$ and $t > n$, set 
$$
a_y := Z_{x,n-1}^{y,n}(\omega), \ a_y(s) := Z_{x,s}^{y,n}(\omega), \ b_y(t) := Z_{y,n}^t(\omega), \ b_y := Z_{y,n}^{\infty}(\omega). 
$$
Since $\lim_{t \to \infty} Z_{x,n-1}^t(\omega)$ exists in $[0, \infty)$, there is $T > n$ such that $Z_{x,n-1}^t(\omega) < \infty$ for every $t \geq T$. By Lemma~\ref{lm:Z_Feynman_Kac}, one has for every $t \geq T$ 
$$
c(t) := \sum_{z \in \Z^d} a_z b_z(t) = Z_{x,n-1}^t(\omega) < \infty. 
$$
Furthermore, 
$$
c := \sum_{z \in \Z^d} a_z b_z = Z_{x,n-1}^{\infty}(\omega) < \infty
$$
and $\lim_{t \to \infty} c(t) = c$. 
For any $y \in \Z^d$, 
\begin{equation} \label{eq:a_alpha_estimate} 
a_y = \sum_{z \in \Z^d} Z_{x,n-1}^{z,s}(\omega) Z_{z,s}^{y,n}(\omega) \geq Z_{x,n-1}^{x,s}(\omega) a_y(s) \geq \alpha(s) a_y(s),   
\end{equation} 
where 
$$
\alpha(s) := e^{-\frac{\beta^2}{2} (s-n+1)} e^{-(s-n+1)} e^{\beta (\omega(x,s) - \omega(x,n-1))} > 0. 
$$
Thus, since $a_y(s) \leq a_y/\alpha(s)$, the convergence of the series $c(t)$ ($t \geq T$) and $c$ yields the convergence of 
$$
Z_{x,s}^t(\omega) = \sum_{y \in \Z^d} a_y(s) b_y(t), \ t \geq T, \quad \text{and} \quad S:= \sum_{y \in \Z^d} a_y(s) b_y. 
$$
We claim that 
$$
\lim_{t \to \infty} Z_{x,s}^t(\omega) = S,  
$$
which implies the desired convergence of $Z_{x,s}^t(\omega)$ to a finite limit. 

Let $\eps > 0$. As the series $c$ converges, there exists $R > 0$ such that 
\begin{equation}    \label{eq:convergent_c} 
\sum_{y \in \Z^d: \|y\| > R} a_y b_y < \frac{\eps \alpha(s)}{9}. 
\end{equation} 
In addition, since $\lim_{t \to \infty} b_y(t) = b_y$ for every $y \in \Z^d$, there exists $T_1 > T$ such that 
$$
\biggl \lvert \sum_{y \in \Z^d: \|y\| \leq R} a_y b_y(t) - \sum_{y \in \Z^d: \|y\| \leq R} a_y b_y \biggr \rvert < \frac{\eps \alpha(s)}{9}, \quad \forall t \geq T_1. 
$$
Finally, since $\lim_{t \to \infty} c(t) = c$, there exists $T_2 > T$ such that 
$$
\lvert c(t) - c \rvert < \frac{\eps \alpha(s)}{9}, \quad \forall t \geq T_2. 
$$
Let us show that for every $t \geq \max\{T_1, T_2\}$, one has 
\begin{equation}    \label{eq:a_b_t_estimate}
\sum_{y \in \Z^d: \|y\| > R} a_y b_y(t) < \frac{\eps \alpha(s)}{3}. 
\end{equation} 
To obtain a contradiction, assume that there is $t \geq \max\{T_1, T_2\}$ such that 
$$
\sum_{y \in \Z^d: \|y\| > R} a_y b_y(t) \geq \frac{\eps \alpha(s)}{3}. 
$$
Then 
\begin{align*}
 \frac{\eps \alpha(s)}{9} >& \lvert c(t) - c \rvert \\
 \geq& \sum_{\substack{y \in \Z^d: \\ \|y\| > R}} a_y b_y(t) - \biggl \lvert \sum_{\substack{y \in \Z^d: \|y\| \leq R}} a_y b_y(t) - \sum_{\substack{y \in \Z^d: \\ \|y\| \leq R}} a_y b_y \biggr \rvert - \sum_{\substack{y \in \Z^d: \\ \|y\| > R}} a_y b_y \\
 >& \frac{\eps \alpha(s)}{3} - \frac{\eps \alpha(s)}{9} - \frac{\eps \alpha(s)}{9} = \frac{\eps \alpha(s)}{9}, 
\end{align*}
which is absurd. From~\eqref{eq:a_b_t_estimate} and~\eqref{eq:a_alpha_estimate}, we infer that 
$$
\sum_{y \in \Z^d: \|y\| > R} a_y(s) b_y(t) < \frac{\eps}{3}, \quad \forall t \geq \max\{T_1, T_2\}. 
$$
And~\eqref{eq:convergent_c} together with~\eqref{eq:a_alpha_estimate} yields 
$$
\sum_{y \in \Z^d: \|y\| > R} a_y(s) b_y < \frac{\eps}{9}. 
$$
Let $T_3 > T$ be so large that 
$$
\biggl \lvert \sum_{y \in \Z^d: \|y\| \leq R} a_y(s) b_y(t) - \sum_{y \in \Z^d: \|y\| \leq R} a_y(s) b_y \biggr \rvert < \frac{\eps}{3}, \quad \forall t \geq T_3. 
$$
Then, for $t \geq \max\{T_1, T_2, T_3\}$, 
\begin{align*}
\lvert Z_{x,s}^t(\omega) - S \rvert \leq& \sum_{\substack{y \in \Z^d: \\ \|y\| > R}} a_y(s) b_y(t) + \sum_{\substack{y \in \Z^d: \\ \|y\| > R}} a_y(s) b_y \\
&+ \biggl \lvert \sum_{\substack{y \in \Z^d: \\ \|y\| \leq R}} a_y(s) b_y(t) - \sum_{\substack{y \in \Z^d: \\ \|y\| \leq R}} a_y(s) b_y \biggr \rvert \\
<& \frac{\eps}{3} + \frac{\eps}{9} + \frac{\eps}{3} < \eps. 
\end{align*}
\end{proof}

\subsection{Positivity of the Limiting Partition Functions}
\label{ssec:positivity}

From the definition of the partition functions $Z_{x,s}^{y,t}(\omega)$, it is clear that the limiting partition functions $Z_{x,s}^{\infty}(\omega)$ are nonnegative $Q$-almost surely. In this section, we show that they are in fact positive $Q$-almost surely. 

\begin{proposition}     \label{prop:uniform_positivity}
Let $\beta$ be so small that $\alpha_d \lambda < 1$. Then there is an $\F$-measurable set $\Omega^+ \subset \Omega$ with $Q(\Omega^+) = 1$ such that for all $(x,s) \in \Z^d \times \R$ and all $\omega \in \Omega^+$, one has $Z_{x,s}^{\infty}(\omega) > 0$. 
\end{proposition}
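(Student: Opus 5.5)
Positivity will come from a zero--one law combined with the semigroup identity of Lemma~\ref{lm:Z_Feynman_Kac_infinite}. I first treat a single $(x,s)$, say $(0,0)$, and then pass to all $(x,s)$ using the decomposition from the proof of Proposition~\ref{prop:uniform_as}.

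\textbf{Step 1: nontriviality.} Since $Z_{0,0}^t \to Z_{0,0}^\infty$ in $L^2(Q)$ (Theorem~\ref{thm:limiting_part_fun_exists}) and $\langle Z_{0,0}^t\rangle = 1$, we get $\langle Z_{0,0}^\infty\rangle = 1$. Hence $p := Q(Z_{0,0}^\infty > 0) > 0$. By stationarity, $Q(Z_{x,s}^\infty > 0) = p$ for every $(x,s)$.

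\textbf{Step 2: reduction to a tail event.} Fix $u > 0$. Lemma~\ref{lm:Z_Feynman_Kac_infinite} gives, almost surely,
$$Z_{0,0}^\infty = \sum_z Z_{0,0}^{z,u} Z_{z,u}^\infty.$$
Every $Z_{0,0}^{z,u}$ is strictly positive, since it is an average of exponentials over a set of paths of positive probability. Therefore, up to null sets,
$$\{Z_{0,0}^\infty = 0\} = \bigcap_{z} \{Z_{z,u}^\infty = 0\} =: E_u.$$
The event $E_u$ depends only on the increments of $(W^x)$ after time $u$, so it lies in $\F_{\geq u} := \sigma(W^x_{r} - W^x_{u} : r \geq u,\, x \in \Z^d)$. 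This holds for every $u$, so $\{Z_{0,0}^\infty = 0\}$ agrees a.s.\ with an event in the tail $\sigma$-field $\bigcap_u \F_{\geq u}$. The increments over disjoint time intervals are independent, so Kolmogorov's zero--one law gives $Q(Z_{0,0}^\infty = 0) \in \{0,1\}$. Step 1 rules out the value $1$, so $Z_{0,0}^\infty > 0$ a.s.

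The main subtlety is this measurability: $E_u$ is a countable intersection and each $Z_{z,u}^\infty$ is an a.s.\ limit of $\F_{\geq u}$-measurable variables. That is fine because we work with the completed $\sigma$-field. An alternative, if preferred, is the $0$--$1$ law for the spatial/temporal shift ergodicity of $Q$.

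\textbf{Step 3: all $(x,s)$ simultaneously.} Let $\Omega^+ \subset \Omega^{\lim}$ be the countable intersection over $(x,n) \in \Z^d \times \Z$ of the events $\{Z_{x,n}^\infty > 0\}$. We also intersect with the full-measure event on which the identities $Z_{x,n}^\infty = \sum_z Z_{x,n}^{z,m} Z_{z,m}^\infty$ hold for integers $n<m$.

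For non-integer $s$ with $n-1 < s < n$, the proof of Proposition~\ref{prop:uniform_as} shows that on $\Omega^{\lim}$
$$Z_{x,s}^\infty = \sum_y Z_{x,s}^{y,n} Z_{y,n}^\infty.$$
The right-hand side contains the term $Z_{x,s}^{x,n} Z_{x,n}^\infty$. The first factor is strictly positive for every $\omega$, and the second is positive on $\Omega^+$. Hence $Z_{x,s}^\infty > 0$ for all $(x,s)$ and all $\omega \in \Omega^+$, and $Q(\Omega^+) = 1$.
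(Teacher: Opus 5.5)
Your argument is correct, but its key step differs from the paper's.

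The paper, following Sinai, uses only the one-way propagation $Z^{\infty}_{x,n}=0 \Rightarrow Z^{\infty}_{z,m}=0$ for all $z$ and all integers $m>n$. It introduces the first zero time $I_x=\inf\{k: Z^{\infty}_{x,n+k}=0\}$. It rules out $I_x\in\Z$ by time-stationarity: the disjoint events $\{I_x=j\}$ would all have the same positive probability. It rules out $I_x=-\infty$ by showing that this event coincides a.s. with $\bigcap_y\{I_y=-\infty\}$. That event is invariant under the mixing spatial shift $\theta^{e_1}$, so it has probability $0$ or $1$, and it cannot have full measure because $\langle Z^{\infty}_{x,n}\rangle=1$.

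You instead use the strict positivity of every $Z^{z,u}_{0,0}$ to upgrade the semigroup identity to an a.s. equivalence $\{Z^{\infty}_{0,0}=0\}=\bigcap_z\{Z^{\infty}_{z,u}=0\}$. This places the event, up to null sets, in the future $\sigma$-field after time $u$ for every $u$. Kolmogorov's zero--one law for the independent $\sigma$-fields generated by the increments over $[k,k+1]$, $k\in\N$, then finishes the argument; this is the standard tail argument from the discrete polymer literature.

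Your route needs only temporal independence of the noise, with no time-stationarity and no spatial ergodicity. The price is some measurability bookkeeping:
\begin{itemize}
\item $Z^{\infty}_{z,u}$ is only an a.s. limit of $\F_{\geq u}$-measurable variables.
\item Passing from ``a.s. equal to some $E_u\in\F_{\geq u}$ for each integer $u$'' to a genuine tail event needs a step such as replacing the $E_u$ by $\bigcap_N\bigcup_{u\geq N}E_u$. You should state this explicitly rather than only invoking completeness.
\end{itemize}

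Your Step 3, the extension to non-integer $s$ via the term $Z^{x,n}_{x,s}Z^{\infty}_{x,n}$, is the same as the paper's final step.
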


%
%
\begin{proof}
We follow to a large extent the proof strategy from~\cite{Sinai_95}.   Recall the set $\Omega^{\lim}$ from Proposition~\ref{prop:uniform_as}, whose precise definition is given at the beginning of the proof of Proposition~\ref{prop:uniform_as}. By definition of $\Omega^{\lim}$, one has for every $\omega \in \Omega^{\lim}$, $x \in \Z^d$, and $m, n \in \Z$ with $n < m$ that 
$$
Z_{x,n}^{\infty}(\omega) = \sum_{z \in \Z^d} Z_{x,n}^{z,m}(\omega) Z_{z,m}^{\infty}(\omega). 
$$
Thus, for $\omega \in \Omega^{\lim}$, $x \in \Z^d$, and $n \in \Z$,  
\begin{equation}    \label{eq:Z_positivity_implication} 
Z_{x,n}^{\infty}(\omega) = 0 \quad \Longrightarrow \quad Z_{z,m}^{\infty}(\omega) = 0 \ \forall z \in \Z^d, m \in \{n+1, n+2, \ldots\}. 
\end{equation} 
Let us show the following claim: 

\medskip 

For every $x \in \Z^d$ and $n \in \Z$, there exists $\Omega^+_{x,n} \in \F$ with $Q(\Omega^+_{x,n}) = 1$ such that $\Omega^+_{x,n} \subset \Omega^{\lim}$ and $Z_{x,n}^{\infty}(\omega) > 0$ for every $\omega \in \Omega^+_{x,n}$. 

\medskip 

Let $e_1 := (1,0, \ldots, 0) \in \Z^d$. For $\omega \in \Omega$, we define $\theta^{e_1}\omega$ as the element of $\Omega$ for which 
$$
\theta^{e_1} \omega(x,t) = \omega(x-e_1, t), \quad \forall x \in \Z^d, \ t \in \R. 
$$
In words, $\theta^{e_1}$ is the spatial shift in the direction of $e_1$. Notice that $\theta^{e_1}$ preserves the measure $Q$ on $\Omega$ and is mixing. In addition, $\Omega^{\lim}$ is invariant under $\theta^{e_1}$. Fix $n \in \Z$. For $x \in \Z^d$, define the random variable 
$$
I_x(\omega) := \inf\{k \in \Z: \ Z_{x,n+k}^{\infty}(\omega) = 0\}, \quad \omega \in \Omega^{\lim}. 
$$
For $x \in \Z^d$, let us prove that $I_x$ is almost surely infinite. To obtain a contradiction, assume that $Q(I_x \in \Z) > 0$. Then there is $k \in \Z$ such that $Q(I_x = k) > 0$. As, for any $j \in \Z$, the discrete-time stochastic processes $(Z_{x,n+i}^{\infty})_{i \in \Z}$ and $(Z_{x,n+j+i}^{\infty})_{i \in \Z}$ have the same law, one has $Q(I_x = j) = Q(I_x = k) > 0$ for all $j \in \Z$. This leads to a contradiction because the events $\{I_x = j\}$, $j \in \Z$, are disjoint. Next, we show that $Q(I_x = -\infty) = 0$. One has 
\begin{align*}
    Q(I_x = -\infty) =& Q \biggl(\bigcap_{k \in \Z} \bigcup_{j < k} \{Z_{x,n+j}^{\infty} = 0\} \biggr) \\
    =& Q \biggl(\bigcap_{k \in \Z} \bigcap_{y \in \Z^d} \{Z_{y,n+k}^{\infty} = 0\} \biggr) = Q \biggl(\bigcap_{y \in \Z^d} \{I_y = -\infty\} \biggr),  
\end{align*}
where we used~\eqref{eq:Z_positivity_implication} to obtain the second identity. Since $\bigcap_{y \in \Z^d} \{I_y = -\infty\}$ is invariant under the shift $\theta^{e_1}$ introduced earlier and since $\theta^{e_1}$ is mixing, in particular ergodic, we have 
$$
Q \biggl(\bigcap_{y \in \Z^d} \{I_y = -\infty\} \biggr) \in \{0,1\}. 
$$
If the set $\{I_x = -\infty\}$ had measure $1$, we would have $Z_{x,n}^{\infty} = 0$, $Q$-almost surely. But this is not the case because $$
\langle Z_{x,n}^{\infty} \rangle = \lim_{t \to \infty} \langle Z_{x,n}^t \rangle = 1. 
$$
Hence, $Q(I_x = -\infty) = 0$. Together with $Q(I_x \in \Z) = 0$, this yields $Q(I_x = \infty) = 1$ and in particular $Q(Z_{x,n}^{\infty} > 0) = 1$. This completes the proof of the claim. 

\medskip 

Let $\Omega^+$ be the set of $\omega \in \Omega^{\lim}$ such that 
$$
Z_{x,n}^{\infty}(\omega) > 0 \quad \forall (x,n) \in \Z^d \times \Z.  
$$
Our claim implies that $Q(\Omega^+) = 1$. 
Fix $\omega \in \Omega^+$, $x \in \Z^d$, and $s \in \R \setminus \Z$. Let $n \in \Z$ such that $n-1 < s < n$. In the proof of Proposition~\ref{prop:uniform_as}, we saw that 
$$
Z_{x,s}^{\infty}(\omega) = \sum_{y \in \Z^d} Z_{x,s}^{y,n}(\omega) Z_{y,n}^{\infty}(\omega) 
$$
and the expression on the right-hand side is positive. 
\end{proof}


\section{A Lower Tail of the Partition Function: Talagrand's Method}
\label{sec:proof_continuous_Talagrand}

This section is devoted to the proof of Theorem~\ref{thm:continuous_Talagrand}. The first step is to prove a discrete-time version of Theorem~\ref{thm:continuous_Talagrand}, which is carried out in Subsection~\ref{ssec:Talagrand}. Then, in Subsection~\ref{continuous_Talagrand}, we prove Theorem~\ref{thm:continuous_Talagrand} by a suitable limiting procedure to go from discrete time to continuous time. To prove the discrete-time version, we follow closely the strategy laid out in \cite[Section 4]{CarmonaHu}, which goes back to Talagrand~\cite{Talagrand03}. 

\subsection{The discrete-time case}    \label{ssec:Talagrand}

Let $N$ be a positive integer and let $\Sc = (S_n)_{n \geq 0}$ be a random walk on $\Z^d$, $d \geq 3$, that starts at the origin and has transition probabilities 
\begin{equation}\label{eq:trans_prob_dis}
\Psf(S_{n+1} = y \; | S_n = x) := 
	\begin{cases}
	\frac{N}{N+1} &\text{ if \; $y = x$},  \\
	\frac{1}{2d (N+1)} &\text{ if \; $\| y - x \| = 1$}.
	\end{cases}
\end{equation}
In addition, let $(\omega(z,k))_{z \in \Z^d, k \in \N_0}$ be an i.i.d. collection of Gaussian random variables with mean $0$ and variance $\tfrac{1}{N}$ that is independent of $\Sc$.  We denote the probability measure corresponding to $(\omega(z,k))_{z \in \Z^d, k \in \N_0}$ by $Q^N$ and the expected value by $\langle \cdot \rangle_{\hspace{-0.5mm}N}$.  
For any $t > 0$ such that $tN \in \N$, we let $\Gamma_{tN}$ denote the set of possible realizations of $(S_0, \ldots, S_{tN-1})$. Given a path $\gamma = (\gamma_0, \ldots, \gamma_{tN-1}) \in \Gamma_{tN}$, we let $p(\gamma)$ denote the probability that $(S_0, \ldots, S_{tN-1}) = \gamma$.  Then, we define the random partition function 
\begin{equation*}
Z_t^N(\omega) := e^{- \frac{\beta^2}{2} t} \E \biggl[ \exp \biggl( \beta \sum_{i=0}^{tN - 1} \omega(S_i,i) \biggr) \biggr], 
\end{equation*}
where $\beta > 0$ is a small parameter and $\E$ is the expectation taken with respect to $\Sc$. 
Then we have
\begin{align*} 
Z_t^N(\omega) =& e^{-\frac{\beta^2}{2} t} \sum_{\gamma \in \Gamma_{tN}} p(\gamma) \exp \biggl(\beta \sum_{i=0}^{tN-1} \omega(\gamma_i,i) \biggr) \\
=& \sum_{\gamma \in \Gamma_{tN}} p(\gamma) \prod_{i=0}^{tN-1} \left[ \exp(\beta \omega(\gamma_i,i)) \exp \left(-\frac{\beta^2}{2N} \right)\right]. 
\end{align*}
%
We now state and prove a discrete-time version of Theorem~\ref{thm:continuous_Talagrand}.
\bigskip
\begin{theorem}       \label{thm:gci_discrete}
For $\beta$ sufficiently small, there exists a constant $c > 0$ such that for all $N \in \N$ and $t > 0$ with $tN \in \N$, 
\begin{equation*}
Q^N \left(Z_t^N < e^{-u} \right) \leq c e^{- u^2 / c}, \quad \forall u > 0. 
\end{equation*}
\end{theorem}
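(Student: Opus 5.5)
Following the Carmona--Hu/Talagrand route, I will combine Gaussian concentration of $\ln Z_t^N$ with a uniform lower bound on the probability that $Z_t^N$ is not small. View $Z_t^N$ as a function of the finitely many Gaussians $\omega(z,k)$, $k<tN$, $z$ in the reachable box; rescale $\omega = N^{-1/2} g$ with $g$ standard Gaussian. For a set $A\subset\R^{M}$ I will use the Gaussian isoperimetric/concentration inequality in its Talagrand form: if $Q(A)\ge a>0$, then $Q(d(g,A)\ge r)\le C_a e^{-r^2/4}$, with $C_a$ depending only on $a$.

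\textbf{Step 1 (second moment and Paley--Zygmund).} I will show that $\langle (Z_t^N)^2\rangle_N\le K$ uniformly in $N,t$ for $\beta$ small. The computation gives $\langle (Z_t^N)^2\rangle_N=\E^{\otimes 2}\exp\bigl(\tfrac{\beta^2}{N}\,\#\{i<tN:S_i=S'_i\}\bigr)$. The difference walk $S-S'$ is lazy, and its number of visits to $0$ is at most $N$ times the number of visits of the embedded discrete-time walk (with stay length geometric of mean of order $N$); this is uniformly transient in $d\ge3$. A standard Khas'minskii/geometric series argument then bounds the exponential moment uniformly in $N$ once $\beta^2$ times the expected local time at $0$ of the rescaled walk is $<1$. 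Since $\langle Z\rangle_N=1$, Paley--Zygmund gives $Q^N(Z_t^N\ge 1/2)\ge 1/(4K)=:a$.

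\textbf{Step 2 (localize to good paths and Lipschitz bound).} Let $A=\{Z\ge 1/2,\ \text{and the polymer overlap}\ \ \langle\!\langle \#\{i:S_i=S'_i\}\rangle\!\rangle_{\text{Gibbs}^{\otimes2}}\le C_0 N\}$. Using the second moment again (for instance $\langle Z^2\cdot\text{overlap}\rangle$ is bounded by differentiating the same exponential moment in $\beta$), I choose $C_0$ large so that $Q^N(A)\ge a/2$. For $g\in A$ and arbitrary $g'$, I apply Jensen under the Gibbs measure $\mu_g$ at $g$:
\[
\ln Z(g')\ \ge\ \ln Z(g) + \beta N^{-1/2}\textstyle\sum_{i,z}\mu_g(S_i=z)\,(g'-g)(z,i)\ \ge\ -\ln 2-\beta N^{-1/2}\|g'-g\|\Bigl(\sum_{i,z}\mu_g(S_i=z)^2\Bigr)^{1/2}.
\]
The sum $\sum_{i,z}\mu_g(S_i=z)^2$ equals the Gibbs overlap, which is at most $C_0N$ on $A$. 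Hence $\ln Z(g')\ge -\ln2-\beta\sqrt{C_0}\,d(g',A)$.

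\textbf{Step 3 (conclusion).} Step 2 gives $\{Z<e^{-u}\}\subset\{d(g,A)\ge (u-\ln 2)/(\beta\sqrt{C_0})\}$, so Talagrand's inequality yields the bound $C_{a/2}\exp(-(u-\ln2)^2/(4\beta^2C_0))$. Adjusting $c$ covers small $u$, and all constants are independent of $N$ and $t$.

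The main obstacle is Step 2's overlap control. This requires uniform-in-$N,t$ estimates, specifically that the lazy walk's collision local time scaled by $1/N$ has exponential moments uniformly in $N$. I would handle it by the embedded-jump-chain decomposition together with the transience constant $\alpha_d$, reducing it to the same condition as $\alpha_d\lambda<1$ in Theorem~\ref{thm:limiting_part_fun_exists}.
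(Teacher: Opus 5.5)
Your proposal is correct and follows essentially the same route as the paper: a uniform-in-$N$ second-moment bound with Paley--Zygmund, then a good set where $Z_t^N\ge\tfrac12$ and the Gibbs overlap is at most a constant times $N$ (via Markov's inequality applied to $\E[L_{tN}e^{\beta^2L_{tN}/N}]$), then Jensen under the Gibbs measure plus Cauchy--Schwarz for the Lipschitz lower bound, and finally Gaussian concentration of the distance to that set. The differences are cosmetic: you invoke Talagrand's bound $Q(d(g,A)\ge r)\le e^{-r^2/4}/Q(A)$ instead of concentrating $d(\omega_{tN},X_t^N)$ about its mean, and you use a jump-chain argument instead of the paper's Fourier bound $1-c_2/N\le q\le 1-c_1/N$; accordingly the natural smallness condition is $\beta^2<\inf_N N\ln(1/q)$ rather than literally $\alpha_d\lambda<1$, which is harmless since only small $\beta$ is needed.
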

%

The proof of this theorem relies on three technical lemmas (Lemmas~\ref{lm:q_bounds}--\ref{lm:Gaussian_ci}) following the approach in~\cite{CarmonaHu} and~\cite{Talagrand03}.

Let $\Sc^1$ and $\Sc^2$ be two independent copies of the random walk $\Sc$.  Set 
\begin{equation*}
L_n := \sum_{i=0}^{n-1} \mathbbm{1}_{\{S^1_i = S^2_i\}} 
\quad \text{and} \quad
L_{\infty} := \sum_{i=0}^{\infty} \mathbbm{1}_{\{S^1_i = S^2_i\}}.
\end{equation*}
Since $\Sc$ is transient on $\Z^d$ for $d \geq 3$, we have $L_{\infty} < \infty$ almost surely. Also consider the random walk $\Dc := \Sc^1 - \Sc^2 = (D_n)_{n\geq0}$. It is transient because $\Sc^1$ and $\Sc^2$ are. Let $\tau_1$ denote the time of first return of the random walk $\Dc$ to $0$ and set
\begin{equation*}
q := \Psf(\tau_1 < \infty).
\end{equation*}
It is important to note that $q$ depends on $N$. Clearly, $q > 0$ and because of transience we also have $q < 1$. The following lemma gives more precise bounds on $q$. 

\begin{lemma}   \label{lm:q_bounds}
There are positive constants $c_1$ and $c_2$ with $c_1 < c_2$, depending only on the dimension $d$, such that 
\begin{equation*}
1 - \frac{c_2}{N} \leq q \leq 1 - \frac{c_1}{N}.
\end{equation*}
\end{lemma}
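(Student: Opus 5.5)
The proof rests on splitting off the steps in which $\Dc$ does not move, and then using a uniform Green's function bound for the jump chain that remains.

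\textbf{Step distribution of $\Dc$.} Since $\Sc^1$ and $\Sc^2$ are independent, $\Dc$ is itself a random walk on $\Z^d$ with i.i.d. steps. Its step is $0$ in two cases: both walks stay put, or both make the same unit move. Hence
$$a_N := \Psf(D_1 = 0) = \Bigl(\tfrac{N}{N+1}\Bigr)^2 + \tfrac{1}{2d (N+1)^2},$$
and therefore $1 - a_N = \tfrac{2N}{(N+1)^2} + O(N^{-2})$. In particular $c/N \leq 1 - a_N \leq C/N$ for constants depending only on $d$.

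Conditional on $D_1 \neq 0$, the step has law
$$\mu_N = (1-\eps_N)\,\nu_1 + \eps_N\, \nu_2,$$
where:
\begin{itemize}
\item $\nu_1$ is uniform on $\{\pm e_i\}$ (exactly one of the two walks moves);
\item $\nu_2$ is the law of $e - e'$, with $e, e'$ independent and uniform on $\{\pm e_i\}$, conditioned on $e \neq e'$ (both walks move);
\item $\eps_N = O(1/N)$, so $\eps_N \le \eps_0 < 1$ uniformly in $N$.
\end{itemize}
Let $\tilde{\Dc}$ be the embedded jump chain with step law $\mu_N$, and let $\tilde q_N$ be its probability of ever returning to $0$.

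\textbf{Decomposing $q$.} Conditioning on the first step of $\Dc$ gives the identity
$$q = a_N + (1-a_N)\,\tilde q_N.$$
Indeed, with probability $a_N$ we have $\tau_1 = 1$. Otherwise $\Dc$ has left $0$, and its later zero steps do not affect whether it returns, so the return probability is that of $\tilde{\Dc}$. Consequently
$$1 - q = (1-a_N)(1-\tilde q_N).$$
Since $\tilde q_N \geq 0$, we get $1-q \leq 1-a_N \leq c_2/N$, which is the lower bound on $q$. The upper bound on $q$ therefore reduces to showing $\tilde q_N \leq 1 - \delta$ for some $\delta > 0$ independent of $N$.

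\textbf{Uniform transience of $\tilde{\Dc}$ (main step).} This is where the work lies. I would use the Fourier representation of the Green's function,
$$G_N(0) = (2\pi)^{-d} \int_{[-\pi,\pi]^d} \frac{dk}{1 - \phi_N(k)},$$
where $\phi_N = (1-\eps_N)\phi_1 + \eps_N \phi_2$ is the characteristic function of $\mu_N$, and $\phi_1, \phi_2$ are those of $\nu_1, \nu_2$. Both $\nu_1$ and $\nu_2$ are symmetric, so $\phi_1, \phi_2$ are real and satisfy $\phi_2 \leq 1$. We also have
$$1 - \phi_1(k) = \tfrac{1}{d} \sum_{i=1}^d (1-\cos k_i) \geq c\|k\|^2 \quad \text{on } [-\pi,\pi]^d.$$
These combine to give
$$1 - \phi_N(k) \geq (1-\eps_0)\, c \|k\|^2 .$$
Because $d \geq 3$, $\|k\|^{-2}$ is integrable near the origin, so $G_N(0) \leq C$ uniformly in $N$. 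Finally, $\tilde q_N = 1 - 1/G_N(0)$, hence $\tilde q_N \leq 1 - 1/C$. Combined with $1-a_N \geq c/N$, this yields $1 - q \geq c_1/N$, completing the proof.

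The only delicate point is verifying that $\eps_N$ stays uniformly bounded away from $1$, which is immediate from the explicit transition probabilities in \eqref{eq:trans_prob_dis}.
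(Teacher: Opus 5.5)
Your proof is correct, but it is organized differently from the paper's.

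The paper never separates out the lazy steps. It applies the Fourier representation $\frac{1}{1-q}=\sum_{n\ge 0}\Psf(D_n=0)=(2\pi)^{-d}\int_{(-\pi,\pi)^d}\frac{d\tbf}{1-\varphi(\tbf)}$ directly to the lazy difference walk $\Dc$. It then proves two-sided bounds $1-\varphi(\tbf)\asymp\|\tbf\|^2/N$:
the lower bound by retaining a single term $y=e_i$, after splitting into $(-\tfrac{\pi}{3},\tfrac{\pi}{3})^d$ and its complement;
the upper bound from $1-\cos x\le x^2$ together with $\sum_y\Psf(D_1=y)\|y\|^2\lesssim 1/N$.
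Both inequalities for $q$ come out of this one integral.

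Your first-step factorization $1-q=(1-a_N)(1-\tilde q_N)$ is the probabilistic counterpart of the identity $1-\varphi=(1-a_N)(1-\phi_N)$. It isolates the $1/N$ scale as exactly the one-step probability that $\Dc$ moves, which buys you two simplifications.
\begin{enumerate}
\item The bound $q\ge 1-c_2/N$ reduces to the triviality $q\ge\Psf(D_1=0)$, so no Fourier upper bound on $1-\varphi$ is needed.
\item The remaining Fourier estimate concerns a jump chain whose step law is an $N$-uniform mixture. Hence $1-\phi_N(k)\gtrsim\|k\|^2$ holds with $N$-free constants, and the $N$-dependence of the problem is fully transparent.
\end{enumerate}
It even gives more. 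Since $\eps_N\to 0$ and $\frac{1}{(1-\eps_0)(1-\phi_1)}$ is an integrable dominating function for $d\ge 3$, dominated convergence in your integral yields $N(1-q)\to 2(1-\varrho_d)$. Here $\varrho_d$ is the return probability of simple random walk on $\Z^d$, so this is sharper than the two-sided bound.

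In exchange, the paper's route works with a single formula for $\Dc$. It avoids the renewal step you need, namely identifying the return probability of $\Dc$ after its first nonzero move with that of its jump chain.
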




\begin{proof}
For $\tbf\in \R^d$, let $\varphi(\tbf)$ denote the characteristic function of $D_1$. By properties of characteristic functions (see e.g.~\cite[p.194]{Durrett}),
\begin{equation*}
\Psf(D_n = 0) = \frac{1}{(2\pi)^d} \int_{(-\pi, \pi)^d} \varphi(\tbf)^n \ d \tbf, \quad n \in \N_0.
\end{equation*}
It is not hard to see that
\begin{equation}   \label{eq:f_formula}
\frac{1}{1 - q} = \sum_{n=0}^{\infty} q^n = \sum_{n = 0}^{\infty} \Psf(D_n = 0) = \frac{1}{(2 \pi)^d} \int_{(-\pi, \pi)^d} \frac{1}{1 - \varphi(\tbf)} \ d \tbf. 
\end{equation}
For any $\tbf \in (-\pi, \pi)^d$, 
\begin{align*} 
1 - \varphi(\tbf) =& \E\left[ 1 - e^{i \langle \tbf, D_1 \rangle} \right] = \sum_{y \in \Z^d} \Psf(D_1 = y) \left( 1 - e^{i \langle \tbf, y \rangle} \right) \\
=&  \sum_{y \in \Z^d} \Psf(D_1 = y) \left(1 - \cos(\langle \tbf, y \rangle) \right).
\end{align*}
Let us decompose the integral on the right-hand side of~\eqref{eq:f_formula} into 
\begin{equation}    \label{eq:two_integral}
\int_{(-\frac{\pi}{3}, \frac{\pi}{3})^d} \frac{d \tbf}{1 - \varphi(\tbf)} + \int_{(-\pi, \pi)^d \setminus (-\frac{\pi}{3}, \frac{\pi}{3})^d} \frac{d \tbf}{1 - \varphi(\tbf)}.
\end{equation}
Fix $\tbf \in (-\tfrac{\pi}{3}, \tfrac{\pi}{3})^d$. Since 
$
1 - \cos(x) \geq \frac{x^2}{4} 
$
for $x \in (-\tfrac{\pi}{3}, \tfrac{\pi}{3})$, we have
\begin{equation*}
1 - \varphi(\tbf) \geq \frac{1}{4} \sum_{y \in \Z^d: \langle \tbf, y \rangle \in (-\frac{\pi}{3}, \frac{\pi}{3})} \Psf(D_1 = y) \ \langle \tbf, y \rangle^2.
\end{equation*}
As long as $\tbf$ is not the zero vector, the expression on the right can be written as 
\begin{equation}   \label{eq:lower_bd_char_fun}
\frac{\| \tbf \|^2}{4} \sum_{y \in \Z^d: \langle \tbf, y \rangle \in (-\frac{\pi}{3}, \frac{\pi}{3})} \Psf(D_1 = y) \left \langle  \frac{\tbf}{\| \tbf \|}, y \right \rangle^2.
\end{equation}
As $\tfrac{\tbf}{\| \tbf \|}$ lies on the unit sphere in $\R^d$, there is $i \in \{1, \ldots, d\}$ such that 
\begin{equation*}
\left \langle \frac{\tbf}{\| \tbf \|}, e_i \right \rangle^2 \geq \frac{1}{d}, 
\end{equation*}
where $e_i$ is the $i^{\rm{th}}$ unit vector in $\R^d$.
Accordingly, using \eqref{eq:trans_prob_dis}, the expression in~\eqref{eq:lower_bd_char_fun} is bounded below by   
\begin{equation*}
\frac{\| \tbf \|^2}{4d} \Psf(D_1 = e_i) = \frac{\| \tbf \|^2}{4 d^2} \frac{N}{(N+1)^2}
\end{equation*}

\noindent and the first integral in~\eqref{eq:two_integral} is bounded above by
\begin{equation*}
4 d^2 \frac{(N+1)^2}{N} \int_{(-\frac{\pi}{3}, \frac{\pi}{3})^d} \frac{d \tbf}{\| \tbf \|^2}
<\hat c_1 N
\end{equation*}
for some constant $\hat c_1$ depending on $d$.

Now, let $\tbf \in (-\pi, \pi)^d \setminus (-\tfrac{\pi}{3}, \tfrac{\pi}{3})^d$. In this case, there is $i \in \{1, \ldots, d\}$ such that $\lvert t_i \rvert \geq \tfrac{\pi}{3}$, and $1-\varphi(\tbf)$ is bounded below by 
$$
\Psf(D_1 = e_i) \left(1 - \cos(\langle \tbf, e_i \rangle) \right) = \frac{N}{d (N+1)^2} (1 - \cos(t_i)) \geq \frac{N}{2d (N+1)^2}.
$$
Therefore, the second integral in~\eqref{eq:two_integral} is bounded above by
\begin{equation*}
2d \frac{(N+1)^2}{N} \int_{(-\pi, \pi)^d \setminus (-\frac{\pi}{3}, \frac{\pi}{3})^d} \ d \tbf
<\tilde c_1 N
\end{equation*}
for some constant $\tilde c_1$ depending on $d$. Letting $c_1:= \hat c_1 + \tilde c_1$, from \eqref{eq:f_formula} we obtain the following upper bound for $q$:
\begin{equation*}
q \leq 1 - \frac{c_1}{N}. 
\end{equation*}
To obtain a lower bound for $q$, we estimate
$$
1 - \varphi(\tbf) \leq \sum_{y \in \Z^d} \Psf(D_1 = y) \ \langle \tbf, y \rangle^2 \leq \sum_{y \in \Z^d \setminus \{0\}} \Psf(D_1 = y) \ \| \tbf \|^2 \| y \|^2.
$$
Since
\begin{equation*}
\sum_{y \in \Z^d \setminus \{0\}} \Psf(D_1 = y) \ \| y \|^2 = \frac{2N}{(N+1)^2} + \frac{2}{d (N+1)^2}  + \frac{2d -2}{d(N+1)^2} \lesssim \frac{1}{N},
\end{equation*}
it follows from \eqref{eq:f_formula} that
\begin{equation*}
\frac{1}{1 - q} \gtrsim \frac{N}{(2 \pi)^d} \int_{(-\pi, \pi)^d} \frac{d \tbf}{\| \tbf \|^2} = \tilde c_2 N
\end{equation*}
for some $\tilde c_2 > 0$ depending on $d$. Therefore, there is a constant $c_2$ such that 
$$
q \geq 1 - \frac{c_2}{N}.
$$
\end{proof}

For the constant $c_1$ from Lemma~\ref{lm:q_bounds}, one has 
$$
0 < c_1 \leq \inf_N (N (1-q)) \leq \inf_N (N \ln(1/q)). 
$$
We will see that in order for the conclusions of Theorems~\ref{thm:gci_discrete} and~\ref{thm:continuous_Talagrand} to hold, it is enough to assume $\beta^2 < \inf_N (N \ln(1/q))$ as this implies $e^{\frac{\beta^2}{N}} q < 1$ for every $N$ (see also Theorem~1.5 in~\cite{CarmonaHu}). For the remainder of Section~\ref{sec:proof_continuous_Talagrand}, we shall therefore make the following assumption. 

\begin{assumption}
    One has $\beta^2 < \inf_N (N \ln(1/q))$. 
\end{assumption}

It is easy to see that the expected value of the partition function $Z_t^N$ with respect to the disorder $(\omega(z,k))_{z \in \Z^d, k \in \N_0}$ is equal to $1$. Now, we show that the second moment of $Z_t^N$ with respect to the disorder is bounded uniformly in $t$ and $N$.

\begin{lemma}    \label{lm:bounded_second_mom}
There is a constant $c_3 > 0$ such that for all $N \in \N$ and $t > 0$ with $tN \in \N$, 
\begin{equation*}
\left \langle (Z_t^N)^2 \right \rangle_N \leq c_3.
\end{equation*}
\end{lemma}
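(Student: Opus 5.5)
We compute the second moment directly by the replica trick and reduce it to the exponential moment of the intersection local time $L_\infty$. Squaring the path expansion of $Z_t^N$ and taking $\langle\cdot\rangle_N$ with respect to the independent Gaussian variables $\omega(z,k)$ of variance $\tfrac1N$ gives
\begin{equation*}
\left\langle (Z_t^N)^2\right\rangle_N = \sum_{\gamma^1,\gamma^2\in\Gamma_{tN}} p(\gamma^1)p(\gamma^2)\prod_{i=0}^{tN-1} \Bigl\langle e^{\beta(\omega(\gamma^1_i,i)+\omega(\gamma^2_i,i))}\Bigr\rangle_N e^{-\beta^2/N}.
\end{equation*}
For each $i$, if $\gamma^1_i\neq\gamma^2_i$ the two Gaussians are independent and the factor equals $e^{\beta^2/(2N)}e^{\beta^2/(2N)}e^{-\beta^2/N}=1$. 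If $\gamma^1_i=\gamma^2_i$, the factor is $\langle e^{2\beta\omega}\rangle_N e^{-\beta^2/N}=e^{2\beta^2/N}e^{-\beta^2/N}=e^{\beta^2/N}$. Hence
\begin{equation*}
\left\langle (Z_t^N)^2\right\rangle_N=\E\bigl[e^{\frac{\beta^2}{N}L_{tN}}\bigr]\le \E\bigl[e^{\frac{\beta^2}{N}L_\infty}\bigr],
\end{equation*}
where the expectation is over the two independent copies $\Sc^1,\Sc^2$. The inequality uses monotonicity of $L_n$ in $n$.

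Next we identify the law of $L_\infty$. Since $D_0=0$, $L_\infty$ is the total number of visits of the walk $\Dc$ to $0$, counting time $0$. By the strong Markov property at successive return times, $L_\infty$ is geometric: $\Psf(L_\infty=k)=q^{k-1}(1-q)$ for $k\ge1$. For $e^{\beta^2/N}q<1$ this gives
\begin{equation*}
\E\bigl[e^{\frac{\beta^2}{N}L_\infty}\bigr]=\frac{(1-q)e^{\beta^2/N}}{1-qe^{\beta^2/N}}.
\end{equation*}
The standing Assumption $\beta^2<\inf_N N\ln(1/q)$ guarantees $qe^{\beta^2/N}<1$ for every $N$, so the right-hand side is finite for every $N$.

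The remaining and main point is uniformity in $N$, since $q\to1$ as $N\to\infty$. Set $\varepsilon:=\inf_N N\ln(1/q)-\beta^2>0$; this is where the strict inequality in the Assumption is used. Write $qe^{\beta^2/N}=e^{-(N\ln(1/q)-\beta^2)/N}\le e^{-\varepsilon/N}$. Then
\begin{equation*}
1-qe^{\beta^2/N}\ge 1-e^{-\varepsilon/N}\ge \frac{\varepsilon}{N}e^{-\varepsilon/N}.
\end{equation*}
By Lemma~\ref{lm:q_bounds}, $1-q\le c_2/N$. Combining the two bounds,
\begin{equation*}
\E\bigl[e^{\frac{\beta^2}{N}L_\infty}\bigr]\le \frac{(c_2/N)\,e^{\beta^2/N}}{(\varepsilon/N)e^{-\varepsilon/N}}\le \frac{c_2}{\varepsilon}e^{\beta^2+\varepsilon}.
\end{equation*}
The bound $e^{\beta^2/N}\le e^{\beta^2}$ holds since $N\ge1$. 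This yields the lemma with $c_3:=c_2e^{\beta^2+\varepsilon}/\varepsilon$, which depends only on $d$ and $\beta$.
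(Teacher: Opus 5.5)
Your proof is correct and follows the paper's route: the replica computation giving $\langle (Z_t^N)^2\rangle_N=\E[e^{\beta^2 L_{tN}/N}]\le \E[e^{\beta^2 L_\infty/N}]$, the geometric law of $L_\infty$, and Lemma~\ref{lm:q_bounds} for uniformity in $N$. The only difference is the last step: the paper bounds the denominator via $q\le 1-c_1/N$ and lets $N\to\infty$ to reach $c_2/(c_1-\beta^2)$, which tacitly requires $\beta^2<c_1$; you instead use the margin $\varepsilon=\inf_N N\ln(1/q)-\beta^2>0$ directly, which works under exactly the standing Assumption and gives an explicit $c_3$.
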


\begin{proof} We have  
\begin{align*}
\left \langle (Z_t^N)^2 \right \rangle_N 
=& e^{- \beta^2 t} \biggl \langle \E \biggl[ \exp \biggl( \beta \sum_{j=1}^2 \sum_{i=0}^{tN-1} \omega(S_i^j, i) \biggr) \biggr] \biggr \rangle_{\hspace{-1.5mm}N} \\
=&  e^{-\beta^2 t} \E \biggl[ \prod_{i=0}^{tN-1} \biggl \langle \exp \biggl( \beta \Bigl( \omega(S^1_i, i) + \omega(S^ 2_i, i) \Bigr) \biggr) \biggr \rangle_{\hspace{-1.5mm}N} \biggr] \\
=& \E \biggl[ \prod_{i=0}^{tN-1} \left( e^{\frac{\beta^2}{N}} \mathbbm{1}_{\{S^1_i = S^2_i\}} + \mathbbm{1}_{\{S^1_i \neq S^2_i\}} \right) \biggr] = \E \left[ e^{\frac{\beta^2}{N} L_{tN}} \right] 
\leq \E \left[ e^{\frac{\beta^2}{N} L_{\infty}} \right].
\end{align*}
Since 
$\Psf(L_{\infty} = k) = q^{k-1} (1 - q)$ and $e^{\frac{\beta^2}{N}} q < 1$, 
we have  
\begin{equation}   \label{eq:exp_L_inf}
\E \left[ e^{\frac{\beta^2}{N} L_{\infty}} \right] 
= e^{\frac{\beta^2}{N}} (1-q) \sum_{k = 1}^{\infty} \left( e^{\frac{\beta^2}{N}} q \right)^{k-1} = e^{\frac{\beta^2}{N}} \frac{1-q}{1 - e^{\frac{\beta^2}{N}} q}. 
\end{equation}
By Lemma~\ref{lm:q_bounds}, the right-hand side of~\eqref{eq:exp_L_inf} is dominated by 
$$ 
\frac{c_2}{N} \frac{1}{e^{-\frac{\beta^2}{N}} - 1 + \frac{c_1}{N}},  
$$
which converges to $c_2/(c_1 - \beta^2)$ as $N \to \infty$ and is thus bounded in $N$. 
\end{proof}


\begin{lemma}        \label{lm:c_2} 
There exists a constant $C > 0$ such that for all $N \in \N$ and $t > 0$ with $tN \in \N$,  
\begin{equation*}
Q^N \biggl(Z_t^N \geq \frac{1}{2}; \; \E \biggl[ L_{tN} \exp \biggl( \beta \sum_{i=0}^{tN-1} \sum_{j=1}^2 \omega(S^j_i,i) \biggr) \biggr] \leq C N e^{\beta^2 t} (Z_t^N)^2 \biggr) 
> \frac{1}{C}. 
\end{equation*}
\end{lemma}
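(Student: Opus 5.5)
We follow the strategy of Carmona--Hu~\cite{CarmonaHu} and Talagrand: use Paley--Zygmund to make $Z_t^N \geq \tfrac12$ likely, and Markov's inequality to make the overlap-weighted quantity small, then intersect.

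\emph{Step 1 (lower bound on $Q^N(Z_t^N\geq \tfrac12)$).} Since $\langle Z_t^N\rangle_N = 1$ and $\langle (Z_t^N)^2\rangle_N\leq c_3$ by Lemma~\ref{lm:bounded_second_mom}, Paley--Zygmund gives
$$
Q^N\bigl(Z_t^N\geq \tfrac12\bigr)\geq \frac{(1-\tfrac12)^2\langle Z_t^N\rangle_N^2}{\langle (Z_t^N)^2\rangle_N}\geq \frac{1}{4c_3}.
$$

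\emph{Step 2 (first moment of the overlap quantity).} Set
$$
X := e^{-\beta^2 t}\,\E\Bigl[L_{tN}\exp\Bigl(\beta\sum_{i=0}^{tN-1}\sum_{j=1}^2\omega(S^j_i,i)\Bigr)\Bigr].
$$
Fubini and the Gaussian computation from the proof of Lemma~\ref{lm:bounded_second_mom} give
$$
\langle X\rangle_N=\E\bigl[L_{tN}e^{\frac{\beta^2}{N}L_{tN}}\bigr]\leq \E\bigl[L_\infty e^{\frac{\beta^2}{N}L_\infty}\bigr].
$$
Since $L_\infty$ is geometric with parameter $1-q$, this is
$$
(1-q)\sum_{k\geq1}k\,e^{\beta^2 k/N}q^{k-1}=\frac{(1-q)e^{\beta^2/N}}{(1-e^{\beta^2/N}q)^2}.
$$
By Lemma~\ref{lm:q_bounds} and the standing Assumption, the numerator is $O(1/N)$ and $1-e^{\beta^2/N}q\gtrsim 1/N$. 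This needs a margin: we must strengthen ``$\beta$ sufficiently small'' to $\beta^2<c_1/2$ or similar, so that $1-e^{\beta^2/N}q\geq (c_1-\beta^2)/N - O(N^{-2})$ uniformly in $N$. Hence $\langle X\rangle_N\leq C_0 N$ with $C_0$ independent of $N$ and $t$. This uniformity is the main obstacle; small $N$ can be handled since $q<1$ and $e^{\beta^2/N}q<1$ for each fixed $N$ under the Assumption.

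\emph{Step 3 (intersection).} By Markov's inequality, $Q^N(X> C'N)\leq C_0/C'$. The event in the statement contains
$$
\{Z_t^N\geq\tfrac12\}\cap\{X\leq C'N\}\quad\text{whenever } C\geq 4C',
$$
since on $\{Z_t^N\geq \tfrac12\}$ we have $C'N\leq 4C'N(Z_t^N)^2$, and the event's inequality is exactly $X\leq CN(Z_t^N)^2$. Therefore
$$
Q^N(\text{event})\geq \frac{1}{4c_3}-\frac{C_0}{C'}.
$$
Choosing $C'=8c_3C_0$ makes this at least $\tfrac{1}{8c_3}$. Finally take $C=\max\{4C',\,8c_3+1\}$, so that the probability exceeds $1/C$.
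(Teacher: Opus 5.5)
Your proof is correct and is essentially the paper's own argument. You use Paley--Zygmund with Lemma~\ref{lm:bounded_second_mom}, then Markov's inequality on $e^{-\beta^2 t}\E[L_{tN}\exp(\cdots)]$, whose $Q^N$-mean is at most $\E[L_\infty e^{\beta^2 L_\infty/N}]=O(N)$; the only cosmetic difference is that you decouple the Markov threshold $C'$ from $C$ rather than using $C/4$ directly.

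Your extra hypothesis $\beta^2<c_1/2$ is not needed, although the paper's limit $4c_2/(c_1-\beta^2)^2$ tacitly uses $\beta^2<c_1$ as well. The standing Assumption gives $\delta\coleq\inf_N N\ln(1/q)-\beta^2>0$, and since $e^{\beta^2/N}q=e^{-(N\ln(1/q)-\beta^2)/N}$, one gets $N(1-e^{\beta^2/N}q)\geq N(1-e^{-\delta/N})\geq 1-e^{-\delta}$ for every $N\in\N$. Together with $N(1-q)\leq c_2$, this directly yields the uniform bound $\langle X\rangle_N\leq c_2e^{\beta^2}(1-e^{-\delta})^{-2}N$.
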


\begin{proof}	
By the Paley--Zygmund inequality~\cite{paley_zygmund_1932}, $\langle Z_t^N \rangle_N = 1$, and Lemma~\ref{lm:bounded_second_mom}, we have
\begin{equation*}
Q^N \left(Z_t^N \geq \frac{1}{2} \right) \geq \frac{1}{4\langle (Z_t^N)^2 \rangle_N} \geq \frac{1}{4 c_3}. 
\end{equation*}
Let $C$ be any positive number. We will only impose a restriction on $C$ towards the end of the proof. Then the left-hand side of the asserted inequality in Lemma~\ref{lm:c_2} is greater than or equal to 
\begin{equation}      \label{eq:lower_bd_mu} 
\frac{1}{4 c_3} - Q^N \biggl( \E \biggl[ L_{tN} \exp \biggl(\beta \sum_{i=0}^{tN-1} \sum_{j=1}^2 \omega(S^j_i, i) \biggr) \biggr] > \frac{C}{4} N e^{\beta^2 t} \biggr)\hspace{-0.5mm}.    
\end{equation}
Markov's inequality implies 
\begin{align*}
& Q^N \biggl( \E \biggl[ L_{tN} \exp \biggl(\beta \sum_{i=0}^{tN-1} \sum_{j=1}^2 \omega(S^j_i, i) \biggr) \biggr] > \frac{C}{4} N e^{\beta^2 t} \biggr) \\
\leq& \frac{4}{C N} e^{-\beta^2 t} \biggl \langle \E\biggl[ L_{tN} \exp \biggl(\beta \sum_{i=0}^{tN-1} \sum_{j=1}^2 \omega(S^j_i, i) \biggr) \biggr] \biggr \rangle_{\hspace{-1.5mm}N} \\
=& \frac{4}{C N} e^{-\beta^2 t} \ \E\biggl[ L_{tN} \prod_{i=0}^{tN-1} \biggl \langle \exp \Big(
\beta (\omega(S^1_i, i) + \omega(S_i^2, i) \Big) \biggr \rangle_{\hspace{-1.5mm}N} \biggr] \\
\leq& \frac{4}{C N} \E \left[ L_{\infty} e^{\frac{\beta^2}{N} L_{\infty}} \right], 
\end{align*}
where the last step follows from the proof of Lemma~\ref{lm:bounded_second_mom}. Continuing to proceed as in the proof of Lemma~\ref{lm:bounded_second_mom}, one has 
\begin{align*}
\frac{4}{N}    \E \left[L_{\infty} e^{\frac{\beta^2}{N} L_{\infty}} \right] =& 
\frac{4}{N} e^{\frac{\beta^2}{N}} (1-q) \sum_{k=1}^{\infty} k \left( e^{\frac{\beta^2}{N}} q \right)^{k-1} \\
=& \frac{4}{N} e^{\frac{\beta^2}{N}} \frac{1-q}{(1 - e^{\frac{\beta^2}{N}} q)^2}
\\
\leq& e^{\frac{\beta^2}{N}} \frac{4 c_2}{N^2} \frac{1}{(1 - e^{\frac{\beta^2}{N}} + e^{\frac{\beta^2}{N}} \frac{c_1}{N})^2}, 
\end{align*}
and as $N \to \infty$, the expression on the right-hand side converges to 
$$
\frac{4 c_2}{(c_1 - \beta^2)^2}. 
$$
Hence, there is $\tilde C > 0$, independent of $C$ and $N$, such that the expression in~\eqref{eq:lower_bd_mu} is greater than 
\begin{equation*}
\frac{1}{4 c_3} - \frac{\tilde C}{C}, 
\end{equation*}
which is greater than $\tfrac{1}{C}$ if $C$ is chosen sufficiently large. 
\end{proof}

\noindent Note that for any fixed $N \in \N$ and $t > 0$ with $t N\in\N$, the random walk $\Sc$ cannot leave the box 
\begin{equation*}
B_{tN} := \left\{z \in \Z^d: \ \| z \|_1 \leq tN \right\} 
\end{equation*}
before time $tN$, so $Z_t^N$ only depends on $(\omega(z,k))_{z \in B_{tN}, 0 \leq k \leq tN}$. Let $\Xi_t^N$ be the collection of arrays $\xi = (\xi(z,k))_{z \in B_{tN}, 0 \leq k \leq tN}$ of real numbers indexed by $z \in B_{tN}$ and $k \in \{1, \ldots, tN\}$. For $\xi \in \Xi_t^N$ define
$$
Z_t^N(\xi) :=\; e^{-\frac{\beta^2}{2} t} \E \biggl[ \exp \biggl(\beta \sum_{i=0}^{tN - 1} \xi(S_i, i) \biggr) \biggr]
$$
and define $X_t^N$ as 
$$
\Biggl\{ \xi \in \Xi^N_t: Z_t^N(\xi) \geq \frac{1}{2}; \; \E \biggl[ L_{tN} \exp \biggl( \beta \sum_{i=0}^{tN-1} \sum_{j=1}^2 \xi(S^j_i, i) \biggr) \biggr] \leq C N e^{\beta^2 t} Z_t^N(\xi)^2 \Biggr\},
$$
where $C$ is the constant from Lemma~\ref{lm:c_2}.
\noindent Let $\omega_{tN}$ be the random vector \\ $(\omega(z,i))_{z \in B_{tN}, i \in \{0,\ldots,tN\}}$. Then, by Lemma~\ref{lm:c_2},  
\begin{equation*}
Q^N(\omega_{tN} \in X_t^N) > \frac{1}{C}.
\end{equation*}
Finally, for $m \in \N$, $g, h \in \R^m$, and $A \subset \R^m$ measurable, define 
$$
d(g,h) := \| g - h \|   
\quad \text{and} \quad 
d(g,A) := \inf_{h \in A} \| g - h \|. 
$$
The following lemma is a consequence of the Gaussian concentration inequality (see, e.g.,~\cite{Talagrand03} and~\cite[Theorem 1.3.4]{Talagrand_11}). 


\bigskip
\begin{lemma}     \label{lm:Gaussian_ci}
Let $C$ be the constant from Lemma~\ref{lm:c_2}. For any $v > 0$, 
\begin{equation*}
Q^N\biggl( d(\omega_{tN}, X_t^N) > v + \sqrt{\frac{2}{N} \ln (2C)} \biggr) \leq 2 e^{-\frac{N}{2} v^2}.
\end{equation*}
\end{lemma}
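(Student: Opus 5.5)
This is a direct application of the Gaussian concentration inequality for distances to a set, combined with the lower bound on $Q^N(\omega_{tN}\in X_t^N)$ from Lemma~\ref{lm:c_2}.

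First, rescale the noise. Write $\omega_{tN} = N^{-1/2} g$, where $g$ is a standard Gaussian vector in $\R^m$ and $m = |B_{tN}|(tN+1)$. Then $d(\omega_{tN}, X_t^N) = N^{-1/2} d(g, \sqrt N X_t^N)$. The map $g \mapsto d(g, A)$ is $1$-Lipschitz for any measurable $A$, so the Gaussian concentration inequality applies. Set $F(g) := d(g, \sqrt N X_t^N)$ and let $M$ be its median (or mean). Then
$$
\Psf(F > M + s) \leq e^{-s^2/2}.
$$
In the form of~\cite[Theorem 1.3.4]{Talagrand_11}, the two-sided version carries the factor $2$:
$$
\Psf(|F - \langle F\rangle| > s) \leq 2e^{-s^2/2}.
$$

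Second, bound the center of concentration using $Q^N(\omega_{tN}\in X_t^N) > 1/C$. On the event $\{g \in \sqrt N X_t^N\}$ we have $F = 0$. Apply the lower-tail inequality
$$
\Psf(F \leq \langle F\rangle - s) \leq e^{-s^2/2}
$$
with $s = \langle F\rangle$. Since $\Psf(F = 0) > 1/C$, we get $1/C < e^{-\langle F\rangle^2/2}$, so
$$
\langle F\rangle < \sqrt{2\ln C} \leq \sqrt{2\ln(2C)}.
$$
If instead I use the two-sided bound $2e^{-s^2/2}$ for the lower tail, the same argument gives $1/C < 2e^{-\langle F\rangle^2/2}$, i.e. $\langle F\rangle < \sqrt{2\ln(2C)}$. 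This matches the constant in the statement exactly, so I would take this route.

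Third, combine the two steps. Put $s = \sqrt N v$. Then
$$
\Psf\bigl(F > \sqrt{2\ln(2C)} + \sqrt N v\bigr) \leq \Psf\bigl(F - \langle F\rangle > \sqrt N v\bigr) \leq 2e^{-Nv^2/2}.
$$
Dividing the event inside by $\sqrt N$ gives
$$
Q^N\bigl(d(\omega_{tN},X_t^N) > v + \sqrt{(2/N)\ln(2C)}\bigr) \leq 2e^{-Nv^2/2},
$$
which is the claim.

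There is no real obstacle. The only points needing care are the variance scaling $1/N$, which produces the factors $\sqrt N$ and $N/2$, and the measurability and Lipschitz property of $d(\cdot, A)$. The latter holds because $X_t^N$ is a Borel set, being defined by continuous functionals of the finite array $\xi$.
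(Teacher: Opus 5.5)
Your proposal is correct and follows essentially the same route as the paper. Both apply the Gaussian concentration inequality (Talagrand, Theorem~1.3.4) to the $1$-Lipschitz distance function, use $Q^N(\omega_{tN}\in X_t^N) > 1/C$ with the two-sided tail bound to get $\langle d(\omega_{tN},X_t^N)\rangle_N \le \sqrt{(2/N)\ln(2C)}$, and then combine this with the upper tail. The differences are cosmetic: you rescale explicitly to a standard Gaussian vector and plug in $s=\langle F\rangle$ directly, which needs the non-strict form of the tail bound (obtained by a limit), whereas the paper takes $v<\langle d(\omega_{tN},X_t^N)\rangle_N$ and lets $v$ increase to the mean.
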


\begin{proof}	
As the components of $\sqrt{N} \omega_{tN}$ are i.i.d. standard normal, the Gaussian concentration inequality (\cite[Theorem~1.3.4]{Talagrand_11}) yields, for any $v>0$,
\begin{equation}   \label{eq:gci}
Q^N \biggl( \biggl \lvert d(\omega_{tN},X_t^N) - \Big\langle d(\omega_{tN}, X_t^N) \Big\rangle_{\hspace{-0.5mm}N} \biggr\lvert > v \biggr) \leq 2e^{-\frac{N}{2} v^2}. 
\end{equation}
Suppose that $v < \big\langle d(\omega_{tN}, X_t^N) \big\rangle_N$. Then 
\begin{equation*}
\frac{1}{C} < Q^N(\omega_{tN} \in X_t^N) \leq Q^N\biggl( \biggl \lvert d(\omega_{tN}, X_t^N) - \Big\langle d(\omega_{tN}, X_t^N) \Big\rangle_{\hspace{-0.5mm}N} \biggr\lvert > v \biggr) \leq 2e^{-\frac{N}{2} v^2}. 
\end{equation*}
As the inequality $1/C \leq 2 e^{-\frac{N}{2} v^2}$ holds for any $v < \big\langle d(\omega_{tN}, X_t^N) \big\rangle_{\hspace{-0.5mm}N}$, we also have 
\begin{equation*}
\Big\langle d(\omega_{tN}, X_t^N )\Big \rangle_{\hspace{-0.5mm}N} \leq \sqrt{\frac{2}{N} \ln(2C)}. 
\end{equation*}
Together with~\eqref{eq:gci}, we obtain the desired result. 
\end{proof}

\bigskip
\begin{proof}[Proof of Theorem~\ref{thm:gci_discrete}]

We recall our standing assumption that 
$$
\beta^2 < \inf_N (N \ln(1/q)).
$$
Fix $\xi' \in X_t^N$ and $\xi \in \Xi_t^N$. Then we can write 
\begin{equation}   \label{eq:partition_fun}  
Z_t^N(\xi) = e^{-\frac{\beta^2}{2} t} \E \biggl[ \exp \hspace{-0.7mm} \biggl(\beta \sum_{i=0}^{tN - 1} \big(\xi(S_i, i) - \xi'(S_i, i) \big) \biggr) \hspace{-0.5mm} \exp \hspace{-0.5mm} \biggl(\beta \sum_{i=0}^{tN - 1} \xi'(S_i, i) \biggr) \biggr]. \hspace{-1mm}
\end{equation}
If we define the Gibbs measure $\nu$ on the space of path realizations for $\Sc$ up to step $(tN - 1)$ by
\begin{equation*}
\nu(B) = \frac{e^{-\frac{\beta^2}{2} t}}{Z_t^N(\xi')} \E \biggl[ \mathbbm{1}_B(S_0, \ldots, S_{tN - 1}) \exp \biggl(\beta \sum_{i=0}^{tN - 1} \xi'(S_i, i) \biggr) \biggr], 
\end{equation*}
then the right-hand side of \eqref{eq:partition_fun} becomes
\begin{align*}
& Z_t^N(\xi') \int \exp \biggl(\beta \sum_{i=0}^{tN-1} \left( \xi(S_i, i) - \xi'(S_i, i) \right) \biggr) \ d \nu \\
	& \geq  \frac{1}{2} \exp \biggl( - \beta \biggl \lvert \int \sum_{i=0}^{tN-1} \left( \xi(S_i, i) - \xi'(S_i, i) \right) \ d \nu \biggr\lvert \biggr), 
\end{align*}
where we used that $\xi' \in X_t^N$. One has 
$$
\biggl \lvert \int \sum_{i=0}^{tN-1} \left( \xi(S_i, i) - \xi'(S_i, i) \right) \ d\nu \biggr\rvert = \biggl \lvert \sum_{i=0}^{tN - 1} \sum_{z \in B_{tN}} \left( \xi(z,i) - \xi'(z,i) \right) \int \mathbbm{1}_{\{S_i = z\}} \ d \nu \biggr\rvert. 
$$
By the Cauchy--Schwarz inequality, the expression on the right is less than  
$$
d(\xi, \xi') \left( \int \int L_{tN} \ d\nu \ d\nu \right)^{\frac{1}{2}}.
$$
Again using the fact that $\xi' \in X_t^N$, we infer from the definition of $\nu$ the inequality $\int \int L_{tN} \ d \nu \ d \nu \leq CN$ and thus 
\begin{equation*}
Z_t^N(\xi) \geq \tfrac{1}{2} \exp \biggl(-\beta d(\xi, \xi') \left( \int \int L_{tN} \ d\nu \ d\nu \right)^{\frac{1}{2}} \biggr) \geq \tfrac{1}{2} \exp\left( -\beta d(\xi, \xi') \sqrt{C N} \right).
\end{equation*}
As the inequality above holds for any $\xi' \in X_t^N$, 
\begin{equation*}
Z_t^N(\xi) \geq \tfrac{1}{2} \exp \left( -\beta \sqrt{C} \ d(\xi, X_t^N) \sqrt{N} \right).
\end{equation*}
Fix $u > \ln(2)$ and suppose
\begin{equation*}
\frac{u - \ln(2)}{\beta \sqrt{C N}} \geq d(\xi, X_t^N). 
\end{equation*}
Then
\begin{equation*}
Z_t^N(\xi) \geq \tfrac{1}{2} \exp \left( - (u - \ln(2)) \right) = e^{-u}.
\end{equation*}
Accordingly, 
$$
Q^N(Z_t^N \geq e^{-u}) \geq Q^N \left( d(\omega_{tN}, X_t^N) \leq \frac{u - \ln(2)}{\beta \sqrt{C N}} \right). 
$$
By Lemma~\ref{lm:Gaussian_ci} with $v = \frac{u - \ln(2)}{\beta \sqrt{CN}} - \sqrt{2 \ln(2C)/N}$ and $u$ so large that 
$\frac{u - \ln(2)}{\beta \sqrt{C}} > \sqrt{2 \ln(C)}$ and thus $v > 0$, the expression on the right is greater than 
\begin{equation*}
1 - 2 \exp \biggl( - \frac{1}{2} \left( \frac{u - \ln(2)}{\beta \sqrt{C}} - \sqrt{2 \ln(C)} \right)^2 \biggr), 
\end{equation*}
so there is $c > 0$ such that 
\begin{equation*}
Q^N(Z_t^N \geq e^{-u}) \geq 1 - e^{- u^2 / c}.  
\end{equation*}
By choosing $c$ even larger if necessary, one obtains the desired inequality 
$$
Q^N(Z_t^N < e^{-u}) \leq c e^{-u^2/c} 
$$
for every $u > 0$. 
\end{proof}

\subsection{The continuous-time case}\label{continuous_Talagrand}

In this section we present the proof of Theorem~\ref{thm:continuous_Talagrand}. 
We will show that there is a constant $c > 0$ such that 
$$ 
Q(Z_{0,0}^t < e^{-u}) < c e^{-u^2 / c}, \quad \forall t, u > 0. 
$$
To simplify notation, we write $Z^t$ instead of $Z_{0,0}^t$. We begin with a result on continuity in time for the partition function.  


\begin{lemma}\label{lm:continuity_of_Zt}
Fix $t > 0$. For any $\eps>0$, there exists $s_\eps >0$ such that 
\begin{equation*}
Q(\lvert Z^{t+s}- Z^t \rvert > \eps) < \eps \ \text{ for all } s \in (0, s_\eps).
\end{equation*}
\end{lemma}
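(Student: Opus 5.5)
We prove the stronger $L^2$ statement $\langle (Z^{t+s}-Z^t)^2\rangle \to 0$ as $s \downarrow 0$. Convergence in probability then follows from Chebyshev's inequality: $Q(\lvert Z^{t+s}-Z^t\rvert>\eps)\le \eps^{-2}\langle (Z^{t+s}-Z^t)^2\rangle$.

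\textbf{Step 1: reduction to a path-level second moment.} By Jensen's inequality (equivalently, the Fubini argument already used in the proof of Theorem~\ref{thm:limiting_part_fun}),
$$
\langle (Z^{t+s}-Z^t)^2\rangle \le \E\Bigl\langle \bigl(e^{-\frac{\beta^2}{2}(t+s)}e^{\beta\AC_0^{t+s}} - e^{-\frac{\beta^2}{2}t}e^{\beta\AC_0^{t}}\bigr)^2\Bigr\rangle .
$$
So it suffices to control the difference of the unnormalized exponentials along a single path $\eta$.

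\textbf{Step 2: fix a path and compute.} Fix $\eta$ and set $X=e^{-\frac{\beta^2}{2}t}e^{\beta\AC_0^t}$. Then
$$
e^{-\frac{\beta^2}{2}(t+s)}e^{\beta\AC_0^{t+s}} = X\cdot Y, \qquad Y= e^{-\frac{\beta^2}{2}s}e^{\beta\AC_t^{t+s}}.
$$
Because $\AC_t^{t+s}$ only involves Wiener increments over $[t,t+s]$, it is independent of $X$ under $Q$. Hence
$$
\langle (XY-X)^2\rangle = \langle X^2\rangle\,\langle (Y-1)^2\rangle .
$$
We have $\langle X^2\rangle = e^{\beta^2 t}$, since $\AC_0^t$ is Gaussian with variance $t$ for every fixed path. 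For the other factor, conditional on $\eta$ the quantity $\AC_t^{t+s}$ is a sum of independent increments with total variance $s$, regardless of how many jumps occur in $[t,t+s]$. Therefore
$$
\langle (Y-1)^2\rangle = e^{\beta^2 s}-1 .
$$

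\textbf{Step 3: conclude.} Combining the two steps gives
$$
\langle (Z^{t+s}-Z^t)^2\rangle \le e^{\beta^2 t}\,(e^{\beta^2 s}-1) \xrightarrow[s\downarrow 0]{} 0 .
$$
We then choose $s_\eps$ so that $e^{\beta^2 t}(e^{\beta^2 s}-1) < \eps^3$ for all $s\in(0,s_\eps)$. Chebyshev's inequality yields $Q(\lvert Z^{t+s}-Z^t\rvert>\eps)<\eps$, which is the claim.

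The only delicate point is Step 2: verifying the independence and the variance computation when jumps fall inside $[t,t+s]$. Both follow directly from the definition~\eqref{eq:def_action}, because the action decomposes into Wiener increments over disjoint time intervals, and the intervals inside $[t,t+s]$ have lengths summing to $s$. No other obstacle arises, since all bounds hold pathwise and are uniform in $\eta$.
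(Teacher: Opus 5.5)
Your proposal is correct and follows essentially the same argument as the paper. Both use Jensen's inequality to move the square inside the path expectation. Both then use independence of the Wiener increments over $[0,t]$ and $[t,t+s]$ to get the bound $e^{\beta^2 t}(e^{\beta^2 s}-1)$. Both finish by choosing $s_\eps$ so that this bound is below $\eps^3$ and applying Markov/Chebyshev.
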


\begin{proof}
Recall that $\AC_s^t$ is the action defined by~\eqref{eq:def_action}. For $s > 0$, we have
\begin{eqnarray*}
	\langle ( Z^{t+s} - Z^t)^2 \rangle  
	&=& e^{-{\beta^2} t} 
\biggl\langle \left(\E_{0,0} \left[ \exp (\beta \AC_0^t) \left( e^{-\frac{\beta^2}{2} s} \exp(\beta \AC_t^{t+s})-1 \right) \right] \right)^2 \biggr\rangle\\
	&\leq & e^{-{\beta^2} t} \ \E_{0,0} \biggl[ \left\langle \exp(2\beta \AC_0^t)\right\rangle \biggl\langle  \left( e^{-\frac{\beta^2}{2} s}  \exp(\beta \AC_t^{t+s})-1 \right)^2 \biggr\rangle \biggr]\\
	&=& e^{\beta^2 t}( e^{\beta^2 s} -1 ).
\end{eqnarray*}
Let $s_\eps>0$ be so small that $e^{\beta^2 t}( e^{\beta^2 s} -1 ) < \eps^3$ for all $s \in (0,s_\eps)$. Then, by Markov's inequality, 
\begin{equation*}
\tag*{\qedhere}
Q(\lvert Z^{t+s} - Z^t \rvert > \eps) \leq \frac{ \langle ( Z^{t+s} - Z^t)^2 \rangle}{\eps^2} < \eps. 
\end{equation*}
\end{proof}

Let $N\in \N$ and $t>0$ such that $tN \in \N$.  We will now represent the partition function $Z_t^N$ from Subsection~\ref{ssec:Talagrand} in a way that mimicks the definition of $Z^t$. Let $\Sc^N = (S^N_i)_{i \in \N_0}$ denote the random walk $\Sc$ from Subsection~\ref{ssec:Talagrand}. The change in notation reflects that we now want to vary $N$. For $s \geq 0$, we define the continuous-time random walk $\eta^N$ by
$$ 
\eta^N_s := S^N_i \; \text{ if $s \in [\tfrac{i}{N}, \tfrac{i+1}{N})$}.
$$ 
 A sample path of $\eta^N$ over the time interval $[0,t)$ is characterized by the number of actual jumps $n^N_t$ (i.e., jumps from a site $x$ to a site $y \neq x$) that occur in $(0,t)$, the embedded discrete-time path $\gamma^N = (\gamma^N_0, \gamma^N_1, \ldots, \gamma^N_{n^N_t})$ on $\Z^d$ such that $\|\gamma^N_j - \gamma^N_{j-1}\|_1 = 1$ for $1 \leq j \leq n^N_t$, and the jump times $0 <  s^N_1 < \ldots < s^N_{n^N_t} < t$, which are of the form $\tfrac{i}{N}$. To keep the definition of the associated action compact, we denote $s_0^N := s$ and $s_{n_t^N +1}^N := t$. To such a sample path of $\eta^N$, we assign the action 
$$ 
\Asc^N_t := \sum_{j=0}^{n^N_t} \left(W^{\gamma^N_j}_{s^N_{j+1}} - W^{\gamma^N_j}_{s^N_j} \right). 
$$ 
Next, we define the probability measure  
\begin{equation*}
g_N \left( \left\{ \frac{k}{N} \right\} \right) := \frac{1}{N+1} \left(\frac{N}{N+1} \right)^{k-1}, \quad k \in \N. 
\end{equation*}
The jump times of $\eta^N$ can be represented as 
\begin{equation*}
s_k^N = \sum_{j=1}^k \tau^N_j, \quad k \in \N_0,  
\end{equation*}
where $(\tau^N_j)_{j \in \N}$ is an i.i.d. sequence of random variables distributed according to $g_N$. As $N \to \infty$, $g_N$ converges weakly to the exponential distribution with intensity $1$. 
 As a result, for any $k \in \N$, $(\tau^N_1, \ldots, \tau^N_k)$ converges weakly to $(\tau_1, \ldots, \tau_k)$ as $N \to \infty$, where $(\tau_j)_{j \in \N}$ is a sequence of independent exponentially distributed random variables with intensity $1$.  
Also note that the partition function $Z_t^N$ from Subsection~\ref{ssec:Talagrand} has the same distribution under $Q^N$ as  
\begin{equation}    \label{eq:Z_N_defi}
e^{-\frac{\beta^2}{2} t} \E_{\tau^N} \E_{\gamma} e^{\beta \Asc^N_t} 
\end{equation}
under $Q$. Here, $\E_{\tau^N}$ denotes expectation with respect to $(\tau^N_j)_{j \in \N}$ and $\E_{\gamma}$ averages with respect to the sample paths of a discrete-time simple symmetric random walk on $\Z^d$.


\begin{lemma}\label{lm:D_small}
Fix $t \in (0,\infty) \cap \Q$. For any $\eps > 0$, there exist $M_\eps, N_{\eps} \in \N$ such that for every $N \in \N$ with $N \geq N_{\eps}$ and $tN \in \N$, one has  
\begin{equation*}
Q \left( e^{-\frac{\beta^2}{2} t} \E_{\tau^N} \left[ \E_{\gamma}\left[ \exp(\beta \Asc_t^N)\right] \id_{n^N_t > M_\eps} \right] > \eps \right) < \eps. 
\end{equation*}
\end{lemma}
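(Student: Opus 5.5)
The plan is to control the expression by its first moment under $Q$ and then apply Markov's inequality. Since the integrand is nonnegative, Fubini gives
\[
\Bigl\langle e^{-\frac{\beta^2}{2} t} \E_{\tau^N} \bigl[ \E_{\gamma}[ e^{\beta \Asc_t^N}] \id_{n^N_t > M} \bigr] \Bigr\rangle
= \E_{\tau^N} \E_{\gamma}\bigl[ e^{-\frac{\beta^2}{2} t} \langle e^{\beta \Asc_t^N}\rangle \id_{n^N_t > M}\bigr].
\]
For a fixed path of $\eta^N$, the action $\Asc_t^N$ is a sum of Brownian increments over the disjoint intervals $[s^N_j, s^N_{j+1})$, which partition $[0,t)$. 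Because of this disjointness the increments are independent, even when the same site is revisited. Hence $\Asc_t^N$ is centered Gaussian with variance exactly $t$, and so $e^{-\frac{\beta^2}{2} t}\langle e^{\beta \Asc_t^N}\rangle = 1$ for every path.

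The expectation above therefore equals the probability that $n^N_t > M$ under the law of the jump times $\tau^N$. (The indicator $\id_{n^N_t>M}$ depends only on $\tau^N$.) Markov's inequality then gives
\[
Q(\cdot > \eps) \le \eps^{-1}\, \Pp(n^N_t > M).
\]
It thus suffices to find $M_\eps$ and $N_\eps$ such that $\Pp(n^N_t > M_\eps) < \eps^2$ for all $N \ge N_\eps$ with $tN \in \N$.

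Next I bound $\Pp(n^N_t > M)$ uniformly in $N$. Under the description of $\eta^N$ through $g_N$, the jump times lie on the grid $\{i/N\}$. At each of the $tN$ steps a jump occurs independently with probability $1/(N+1)$ (see \eqref{eq:trans_prob_dis}). So $n^N_t$ is (essentially) Binomial$(tN, 1/(N+1))$, up to an off-by-one convention at the endpoints that is harmless. Its mean is at most $t$, so Markov's inequality gives $\Pp(n^N_t > M) \le t/M$ uniformly in $N$. Choosing $M_\eps > t/\eps^2$ finishes the proof, and $N_\eps$ can in fact be taken to be $1$.

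If one prefers to argue through weak convergence to the Poisson law, the same conclusion follows from a Chernoff bound. I do not expect a real obstacle here. The only point needing care is to check that the increments in $\Asc^N_t$ are attached to disjoint time intervals. That holds by construction, and it is what makes the normalized exponential moment equal to $1$ regardless of self-intersections of $\gamma^N$.
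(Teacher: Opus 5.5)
Your proof is correct. The first half matches the paper's proof: the paper also uses the identity that the $Q$-mean of the truncated quantity equals $\Psf_{\tau^N}(n^N_t > M)$, followed by Markov's inequality. You additionally justify that identity, which the paper states without comment: the Brownian increments sit on disjoint intervals partitioning $[0,t)$, so $\Asc^N_t\sim\mathcal N(0,t)$ for every path.

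The difference is in how the jump-count tail is controlled uniformly in $N$. The paper uses the weak convergence $g_N\Rightarrow\mathrm{Exp}(1)$ to get $\Psf_{\tau^N}(n^N_t>M)\to\Psf(n_t>M)$. It then chooses $M_\eps$ from the Poisson tail and takes $N_\eps$ from this limit. You instead observe that actual jumps occur independently, with probability $1/(N+1)$, at the grid points $i/N$ for $1\le i\le tN-1$. Hence $n^N_t$ is exactly Binomial$(tN-1,1/(N+1))$, so $\E\,n^N_t<t$ and $\Psf(n^N_t>M)\le t/M$ for every $N$.

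Your route is more elementary and quantitative. It gives an explicit $M_\eps$ (any integer exceeding $t/\eps^2$) and $N_\eps=1$. It also avoids the step the paper leaves tacit, namely that $\{n_t>M\}=\{s_{M+1}<t\}$ is a continuity set for the Gamma-distributed limit of $s^N_{M+1}$. The paper's route is softer and non-explicit, but it fits the weak-convergence and Portmanteau framework it reuses immediately afterwards in the proof of Theorem~\ref{thm:continuous_Talagrand}. There, your $N_\eps=1$ would work equally well.
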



\begin{proof}
For any $M, N \in \N$ with $tN \in \N$, we have 
\begin{equation}   \label{eq:exp_P} 
\left\langle e^{-\frac{\beta^2}{2} t} \E_{\tau^N} \left[\E_{\gamma}\left[ \exp(\beta \Asc_t^N)\right] \id_{n^N_t > M} \right]\right\rangle = \Psf_{\tau^N}(n^N_t > M). 
\end{equation} 
Recall from Section~\ref{sec:setting} that $n_{t}$ is the number of jumps that occur within the finite time interval $(0,t)$ for a sample path of the continuous-time simple symmetric random walk $\eta$. By weak convergence,
$$
\lim_{N \to \infty} \Psf_{\tau^N} \big(n^N_t > M \big) = \Psf\big( n_{t} > M \big).  
$$
Choose $M_\eps \in \N$ so large that 
$$ 
\Psf\Big(n_{t} > M_\eps\Big) < \frac{\eps^2}{2}. 
$$ 
Then there exists $N_{\eps} \in \N$ such that for every $N \in \N$ with $N \geq N_{\eps}$ and $tN \in \N$,   
\begin{equation*}
\Psf_{\tau^N}(n^N_t > M_{\eps}) < \eps^2, 
\end{equation*}
and we can conclude using~\eqref{eq:exp_P} and Markov's inequality. 
\end{proof}

\begin{proof}[Proof of Theorem~\ref{thm:continuous_Talagrand}]  
We recall our standing assumption 
$$
\beta^2 < \inf_N (N \ln(1/q)).
$$
By Theorem~\ref{thm:gci_discrete}, there exists $c > 0$ such that for all $N \in \N$ and $t > 0$ with $tN \in \N$, 
\begin{equation}    \label{eq:continuous_Talagrand} 
Q^N(Z^N_t < e^{-u}) \leq c e^{-u^2/c}, \quad \forall u > 0. 
\end{equation} 
Let us fix $t > 0$ and $\eps > 0$. By Lemma~\ref{lm:continuity_of_Zt}, there is $s_{\eps} > 0$ such that 
$$
Q(\lvert Z^{t+s} - Z^t \rvert > \eps) < \eps, \quad \forall s \in (0, s_{\eps}). 
$$
Let $s\in (0,s_{\eps})$ such that $t+s \in \Q$. For $u > 0$, one has 
\begin{align}
Q\big( Z^t < e^{-u} \big) 
\notag	&= Q\Big(Z^t < e^{-u}; \lvert Z^{t+s} - Z^t \rvert > \eps) + Q(Z^t < e^{-u}; \; \lvert Z^{t+s} - Z^t \rvert \leq \eps\Big) \\ 
	&\leq Q\Big( \lvert Z^{t+s} - Z^t \rvert > \eps \Big) + Q\Big( Z^{t+s} < e^{-u} + \eps \Big) \notag \\
    &< \eps + Q\Big( Z^{t+s} < e^{-w_{\eps}} \Big), \label{eq:QZt_first_bound}
\end{align}
where $w_{\eps} := -\ln(\eps+e^{-u})$. Since $t + s \in \Q$, Lemma~\ref{lm:D_small} implies that there exist $M_{\eps}, N_{\eps} \in \N$ such that 
\begin{equation}    \label{eq:D_small}
Q \biggl(e^{-\frac{\beta^2}{2} (t+s)} \E_{\tau^N} \left[ \E_{\gamma} \left[ \exp(\beta \AC_{t+s}^N) \right] \id_{n^N_{t+s} > M_{\eps}} \right] > \eps \biggr) < \eps 
\end{equation} 
for every $N \in \N$ with $N \geq N_{\eps}$ and $(t+s) N \in \N$. 
Consider the event 
\begin{equation*}
A := \biggl\{ \omega \, : \, e^{ - \frac{\beta^2}{2} (t+s)} \E_\tau \left[  \E_{\gamma} \left[  \exp(\beta \Asc_0^{t+s})  \right] \id_{ n_{t+s} \leq M_\eps} \right] < e^{- w_\eps} \biggr\},
\end{equation*}
where $\E_{\tau}$ denotes expectation with respect to $(\tau_j)_{j \in \N}$, an i.i.d. sequence of exponentially distributed random variables with intensity $1$. 

As $\big \{ \omega \, : \,  Z^{t+s}(\omega) < e^{-w_{\eps}} \big\} \subset A$, we have 
\begin{equation} \label{eq:bounded_by_QA}
Q\big(Z^{t+s} < e^{-w_{\eps}}\big) \leq Q(A). 
\end{equation}
For a fixed realization $\omega$ of the disorder $\big (W^z \big)_{z \in \Z^d}$ and for $k \in \N_0$, we define the map 
\begin{align*}
&\varphi^{\omega}_k(t_1, \ldots, t_{k+1}) 
:= \\
&\id_T e^{-\frac{\beta^2}{2} (t+s)} 
 \E_{\gamma} \biggl[\exp \biggl(\beta \sum_{i=0}^{k-1} \left( W^{\gamma_i}_{\sum_{j=1}^{i+1} t_j} - W^{\gamma_i}_{\sum_{j=1}^i t_j} \right)+\beta \left( W^{\gamma_k}_{t+s} - W^{\gamma_k}_{\sum_{j=1}^k t_j} \right) \biggr) \biggr],  
\end{align*}
where $T :=(0,\infty)^{k+1} \cap \{\sum_{j=1}^k t_j < t+s \leq \sum_{j=1}^{k+1} t_j\}$. Path continuity of Brownian motion implies that for every $k \in \N_0$, the function $\varphi_k^{\omega}$ is bounded for $Q$-almost every $\omega$. 
Moreover, $Q$-almost surely, the set of discontinuities of $\varphi_k^{\omega}$ has measure zero with respect to the law of $(\tau_1,\ldots,\tau_{k+1})$. 
Thus, by the Portmanteau Theorem (see, e.g. \cite[Theorem 13.16]{klenkeprobability}), $Q$-almost surely,   
\begin{equation*}
\lim_{N \to \infty} \E_{\tau^N} \left[ \varphi_k^{\omega}(\tau_1^N, \ldots, \tau^N_{k+1}) \right] = \E_{\tau} \left[ \varphi^{\omega}_k(\tau_1, \ldots, \tau_{k+1}) \right], \quad  \forall k \in \N_0. 
\end{equation*}
In particular, 
$$ 
Q \biggl(\bigcap_{k=0}^{M_\eps} \left\{\omega: \lim_{N \to \infty} \E_{\tau^N} [ \varphi^{\omega}_k(\tau_1^N, \ldots, \tau_{k+1}^N)] = \E_{\tau}[\varphi^{\omega}_k(\tau_1, \ldots, \tau_{k+1})] \right\} \biggr) = 1. 
$$
As 
\begin{align*}
& \bigcap_{k=0}^{M_\eps} \left\{\omega: \lim_{N \to \infty} \E_{\tau^N} \left[ \varphi^{\omega}_k(\tau_1^N, \ldots, \tau^N_{k+1}) \right] = \E_{\tau} \left[ \varphi^{\omega}_k(\tau_1, \ldots, \tau_{k+1}) \right] \right\}\\
\subset& \bigcap_{k=0}^{M_\eps} \bigcup_{N=1}^\infty\bigcap_{j=N}^\infty  
\biggl\{\omega: \left \lvert \E_{\tau^j}[ \varphi^{\omega}_k(\tau_1^j, \ldots, \tau^j_{k+1})] - \E_{\tau}[ \varphi^{\omega}_k(\tau_1, \ldots, \tau_{k+1})] \right\rvert < \frac{\eps}{M_\eps+1} \biggr\},
\end{align*}
there is an integer $\tilde N_\eps \geq N_{\eps}$ such that for all $N\geq \tilde N_\eps$,
\begin{equation*}
Q\left(B_N \right) > 1-\eps,
\end{equation*}
where
 \begin{equation*}
B_N :=
 \bigcap_{j=N}^\infty \bigcap_{k=0}^{M_\eps} \biggl\{ \omega: 
\left \lvert \E_{\tau^j}[\varphi_k^\omega (\tau_1^j,\ldots, \tau_{k+1}^j)]  - \E_{\tau}[\varphi_k^\omega (\tau_1,\ldots, \tau_{k+1}) ] \right\rvert < \frac{\eps}{M_\eps+1} \biggr\}.
\end{equation*}

Consider $N> \tilde N_\eps$ such that $(t+s)N\in\N$. 
Since $Q(B_N^c) < \eps$, we have
\begin{equation}\label{eq:bound_on_QA}
Q(A) = Q(A\cap B_N) + Q(A \cap B_N^c) \leq Q(A \cap B_N) + \eps. 
\end{equation}
For $\omega \in B_N$, 
\begin{align*}
&e^{-\frac{\beta^2}{2} (t+s)} \biggl | \E_\tau \biggl[ \id_{\{n_{t+s}\leq M_\eps)\}} \E_{\gamma} \left[e^{\beta \AC_0^{t+s}} \right] \biggr] 
-
\E_{\tau^N} \biggl[ \id_{n^N_{t+s}\leq M_\eps} \E_{\gamma} \left[e^{\beta \AC_{t+s}^N} \right] \biggr]  \biggr |\\
	\leq& \sum_{k=0}^{M_\eps} \Big\lvert \E_\tau[ \varphi_k^{\omega}(\tau_1, \ldots, \tau_{k+1})] - \E_{\tau^N}[\varphi_k^{\omega}(\tau_1^N, \ldots, \tau^N_{k+1}) ] \Big\rvert < (M_\eps + 1)\frac{\eps}{M_\eps +1}	=\eps. 
\end{align*}
Accordingly, 
\begin{equation*}
A\cap B_N \subset  C := \biggl\{ e^{-\frac{\beta^2}{2} (t+s)}  \E_{\tau^N}\biggl[ \id_{n^N_{t+s}\leq M_\eps} \E_{\gamma} \left[e^{\beta \AC_{t+s}^N} \right] \biggr] < e^{-v_{\eps}} \biggr\} , 
\end{equation*}
where $v_{\eps} := -\ln(\eps+e^{-w_{\eps}})$.  Let
\begin{equation*}
D := \biggl\{ e^{-\frac{\beta^2}{2} (t+s) } \E_{\tau^N} \biggl[ \id_{n_{t+s}^N > M_\eps} \E_{\gamma}\left[e^{\beta \Asc_{t+s}^N}\right] \biggr] > \eps \biggr\}.
\end{equation*}
Since $N > N_{\eps}$, one obtains with~\eqref{eq:D_small} and~\eqref{eq:Z_N_defi}  
\begin{equation} \label{eq:Q_AcapB}
Q(A \cap B_N) \leq Q(C) = Q(C \cap D) + Q(C \cap D^c) < \eps + Q^N(Z_{t+s}^N < e^{-y_{\delta}}), 
\end{equation}
where $y_{\eps} := -\ln(\eps + e^{-v_{\eps}})$. 
On account of~\eqref{eq:continuous_Talagrand}, one has 
$$ 
Q^N(Z_{t+s}^N < e^{-y_{\eps}}) \leq c e^{- y_{\eps}^2 / c}. 
$$ 
Combining this estimate with the estimates~\eqref{eq:QZt_first_bound},~\eqref{eq:bounded_by_QA},~\eqref{eq:bound_on_QA}, and~\eqref{eq:Q_AcapB}, we have 
\begin{equation*}
Q\big( Z^t < e^{-u} \big) \leq 3 \eps + c e^{- y_{\eps}^2 / c}.
\end{equation*}
Since, in addition, $\lim_{\eps \searrow 0} (3 \eps + c e^{-y_{\eps}^2/c}) = c e^{-u^2/c}$, 
we obtain the desired estimate. 
\end{proof}

\begin{appendix} 

\section{Proofs of Estimates for Transition Probabilities}
\label{ssec:transition_prob_proofs}

\subsection{Proof of Lemma~\ref{lm:lclt_con_time}}

Denote the coordinate components of the continuous-time simple symmetric random walk $\eta$ on $\Z^d$ starting at $0$ by $\eta^{(1)}, \ldots, \eta^{(d)}$ and set $\hat \eta^{(i)}_t \coleq \eta^{(i)}_{dt}$ for $1 \leq i \leq d$.
Then Proposition~1.2.2 in~\cite{Lawler_Limic} implies that $\hat \eta^{(1)}, \ldots, \hat \eta^{(d)}$ are independent continuous-time simple symmetric random walks in dimension $1$ starting at $0$.
According to Theorem~2.5.6 in~\cite{Lawler_Limic}, 
$$ 
\Pp(\hat \eta^{(i)}_t = z) = \frac{1}{\sqrt{2 \pi t}} e^{-z^2/(2t)} \exp \left(O\left( \frac{1}{\sqrt{t}} + \frac{\lvert z \rvert^3}{t^2} \right) \right)
$$ 
for $1 \leq i \leq d$, and for all $t > 0$ and $z \in \Z$ such that $\lvert z \rvert \leq \tfrac{t}{2}$. 
Now, if $z = (z_1, \ldots, z_d) \in \Z^d$ such that $\|z\| \leq \tfrac{t}{2 \sqrt{d}}$, the above equality implies~\eqref{eq:lclt_con_time} and the estimate~\eqref{ineq:lclt_con_time} because
$$
	p_t^{z} = \prod_{i=1}^d \Pp \big(\hat \eta^{(i)}_{\frac{t}{d}} = z_i \big).
$$


    
\subsection{Proof of Lemma~\ref{lm:q_iota}}  

Since $\iota(y,l) > \|y\|_1$ for $\|y\| \leq t^{\sublin}$, $l \in J(t-2t^{\xi_1})$, and $t$ sufficiently large, and since $\iota(y,l) \equiv \|y\|_1$, we have $q^y_{\iota(y,l)} > 0$. For $m \in J(2 t^{\xi_1})$, Lemma~\ref{lm:ratio_estimate} implies the estimate 
\begin{align*}
q^y_{m+l} =& \frac{q^y_{m+l}}{q^y_{\iota(y,l)}} q^y_{\iota(y,l)} \\
\leq& \left(1+O(l^{-\frac{2}{5}}) \right) \exp \left(c \left(\frac{\|y\|}{\iota(y,l)} m + \ln(\iota(y,l)) \frac{m}{\iota(y,l)} \right) \right) q^y_{\iota(y,l)} \lesssim q^y_{\iota(y,l)}.
\end{align*}
For the last estimate, we used the assumption that $\xi_1 < 1 - \sublin$. 


\subsection{Proof of Lemma~\ref{lm:uniform_D_conv}} 
On account of~\eqref{eq:lclt_con_time}, for $y \in \Z^d$ there is a constant $C>0$ such that 
$$
\sum_{n=0}^{\infty} e^{-t} \frac{t^n}{n!} q_n^{y-x} = p_t^{y-x} \geq \exp \left(-C t^{2 \sublin -1} \right), \quad \|x\| \leq t^{\sublin}. 
$$
On the other hand, for $\rho > 0$, 
$$
\exp \left(C t^{2 \sublin -1} \right) \sum_{n \notin J(t)} e^{-t} \frac{t^n}{n!} q_n^{y-x} \lesssim e^{(\rho -1)t} \sum_{n \notin J(t)} \frac{t^n}{n!} \sum_{1 \leq r \leq n+1} r \alpha^r A(t,n,r), 
$$
and 
by~(B1) of Proposition~\ref{prop:key},
the expression on the right tends to $0$ as $t \to \infty$ provided that $\rho$ is small enough. 

\subsection{Proof of Lemma~\ref{lm:p_ratio}} 

Choose the parameter $\Jtconst \in (\tfrac{1}{2},1)$ in the definition of $J(t)$ so close to $1$ that $4 (1-\Jtconst) < \beta^2$, and let $\tilde \sublin \in (\sublin,1)$. We have  
$$ 
\limsup_{t \to \infty} \sup_{\|y\| \leq t^{\sublin}} \Dc_{t-2t^{\xi}}(y,0) \leq \lim_{t \to \infty} \sup_{\|y\| \leq (t-2t^{\xi})^{\tilde \sublin}} \Dc_{t-2t^{\xi}}(y,0),  
$$ 
and the righthand side tends to $0$ as $t \to \infty$ by virtue of Lemma~\ref{lm:uniform_D_conv}. Thus, 
\begin{align*}
\frac{p^y_{t-2t^{\xi}}}{p^y_t} =& \frac{\sum_{n \in J(t-2 t^{\xi})} e^{2t^{\xi} - t} \frac{(t-2t^{\xi})^n}{n!} q^y_n (1+\Dc_{t-2t^{\xi}}(y,0))}{\sum_{n=0}^{\infty} e^{-t} \frac{t^n}{n!} q^y_n} \\
\leq& (1+\Dc_{t-2t^{\xi}}(y,0)) e^{2t^{\xi}} \frac{\sum_{n \in J(t-2t^{\xi})} \frac{(t-2t^{\xi})^n}{n!} q^y_n}{\sum_{n \in J(t-2t^{\xi})} \frac{t^n}{n!} q^y_n} \\
\lesssim& e^{2t^{\xi}} \frac{\sum_{n \in J(t-2t^{\xi})} \frac{(t-2t^{\xi})^n}{n!} q^y_n}{\sum_{n \in J(t-2t^{\xi})} \frac{t^n}{n!} q^y_n}. 
\end{align*}
For $n \in J(t-2t^{\xi})$, 
$$ 
\left(\frac{t-2t^{\xi}}{t} \right)^n = (1-2t^{\xi -1})^n \leq (1-2t^{\xi-1})^{\Jtconst (t-2t^{\xi})} \leq (1-2t^{\xi-1})^{(2 \Jtconst -1)t} \leq e^{-2(2 \Jtconst-1) t^{\xi}}. 
$$ 
Accordingly, 
$$ 
\frac{p^y_{t-2t^{\xi}}}{p^y_t} \lesssim  e^{4 (1-\Jtconst) t^{\xi}} \lesssim e^{\beta^2 t^{\xi}}. 
$$

\section{Proofs of Some Preliminary Estimates}
\label{ssec:key_lemma_estimates_proofs}


\subsection{Proof of Lemma~\ref{lm:expected_value}}
Let us first consider the special case $l=0$. If $l=0$, then $r=1$, and 
$$
A(t,0,1) = \E \big[ e^{\beta^2 t} \big| n_t = 0 \big] = e^{\beta^2 t} = \beta^{-2} t^{-1} e^{\beta^2 t} \beta^2 t \lesssim e^{\beta^2 t} \beta^2 t. 
$$
Now, assume that $l > 0$. For $t > 0$, define the $l$-dimensional simplex 
\begin{flalign}    \label{eq:def_simplex} 
	\Delta(t,l) 
		&\coleq \left\{(t_1, \ldots, t_l) \in \R^l_+: t_1 + \ldots + t_l < t \right\}. 
\intertext{and for $1 \leq r \leq l+1$ the integral}
	\Ic(t,l,r) 
		&\coleq \int_{\Delta(t,l)} \prod_{j=1}^{r-2} \left( e^{\beta^2 t_j} - 1 \right) e^{\beta^2 (t_{r-1} + t_r)} \ d t_1 \cdots d t_l,  
\label{190921092510}
\end{flalign}
where $t_0 \coleq 0$ and $t_{l+1} \coleq t - (t_1 + \ldots + t_l)$. 
Then we have the following identity:
\begin{equation}\label{eq:A_I_identity}
A(t,l,r) = \frac{l!}{t^l} \Ic(t,l,r). 
\end{equation}
We claim that if $\beta < 1$ and $t \geq \max \{ \beta^{-2}; 2 \}$, then
\begin{equation}    \label{eq:si_2}
	\Ic(t,l,r) 
		\leq e^{\beta^2 t} \beta^{2 (r-3)} 
			\frac{t^{l+r-2}}{(l+r-2)!},
			\qquad \forall l \in \N, \ 1 \leq r \leq l + 1. 
\end{equation}
This estimate together with~\eqref{eq:A_I_identity} implies~\eqref{eq:A_bound}.

To complete the proof of Lemma~\ref{lm:expected_value}, it remains to prove the estimate~\eqref{eq:si_2}. In several places, we will use the identity 
\begin{equation}     \label{eq:integral}
\int_0^t (t - s)^k \ s^n \ ds = \frac{n! k!}{(n+k+1)!} t^{n +k + 1}
\end{equation}
for $t > 0$ and $n, k \in \N_0$.  
Consider three separate cases: $r = l +1$, $r = l$, and $r < l$.
In the case $r = l$, we use the easily verifiable identity
\begin{equation}\label{190920230435}
\Ic(t,l+1,l+1) = \int_0^t \left( e^{\beta^2 s} -1 \right) \Ic(t-s,l,l) \ ds,
\qquad l \geq 2,
\end{equation}
and argue by induction on $l$ with bases cases $l = 1,2$:
\begin{flalign*}
	\Ic (t, 1,1) &= e^{\beta^2 t} \beta^{-2} - \beta^{-2} \leq e^{\beta^2 t} \beta^{-4},
\\	\Ic (t,2,2) &= \beta^{-2} (e^{\beta^2 t} t - \beta^{-2} e^{\beta^2 t} + \beta^{-2}) \leq \beta^{-2} e^{\beta^2 t} t \leq \beta^{-2} e^{\beta^2 t} \frac{t^2}{2}.
\end{flalign*}
By the induction hypothesis, the right-hand side of~\eqref{190920230435} is dominated by  
\begin{align*}
	& e^{\beta^2 t} \beta^{2(l-3)} \frac{1}{(2(l-1))!} \int_0^t (t-s)^{2(l-1)} \left(1 - e^{-\beta^2 s} \right) \ ds
	\\ &\leq  e^{\beta^2 t} \beta^{2(l-2)} \frac{1}{(2(l-1))!} \int_0^t (t-s)^{2(l-1)} s \ ds 
	= e^{\beta^2 t} \beta^{2(l-2)} \frac{t^{2l}}{(2l)!},
\end{align*}
which implies~\eqref{eq:si_2} for $r = l$.

The case $r = l+1$ is similar: In the base case $l=1$, 
$$
\Ic(t,1,2) = e^{\beta^2 t} t \leq e^{\beta^2 t} \beta^{-2} t. 
$$
And in the induction step, for $l \geq 1$, 
\begin{align*}
\Ic(t,l+1, l+2) =& \int_0^t \left(e^{\beta^2 s} - 1 \right) \Ic(t-s,l,l+1) \ ds \\
\leq& e^{\beta^2 t} \beta^{2 (l-2)}  \frac{1}{(2l-1)!}  \int_0^t (t-s)^{2l-1} \left(1 - e^{-\beta^2 s} \right)  \ ds \\
\leq& e^{\beta^2 t} \beta^{2 (l-1)} \frac{t^{2l+1}}{(2l+1)!}.  
\end{align*}

In the case $r < l$, we write
\begin{equation*}            
\Ic(t,l,r) = \int_{\Delta(t,l-r)} \Ic \biggl(t - \sum_{j=r+1}^l t_j, r,r \biggr) \  d t_{r+1} \ldots d t_l. 
\end{equation*}
The right-hand side is dominated by 
\begin{equation}      \label{eq:upper_bound_int}
\frac{\beta^{2(r-3)}}{(2(r-1))!} \int_{\Delta(t,l-r)} \biggl(t - \sum_{j = r+1}^l t_j \biggr)^{2(r-1)} \ e^{\beta^2 (t - \sum_{j=r+1}^l t_j)} \ d t_{r+1} \ldots d t_l.  
\end{equation}
If $r = l - 1$, the expression above becomes  
\begin{equation*}
\frac{\beta^{2 (l-4)}}{(2 (l-2))!} \int_0^t  (t - s)^{2(l - 2)} \ e^{\beta^2 (t-s)} \ ds \leq e^{\beta^2 t} \beta^{2(l-4)} \frac{t^{2l-3}}{(2l - 3)!}.
\end{equation*}
If $r < l-1$, we use the change of variables $s = \sum_{j=r+1}^l t_j$, $s_1 = t_{r+2}, s_2 = t_{r+3}, \ldots, s_{l-r-1} = t_l$ to convert the expression in~\eqref{eq:upper_bound_int} into  
\begin{align}      
& \frac{\beta^{2(r-3)}}{(2(r-1))!} \int_0^t \int_{\Delta(s,l-r-1)} (t - s)^{2(r-1)} \ e^{\beta^2 (t - s)} \ d s_1 \ldots \ d s_{l-r-1} \ ds  \notag \\
=& \frac{\beta^{2(r-3)}}{(2(r-1))! (l - r - 1)!} \int_0^t (t - s)^{2(r-1)} \ e^{\beta^2 (t-s)} \ s^{l - r - 1} \ ds. \label{eq:si_4}
\end{align} 
With the identity in~\eqref{eq:integral}, we see that the expression in the second line of~\eqref{eq:si_4} is dominated by
\begin{equation*}
e^{\beta^2 t} \beta^{2(r-3)} \frac{t^{l+r-2}}{(l+r-2)!}.
\end{equation*}
%
%


\subsection{Proof of Lemma~\ref{lm:precise_I_bounds}} 
Recall that $\Jtconst \in (\tfrac{1}{2}, 1)$, $\delta \in (0,1)$, $\nuzero \in (0, \tfrac{\delta}{2})$, $\kappa \in (0, \delta)$, and $\hat \kappa \in (0, \Jtconst)$. To prove~\eqref{190921111055}, let $t$ be so large that $\max\{2 \nuzero t + 1; \kappa t\} < \lfloor \delta t \rfloor$.
For $0 \leq l \leq \nuzero t$ and $1 \leq r \leq l+1$, one has $l+r \leq 2l+1 \leq 2 \nuzero t + 1 < \lfloor \delta t \rfloor < t$.
Thus, 
$$
	\frac{t^{l+r}}{(l+r)!} \leq \frac{t^{\lfloor \delta t \rfloor}}{\lfloor \delta t \rfloor!} \leq \frac{1}{\sqrt{2 \pi \kappa t}} \left(\frac{e}{\kappa} \right)^{\delta t}, 
$$
using Sterling's formula and $\kappa t < \lfloor \delta t \rfloor$. 

To prove~\eqref{190921111101}, note that for $t$ so large that $\hat \kappa t < \lfloor \nu t \rfloor$ and for $\nuzero t < l \leq \nu t$, 
$$
\frac{t^l}{l!} \leq \frac{t^{\lfloor \nu t \rfloor}}{\lfloor \nu t \rfloor !} \leq \frac{1}{\sqrt{2 \pi \hat \kappa t}} \left(\frac{e}{\hat \kappa} \right)^{\nu t}. 
$$

\subsection{Proof of Lemma~\ref{lm:A_estimate}} 
For $t > 0$, $l \in \N$, and $1 \leq r \leq l$, define 
$$
\rho^t_{l,r}(t_1, \ldots, t_r) := t^{-r} \prod_{j=0}^{r-1} (l-j) \biggl(1 - \frac{1}{t} \sum_{j=1}^r t_j \biggr)^{l-r}, \quad (t_1, \ldots, t_r) \in \R^r_+, 
$$
and notice that 
\begin{equation}     \label{190921092451}
A(t,l,r) = \int_{\Delta(t,r)} \prod_{j=1}^{r-2} \left(e^{\beta^2 t_j} - 1 \right) e^{\beta^2 (t_{r-1} + t_r)} \rho^t_{l,r}(t_1, \ldots, t_r) \ d t_1 \ldots \ d t_r, 
\end{equation}
where $\Delta(t,r)$ was defined in~\eqref{eq:def_simplex}. For $\Jtconst t < l < (2-\Jtconst) t$ and $1 \leq r < \nuone l$, the function $\rho^t_{l,r}$ can be bounded as follows:
\begin{equation*}
\rho^t_{l,r}(t_1, \ldots, t_r) 
	\leq \left(\frac{l}{t} \right)^r \prod_{j=1}^{r} e^{-\frac{l- r}{t} t_j}
	\leq (2-\Jtconst)^r \prod_{j=1}^r e^{-\Jtconst (1- \nuone) t_j}.
\end{equation*}
As a result, the integrand in~\eqref{190921092451} can be bounded by the following expression:
$$
	(2-\Jtconst)^r
	\prod_{j=1}^{r-2}
		\left(e^{(\beta^2 - \Jtconst (1-\nuone)) t_j} - e^{-\Jtconst (1-\nuone)t_j} \right)
		e^{(\beta^2 - \Jtconst (1-\nuone)) t_{r-1}}
		e^{(\beta^2 - \Jtconst (1-\nuone)) t_r}.
$$
Then, if we integrate in~\eqref{190921092451} over the larger domain $\R^r_+$, we can rewrite the integral as a product of integrals over $\R^r_+$.
Therefore, we can bound $A (t, l, r)$ by the following expression: 
$$
	(2-\Jtconst)^r
			\left(
		\int_0^{\infty} \left(e^{(\beta^2 - \Jtconst (1-\nuone)) s} - e^{-\Jtconst (1-\nuone) s} \right) \, ds \right)^{r-2}
		\left( \int_0^{\infty} e^{(\beta^2 - \Jtconst (1-\nuone)) s} \, ds \right)^2,
$$
which is less than a constant times $(2-\Jtconst)^r \left(\frac{\beta^2}{(1-\nuone) (2-\Jtconst) ((2-\Jtconst) (1-\nuone) - \beta^2)}\right)^r = \psi(\beta)^r$.

%
%

\subsection{Proof of Lemma~\ref{lm:tail_exponential}}
Using the tail estimate
$$
\sum_{n=k}^{\infty} \frac{t^n}{n!} \leq \frac{t^k}{k!} \sum_{n=k}^{\infty} \left(\frac{t}{k} \right)^{n-k} = \frac{t^k}{k!} \frac{1}{1 - \frac{t}{k}}, \quad k > t 
$$
and Stirling's formula, we have 
\begin{equation*}
e^{(\lambda - 1) t} \sum_{n = f(t)}^{\infty} \frac{t^n}{n!} \lesssim t^{-\frac{1}{2}} e^{(\lambda - 1) t} \left(\frac{e}{\rho_1} \right)^{\rho_2 t}.
\end{equation*}  
From this we obtain the desired convergence for $\lambda < 1 - \rho_2 (1-\ln(\rho_1))$.

\bigskip 

\bigskip

\section{Proof of Lemma~\ref{lm:building_blocks}}
\label{ssec:building_blocks_proofs}

Let $\sublin \in (\tfrac{3}{4},1)$, $\sigma \in (0, \tfrac{1}{2} (1-\sublin))$, $\mu \in (-\infty,-1)$, $\Jtconst \in (\tfrac{1}{2},1)$, and $\nuone \in (\Jtconst^{-1} - 1,1)$.
Let $\chi_1(t)$ be the smallest even integer $\geq t (1-t^{-\sigma})$, and let $\chi_2(t)$ be the largest odd integer $\leq t (1+t^{-\sigma})$. Recall that 
$$
J(t) = \left\{ n \in \N: \left \lvert \frac{n}{t} - 1 \right \rvert < 1 - \Jtconst \right\}, 
$$
and set
$$
K(t) := \left\{l \in \N: \chi_1(t) \leq l \leq \chi_2(t) \right\}. 
$$
We will derive Lemma~\ref{lm:building_blocks} from the following proposition.

\begin{proposition}     \label{prop:key}
For $\beta > 0$ sufficiently small, the following statements hold. 
\begin{enumerate}
\item[\rm(C1)] There is $\tilde \rho > 0$ such that for every $\rho \in (0,\tilde \rho]$, $c \geq 0$, 
$$
\lim_{t \to \infty} e^{(\rho -1) t} \sum_{l \notin J(t)} e^{c t^{\mu} l} \frac{t^l}{l!} \sum_{1 \leq r \leq l+1} r \alpha^r A(t,l,r) = 0. 
$$
\item[\rm(C2)] There is $\tilde \rho > 0$ such that for every $\rho \in (0, \tilde \rho]$,
$$
\lim_{t \to \infty} e^{(\rho - 1) t} \sum_{l \in J(t)} \frac{t^l}{l!} \sum_{\nuone l \leq r \leq l+1} r \alpha^r A(t,l,r) = 0. 
$$
\item[\rm(C3)] We have  
$$
\lim_{t \to \infty}  e^{t^{\sublin}} e^{-t} \sum_{l \in J(t) \setminus K(t)}  \frac{t^l}{l!} \sum_{1 \leq r \leq l+1} r \alpha^r A(t,l,r) = 0.
$$
\end{enumerate}
\end{proposition}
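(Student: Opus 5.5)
The plan is to bound the inner $r$-sums uniformly using only the two estimates on $A$ already available: Lemma~\ref{lm:expected_value} in general, and Lemma~\ref{lm:A_estimate} when $l\in J(t)$ and $r<\nuone l$. After that, each statement reduces to a Poisson-type tail computation, which I would handle with Lemma~\ref{lm:precise_I_bounds} and Lemma~\ref{lm:tail_exponential}. Throughout, $\beta$ is taken small enough that $\alpha\psi(\beta)<1$ and $\beta^2\alpha<1$; in addition $\beta^2$ must be small compared with the exponential gaps produced below.

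\emph{(C1).} By Lemma~\ref{lm:expected_value},
\[
\sum_{r} r\alpha^r A(t,l,r)\lesssim (l+1)^3 e^{\beta^2 t}\sum_{r}(\beta^2\alpha)^r\frac{t^r}{(l+1)\cdots(l+r)} .
\]
Since $\mu<-1$, we have $ct^{\mu}l\le c$ whenever $l\lesssim t$. I would split $l\notin J(t)$ into three ranges, as in the proof of the Key Lemma.
\begin{itemize}
\item $l\le\nuzero t$: bound $\frac{t^l}{l!}\cdot\frac{t^r}{(l+1)\cdots(l+r)}=\frac{t^{l+r}}{(l+r)!}$ and apply \eqref{190921111055}. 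This gives decay $e^{(\rho+\beta^2+\nuzero\cdot O(1)-1)t}(e/\kappa)^{\delta t}\to0$, because the $r$-sum has at most $l+1$ terms and $(\beta^2\alpha)^r\le 1$.
\item $\nuzero t<l\le\nu t$: use $t^r/((l+1)\cdots(l+r))\le\nuzero^{-r}$ together with $\beta^2\alpha/\nuzero<1$, then apply \eqref{190921111101}.
\item $l\ge(2-\nu)t$: use the ratio bound $t/l<1$. Here $e^{ct^{\mu}l}$ is harmless, since $\sum_l e^{ct^\mu l}(l+1)^3t^l/l!$ is dominated by $e^{te^{ct^\mu}}\mathrm{poly}$ and $te^{ct^{\mu}}-t\to0$. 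For the tail itself I would apply Lemma~\ref{lm:tail_exponential}, which leaves room $\rho+\beta^2<1-\rho_2(1-\ln\rho_1)$.
\end{itemize}
The constant $\tilde\rho$ is then taken to be the smallest of the three margins.

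\emph{(C2).} For $l\in J(t)$ and $r\ge\nuone l$ we have $l+r\ge\nu(1+\nuone)t>t$. Lemma~\ref{lm:expected_value} then bounds the term by $(l+1)^3e^{\beta^2t}\,t^{l+r}/(l+r)!$, and the resulting sum over $l$ and $r$ is at most $\mathrm{poly}(t)\sum_{k\ge\nu(1+\nuone)t}t^k/k!$. Lemma~\ref{lm:tail_exponential}, applied with $\tilde\rho_1,\tilde\rho_2$, gives decay, provided $\rho+\beta^2<1-\tilde\rho_2(1-\ln\tilde\rho_1)$. This is exactly the argument used for $Y^2_l$ in the Key Lemma.

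\emph{(C3).} For $l\in J(t)$ I would split the $r$-sum at $\nuone l$. The part $r\ge\nuone l$ is exponentially small by (C2), which more than absorbs the factor $e^{t^{\sublin}}$. For the part $r<\nuone l$, Lemma~\ref{lm:A_estimate} bounds the $r$-sum by $\sum r(\alpha\psi)^r<\infty$. It remains to show
\[
e^{t^{\sublin}}\,\Pp\bigl(|n_t-t|>t^{1-\sigma}\bigr)\to0
\]
for a Poisson variable $n_t$ of mean $t$ (the parity adjustments in $\chi_1,\chi_2$ shift $K(t)$ by $O(1)$ and do not matter). By Chernoff or Stirling, this probability is at most $\exp(-ct^{1-2\sigma})$. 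Since $\sigma<\tfrac12(1-\sublin)$ gives $1-2\sigma>\sublin$, the product tends to $0$.

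The main obstacle is the bookkeeping in (C1): making one choice of $\beta$ and $\tilde\rho$ compatible with all three ranges, and checking that the weight $e^{ct^\mu l}$ does not destroy the super-linear tail estimate for large $l$. The latter requires redoing the Stirling bound of Lemma~\ref{lm:tail_exponential} with $t$ replaced by $te^{ct^\mu}=t+o(1)$.
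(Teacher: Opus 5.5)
Your proposal is correct and follows the paper's proof essentially step by step. For (C1) you use the same three-range split with Lemmas~\ref{lm:expected_value}, \ref{lm:precise_I_bounds} and \ref{lm:tail_exponential}, including replacing $t$ by $te^{ct^{\mu}}=t+o(1)$ in the large-$l$ tail; for (C2) you use the same $l+r>\nu(1+\nuone)t$ tail argument; and for (C3) the same split at $r=\nuone l$ via (C2) and Lemma~\ref{lm:A_estimate}. The only real difference is the final Poisson tail in (C3): your Chernoff bound $\Pp(|n_t-t|>t^{1-\chi})\le 2e^{-ct^{1-2\chi}}$, where $\chi$ is the exponent defining $K(t)$, replaces the paper's lengthy Stirling/L'Hospital computation and needs exactly the same condition $1-2\chi>\sublin$.
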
  

%
%

\begin{proof}[Proof of Proposition~\ref{prop:key}]
 Let us first show~(C1). Fix $\delta \in (0,1)$, $\kappa \in \left(e^{1-\frac{1}{\delta}}, \delta \right)$, $\hat \kappa \in \left(e^{1 - \frac{1}{\Jtconst}}, \Jtconst \right)$, and $\nuzero \in (0,\tfrac{\delta}{2})$. Let $\beta > 0$ be so small that 
$$
e^{1-\beta^2} > \left(\frac{e}{\kappa} \right)^{\delta} \vee \left(\frac{e}{\hat \kappa} \right)^{\Jtconst}, \quad \beta^2 < \nuzero \alpha^{-1}. 
$$
Then, let $\tilde \rho > 0$ be so small that 
$$
e^{1 - \beta^2 - \tilde \rho} > \left(\frac{e}{\kappa} \right)^{\delta} \vee \left(\frac{e}{\hat \kappa} \right)^{\Jtconst}. 
$$
For fixed $\rho \in (0, \tilde \rho]$ and $c \geq 0$, we decompose 
$$
e^{(\rho-1) t} \sum_{l \notin J(t)} e^{c t^{\mu} l} \frac{t^l}{l!} \sum_{1 \leq r \leq l+1} r \alpha^r A(t,l,r) 
$$
into 
\begin{equation}   \label{eq:three_sums}      
\sum_{0 \leq l \leq \nuzero t} Y_l(t) + \sum_{\nuzero t < l \leq \Jtconst t} Y_l(t) + \sum_{l \geq (2-\Jtconst) t} Y_l(t), 
\end{equation}
where
$$
Y_l(t) = e^{(\rho-1)t} \frac{t^l}{l!} e^{c t^{\mu} l} \sum_{1 \leq r \leq l+1} r \alpha^r A(t,l,r). 
$$
For $0 \leq l \leq \Jtconst t$, 
$$
e^{c t^{\mu} l} \leq e^{c \Jtconst t^{\mu + 1}}. 
$$
As $\mu + 1 < 0$, $\exp(c \Jtconst t^{\mu +1})$ stays bounded, so we disregard the factors $\exp(c t^{\mu} l)$ in the first two sums in~\eqref{eq:three_sums}. 
Lemma~\ref{lm:expected_value} implies for $t \geq \beta^{-2} \vee 2$ 
\begin{equation}     \label{eq:Y_estimate}
Y_l(t) \lesssim \beta^{-2} e^{-(1-\beta^2- \rho)t} (l+1)^2 e^{c t^{\mu} l} \sum_{1 \leq r \leq l+1} r (\alpha \beta^2)^r \frac{t^{l+r}}{(l+r)!}. 
\end{equation}
Then 
\begin{equation*}
Y_l(t) \lesssim e^{\lfloor \delta t \rfloor} (t+1)^5 \frac{t^{\lfloor \delta t \rfloor}}{\sqrt{2 \pi \lfloor \delta t \rfloor} \lfloor \delta t \rfloor^{\lfloor \delta t \rfloor}} \leq \frac{(t+1)^3}{\sqrt{2 \pi \kappa t}} \left(\frac{e}{\kappa} \right)^{\delta t}.
\end{equation*} 
Since  
\begin{equation*}
e^{1 - \beta^2 - \rho} > \left( \frac{e}{\kappa} \right)^{\delta},
\end{equation*}
we have 
$$ 
\lim_{t \to \infty}  \sum_{0 \leq l \leq \nuzero t} Y_l(t) = 0. 
$$ 
Next, observe that for $l > \nuzero t$ and $1 \leq r \leq l+1$, 
\begin{equation}    \label{eq:t_l_less_nu_0}  
\frac{t^{r}}{(l+1) \ldots (l+r)} \leq \left(\frac{t}{l} \right)^r \leq \nuzero^{-r}.  
\end{equation} 
From this,~\eqref{eq:Y_estimate}, and $\beta^2 < \nuzero \alpha^{-1}$, we infer the estimate 
$$ 
\sum_{\nuzero t < l \leq \Jtconst t} Y_l(t) \lesssim \beta^{-2} e^{-(1 - \beta^2 - \rho) t} \sum_{\nuzero t < l \leq \Jtconst t} (l+1)^2 \frac{t^l}{l!}. 
$$ 
As 
$$
e^{1 - \beta^2 - \rho} > \left(\frac{e}{\hat \kappa} \right)^{\Jtconst}, 
$$
we have similarly, again with the help of Stirling's formula, 
$$ 
\lim_{t \to \infty} \sum_{\nuzero t < l \leq \Jtconst t} Y_l(t) = 0. 
$$ 
From~\eqref{eq:t_l_less_nu_0} and $\beta^2 < \nuzero \alpha^{-1}$, we may also infer  
\begin{align*}
\sum_{l \geq (2-\Jtconst) t} Y_l(t) \lesssim& \ \beta^{-2} e^{-(1 - \beta^2 - \rho) t} \sum_{l \geq (2-\Jtconst) t} (l+1)^2 \frac{(t e^{c t^{\mu}})^l}{l!} \\
\lesssim& \ \beta^{-2} t^2 e^{-(1 - \beta^2 - \rho) t} \sum_{l \geq (2-\Jtconst) t-2} \frac{(t e^{c t^{\mu}})^l}{l!}.  
\end{align*}
Let $\rho_2 = 2-\Jtconst$ and $\rho_1 \in \left(e^{1 - \frac{1}{\rho_2}}, \rho_2 \right)$. Then, 
$$
\rho_1 t \exp(c t^{\mu}) < \lfloor (2-\Jtconst) t - 2 \rfloor < \rho_2 t \exp(c t^{\mu})
$$
for $t$ sufficiently large, and Lemma~\ref{lm:tail_exponential} gives 
$$
\lim_{t \to \infty} \sum_{l \geq (2-\Jtconst) t} Y_l(t) = 0. 
$$

Now, we show (C2). Notice that $f(t) = \lfloor \Jtconst (1+\nu_1) t \rfloor$, $\rho_2 = \Jtconst (1+\nu_1)$, and $\rho_1 \in \left(e^{1-\frac{1}{\rho_2}}, \rho_2 \right)$ satisfy the conditions in Lemma~\ref{lm:tail_exponential}. Therefore, we can choose $\lambda > 0$ so small that 
\begin{equation}     \label{eq:tail_convergence}
\lim_{t \to \infty} e^{(\lambda -1) t} \sum_{n = f(t)}^{\infty} \frac{t^n}{n!} = 0. 
\end{equation}
Let $\beta > 0$ be so small that $\beta^2 < \lambda \wedge \alpha^{-1}$, and let $\tilde \rho \in (0,(\lambda \wedge \alpha^{-1}) - \beta^2)$. Fix $\rho \in (0,\tilde \rho]$.  
Lemma~\ref{lm:expected_value} implies for $\beta^2 < 1$ and $t \geq \beta^{-2} \vee 2$
\begin{align}      
&e^{(\rho - 1) t} \sum_{l \in J(t)} \frac{t^l}{l!} \sum_{\nu_1 l \leq r \leq l+1} r \alpha^r A(t,l,r) \notag \\
\lesssim& \beta^{-2} e^{-(1-\beta^2 - \rho)t} \sum_{l \in J(t)} (l+1)^2 \sum_{\nu_1 l \leq r \leq l+1} r (\alpha \beta^2)^r \frac{t^{l+r}}{(l+r)!}.   \label{eq:bb_3_1} 
\end{align}
For $l > \Jtconst t$ and $\nu_1 l \leq r \leq l+1$, we have 
$$
l+r \geq (1+\nu_1) l > (1+\nu_1) \Jtconst t > t, 
$$
where we used that $\nu_1 > \Jtconst^{-1} - 1$. Hence,   
\begin{equation}    \label{eq:t_l_r_estim}
\frac{t^{l+r}}{(l+r)!} \leq \frac{t^{\lceil (1+\nu_1) l \rceil}}{\lceil (1+\nu_1) l \rceil !}.
\end{equation}
Since $\beta^2 < \alpha^{-1}$, the expression in the second line of~\eqref{eq:bb_3_1} is thus less than a constant times 
$$
\beta^{-2} e^{-(1 - \beta^2 - \rho) t}  \sum_{l \in J(t)} (l+1)^2 \frac{t^{\lceil (1+\nu_1) l \rceil}}{\lceil (1+\nu_1) l \rceil!} \lesssim \beta^{-2} e^{-(1- \beta^2 - \rho) t} t^{2}  \sum_{l > \Jtconst (1+\nu_1) t} \frac{t^{l}}{l!}.
$$
Since $\beta^2 + \rho < \lambda$, the right side tends to $0$ on account of~\eqref{eq:tail_convergence}. 

Finally, we show~(C3). In light of~(C2), it is enough to show 
$$
\lim_{t \to \infty} e^{t^{\sublin}} e^{-t} \sum_{l \in J(t) \setminus K(t)} \frac{t^l}{l!} \sum_{1 \leq r < \nu_1 l} r \alpha^r A(t,l,r) = 0. 
$$
By Lemma~\ref{lm:A_estimate}, we have for 
$\beta$ so small that $\alpha (2-\Jtconst) \psi < 1$ the estimate  
$$
\sum_{l \in J(t) \setminus K(t)} \frac{t^l}{l!} \sum_{1 \leq r < \nu_1 l} r \alpha^r A(t,l,r) \lesssim \sum_{l \in J(t) \setminus K(t)} \frac{t^l}{l!}. 
$$
Fix $\tilde \sigma \in (0,\sigma)$. 
By Stirling's formula, 
\begin{align}   
& e^{t^{\sublin}} e^{-t} \sum_{0 \leq l < \chi_1(t)} \frac{t^l}{l!} \notag \\
\lesssim& e^{t^{\sublin}} e^{-t} \sum_{0 \leq l \leq (1-t^{- \tilde \sigma})t} \frac{t^l}{l!} \lesssim t e^{t^{\sublin} - t} \frac{e^{\lceil (1-t^{-\tilde \sigma})t \rceil} t^{\lceil (1-t^{-\tilde \sigma})t \rceil}}{\lceil (1-t^{-\tilde \sigma})t \rceil^{\lceil (1-t^{-\tilde \sigma})t \rceil}}. \label{eq:stanarg_7} 
\end{align}
If we set  
\begin{equation*}
r(t) = \frac{1}{2} \left(e^{1-\frac{1}{1 -t^{-\tilde \sigma}}} +  \left(1 - t^{-\tilde \sigma} \right) \right), 
\end{equation*}
we have 
\begin{equation*}
e^{1-\frac{1}{1-t^{-\tilde \sigma}}} < r(t) < 1 - t^{-\tilde \sigma}.
\end{equation*}
By L'Hospital's rule, 
\begin{align*}
\lim_{t \to \infty} t \left(1 - t^{-\tilde \sigma} - r(t) \right) =& \frac{\tilde \sigma}{2} \lim_{t \to \infty} t^{1-\tilde \sigma} \left( \frac{e^{1-\frac{1}{1-t^{-\tilde \sigma}}}}{(1-t^{-\tilde \sigma})^2} - 1 \right) \\
=& \frac{\tilde \sigma^2}{2(\tilde \sigma-1)} \lim_{t \to \infty} t^{1-2 \tilde \sigma} \frac{e^{1-\frac{1}{1-t^{-\tilde \sigma}}}}{(1-t^{-\tilde \sigma})^4} \left(2 t^{-\tilde \sigma} -1 \right) = \infty,
\end{align*}
where we used that $\tilde \sigma < \tfrac{1}{2}$. Then, for $t$ sufficiently large, 
\begin{equation*}
\lceil (1-t^{-\tilde \sigma})t \rceil \geq (1-t^{-\tilde \sigma})t - \frac{t}{2} \left(1 - t^{-\tilde \sigma} - r(t) \right) = \frac{t}{2} \left(1-t^{-\tilde \sigma} + r(t) \right) > t r(t). 
\end{equation*}
Thus, the right side of~\eqref{eq:stanarg_7} is less than a constant times  
\begin{equation*}
t e^{t^{\sublin} - t} \left(\frac{e}{r(t)} \right)^{(1-t^{- \tilde \sigma})t}. 
\end{equation*}
It remains to check that the term above converges to $0$ as $t \to \infty$, or equivalently 
\begin{align*}
& \lim_{t \to \infty} \ln \biggl(t  e^{t^{\sublin} - t} \left(\frac{e}{r(t)} \right)^{(1-t^{-\tilde \sigma})t} \biggr) \\
=& \lim_{t \to \infty} \left(\ln(t) - t \left(t^{-\tilde \sigma} - t^{\sublin -1} + (1-t^{-\tilde \sigma}) \ln(r(t)) \right) \right) = -\infty.
\end{align*}
To simplify notation, we set 
\begin{equation*} 
\iota(t) = \frac{1}{1-t^{-\tilde \sigma}}, \quad e(t) = e^{1-\iota(t)}.
\end{equation*}
For fixed $\varphi \in (\sublin,1-2\tilde \sigma)$, L'Hospital's rule implies 
\begin{align*}
& \lim_{t \to \infty} t^{-\varphi} t \left(t^{-\tilde \sigma} - t^{\sublin - 1} + (1-t^{-\tilde \sigma}) \ln(r(t)) \right) \\
=& \frac{\tilde \sigma}{\varphi-1} \lim_{t \to \infty} t^{1-\tilde \sigma-\varphi} \biggl( \frac{1-t^{-\tilde \sigma}+e(t) \iota(t)^2 (1-t^{-\tilde \sigma})}{1-t^{-\tilde \sigma}+e(t)} - 1 +  \frac{1-\sublin}{\tilde \sigma} t^{\tilde \sigma + \sublin-1} + \ln(r(t))  \biggr)  \\
=& \frac{\tilde \sigma^2}{(\varphi-1)(\tilde \sigma+\varphi-1)} \lim_{t \to \infty} t^{1-2\tilde \sigma-\varphi} \biggl( \frac{2 + 2e(t) \iota(t)^2}{1-t^{-\tilde \sigma} +e(t)} + \frac{1-t^{-\tilde \sigma}}{(1-t^{-\tilde \sigma} + e(t))^2} \\
& \cdot \left( \left(1-t^{-\tilde \sigma}+e(t) \right) \left( e(t) \iota(t)^4 -2 e(t) \iota(t)^3 \right) - \left(1 + e(t) \iota(t)^2 \right)^2  \right) \\
& + \frac{1-\sublin}{\tilde \sigma^2} (\tilde \sigma+\sublin-1) t^{2\tilde \sigma + \sublin -1} \biggr) = \frac{\tilde \sigma^2}{2 (\varphi-1)(\tilde \sigma+\varphi-1)} \lim_{t \to \infty} t^{1-2\tilde \sigma-\varphi} = \infty.
\end{align*}
If $t$ is so large that 
\begin{equation*}
t^{-\varphi} t \left(t^{-\tilde \sigma}- t^{\sublin-1} + (1-t^{-\tilde \sigma}) \ln(r(t)) \right) \geq 1, 
\end{equation*}
we have 
\begin{equation*}
\ln(t) - t \left(t^{-\tilde \sigma} - t^{\sublin-1} + (1-t^{-\tilde \sigma}) \ln(r(t)) \right) \leq  \left(\ln(t)-t^{\varphi} \right), 
\end{equation*}
which tends to $-\infty$ as $t \to \infty$. Stirling's formula and the tail estimate 
$$ 
\sum_{n=k}^{\infty} \frac{t^n}{n!} \leq \frac{t^k}{k!} \frac{1}{1-\frac{t}{k}}, \quad k > t,
$$ 
yield in addition 
\begin{align*}
e^{t^{\sublin}} e^{-t} \sum_{l > \chi_2(t)} \frac{t^l}{l!} \lesssim e^{t^{\sublin}} e^{-t} \sum_{l \geq (1+t^{-\tilde \sigma}) t} \frac{t^l}{l!} \lesssim& e^{t^{\sublin}-t} \frac{e^{\lceil (1+t^{-\tilde \sigma})t \rceil} t^{\lceil (1+t^{-\tilde \sigma})t \rceil}}{((1+t^{-\tilde \sigma})t)^{\lceil (1+t^{-\tilde \sigma})t \rceil}} \cdot \frac{1}{1 - \frac{1}{1+t^{-\tilde \sigma}}} \\
\lesssim& t^{\tilde \sigma} e^{t^{\sublin}-t} \left(\frac{e}{1+t^{-\tilde \sigma}} \right)^{(1+t^{-\tilde \sigma})t}. 
\end{align*} 
To complete the proof of~(C3), let us now show that 
$$ 
\lim_{t \to \infty} \ln \biggl( t^{\tilde \sigma} e^{t^{\sublin}-t} \left(\frac{e}{1+t^{-\tilde \sigma}} \right)^{(1+t^{-\tilde \sigma})t} \biggr) = -\infty. 
$$
For $\varphi \in (\sublin,1-2 \tilde \sigma)$, we have by virtue of L'Hospital's rule 
\begin{align*}
& \lim_{t \to \infty} t^{-\varphi} t \left((1+t^{-\tilde \sigma}) \ln(1+t^{-\tilde \sigma}) - t^{-\tilde \sigma} - t^{\sublin-1} \right) \\
=& \frac{\tilde \sigma}{1-\varphi} \lim_{t \to \infty} \left(t^{1-\tilde \sigma-\varphi} \ln(1+t^{-\tilde \sigma}) + \frac{\sublin-1}{\tilde \sigma} t^{\sublin-\varphi} \right) \\
=& \frac{\tilde \sigma}{1-\varphi} \lim_{t \to \infty} t^{1-\tilde \sigma-\varphi} \ln(1+t^{-\tilde \sigma}) \\
=& \frac{\tilde \sigma^2}{(\varphi-1) (\tilde \sigma+\varphi-1)} \lim_{t \to \infty} \frac{t^{1-2 \tilde \sigma-\varphi}}{1+t^{-\tilde \sigma}} = \infty. 
\end{align*}
In particular, we can choose $t$ so large that 
$$ 
t^{-\varphi} t \left((1+t^{-\tilde \sigma}) \ln(1+t^{-\tilde \sigma}) - t^{-\tilde \sigma} - t^{\sublin -1} \right) \geq \tilde \sigma, 
$$
in which case 
$$
\tilde \sigma \ln(t) - t ( (1+t^{-\tilde \sigma}) \ln(1+t^{-\tilde \sigma}) - t^{-\tilde \sigma} - t^{\sublin-1}) 
\leq \tilde \sigma (\ln(t) - t^{\varphi}) \to -\infty
$$
as $t \to \infty$. 
\end{proof}

\begin{proof}[Proof of Lemma~\ref{lm:building_blocks}]

(A0) is an immediate consequence of (A1) since 
$$
\sum_{0 \leq r < \nuone l-1} (r+1) \alpha^r A(t-2t^{\xi_1},l,r+1) < 1.
$$
We now prove (A1). We split the proof into two parts. First, we show that 
\begin{align}    
\limsup_{t \to \infty} e^{-\beta^2 t^{\xi_1}} & \sup_{\|y\| \leq t^{\sublin}} \frac{1}{p_t^y} \sum_{l \in J(t-2 t^{\xi_1})} q^y_{\iota(y,l)} \Pp^{\bullet}(l) \notag \\
& \sum_{0 \leq r < \nuone l-1} (r+1) \alpha^r A(t-2t^{\xi_1},l,r+1) < \infty.   \label{eq:bb_1_less_than_nu}
\end{align}
For $l \in J(t-2 t^{\xi_1})$ and $0 \leq r < \nuone l-1$, we have by Lemma~\ref{lm:A_estimate} 
$$
A(t-2t^{\xi_1},l,r+1) \lesssim ((2-\Jtconst) \psi(\beta))^r. 
$$
Moreover, 
$$
\frac{\Pp^{\bullet}(l)}{\Pp^{\bullet}(l+1)} = \frac{(t-2t^{\xi_1})^l (l+1)!}{(t-2t^{\xi_1})^{l+1} l!} = \frac{l+1}{t-2t^{\xi_1}}, 
$$
which is bounded in $t$. Hence, 
\begin{align*}
& \sum_{l \in J(t-2 t^{\xi_1})} q^y_{\iota(y,l)} \Pp^{\bullet}(l) \sum_{0 \leq r < \nuone l-1} (r+1) \alpha_d^r A(t-2t^{\xi_1},l,r+1) \\
\lesssim& \sum_{\substack{l \in J(t-2 t^{\xi_1}), \\ l \equiv \|y\|_1}} q^y_l \Pp^{\bullet}(l) + 
\sum_{\substack{l \in J(t-2 t^{\xi_1}), \\ l \equiv \|y\|_1 +1}} q^y_{l +1} \Pp^{\bullet}(l) \lesssim p^y_{t-2t^{\xi_1}}. 
\end{align*}
By Lemma~\ref{lm:p_ratio}, 
$$
\frac{p^y_{t-2t^{\xi_1}}}{p_t^y} \lesssim e^{\beta^2 t^{\xi_1}}
$$
for $t$ sufficiently large. 
This implies~\eqref{eq:bb_1_less_than_nu}.  The statement in~(A1) will then follow from  
\begin{equation} \label{eq:bb_1_greater_than_nu}
\lim_{t \to \infty}  \sup_{\|y\| \leq t^{\sublin}} \frac{1}{p_t^y} \sum_{l \in J(t-2 t^{\xi_1})} q^y_{\iota(y,l)} \Pp^{\bullet}(l) 
\sum_{\nuone l -1 \leq r \leq l} (r+1) \alpha^r A(t-2t^{\xi_1},l,r+1) = 0.  
\end{equation}
Let $\tilde \sublin \in (\sublin,1)$. We have 
\begin{align*}
& \sup_{\|y\| \leq t^{\sublin}} \frac{1}{p_t^y} \sum_{l \in J(t- 2 t^{\xi_1})} q^y_{\iota(y,l)} \Pp^{\bullet}(l) \sum_{\nuone l - 1 \leq r \leq l} (r+1) \alpha^r A(t-2t^{\xi_1}, l, r+1) \\
\lesssim& e^{(t- 2 t^{\xi_1})^{\tilde \sublin}} e^{-(t-2 t^{\xi_1})}  \sum_{l \in J(t-2 t^{\xi_1})} \frac{(t-2 t^{\xi_1})^l}{l!} \sum_{\nuone l \leq r \leq l+1} r \alpha^r A(t-2t^{\xi_1},l,r), 
\end{align*}
and the expression above tends to $0$ as $t \to \infty$ by~(B2) of Proposition~\ref{prop:key}.

To prove~(A2), it is enough to replace $t-2t^{\xi_1}$ with $t^{\xi_1}$ in~\eqref{eq:bb_1_less_than_nu} and to drop the terms $e^{-\beta^2 t^{\xi_1}}$ and $q^y_{\iota(y,l)}/ p_t^y$. With respect to~(A3), replace $t-2t^{\xi_1}$ with $t^{\xi_1}$ in the proof of~\eqref{eq:bb_1_greater_than_nu}, and notice that~(C2) of Proposition~\ref{prop:key} also lets us deal with the additional factors $t^{\theta}$ and $e^{\beta^2 t^{\xi_1}}$ provided that $\beta^2 \leq \tilde \rho$.  

With regard to~(A4)  , we have for $\theta, c > 0$, $\beta^2 < \tilde \rho$, and $\mu \coleq (\sublin - 1)/ \xi_1 < -1$ 
\begin{align*}
& t^{\theta} e^{\beta^2 t^{\xi_1}} \sum_{l \notin J(t^{\xi_1})} e^{c t^{\sublin - 1} l} \Pp^-(l) \sum_{0 \leq r \leq l} (r+1) \alpha^r A(t^{\xi_1}, l, r+1) \\
\lesssim& e^{(\tilde \rho -1) t^{\xi_1}} \sum_{l \notin J(t^{\xi_1})} e^{c (t^{\xi_1})^{\mu} l}  \frac{(t^{\xi_1})^l}{l!} \sum_{1 \leq r \leq l+1} r \alpha^r A(t^{\xi_1},l,r), 
\end{align*}
and the expression above tends to $0$ as $t \to \infty$ by~(C1) of Proposition~\ref{prop:key}. 

Instead of proving (A5), we will show the following stronger statement: 

\begin{equation}   \label{eq:stronger_B4} 
\lim_{t \to \infty} t^{\theta} e^{t^{\sublin}} \sum_{l \notin K(t-2 t^{\xi_1})} \Pp^{\bullet}(l) \sum_{0 \leq r \leq l} (r+1) \alpha^r A(t-2t^{\xi_1}, l, r+1) = 0, \quad \theta > 0. 
\end{equation}

Notice that for $\theta > 0$ and $\tilde \sublin \in (\sublin, 1)$, 
\begin{align*}
& t^{\theta} e^{t^{\sublin}} \sum_{l \notin K(t-2 t^{\xi_1})} \Pp^{\bullet}(l) \sum_{0 \leq r \leq l} (r+1) \alpha^r A(t-2 t^{\xi_1}, l, r+1) \\
\lesssim& e^{(\tilde \rho -1) (t-2 t^{\xi_1})} \sum_{l \notin J(t-2 t^{\xi_1})} \frac{(t- 2 t^{\xi_1})^l}{l!} \sum_{1 \leq r \leq l+1} r \alpha^r A(t- 2 t^{\xi_1}, l,r) \\
&+ e^{(t-2t^{\xi_1})^{\tilde \sublin}}  e^{-(t-2t^{\xi_1})} \sum_{l \in J(t- 2t^{\xi_1}) \setminus K(t- 2 t^{\xi_1})} \frac{(t-2 t^{\xi_1})^l}{l!} \sum_{1 \leq r \leq l+1} r \alpha^r A(t-2 t^{\xi_1}, l,r).
\end{align*}
and the term above tends to $0$ as $t \to \infty$ by~(C1) and~(C3) of Proposition~\ref{prop:key}. 
\end{proof}

\begin{proof}[Proof of Claim~\ref{cl:sum_notin_K}]
This is an immediate consequence of~\eqref{eq:stronger_B4}.
\end{proof}

\section{A Calculus Estimate}
\label{appendix_calc}



In the proof of Lemma~\ref{lm:Z_increment}, we use the following auxiliary result that corresponds to Inequality (3.29) in~\cite{Kifer}.  

\begin{lemma}     \label{lm:lclt_lemma}
There is a constant $c > 0$, depending only on the dimension $d$, such that for any $r \in \N$,     
	\begin{equation}    \label{eq:lclt_lemma}
		\sum_{0 < i_1 < \ldots < i_r < n} i_1^{-\frac{d}{2}} (i_2 - i_1)^{-\frac{d}{2}} \ldots (i_r - i_{r-1})^{-\frac{d}{2}} (n - i_r)^{-\frac{d}{2}} \leq c^r  n^{-\frac{d}{2}}, \quad n \geq r + 1.   
	\end{equation}  
	\end{lemma}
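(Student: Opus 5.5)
We prove the bound by induction on $r$, reducing it to a single convolution estimate. Write $f(m) \coleq m^{-d/2}$ for $m \in \N$ and let $f^{*(r+1)}(n)$ denote the $(r+1)$-fold convolution of $f$ (restricted to positive arguments), so that the left-hand side of the statement is exactly $f^{*(r+1)}(n)$. The claim is then that there is $c$ with $f^{*(k)}(n) \le c^{k-1} n^{-d/2}$ for all $k \ge 1$ and $n \ge k$; we may take $c \geq 1$ at the end.

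The key step is a one-step estimate: there is a constant $c_0 = c_0(d)$ such that for every $n \geq 2$,
\begin{equation*}
\sum_{0<i<n} i^{-\frac{d}{2}} (n-i)^{-\frac{d}{2}} \le c_0\, n^{-\frac{d}{2}}.
\end{equation*}
To prove it, we split the sum at $n/2$. For $i \le n/2$ we have $(n-i)^{-d/2} \le 2^{d/2} n^{-d/2}$, so this part is at most $2^{d/2} n^{-d/2}\sum_{i\ge1} i^{-d/2}$, which is finite because $d\ge3$ gives $d/2>1$. The part $i>n/2$ is handled symmetrically. We therefore get $c_0 = 2^{1+d/2}\zeta(d/2)$.

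Next we show a slightly stronger inductive bound, namely $f^{*(k)}(n) \le c_0^{k-1} n^{-d/2}$. Its induction step reads
\begin{equation*}
f^{*(k+1)}(n) = \sum_{0<m<n} f^{*(k)}(n-m)\, f(m) \le c_0^{k-1} \sum_{0<m<n} (n-m)^{-\frac{d}{2}} m^{-\frac{d}{2}} \le c_0^{k} n^{-\frac{d}{2}}.
\end{equation*}
This uses the induction hypothesis pointwise; terms with $n-m<k$ vanish, so the hypothesis is applied only where it holds. Taking $k=r+1$ yields the lemma with $c = c_0$.

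The only real content is the one-step estimate, and its only subtle point is the summability of $\sum_i i^{-d/2}$. This is exactly where $d\ge3$ enters, so we expect no substantial obstacle.
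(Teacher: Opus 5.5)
Your proof is correct and follows the paper's argument: induction on $r$ by peeling off one gap, with the hypothesis applied to the remaining $r$-fold sum and then the one-step bound $\sum_{0<i<n} i^{-d/2}(n-i)^{-d/2}\le c\,n^{-d/2}$. The only difference is cosmetic. The paper proves the one-step bound via the identity $i^{-d/2}(n-i)^{-d/2}=n^{-d/2}\bigl(\frac{1}{i}+\frac{1}{n-i}\bigr)^{d/2}$ and convexity, which gives $c=2^{d/2}\zeta(d/2)$. You split at $n/2$ instead, which gives twice that constant.
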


\begin{proof} We prove the statement by induction.
As $d \geq 3$, Jensen's inequality yields   
	\begin{equation*}
		\biggl( \frac{1}{i} + \frac{1}{n-i} \biggr)^{\frac{d}{2}} \leq 2^{\frac{d}{2}-1} \biggl( \frac{1}{i^{\frac{d}{2}}} + \frac{1}{(n-i)^{\frac{d}{2}}} \biggr), \quad1 \leq i < n.  
	\end{equation*}
For $r = 1$, the left side of~\eqref{eq:lclt_lemma} becomes 
	\begin{align*}
		\sum_{i=1}^{n-1} \frac{1}{i^{\frac{d}{2}}} \cdot \frac{1}{(n - i)^{\frac{d}{2}}} 
		&= \frac{1}{n^{\frac{d}{2}}} \sum_{i=1}^{n-1} \biggl( \frac{1}{i} + \frac{1}{n-i} \biggr)^{\frac{d}{2}} \\
		&\leq 2^{\frac{d}{2}-1} \frac{1}{n^{\frac{d}{2}}} \sum_{i=1}^{n-1} \biggl( \frac{1}{i^{\frac{d}{2}}} + \frac{1}{(n-i)^{\frac{d}{2}}} \biggr) \leq c \frac{1}{n^{\frac{d}{2}}},   
	\end{align*}
where $c \coleq 2^{d/2} \sum_{i=1}^{\infty} i^{-d/2}$. In the induction step, assume that~\eqref{eq:lclt_lemma} holds for some $r \in \N$. Then, for $n \geq r + 2$, 
\begin{align*}
		& \sum_{0 < i_1 < \ldots < i_{r+1} < n} i_1^{-\frac{d}{2}} (i_2 - i_1)^{-\frac{d}{2}} \ldots (i_{r+1} - i_r)^{-\frac{d}{2}} (n - i_{r+1})^{-\frac{d}{2}} \\
		=& \sum_{i_{r+1} = r+1}^{n-1} (n - i_{r+1})^{-\frac{d}{2}} \sum_{\substack{i_1, \ldots, i_r : \\ 0 < i_1 < \ldots i_r < i_{r+1}}} i_1^{-\frac{d}{2}} (i_2 - i_1)^{-\frac{d}{2}} \ldots (i_{r+1} - i_r)^{-\frac{d}{2}} \\
		\leq& \ c^r \sum_{i_{r+1} = 1}^{n-1} (n - i_{r+1})^{-\frac{d}{2}} \cdot i_{r+1}^{-\frac{d}{2}}\leq c^{r+1} n^{-\frac{d}{2}}.     
	\end{align*}
\end{proof}


%

\end{appendix}


\bibliographystyle{alpha}

\bibliography{references_v2}


%
%
%

\end{document}